\newdimen\bibspace
\newtheorem{Theorem}{Theorem}[section]
\newtheorem{Lemma}[Theorem]{Lemma}
\newtheorem{Proposition}[Theorem]{Proposition}
\newtheorem*{Assumption*}{Assumption (H)}
\newtheorem{Definition}[Theorem]{Definition}
\newtheorem{Corollary}[Theorem]{Corollary}
\newtheorem{Remark}[Theorem]{Remark}
\newtheorem{question}[Theorem]{Open question}
\def\XXint#1#2#3{{\setbox0=\hbox{$#1{#2#3}{\int}$}
  \vcenter{\hbox{$#2#3$}}\kern-.5\wd0}}
                \newcommand{\lda}{\lambda}
           \newcommand{\ud}{\mathrm{d}}
\newcommand{\be}{\begin{equation}}      \newcommand{\ee}{\end{equation}}
\newcommand{\Lda}{\Lambda}              
\newcommand{\T}{\mathcal{T}}
\newcommand{\R}{\mathbb{R}}              
\newcommand{\D}{\mathcal{D}}
\newcommand{\I}{\mathcal{I}}
\newcommand{\E}{\mathcal{E}}
\newcommand{\J}{\mathcal{J}}
\newcommand{\G}{\mathcal{G}}
\newcommand{\M}{\mathscr{M}}
\begin{document}

\title{\textbf{Regularity and classification of the free boundary for a Monge-Amp\`ere obstacle problem}\bigskip}

\author{\medskip  Tianling Jin,\footnote{T. Jin was partially supported by NSFC grant 12122120, and Hong Kong RGC grants GRF 16303822 and GRF 16303624.}\quad Xushan Tu, \quad
Jingang Xiong\footnote{J. Xiong was partially supported by NSFC grants 12325104 and 12271028.}}

\date{\today}

\maketitle

\begin{abstract} 
We study convex solutions to the Monge-Amp\`ere obstacle problem
\[
\operatorname{det} D^2 v=g v^q\chi_{\{v>0\}},  \quad v \geq 0, 
\]
where $q \in [0,n)$ is a constant and $g$ is a bounded positive function. This problem emerges from the $L_p$ Minkowski problem. We establish $C^{1, \alpha}$ regularity for the strictly convex part of the free boundary $\partial\{v=0\}$. Furthermore, when $g \in C^{\alpha}$, we prove a Schauder-type estimate. As a consequence, when $g\equiv 1$, we obtain a Liouville theorem for entire solutions with unbounded coincidence sets $\{v=0\}$. Combined with existing results, this provides a complete classification of entire solutions for the case $q=0$.

\medskip

\noindent{\it Keywords}: Monge-Amp\`ere equation, obstacle problem, free boundary, regularity, classification.

\medskip

\noindent {\it MSC (2010)}: Primary 35B25; Secondary 35J96, 35R35.

\end{abstract}

\section{Introduction}

Let $n\ge 2$ and $\Omega \subset \mathbb{R}^n$ be a bounded convex open set. In \cite{savin2005obstacle}, Savin studied convex solutions to
\begin{equation}\label{eq:obs problem savin}
\det  D^2 v = g \chi_{\{v > 0\}} \ \text{ in } \Omega, \quad  v> 0\ \text{ on } \partial\Omega,
\end{equation}
where $g$ is a positive bounded function on $\Omega$, and $\chi_E$ denotes the characteristic function of a set $E$. Let $K=\{x\in\Omega: v(x)=0\}$ be the coincidence set. When $|K|=0$, the problem \eqref{eq:obs problem savin} reduces to the Monge-Amp\`ere equation $\det  D^2 v = g$, where the regularity of solutions was establish by Caffarelli \cite{caffarelli1990ilocalization,caffarelli1990interiorw2p,caffarelli1991regularity}.  Where  $|K|>0$, this is a Monge-Amp\`ere  obstacle problem, which can be interpreted as a model in optimal transportation with a Dirac measure and also as the dual problem for the Monge-Amp\`ere equation with an isolated singularity. Savin \cite{savin2005obstacle} proved that the free boundary $\partial K$ is $C^{1,\alpha}$, and furthermore, when $g\equiv 1$, the free boundary is $C^{1,1}$ and  uniformly  convex. In two dimensions, G\'alvez-Jim\'enez-Mira \cite{galvez2015classification} proved that the free boundary $\partial K$ is $C^{\infty}$  and analytic, provided that $g$ is $C^{\infty}$ and analytic, respectively. 
This regularity in higher dimensions has recently been completely  solved by Huang-Tang-Wang \cite{huang2024regularity}. The regularity of free boundary problems associated with the Monge-Amp\`ere operator has also been studied in other literatures, see, e.g.,  \cite{chou1993obstacle, lee2001obstacle, daskalopoulos2012fullydegenerate, caffarelli2010freeboundaries, chen2024c2aregularity}.

In this paper, we  study  convex solutions to the obstacle problem
\begin{equation}\label{eq:obs equation}
\operatorname{det} D^2 v=g v^q\chi_{\{v>0\}} ,  \quad v\ge 0 \quad \mbox{in } \Omega,
\end{equation}
where $q \in [0,n)$ is constant, $\lambda \leq g  \leq   \Lambda$ in $\Omega$, and $\lda$ and  $\Lda$ are two positive constants. This obstacle problem emerges from the $L_p$ Minkowski problem \cite{lutwak1993brunnminkowskifirey, lutwak2004lpminkowski, chou2006lpminkowski} when $0$ is on the boundary of the convex body, which will be explained in Section \ref{sec:Minkowski}.  If $q\ge n$, then it was shown in \cite{diaz2015freeboundary} that either $v\equiv 0$ or $v>0$, that is, there is not free boundary.

Our first result is a classification of the global convex solutions to \eqref{eq:obs equation} with $ g\equiv 1$. 

\begin{Theorem}\label{thm:classify}
Let $n\ge 2$ and $0\le q<n$. Suppose $v\not\equiv 0$ is a convex solution of \begin{equation}\label{eq:global obs}
	\det D^2 v = v^q\chi_{\{v>0\}},\quad v\ge 0, \quad \text{in } \mathbb{R}^n.
\end{equation}
If the coincidence set $K:=\{v=0\}$ is unbounded, then it must  be a paraboloid, and $v$ is affine equivalent\footnote{Two functions \( u_1, u_2: \mathbb{R}^n \to \mathbb{R} \) are called \emph{affine equivalent} if there exists an \( n \times n \) matrix \( A \) with \(\det A \neq 0\) and a vector \( b = (b_1, \ldots, b_n)^T \) such that $u_1(x) = (\det A)^{-2/n} u_2(Ax + b)$ almost everywhere in \(\mathbb{R}^n\). 
} to
\begin{equation}\label{eq:glo model 2}
	\varpi(x',x_n) = c_{n,q}\max\left\{\frac{1}{2} |x'|^2 -x_n,0\right\} ^{\frac{n+1}{n-q}} \text{ for } x=(x',x_n) \in \R^n,
\end{equation}
where $c_{n,q}=\frac{(n-q)^{\frac{n+1}{n-q}}}{(n+1)^{\frac{n}{n-q}}(q+1)^{\frac{1}{n-q}}}$.
\end{Theorem}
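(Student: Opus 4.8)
The plan is to reduce the global classification to the (already established) $C^{1,\alpha}$/Schauder regularity theory for the free boundary together with an analysis of how $v$ must grow far from $K$. First I would show that if $K=\{v=0\}$ is unbounded, then it is "infinite in one direction but thin in others," i.e. after an affine normalization $K$ is a convex set containing a ray and contained in a thin slab. The key point is that $v$ is convex and satisfies $\det D^2 v=v^q$ away from $K$; if $K$ contained a full line, convexity of $v$ and invariance would let one split off that variable and recurse to lower dimension, where by the hypothesis $q<n$ one gets a contradiction with the strict positivity of the right side unless $v\equiv 0$. So $K$ contains no line, hence (being a closed convex unbounded set) it contains a ray; after an affine transformation one may assume this ray is the negative $x_n$-axis. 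One then wants to show $K$ is bounded in the $x'$ directions at each height, so that $\partial K$ is (locally) a graph over a strictly convex piece, to which the $C^{1,\alpha}$ free-boundary regularity from the earlier part of the paper applies.

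Next, I would perform a blow-down/scaling analysis. The equation $\det D^2 v=v^q\chi_{\{v>0\}}$ has the scaling $v_R(x):=R^{-\beta}v(Rx)$ with $\beta=\tfrac{2n}{n-q}$ (chosen so $\det D^2 v_R=R^{-n\beta+2n}v(Rx)^0\cdots$; precisely $\beta$ is forced by $-\beta n+2n=\beta q$, giving $\beta=\tfrac{2n}{n-q}$, and note $\tfrac{n+1}{n-q}$ in \eqref{eq:glo model 2} is consistent with the homogeneity of $\varpi$ under parabolic scaling $x'\mapsto \mu x'$, $x_n\mapsto\mu^2 x_n$). Using the convexity of $v$, Savin-type sections/John-ellipsoid normalization, and interior Monge–Amp\`ere estimates, I would extract a limit $v_\infty$ solving the same global equation whose coincidence set is a cone (the asymptotic cone of $K$), necessarily a ray; combined with the structure $\varpi$ enjoys, this pins down the shape of $K$ at infinity. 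The model solution $\varpi$ is exactly the one whose coincidence set $\{x_n\ge\tfrac12|x'|^2\}$… wait—rather $\{\tfrac12|x'|^2\le x_n\}$—is a solid paraboloid; one checks directly by a short computation that $\varpi$ solves \eqref{eq:global obs}. The heart of the argument is the rigidity: any solution with unbounded $K$ must, after affine equivalence, coincide with $\varpi$.

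For the rigidity step I would use a Liouville/uniqueness argument in the spirit of Jörgens–Calabi–Pogorelov combined with the free-boundary Schauder estimate. Concretely: near a strictly convex portion of $\partial K$ the free boundary is $C^{1,\alpha}$ (and $C^{2,\alpha}$, or smooth, by the Schauder-type estimate when $g\equiv1$ is smooth), and $v$ behaves like $\operatorname{dist}(x,K)^{\frac{n+1}{n-q}}$ transversally — this exponent is forced by an ODE analysis of the equation in the normal direction, matching the exponent in \eqref{eq:glo model 2}. One then shows the full free boundary is a single smooth convex hypersurface that is a graph $x_n=\phi(x')$ with $\phi$ convex, $D^2\phi>0$, and — using the equation and the asymptotics of $v$ — that $\det D^2\phi$ is constant, or more precisely that $\phi$ satisfies an equation whose only globally convex solutions with the right growth are quadratics; hence $\phi$ is a paraboloid, and then $v$ is determined by $\phi$ via the transversal ODE, giving $v$ affine equivalent to $\varpi$.

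The main obstacle I expect is precisely this last rigidity: upgrading "the free boundary is a smooth convex entire graph and $v$ has paraboloid-type asymptotics" to "the graph is exactly a paraboloid and $v$ is exactly $\varpi$." This requires either a clean monotonicity/Pogorelov-type argument showing the second fundamental form of $\partial K$ is constant, or a delicate uniqueness theorem for the degenerate Monge–Amp\`ere equation with the prescribed homogeneous behavior — ruling out, e.g., free boundaries that are asymptotically paraboloidal but not exactly so. A secondary technical point is handling the non-strictly-convex part of $\partial K$ (where the earlier $C^{1,\alpha}$ theory does not directly apply): one must argue that in the global setting with $K$ unbounded, $\partial K$ cannot contain a line segment unless $K$ degenerates, so the whole free boundary is strictly convex and the regularity theory applies everywhere.
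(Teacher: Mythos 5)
Your setup matches the paper's: with $K$ unbounded one first rules out lines in $K$ (a line forces $\det D^2v\equiv 0$, hence $v\equiv 0$), normalizes so that a ray $\{te_n:t\ge 0\}$ lies in $K$, and invokes the global version of the structure result ($\Gamma_{nsc}=\emptyset$ when $\Omega=\R^n$, Proposition \ref{prop:infinite ray}) so that the $C^{1,\alpha}$ and then $C^{1,1}$/uniform-convexity theory applies on all of $\partial K$. But the heart of the theorem is the rigidity step, and there your proposal has a genuine gap that you yourself flag. The mechanism you suggest --- that $\partial K$ is a graph $x_n=\phi(x')$ satisfying a closed equation of Monge--Amp\`ere type whose only admissible global convex solutions are quadratics, with $v$ then ``determined by $\phi$ via the transversal ODE'' --- does not work: $v$ is not a function of $\operatorname{dist}(x,K)$ (the growth \eqref{eq:v distance optimal} is only a two-sided bound), there is no transversal ODE determining $v$ from $\phi$, and the free boundary does not satisfy an autonomous equation decoupled from $v$. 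A secondary problem is the isotropic blow-down $v_R(x)=R^{-2n/(n-q)}v(Rx)$: its limiting coincidence set would be the asymptotic cone of a paraboloid, i.e.\ a ray, which is impossible for a solution by Lemma \ref{lem:ext are exp} (an extreme point of $K$ cannot be the endpoint of a segment on $\partial K$); so that blow-down degenerates rather than producing a useful limit.

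The paper closes the gap by an entirely different device. After establishing uniform convexity of all level sets (Theorem \ref{thm:c11 bound of K}, which requires a new Pogorelov-type estimate, Theorem \ref{lem:Pogorelov p>0}, for $\det D^2w=(x\cdot Dw-w)^{-q}$ applied to the Legendre transform of $v$), it takes the \emph{partial} Legendre transform $\psi=\G_n v$ in the $x_n$-variable and changes variables $s=|y_n|^{k/2}$ with $k=\frac{n-q}{q+1}$. This converts the degenerate free boundary problem into the uniformly elliptic, concave equation \eqref{eq:eq for psi} on a half-space with a singular drift $\sim\psi_s/s$, for which a boundary Harnack/Schauder theory (Lemma \ref{lem:holder K ndiv}, Evans--Krylov) yields the \emph{scale-invariant} global estimate \eqref{eq:c2a g=1} uniformly over the class $\E_1$. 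The natural parabolic rescaling $\psi_h(y',s)=t^{-1}\psi(t^{1/2}y',t^{1/2}s)$ then preserves this estimate for all $t$, and letting $t\to\infty$ forces the $C^{2,\alpha}$ seminorm of $\psi$ to vanish, i.e.\ $\psi$ is an exact quadratic polynomial; undoing the transform gives $v$ affine equivalent to $\varpi$. This Liouville theorem in the partial-Legendre variables is the missing idea: without it (or a substitute of comparable strength), the step from ``asymptotically paraboloidal'' to ``exactly a paraboloid'' is not justified in your proposal.
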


If $q=0$, then we have the following complete classification. 

\begin{Corollary}\label{thm:classify0}
Let $n\ge 2$ and $v\not\equiv 0$ be a convex solution of
\[
\det D^2v  = \chi_{\{ v > 0\}},\quad v\ge 0,\quad  \text{in }\R^n.
\]
Let $K:=\{v=0\}$ denote the coincidence set.  Then one of the following holds:
\begin{enumerate}
\item[(i).] $K= \emptyset$ or is a single point set, and $v$ is a quadratic function;
\item[(ii).] $K$ is an ellipsoid and $v$ is  affine equivalent to $\displaystyle\int_{0}^{|x|} \max\left\{r^n-1,0\right\}^{\frac{1}{n}}dr$;
\item[(iii).] $K$ is a paraboloid and $v$ is affine equivalent to 
$
\displaystyle\frac{n^{\frac{n+1}{n}} }{(n+1) } \max\left\{\frac{1}{2} |x'|^2 -x_n,0\right\} ^{\frac{n+1}{n}}.
$
\end{enumerate} 
\end{Corollary}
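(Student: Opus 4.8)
The plan is to run a trichotomy on the coincidence set $K$: either $K$ is unbounded, or $K$ is bounded with $|K|=0$, or $K$ is bounded with $|K|>0$, leading to alternatives (iii), (i) and (ii) respectively. Before starting I would record the two explicit models. Putting $q=0$ in \eqref{eq:glo model 2} gives $c_{n,0}=n^{\frac{n+1}{n}}/(n+1)$, so that $\varpi$ is exactly the function in (iii). For the radial candidate $w(x)=\int_0^{|x|}\max\{r^n-1,0\}^{1/n}\,dr$, the formula $\det D^2\big(h(|x|)\big)=h''(r)\,(h'(r)/r)^{n-1}$ (with $r=|x|$) yields, after a one-variable computation, that $w\equiv 0$ on the closed unit ball and $\det D^2 w\equiv 1$ outside it; hence $w$ solves $\det D^2 w=\chi_{\{w>0\}}$ with coincidence set a ball, and it is the function in (ii). With these in hand, the case $K$ unbounded is immediate: Theorem~\ref{thm:classify} applies verbatim with $q=0$ and puts us in alternative (iii). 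What remains is the global version of Savin's obstacle problem \eqref{eq:obs problem savin}, which is handled by combining the above with existing classification results.

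Suppose first $|K|=0$. I would show that $v$ is then a global Alexandrov solution of $\det D^2 v=1$ in $\R^n$. Indeed, $\R^n\setminus K$ is open, dense and connected (a bounded convex set of empty interior cannot disconnect $\R^n$ when $n\ge 2$), and there $v>0$ with $\det D^2 v=1$; by Caffarelli's regularity theory \cite{caffarelli1991regularity}, $v$ is smooth and strictly convex on $\R^n\setminus K$. Moreover $K$ is contained in an affine hyperplane, so every supporting plane of $v$ at a point of $K$ has contact set inside that hyperplane; consequently $\partial v(K)$ is Lebesgue-null and the Monge-Amp\`ere measure of $v$ has no singular part on $K$. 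The J\"{o}rgens-Calabi-Pogorelov theorem then forces $v$ to be a quadratic polynomial, and since $\det D^2 v=1>0$ its Hessian is positive definite; hence $v$ is strictly convex and $K=\{v=\min v\}$ is either empty or a single point. This is alternative (i).

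Now suppose $K$ is bounded with $|K|>0$, so that $K$ is a genuine convex body. The plan here is: by Savin's free boundary regularity \cite{savin2005obstacle}, completed in higher dimensions by \cite{huang2024regularity}, $\partial K$ is $C^{1,1}$ and uniformly convex; outside a large ball containing $K$ one has $\det D^2 v=1$, so by the Caffarelli-Li extension of the J\"{o}rgens-Calabi-Pogorelov theorem $v$ is asymptotic at infinity to a quadratic polynomial $Q$ with $\det D^2 Q=1$. After an affine transformation normalizing $Q$ to $\tfrac12|x|^2$, a moving-plane argument (or symmetrization together with uniqueness for the obstacle problem) shows the normalized solution is radially symmetric; its coincidence set is therefore a ball, and by the radial computation above it must equal $w$. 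Undoing the affine map, $K$ is an ellipsoid and $v$ is affine equivalent to $w$, which is alternative (ii). Since the three cases are exhaustive, this finishes the proof.

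The step I expect to be the main obstacle is the last one: turning Savin's interior free boundary regularity into global information — quadratic growth and asymptotic roundness of $v$, hence of $K$ — and then upgrading this to exact radial symmetry of the normalized solution; this is where the real work, or the precise appeal to an existing result, is needed. By comparison, the measure-zero case only hinges on the small fact that a bounded convex set of empty interior carries no Monge-Amp\`ere mass for $v$, and the unbounded case is dispatched directly by Theorem~\ref{thm:classify}.
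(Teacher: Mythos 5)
Your trichotomy and your treatment of cases (i) and (iii) match the paper: unbounded $K$ is dispatched by Theorem \ref{thm:classify} with $q=0$ (and your constant check $c_{n,0}=n^{\frac{n+1}{n}}/(n+1)$ is right), and the $|K|=0$ case reduces to J\"orgens--Calabi--Pogorelov exactly as in the paper. (Your worry about a singular part of $\mathcal{M}v$ on $K$ is moot: for an Aleksandrov solution of $\det D^2v=\chi_{\{v>0\}}$ one has $\mathcal{M}v(K)=|K\cap\{v>0\}|=0$ by definition; see also Proposition \ref{prop:existence obs}.)

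The genuine gap is in case (ii), $0<|K|<\infty$, and it is exactly the step you flagged yourself. Your route --- Savin/Huang--Tang--Wang free boundary regularity, then Caffarelli--Li asymptotics $v=Q+o(1)$ at infinity, then ``a moving-plane argument (or symmetrization together with uniqueness for the obstacle problem)'' --- leaves the decisive symmetry/uniqueness step unproved. A moving-plane argument for this degenerate free boundary problem, or a uniqueness theorem for the global obstacle problem with prescribed quadratic asymptotics, is not available off the shelf and is not supplied; as written the argument does not close. The paper avoids this entirely by passing to the Legendre transform: since $|K|>0$, the dual function $u=v^*$ satisfies $\det D^2u=1$ in $\R^n\setminus\{0\}$ with $\partial u(0)=K$, and the classification of such solutions (J\"orgens \cite{jorgens1955harmonische} for $n=2$, Jin--Xiong \cite{jin2016solutions} for $n\ge3$) gives that $u$ is affine equivalent to $\int_0^{|x|}(r^n+1)^{1/n}\,dr$; transforming back yields $v$ affine equivalent to $\int_0^{|x|}\max\{r^n-1,0\}^{1/n}\,dr$ and $K$ an ellipsoid. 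This duality step --- already advertised in the introduction after the statement of the corollary --- is the missing idea; if you insist on your asymptotic-symmetry route, you would need to prove the uniqueness statement it relies on, which is essentially equivalent in difficulty to the cited classification.
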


Case (i) corresponds to the case $|K|=0$. The conclusion follows from the theorem of J\"orgens \cite{jorgens1954losungen}, Calabi \cite{calabi1958improper}, and Pogorelov \cite{pogorelov1978minkowski} on the classification of the global solutions to $\det D^2 v = 1$ in $\mathbb{R}^n$.

Case (ii) corresponds to the case $0<|K|<\infty$. The conclusion follows from the classification of the global solutions to $\det D^2 u = 1$ in $\mathbb{R}^n\setminus\{0\}$, which the Legendre transform of $v$ satisfies. This classification result was established by J\"orgens \cite{jorgens1955harmonische} for $n = 2$ and Jin-Xiong \cite{jin2016solutions} for $n \geq 3$.

Case (iii) corresponds to the case $|K|=\infty$. The conclusion follows from Theorem \ref{thm:classify} with $q=0$.

The classification of solutions and their coincidence sets of the global obstacle problem
\[
\Delta v =\chi_{\{ v > 0\}},\quad v\ge 0, \quad \text{in }\R^n 
\] 
has been established through the works of Dive \cite{dive1931attraction}, Lewy \cite{lewy1979inversion}, Sakai \cite{sakai1981null}, DiBenedetto-Friedman \cite{dibenedetto1986bubble} and Friedman-Sakai \cite{friedman1986characterization} if the coincidence sets are assumed to be bounded, and has recently been fully resolved by Eberle-Shahgholian-Weiss \cite{eberle2023global} for $n\geq 6$ and  Eberle-Figalli-Weiss \cite{eberle2022complete} for all $n$. Nonetheless, the classification of solutions to the global Alt-Phillips \cite{alt1986free} equation
\[
\Delta v = v^{\frac{q}{n}} \chi_{\{v > 0\}},\quad v\ge 0, \quad \text{in } \R^n
\]
remains largely unresolved; see \cite{bonorino2001regularity,wu2022fully,dipierro2023classification,savin2025stable,savin2025concentration} for some results on the classification of homogeneous solutions.  We refer to \cite{de2021certain, wu2022fully} for recent advancements and generalizations of the Alt-Phillips equation to fully nonlinear uniformly elliptic operators.

\begin{question}\label{question1}
Let $n\ge 2$. When $0<q<n$, the classification of solutions to \eqref{eq:global obs} is unknown if the coincidence sets are assumed to be bounded.
\end{question}

Our proof of Theorem \ref{thm:classify} makes use of the regularity of local solutions to the obstacle problem \eqref{eq:obs equation}, which will be discussed below.

The existence of solutions to \eqref{eq:obs equation} will be presented in Proposition \ref{prop:existence obs} using Perron's method. Let the convex set 
\[
K:= \left\{ x\in\Omega:\; v(x)=0\right\} 
\]
be the coincidence set, and $\Gamma= \Omega \cap \partial K$ be the free boundary in $\Omega$. In the scenario where $K =\emptyset$,  the regularity of strictly convex solutions follows from Caffarelli \cite{caffarelli1990ilocalization,caffarelli1990interiorw2p,caffarelli1991regularity}. So we will consider the case where $K\neq \emptyset$.  Our main focus will be on the regularity of the solution $v$, and in particular, the regularity of the free boundary $\Gamma$.

Strict convexity plays a fundamental role in the regularity theory for Monge-Amp\`ere equations. Inspired by Caffarelli \cite{caffarelli1990ilocalization,caffarelli1990interiorw2p,caffarelli1991regularity} and Savin \cite{savin2005obstacle}, we define  the strictly convex part of $\Gamma$ as
\[
\Gamma_{sc}:= \left\{x \in \Omega:\; x \text{ is an exposed point of } K  \right\}\subset \Gamma.
\]
This is a relatively open subset of $\Gamma$; see Lemma \ref{lem:exposed set}. 
We will show the $C^{1,\alpha}$ regularity of both $v$ and $\partial K$ around $\Gamma_{sc}$ in Theorems \ref{thm:c1a vK K>0} and \ref{thm:c1a v K=0} below. However, since we allow $\varphi$ to vanish somewhere on $\partial\Omega$, it is possible that $\Gamma_{sc}\neq \Gamma$ and the solution $v$ can be merely Lipschitz at $\Gamma_{nsc}:=\Gamma\setminus\Gamma_{sc}$; see the examples in Remark \ref{rem:nonstrictconvex}.  
On the other hand, there are sufficient conditions under which $\Gamma_{sc}= \Gamma$; see Proposition \ref{prop:infinite ray}.


We split the study of the regularity of $v$ and $\partial K$ around $\Gamma_{sc}$ into two cases: 

\begin{itemize}
\item[(i).] $|K|>0$, and 
\item[(ii).] $|K|=0$. If $\Gamma_{sc}\neq\emptyset$, then Lemma \ref{lem:ext are exp} will show that $K$ consists of a single point. Note that this case does not require special treatment for \eqref{eq:obs problem savin}, since when $|K|=0$, equation \eqref{eq:obs problem savin} reduces to $\det D^2 v = g$, where the regularity of its solutions follows from Caffarelli's results \cite{caffarelli1990ilocalization,caffarelli1990interiorw2p,caffarelli1991regularity}. However, this simplification does not apply to equation \eqref{eq:obs equation} when $q>0$.
\end{itemize}

We first consider the case that $|K|>0$. When $q = 0$ and $v>0$ on $\partial\Omega$, it is \eqref{eq:obs problem savin}, and the regularity theory  has been established in \cite{savin2005obstacle, galvez2015classification, huang2024regularity}. For the case of $n=2$, $0<q<2$ and $g\equiv 1$, the optimal regularity of $v$ and $\Gamma_{sc} $ was obtained in Daskalopoulos-Lee \cite{daskalopoulos2012fullydegenerate}. Here, we will establish the $C^{1,\alpha}$ regularity of $\Gamma_{sc}$, as well as the $C^{1,\alpha}$ regularity of $v$ around $\Gamma_{sc}$ for \eqref{eq:obs equation}, with $\alpha \in (0,1)$ depending only on $n$, $q$,  $\lambda$ and $\Lambda$. After an suitable affine transformation\footnote{We say a map $T:\R^n\to\R^n$ is an affine transformation if $Tx=Ax+b$ for some $n\times n$ matrix $A$ and a vector $b\in\R^n$. We say $T$ is a unimodular affine transformation if in addition $|\det A|=1$.}, we can presume, for simplicity, that the following assumption holds:

\begin{Assumption*}
Suppose that $0 \in \Gamma_{sc}$\,, $K\subset \{x=(x',x_n): x_n \geq 0\}$, $K \cap \{0 \leq x_n\leq 1\} =  \{x:\psi(x') \leq x_n \leq 1\}$ for some convex function $\psi$, and $K_1':= \left\{ x':\; (x',1)\in K\right\}$ is normalized such that
\[
B_{c(n)}'(0) \subset K_1' -x_0'\subset B_{C(n)}'(0),
\]
where $x_0'\in \R^{n-1}$ denote the mass center of $K_1'$, and $B_{r}'(0)$ denotes the ball in $\R^{n-1}$ of radius $r$ with center $0$.
\end{Assumption*}
Then we have the following regularity for the obstacle problem \eqref{eq:obs equation}.
\begin{Theorem}\label{thm:c1a vK K>0}
Let $n\ge 2$ and $0\le q<n$. Suppose $v$ is a solution to \eqref{eq:obs equation}, and Assumption (H) holds. Then, we have  $B_c'(0) \subset K_1'\subset B_C'(0)$, $B_c(0) \subset \left\{ v\leq 1\right\} \cap \left\{x_n \leq \frac{3}{4}\right\} \subset B_C(0)$, $ \left\|v\right\|_{C^{1,\alpha}(B_{c}(0))} \leq C$, and
\[
c|x'|^{\frac{1+\alpha}{\alpha}}\leq \psi\left( x' \right) \leq C|x'|^{1+\alpha},
\]
where $\alpha \in (0,1)$, $c$ and $C$ are positive constants, all of which depend only on $n$, $q$, $\lambda$ and $\Lambda$. Moreover,  $v$ is strictly convex in $B_c(0)\setminus K$ and satisfies
\begin{equation}\label{eq:v distance alpha} 
c\operatorname{dist}(x,K)^{\frac{1+\alpha}{\alpha}} \leq v \leq C\operatorname{dist}(x,K)^{1+\alpha} \quad \text{in }B_{c}(0).
\end{equation}  
\end{Theorem}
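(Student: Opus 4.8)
The strategy is to mimic Caffarelli's localization and Savin's obstacle-problem technique, adapted to the degenerate right-hand side $gv^q$. The starting point is a barrier/comparison argument to control the size of the sublevel set $S := \{v \le 1\} \cap \{x_n \le 3/4\}$. Because $0 \in \Gamma_{sc}$ is an exposed point of $K$, after the normalization in Assumption (H) one should first show that the section $S$ is "balanced" at unit scale: $B_c(0) \subset S \subset B_C(0)$. The upper bound comes from the lower bound $\det D^2 v \ge \lambda v^q$ combined with the Aleksandrov maximum principle applied in sections of $v$ (or of $v$ minus a supporting affine function at a point of $\Gamma_{sc}$); the lower bound $B_c(0)\subset S$ uses the upper bound $\det D^2 v \le \Lambda v^q$ together with the normalization $B_{c(n)}'(0)\subset K_1'-x_0'\subset B_{C(n)}'(0)$, comparing $v$ against explicit convex polynomial barriers built from the model solution $\varpi$ in \eqref{eq:glo model 2}. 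This step simultaneously yields $B_c'(0)\subset K_1'\subset B_C'(0)$.

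The second step is the two-sided growth estimate for $v$ away from $K$. Fix a point $x\in B_c(0)\setminus K$ and let $d = \operatorname{dist}(x,K)$. For the upper bound $v(x)\le C d^{1+\alpha}$, slide the model function $\varpi$ (which satisfies $\det D^2\varpi = c\,\varpi^q$ with the right homogeneity) from outside and use the comparison principle for the Monge-Amp\`ere operator on the region between $\partial K$ and a nearby exposed supporting plane; the exponent $1+\alpha$ must be the one for which a power of the distance function is a supersolution, i.e. forced by the scaling relation between $\det D^2$ and $v^q$, and $\alpha$ is pinned down by $n$ and $q$ (for the model, $1+\alpha = \frac{n+1}{n-q}\cdot\frac{n-q-1}{\cdots}$ — more precisely $\alpha$ emerges from the interior $C^{1,\alpha}$ estimate of Caffarelli applied to $v$ in sections not touching $K$, rescaled). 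For the lower bound $v(x)\ge c\, d^{\frac{1+\alpha}{\alpha}}$, use instead that $\det D^2 v\le \Lambda v^q$ forces $v$ to be trapped below a subsolution that degenerates like a high power of the distance; this is where the asymmetric exponents $\frac{1+\alpha}{\alpha}$ versus $1+\alpha$ come from, exactly as in Daskalopoulos--Lee for $n=2$. The pointwise bounds \eqref{eq:v distance alpha}, together with the geometry of $\partial K$ near the exposed point $0$, then translate into the stated two-sided estimate $c|x'|^{\frac{1+\alpha}{\alpha}}\le \psi(x')\le C|x'|^{1+\alpha}$, since $\psi$ is (up to the affine normalization) the graph of $\partial K$ and $v$ vanishes exactly on it.

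The third step upgrades this to $\|v\|_{C^{1,\alpha}(B_c(0))}\le C$. On $B_c(0)\setminus K$, $v$ solves a nondegenerate Monge-Amp\`ere equation $\det D^2 v = g v^q$ with $v^q$ bounded above by the growth estimate and bounded below on compact subsets away from $K$, so Caffarelli's interior $C^{1,\alpha}$ estimate applies on each section, and one obtains a uniform $C^{1,\alpha}$ bound by a scaling argument anchored at $\partial K$: rescaling a section $S_t$ of size $\sim t$ to unit size, the estimates of Step 2 guarantee the rescaled sections stay balanced (this is the role of strict convexity / the exposed-point hypothesis), so the $C^{1,\alpha}$ norms do not blow up as $t\to 0$. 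At points on $\Gamma_{sc}$ themselves, $Dv = 0$ (since $v\ge 0$ attains its minimum there and is differentiable by convexity plus the growth bound), and the Hölder continuity of $Dv$ across $\Gamma$ follows by combining the one-sided control from inside $K$ (where $v\equiv 0$) with the growth bound \eqref{eq:v distance alpha} from outside. Strict convexity of $v$ in $B_c(0)\setminus K$ is then a consequence of the fact that $v$ cannot have a flat piece there, as any segment on which $v$ is affine and positive would force $\det D^2 v = 0$ on that segment, contradicting $\det D^2 v = g v^q \ge \lambda v^q > 0$.

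The main obstacle is Step 2, specifically establishing the correct \emph{lower} growth bound $v\ge c\,\operatorname{dist}(x,K)^{\frac{1+\alpha}{\alpha}}$ and, hand in hand, the inner estimate $\psi(x')\le C|x'|^{1+\alpha}$ controlling how fast the free boundary can curve away from its supporting plane at an exposed point. This requires constructing subsolution barriers that degenerate at precisely the right rate and fit inside the (a priori only Lipschitz) region between $K$ and a nearby section, and then iterating the comparison across dyadic scales while keeping the constants uniform — the iteration only closes because the exposed-point hypothesis forces the rescaled sections to remain uniformly balanced, which is the technical heart shared with Savin's and Caffarelli's arguments. A secondary difficulty is that the degeneracy $v^q$ with $q>0$ means the equation is genuinely nonlinear-degenerate near $\Gamma$, so one cannot directly quote Caffarelli; the resolution is to treat $v^q$ as a known, Hölder-controlled coefficient once the growth estimate is in place, bootstrapping from Step 2 to Step 3.
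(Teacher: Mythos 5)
Your overall architecture (normalize, prove a two-sided growth estimate at the free boundary, then rescale sections and invoke Caffarelli's interior estimates) matches the paper's, and your Step 1 and Step 3 are essentially what the paper does (Lemma \ref{lem:compactness of E1} for \eqref{eq:E1 normal}, and the dichotomy $|x-y|\gtrless a^2$ with the normalization $\T_a$ for the $C^{1,\alpha}$ bound). However, your Step 2 — which you correctly identify as the heart of the matter — rests on a mechanism that does not work. You propose to obtain the exponents in $c|x'|^{\frac{1+\alpha}{\alpha}}\le\psi(x')\le C|x'|^{1+\alpha}$ and in \eqref{eq:v distance alpha} from explicit barriers built on the model $\varpi$, with $\alpha$ ``pinned down by $n$ and $q$'' via the scaling of the equation. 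With $g$ merely bounded measurable and $\partial K$ a priori only Lipschitz at the exposed point $0$, no explicit barrier can rule out a corner of $K$ at $0$ (a corner is perfectly consistent with $0$ being an exposed point), and ruling out the corner is exactly what the upper bound $\psi(x')\le C|x'|^{1+\alpha}$ asserts. The paper instead proves a one-step dyadic improvement, $\frac{1+\epsilon}{2}K_t'\subset K_{t/2}'\subset(1-\epsilon)K_t'$ (and the analogous \eqref{eq:universalconvex 1} for sections along a ray), by \emph{compactness and contradiction}: if the improvement failed along a sequence in $\E_1$, the limit solution would have a segment of $\partial\{v_\infty=0\}$ ending at $0$, contradicting the rigidity of exposed points (Lemma \ref{lem:ext are exp}); this in turn requires the stability of coincidence sets under uniform convergence (Lemma \ref{lem:stability of K}), which relies on the volume estimate of Lemma \ref{lem:volume concidence}. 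The universal $\epsilon$, iterated dyadically, is what produces $\alpha$; it is a ``soft'' exponent, not one computable from the model. You name the iteration and the balancing, but supply no valid source for the $\epsilon$-gain, so the argument does not close.

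A second genuine error is your strict-convexity argument: you claim an affine segment on which $v>0$ would force $\det D^2v=0$ there, contradicting $\det D^2v\ge\lambda v^q>0$. For Aleksandrov solutions this implication is false — Pogorelov's example has $\det D^2v=1$ while $v$ is affine along a line segment. The paper's argument (Lemma \ref{lem:c1 around ek}) instead uses Caffarelli's structure theorem that the extreme points of any flat piece of $v$ must lie on the boundary of the domain, combined with the quantitative geometry of $K$ from Proposition \ref{prop:Savin-1} (the flat piece would have to thread the thin region $E_{2,C\delta^{(n-q)/(2n)}}\setminus E_1$, which is impossible for small $\delta$). You need this structural input; the pointwise determinant heuristic is not available.
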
 

When $g \in C^{\alpha}$, we will further show the $C^{2,\alpha}$ regularity of the free boundary and establish the corresponding behavior of the solution $v$ at the free boundary. More specifically, we have
\begin{Theorem}\label{thm:c2a origorous}
Let $n\ge 2$ and $0\le q<n$. Suppose   $v$ is a solution to
\begin{equation}\label{eq:obstacle problem}
	\det D^2 v = g(x,v,Dv) v^q\chi_{\{v>0\}},
\end{equation}
and Assumption (H) holds. Assume that $ 0<\lambda \leq g \leq \Lambda <\infty$, and $\left\|g\right\|_{C^{\alpha}} \leq \Lambda$ for $\alpha \in (0,1)$. Then, we have
\[
\left\|\psi\right\|_{C^{2,\beta} (B_c'(0))} \leq C  ,\quad 0 < c\I_{n-1}\leq D_{x'}^2 \psi \leq  C \I_{n-1}\quad  \text{in }B_c'(0),
\]
and
\begin{equation}\label{eq:v distance optimal}
c\operatorname{dist}(x,K)^{\frac{n+1}{n-q}} \leq v(x) \leq C\operatorname{dist}(x,K)^{\frac{n+1}{n-q}}   \quad \text{in }B_{c}(0) ,
\end{equation}
where $\beta=\min\left\{\frac{2(q+1)}{n-q},1\right\}\alpha$,  $c$ and $C$ are positive constants depending only on $n$, $q$, $\alpha$, $\lambda$ and $\Lambda$.
Moreover, if $g(x,v,Dv)=g(x,v)$ is independent of $Dv$, then we can take $\beta=\alpha$.   
\end{Theorem}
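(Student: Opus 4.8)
The plan is to deduce Theorem \ref{thm:c2a origorous} from the already-established $C^{1,\alpha}$ regularity in Theorem \ref{thm:c1a vK K>0} by a bootstrap argument carried out in the \emph{partial Legendre transform} (or, equivalently, in suitably chosen slices), which is the standard device for upgrading regularity of degenerate Monge-Amp\`ere equations near a free boundary. Starting from Assumption (H) and Theorem \ref{thm:c1a vK K>0}, I already know $\psi\in C^{1,\alpha}$, that $v\simeq \operatorname{dist}(x,K)^{1+\alpha}$ from above and $\operatorname{dist}(x,K)^{(1+\alpha)/\alpha}$ from below, and that $v$ is strictly convex off $K$. The first step is to set up good coordinates: after the normalization in (H), work in a cylinder $B_c'(0)\times[0,c]$ and write the free boundary as the graph $x_n=\psi(x')$. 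Rescaling the $x_n$ variable by the (one-dimensional, hence explicitly solvable) ODE behaviour of $v$ transverse to $K$, one expects $v(x',x_n)\approx C(x')(x_n-\psi(x'))^{(n+1)/(n-q)}$ with $C(x')$ comparable to a constant; making this precise via barriers built from the model solution $\varpi$ in \eqref{eq:glo model 2} gives the sharp exponent $(n+1)/(n-q)$ in \eqref{eq:v distance optimal} once $\psi$ is known to be $C^2$ with nondegenerate Hessian.

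The core of the argument is to derive the equation satisfied by $\psi$ (or by the partial Legendre transform $v^*$ of $v$ in the $x'$ variables). Differentiating the relation "$v=0$, $Dv=0$ on $\Gamma$" and using $\det D^2 v = g v^q$, one finds that $\psi$ satisfies a uniformly elliptic equation of Monge-Amp\`ere type whose right-hand side involves $g$ restricted to the free boundary and the normalizing factor $C(x')$ coming from the transverse profile; schematically $\det D_{x'}^2\psi = \tilde g(x',\psi(x'),D\psi(x'))$ with $\tilde g$ bounded between positive constants and with a H\"older modulus inherited from $g\in C^\alpha$ and from $\psi\in C^{1,\alpha}$. Because the transverse profile has exponent $(n+1)/(n-q)$, the composition with $v^q$ produces a gain: the H\"older exponent of $\tilde g$ is $\beta=\min\{\frac{2(q+1)}{n-q},1\}\alpha$ (and simply $\alpha$ when $g$ has no $Dv$-dependence, since then no extra factor of $D\psi\in C^{0,\alpha}$ enters), which explains the exponent in the statement. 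Once $\psi$ solves such an equation, Caffarelli's interior $C^{2,\beta}$ estimates for the Monge-Amp\`ere equation (applicable because $\psi$ is already strictly convex and $C^{1,\alpha}$, so it has an interior section structure) give $\|\psi\|_{C^{2,\beta}}\le C$ together with the two-sided Hessian bound $c\I_{n-1}\le D_{x'}^2\psi\le C\I_{n-1}$.

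With $\psi\in C^{2,\beta}$ and $D^2_{x'}\psi$ nondegenerate in hand, I would go back to the solution and prove the sharp two-sided bound \eqref{eq:v distance optimal}. Here one flattens the free boundary by the $C^{2,\beta}$ change of variables $y'=x'$, $y_n=x_n-\psi(x')$, which transforms \eqref{eq:obstacle problem} into a Monge-Amp\`ere equation on the half-cylinder $\{y_n\ge 0\}$ with a coefficient matrix that is H\"older continuous and uniformly elliptic and with a right-hand side still comparable to $v^q$; comparing against the one-dimensional solution of $\partial_{y_n}^2 w \cdot(\text{bounded elliptic factor})= c\, w^q$, whose solution is exactly a multiple of $y_n^{(n+1)/(n-q)}$, and sandwiching $v$ between two such barriers with slightly different constants yields \eqref{eq:v distance optimal}. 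The main obstacle, and the step that requires real care rather than bookkeeping, is the rigorous derivation of the equation for $\psi$ together with the identification of its right-hand side's regularity: one must justify that the transverse normalizing factor $C(x')$ is itself as regular as claimed (this is a slightly circular-looking self-improvement, handled by first getting a preliminary H\"older bound on $C(x')$ from Theorem \ref{thm:c1a vK K>0}, plugging it in to get $\psi\in C^{2,\beta_0}$ for some small $\beta_0$, and then iterating to reach the stated $\beta$), and one must control the lower-order dependence of $g$ on $v$ and $Dv$ along the free boundary, where $v\equiv0$ but $Dv$ picks up $D\psi$. Everything else—the barrier constructions, the affine normalizations, the appeal to Caffarelli's and (earlier in the paper) to the $C^{1,\alpha}$ machinery—is routine once this equation is on the table.
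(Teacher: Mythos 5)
Your proposal correctly identifies the general philosophy (partial Legendre / hodograph-type transformation plus perturbation off the $g\equiv1$ case, and the right heuristic for why $\beta=\min\{\tfrac{2(q+1)}{n-q},1\}\alpha$ appears through the $Dv\sim\operatorname{dist}^{\frac{2(q+1)}{n-q}}$ scaling), but it has a genuine gap at its core: the equation you propose for the free boundary, $\det D_{x'}^2\psi=\tilde g(x',\psi,D\psi)$, is never derived and is not the structure that actually governs the problem. In the paper, the partial Legendre transform is taken in the $x_n$ variable, producing a function $\psi(y',s)$ of \emph{all $n$ variables} on a half-ball $B_c^+(0)$ that satisfies the degenerate/singular equation \eqref{eq:eq for psi}, namely $\psi_{ss}+\tfrac{k+2}{k}\tfrac{\psi_s}{s}+\tfrac{1}{g}(\tfrac2k)^{q+2}(\tfrac{-\psi_s}{s})^{-q}\det D_{y'}^2\psi=0$; the free boundary graph is only the trace $\psi(\cdot,0)$, and its $C^{2,\beta}$ regularity is extracted from boundary regularity for this equation (via the H\"older estimate of Lemma \ref{lem:holder K ndiv} for operators with the singular drift $b\,w_n/x_n$, Evans--Krylov, and the Schauder theory of \cite{huang2024regularity}). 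There is no self-contained Monge--Amp\`ere equation for the graph function alone, so the step on which your whole bootstrap rests is missing rather than merely ``routine bookkeeping.''

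The second, related gap is the lower bound $c\,\I_{n-1}\le D_{x'}^2\psi$, i.e.\ the uniform convexity of the free boundary. You propose to obtain it from Caffarelli's interior $C^{2,\beta}$ theory ``because $\psi$ is already strictly convex and $C^{1,\alpha}$,'' but that is circular: Caffarelli's estimates require a two-sided bound on the Monge--Amp\`ere measure of $\psi$, which is exactly the nondegeneracy to be proved. In the paper this is the content of Theorem \ref{thm:c11 bound of K}, proved in Section \ref{sec:c11} by applying the Pogorelov-type estimate of Theorem \ref{lem:Pogorelov p>0} to the Legendre transform $u=v^*$, which solves $\det D^2u=(x\cdot Du-u)^{-q}$ — the Remark following the theorem statement flags this as the essential ingredient. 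Finally, the passage from $g\equiv1$ to $g\in C^\alpha$ is not an abstract ``iterate to self-improve'': it is a quantitative approximation scheme (Lemma \ref{lem:psi approx sigma} comparing $v$ with the solution of the constant-coefficient problem, plus Lemma \ref{lem:global C2a bounded 1}) run over geometric scales $r_m=r_1^m$, in which one must control the affine normalizations $\D_m'$ uniformly in $m$ to close the argument; this control is where the smallness of $\|g-1\|_{L^\infty}$ and the choice of $r_1$ enter, and it is absent from your outline.
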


\begin{Remark}
In our proof of the $C^{1,1}$ regularity and the uniform convexity of the free boundary for the case $g\equiv 1$, the Pogorelov type estimate in Theorem \ref{lem:Pogorelov p>0} for the equation
\[
\operatorname{det} D^2 w  = \left(x\cdot Dw-w\right)^{-q} \quad\mbox{in }\left\{x\cdot Dw-w>0\right\}
\]
plays an essential role.
\end{Remark}

Now, let us consider the second case that $|K|=0$. As mentioned earlier, when $\Gamma_{sc}\neq\emptyset$, then $K$ is a single point set (Lemma \ref{lem:ext are exp}). 

\begin{Theorem}\label{thm:c1a v K=0}
Let $n\ge 2$ and $0< q<n$. Let $v$ be a solution to \eqref{eq:obs equation}. Suppose $K=\left\{0\right\}$, and $S_1:=\left\{ v \leq 1\right\}\subset\subset\Omega$. Then, after applying a unimodular affine transformation, $S_1$ can be normalized so that $B_{c}(0) \subset  S_1\subset B_{C}(0)$. Furthermore, after this normalization, we have  $ \left\|v\right\|_{C^{1,\alpha}\left(\left\{ v \leq 1/2\right\}\right)} \leq C$,  and
\begin{equation}\label{eq:v distance alpha 0} 
c|x|^{\frac{\alpha}{1+\alpha}} \leq v(x) \leq C|x|^{1+\alpha}  \quad \text{in }B_{c}(0),
\end{equation}   
where $\alpha \in (0,1)$, $c$ and $C$ are positive constants, all of which depend only on $n$, $q$, $\lambda$ and $\Lambda$. 
\end{Theorem}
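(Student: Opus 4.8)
The plan is to normalize by an affine transformation so that $\det D^2 v$ is trapped between two positive constants away from the single degenerate point $K=\{0\}$, apply Caffarelli's interior theory there, and then capture the behaviour of $v$ near $0$ by rescaling along the dyadic sections $S_t:=\{v\le t\}$. Note $v$ is continuous with $v(0)=0$ and $v\equiv1$ on $\partial S_1$, so $0\in\operatorname{int} S_1$ and $S_t\uparrow S_1$; moreover $\lambda v^q\le\det D^2 v\le\Lambda$ on $S_1$, so on $\{v\ge\delta\}\cap S_1$ the right-hand side lies in $[\lambda\delta^q,\Lambda]$ and Caffarelli \cite{caffarelli1990ilocalization,caffarelli1990interiorw2p,caffarelli1991regularity} already gives interior strict convexity and $C^{1,\alpha}$ estimates. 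All the difficulty is thus near $0$, where $\det D^2 v=gv^q$ degenerates because $v\to0$; this is also why $q>0$ is needed, since for $q=0$ the equation is $\det D^2 v=g$, with no degeneracy, and the result is classical.

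The first and main step is to control the geometry of the sections $S_t$ uniformly as $t\downarrow0$: I would show $|S_t|\asymp t^{(n-q)/2}$; that a normalizing affine map $T_t$ placing $S_t$ in John position satisfies $|\det T_t|\asymp t^{-(n-q)/2}$ and makes $T_t(S_t)$ of bounded eccentricity (depending only on $n,q,\lambda,\Lambda$); that $0$ is well inside $S_t$, i.e. $\operatorname{dist}(0,\partial S_t)\gtrsim\operatorname{diam} S_t$; and that consecutive normalizations are comparable, i.e. $T_{2t}T_t^{-1}$ and its inverse are bounded. The lower bound $|S_t|\gtrsim t^{(n-q)/2}$ comes from Alexandrov's maximum principle applied to $v-t$ on $S_t$ together with $\int_{S_t}\det D^2 v=\int_{S_t}gv^q\le\Lambda t^q|S_t|$ and the elementary estimate $|\partial v(S_t)|\gtrsim(t/\operatorname{diam} S_t)^n$; the matching upper bound and the balancing exploit $\det D^2 v\ge\lambda v^q$ on the annuli $S_{2t}\setminus S_t$, which forces $v$ to grow, fed into Caffarelli's theory of sections. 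In particular $|S_1|\asymp1$, so after a \emph{unimodular} affine transformation $S_1$ sits in the position $B_c(0)\subset S_1\subset B_C(0)$. I expect this step to be the main obstacle: unlike the positive-measure case, one cannot quote the section lemmas in a fixed neighbourhood of $0$ because the right-hand side is not bounded below there, so the estimates must be propagated down to $t=0$ by the rescaling below, via a compactness argument in which the explicit radial solution $x\mapsto c|x|^{2n/(n-q)}$ of $\det D^2 w=c_*w^q$ is the limiting profile and rules out degeneration of the sections.

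Granting this, set $\hat v_k(x):=2^k v(T_k^{-1}x)$ on $\hat S_k:=T_k(S_{2^{-k}})$ for $k\ge0$. Using $|\det T_k|\asymp2^{k(n-q)/2}$ one finds $\det D^2\hat v_k=\hat g_k\hat v_k^{\,q}$ on $\hat S_k$ with $\hat g_k$ comparable to $g$ uniformly in $k$, while $\hat v_k\equiv1$ on $\partial\hat S_k$, $\hat v_k$ vanishes at a single point, and $B_c\subset\hat S_k\subset B_C$: all rescalings solve the same problem with the same constants. On $\{\hat v_k\ge1/4\}$ the right-hand side lies between positive constants, so Caffarelli's results give uniform interior strict convexity and $C^{1,\alpha}$ bounds there, and since $T_{k+1}T_k^{-1}$ is bounded above and below these transplant to a $C^{1,\alpha}$ estimate for $v$ on each dyadic shell $S_{2^{-k}}\setminus S_{2^{-k-1}}$, uniformly in $k$. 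Undoing the normalizing dilations ($\|T_k^{-1}\|\asymp2^{-k(n-q)/(2n)}$, $\operatorname{diam} S_{2^{-k}}\asymp2^{-k(n-q)/(2n)}$) and using that $0$ is well inside each section, this yields the pointwise bound $|Dv(x)|\le C|x|^{(n+q)/(n-q)}$ near $0$.

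Finally, the estimates \eqref{eq:v distance alpha 0} and the $C^{1,\alpha}$ bound up to $0$ follow. Since $\operatorname{dist}(0,\partial S_t)\asymp\operatorname{diam} S_t\asymp t^{(n-q)/(2n)}$, one has $B_{ct^{(n-q)/(2n)}}(0)\subset S_t\subset B_{Ct^{(n-q)/(2n)}}(0)$, hence $c|x|^{2n/(n-q)}\le v(x)\le C|x|^{2n/(n-q)}$ near $0$; since $1+\alpha<2\le2n/(n-q)$ and $2n/(n-q)\le(1+\alpha)/\alpha$ once $\alpha\le(n-q)/(n+q)$, this gives the two-sided power bound \eqref{eq:v distance alpha 0} (of the same shape as \eqref{eq:v distance alpha}) after decreasing $\alpha$ if necessary. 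For the $C^{1,\alpha}$ bound, the gradient estimate $|Dv(x)|\le C|x|^{(n+q)/(n-q)}\le C|x|^{\alpha}$ controls $|Dv(x)-Dv(y)|$ whenever $|x-y|\gtrsim\max(|x|,|y|)$, while the uniform shell estimates of the previous step handle the complementary range, so $Dv\in C^{\alpha}$ on $\{v\le1/2\}$ with the stated bound. (Alternatively one could approximate $v$ by solutions with positive-measure coincidence sets and invoke Theorem \ref{thm:c1a vK K>0} with uniform constants, but the direct route is shorter.)
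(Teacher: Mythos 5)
Your overall architecture (volume bounds $|S_t|\asymp t^{(n-q)/2}$, John normalization of the dyadic sections, rescaling to a fixed normalized problem, Caffarelli's interior theory on the shells $S_{2^{-k}}\setminus S_{2^{-k-1}}$) is the same as the paper's, and those parts are sound. But there is a genuine gap at the point where you claim $\operatorname{dist}(0,\partial S_t)\asymp\operatorname{diam}S_t\asymp t^{(n-q)/(2n)}$ and conclude $B_{ct^{(n-q)/(2n)}}(0)\subset S_t\subset B_{Ct^{(n-q)/(2n)}}(0)$, hence $c|x|^{2n/(n-q)}\le v(x)\le C|x|^{2n/(n-q)}$. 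Bounded eccentricity of the \emph{normalized} sections $T_t(S_t)$ says nothing about the eccentricity of $S_t$ itself: $T_t$ is precisely the map that removes the eccentricity, and even granting that consecutive maps $T_{2t}T_t^{-1}$ are uniformly comparable, composing them over $\sim\log_2(1/t)$ scales only controls $\|T_t\|$ and $\|T_t^{-1}\|$ by powers of $t$, not by the single exponent $t^{-(n-q)/(2n)}$ in every direction. This is exactly the ``polynomial dependence'' the paper establishes, and it is why the theorem's two-sided bound has \emph{different} exponents on the two sides (of the shape $|x|^{(1+\alpha)/\alpha}\le v\le |x|^{1+\alpha}$, as in \eqref{eq:v distance alpha}), rather than the single exponent $2n/(n-q)$. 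Indeed, your symmetric bound $v\asymp|x|^{2n/(n-q)}$ would be the optimal regularity of $v$ at the degenerate point, which the authors explicitly state is an open problem even for $n=2$ and $g\equiv1$. Since both \eqref{eq:v distance alpha 0} and your $C^{\alpha}$ bound on $Dv$ (via $|Dv(x)|\le C|x|^{(n+q)/(n-q)}$) are derived from the roundness of $S_t$, the final step does not go through as written.

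What is actually needed, and what the paper does, is a separation/engulfing property: there is a universal $\epsilon>0$ with $\frac{1+\epsilon}{2}S_h\subset S_{h/2}\subset(1-\epsilon)S_h$ for all $h\in(0,1)$. This is proved by contradiction and compactness for the normalized solutions: a failure of the left inclusion would produce a limit solution that is linear and nonzero along a segment emanating from the (exposed) zero point, contradicting the strict convexity and differentiability established at exposed points (Lemmas \ref{lem:Eext=emptyset}--\ref{lem:c1 around ek}); the stability of the coincidence set (Lemma \ref{lem:stability of K}) guarantees the limit still has $K=\{0\}$. Iterating the two inclusions gives geometric decay of the sections with two different rates, hence the asymmetric bounds \eqref{eq:v distance alpha 0} and the polynomial comparability of $h$, $\|T_h\|$, $\|T_h^{-1}\|^{-1}$; the $C^{1,\alpha}$ estimate then follows by the same two-regime interpolation you describe (far apart: use the gradient decay $|Dv|\le C|v|^{\gamma_0}$; close together: rescale to a fixed shell and use Caffarelli), but with the exponent degraded to account for the polynomial, not bounded, anisotropy of $T_h$. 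If you replace your roundness claim by this engulfing argument, the rest of your proposal assembles into a correct proof.
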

Note that the equation \eqref{eq:obs equation} is degenerate at the single point set $K$. The optimal regularity of $v$ in the neighborhood of this point remains an open question, even in the  case where $n=2$ and $g \equiv 1$. Regularity of solutions to the two-dimensional degenerate Monge-Amp\`ere equation $\det D^2u=|x|^{\alpha}$ with $\alpha>-2$ has been studied in Daskalopoulos-Savin \cite{daskalopoulos2009monge}.

This paper is organized as follows. In Section \ref{chp:MA obs}, we analyze the structure of the free boundary, and show its $C^{1,\alpha}$ regularity by a compactness argument.  In Section \ref{sec:solutionregularity1}, we establish the $C^{1,\alpha}$ regularity of the solution, and provide the proofs of Theorems  \ref{thm:c1a vK K>0} and \ref{thm:c1a v K=0}.  In Section \ref{sec:c11}, we show the $C^{1,1}$ regularity and the uniform convexity of the free boundary for the case $g\equiv 1$. In Section \ref{sec:c2a regularity}, we investigate the $C^{2,\alpha}$ regularity using a perturbative argument,  prove Theorem \ref{thm:c2a origorous}, and establish the classifications in Theorem \ref{thm:classify} and Corollary \ref{thm:classify0}. Finally, in Section \ref{sec:Minkowski}, we discuss the applications of these results to the $L_p$ Minkowski problem.

\section{The structure and $C^{1,\alpha}$ regularity of the free boundary}\label{chp:MA obs}

For a convex function $w$ defined on an open set $U$, the subdifferential of $w$ at a point $x_0 \in U$ is defined as the convex set
\[
\partial w(x_0) = \left\{ p \in \mathbb{R}^{n} :\; w(x) \geq w(x_0) + p \cdot (x - x_0),\ \forall\  x \in U \right\},
\]
where the elements of this set are called the subgradients of $w$ at $x_0$. For any measurable set $E \subset \subset U$ (i.e., $E$ is compactly contained in $U$), we denote the union of subdifferentials at points in $E$ by
\[
\partial w(E) := \bigcup_{x \in E} \partial w(x).
\]
The Monge-Amp\`ere measure $\M w$ is then defined as
\[
\M w(E) = |\partial w(E)|\quad  \text{for each Borel set } E \subset \subset U,
\]
where $|\partial w(E)|$ represents the measure of the set $\partial w(E)$.  Given a positive Borel measure $\mu$ in $\Omega$, we say that $w$ solves the Monge-Amp\`ere equation (in the Aleksandrov sense)
\[
\det D^2w =\mu  \quad  \text{ if } \M w = \mu. 
\]
 
The following lemma is well known for convex functions and is a corollary of the Aleksandrov-Bakelman-Pucci maximum principle (see Guti\'errez \cite{gutierrez2016monge} or Figalli \cite{figalli2017monge}).
\begin{Lemma}[Aleksandrov’s Maximum Principle]\label{lem:measure constrain 2}
Let $\Omega\subset \R^n$ be a convex bounded open set. Suppose $w \in C(\overline{\Omega})$ is convex, $w=0$ on $\partial \Omega$, then
\[
w(x) \geq-C(n)\left([\operatorname{diam}(\Omega)]^{n-1}\operatorname{dist}\left(x, \partial \Omega\right) \M u(\Omega)\right)^{\frac{1}{n}},
\]
where $C(n)$ is a positive constant depending only on $n$.
\end{Lemma}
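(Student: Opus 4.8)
The plan is to derive the estimate from the classical lower bound on the Monge--Amp\`ere mass obtained by exhibiting a large family of admissible slopes; this is the standard proof of Aleksandrov's estimate (as in Guti\'errez or Figalli). First, since $w$ is convex and $w=0$ on $\partial\Omega$, the maximum principle gives $w\le 0$ in $\Omega$, so for a fixed $x_0\in\Omega$ the quantity $h:=-w(x_0)$ is $\ge 0$; if $h=0$ there is nothing to prove, so assume $h>0$. After translating we may take $x_0=0$, and we write $d:=\operatorname{dist}(0,\partial\Omega)$ and $D:=\operatorname{diam}(\Omega)$. Because $\Omega$ is open and convex with $0\in\Omega$ we have $B_d(0)\subset\Omega\subset B_D(0)$; in addition, choosing $y_0\in\partial\Omega$ with $|y_0|=d$ and rotating so that $y_0=-d e_n$, the supporting hyperplane of $\Omega$ at $y_0$ (which must be tangent to $B_d(0)$ there) forces the one-sided containment $\Omega\subset\{x_n\ge -d\}$. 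This last observation is what will eventually produce the sharp factor $[\operatorname{diam}\Omega]^{n-1}\operatorname{dist}(x_0,\partial\Omega)$ rather than $[\operatorname{diam}\Omega]^{n}$.

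Next I would prove the key inclusion: every $p\in\R^n$ with $h_\Omega(p):=\sup_{y\in\overline\Omega}p\cdot y\le h$ lies in $\partial w(\Omega)$. For such $p$, consider the affine function $\ell(x)=-h+p\cdot x$. On $\partial\Omega$ we have $w=0$, so $\ell-w=\ell\le -h+h_\Omega(p)\le 0$, whereas $(\ell-w)(0)=0$; hence $\max_{\overline\Omega}(\ell-w)\ge 0$ and is attained at some $x_\ast\in\overline\Omega$. If that maximum were attained only on $\partial\Omega$ its value would be $\le 0$, hence $=0$, hence also attained at $0\in\Omega$. So in all cases the maximum is attained at an interior point $x_\ast$, and there $w(x)\ge w(x_\ast)+p\cdot(x-x_\ast)$ for all $x$, i.e. $p\in\partial w(x_\ast)\subset\partial w(\Omega)$.

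It then remains to bound $|\{p:h_\Omega(p)\le h\}|$ from below. Using $\Omega\subset B_D(0)\cap\{x_n\ge -d\}$, for any $p=(p',p_n)$ with $|p'|\le h/(2D)$ and $-h/(2d)\le p_n\le 0$ we get $h_\Omega(p)\le |p'|\,D+|p_n|\,d\le h$; therefore $\{p:h_\Omega(p)\le h\}$ contains the cylinder $\{|p'|\le h/(2D)\}\times[-h/(2d),0]$, whose volume is $c(n)\,h^{n}/(D^{n-1}d)$. Combining with the previous step, $\M w(\Omega)=|\partial w(\Omega)|\ge c(n)\,h^{n}/(D^{n-1}d)$, and rearranging gives $-w(x_0)=h\le C(n)\big([\operatorname{diam}\Omega]^{n-1}\operatorname{dist}(x_0,\partial\Omega)\,\M w(\Omega)\big)^{1/n}$ with $C(n)=c(n)^{-1/n}$.

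The only genuinely delicate point is the inclusion $\{p:h_\Omega(p)\le h\}\subset\partial w(\Omega)$, i.e. checking that the relevant maximum of $\ell-w$ is realized in the interior of $\Omega$; everything else is elementary convex geometry, the one piece of bookkeeping that matters being the use of the half-space containment $\Omega\subset\{x_n\ge -d\}$ to keep the dependence on $\operatorname{dist}(x_0,\partial\Omega)$ linear.
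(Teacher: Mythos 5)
Your proof is correct and is essentially the standard argument from Guti\'errez/Figalli that the paper cites for this lemma (the paper itself gives no proof): translate $x$ to the origin, show that every slope $p$ with $\sup_{\overline\Omega}p\cdot y\le h=-w(x)$ belongs to $\partial w(\Omega)$ via the sliding-plane argument, and bound the volume of that set of slopes from below using $\Omega\subset B_D(0)\cap\{x_n\ge -d\}$. The only cosmetic difference from the textbook version is that you lower-bound the slope set by a cylinder rather than by the convex hull of the ball $B_{h/D}(0)$ and the point $-\tfrac{h}{d}e_n$; both give the same $c(n)h^n/(D^{n-1}d)$ bound, and all the delicate points (interior attainment of the maximum, the half-space containment coming from the supporting hyperplane at the nearest boundary point) are handled correctly.
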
	

The existence of solutions to \eqref{eq:obs equation} can be derived by considering it as an obstacle problem as follows.  Let $\mu_0$ be a finite non-negative Borel measure on $\Omega$, and $\varphi \in C(\overline{\Omega})$ be a convex function \footnote{If $\Omega$ is strictly convex, then any continuous function on $\partial \Omega$ can be extended to a convex function on $\overline\Omega$. In this case, we can omit the assumption that $\varphi $ is convex.} with $\varphi\ge 0$ on $\partial\Omega$. Let $q\in[0,n)$ be a constant. We define
\[
\D_{\mu_0, \varphi} = \left\{ w \in C(\overline\Omega):\;    w  \geq 0\ \text{is convex},\   w=\varphi \ \text{on }\partial \Omega, \ \mbox{and } \M w\leq  w^q\mu_0 \right\},
\] 
where $w^q\mu_0$ is the Borel measure defined as
\[
(w^q\mu_0)(E):=\int_{E} w^q(x)\ud\mu_0
\]
for every Borel set $E\subset\Omega$. Consider the minimization problem 
\begin{equation}\label{eq:obs problem n}
v = \inf_{w \in  \D_{\mu_0, \varphi} } w.
\end{equation}

\begin{Proposition}\label{prop:existence obs}
The minimizer $v$ of \eqref{eq:obs problem n} exists and belongs to  $\D_{\mu_0, \varphi}$, and it satisfies 
\begin{equation}\label{eq:obs equation00}
\M v = v^q \mu_0 \quad \text{on } \left\{ v >0\right\} \cap \Omega.
\end{equation}
If $\ud\mu_0=g(x)\ud x$ for some $g \in L^{\infty}(\Omega)$, then it satisfies that 
\begin{equation}\label{eq:obs eq 0}
\operatorname{det} D^2 v=g v^q\chi_{\{v>0\}}\quad \text{in } \Omega.
\end{equation}
\end{Proposition}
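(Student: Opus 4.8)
The plan is to run Perron's method on the class $\D_{\mu_0,\varphi}$. First I would verify that $\D_{\mu_0,\varphi}$ is nonempty: since $\varphi$ extends to a convex function on $\overline\Omega$ with $\varphi\ge 0$ on $\partial\Omega$, one can add a large multiple of $(|x|^2 - R^2)$ (with $\{|x|<R\}\supset\Omega$), or rather subtract a multiple — more carefully, take $\varphi$ itself and lower it by a large convex paraboloid so that the Monge-Ampère measure becomes huge; but since we need $\M w \le w^q\mu_0$ with $\mu_0$ finite, the correct move is the opposite. Note $\varphi$ is convex, so $\M\varphi$ is some finite measure; the candidate $w_0 := $ the convex envelope obtained by pushing $\varphi$ down is not immediately in the class. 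Instead I would simply check that $w \equiv \varphi$ works when $\varphi$ is affine (then $\M\varphi = 0 \le \varphi^q\mu_0$), and in general replace $\varphi$ by the infimum of affine functions $\ge \varphi$ on $\partial\Omega$; the solution of $\det D^2 w = 0$ with boundary data $\varphi$ has zero Monge-Ampère measure and is $\ge 0$, hence lies in $\D_{\mu_0,\varphi}$. This shows the class is nonempty, and it is clearly closed under pointwise minimum of finitely many members (the min of convex functions need not be convex, so one must instead use the convex envelope — here I would argue that $v = \inf_{w\in\D} w$ directly, its convex envelope $v^{**}$ still satisfies the constraints because taking convex envelope only decreases the Monge-Ampère measure on the contact set and $v^{**}\le v$ forces $v^{**} = v$ by minimality, so $v$ is already convex).

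Second, I would establish that the infimum $v$ is attained and belongs to $\D_{\mu_0,\varphi}$. Uniform Lipschitz bounds on compact subsets follow from convexity together with the uniform upper bound $v\le w_0$ and lower bound $v\ge 0$; Aleksandrov's maximum principle (Lemma \ref{lem:measure constrain 2}) controls how far below $\varphi$ the functions can dip near $\partial\Omega$, giving equicontinuity up to the boundary and hence $v\in C(\overline\Omega)$ with $v=\varphi$ on $\partial\Omega$. Convexity of $v$ passes to the limit. For the Monge-Ampère constraint $\M v \le v^q\mu_0$: take a sequence $w_k\downarrow v$ in $\D_{\mu_0,\varphi}$; by weak-$*$ convergence of Monge-Ampère measures under locally uniform convergence of convex functions, $\M w_k \to \M v$ weakly, while $w_k^q\mu_0 \to v^q\mu_0$ weakly by dominated convergence (using $0\le w_k \le w_0$ and $q\ge 0$); the inequality $\M w_k \le w_k^q\mu_0$ is preserved in the limit. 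Hence $v\in\D_{\mu_0,\varphi}$ and is the minimizer.

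Third, I would prove the Euler–Lagrange statement $\M v = v^q\mu_0$ on $\{v>0\}\cap\Omega$. Suppose not: then there is a ball $B\subset\subset\{v>0\}\cap\Omega$ on which $\M v(B) < \int_B v^q\,\ud\mu_0$, actually one argues locally — if at some point the strict inequality $\M v < v^q\mu_0$ holds on a small ball $B_r(x_0)\subset\{v>0\}$, solve the Dirichlet problem $\det D^2 \tilde w = v^q\mu_0$ in $B_r(x_0)$ with $\tilde w = v$ on $\partial B_r(x_0)$ (existence and comparison for Monge-Ampère with bounded-below convex data, Gutiérrez \cite{gutierrez2016monge}); by comparison $\tilde w \le v$ in $B_r(x_0)$, and $\tilde w < v$ somewhere since the measures differ, and $\tilde w$ stays positive if $r$ is small (using $v>0$ on the closed ball and stability). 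Gluing $\tilde w$ inside $B_r$ and $v$ outside produces a convex competitor — here the subtle point is convexity of the glued function, which holds because $\tilde w\le v$ and $\tilde w = v$ on $\partial B_r$ guarantee the glued function equals its own convex envelope near $\partial B_r$; one checks $\M(\text{glued}) \le v^q\mu_0 = (\text{glued})^q\mu_0$ using that $v$ is positive there so the factor $v^q$ is unchanged to first order — strictly, one needs the glued function $\le v$ so $(\text{glued})^q \le v^q$, and the Monge-Ampère measure of the glue is $\le \M v$ outside $B_r$ plus $v^q\mu_0$ inside, hence $\le (\text{glued})^q\mu_0$ after noting $(\text{glued})^q = v^q$ outside and we only need the inequality. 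This competitor is strictly below $v$, contradicting minimality.

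The main obstacle I anticipate is exactly this last gluing step: ensuring the spliced function is genuinely convex and still satisfies the one-sided Monge-Ampère constraint $\M w \le w^q\mu_0$ globally, not just locally, given that lowering $v$ inside $B_r$ changes the factor $w^q$ on that ball (it decreases it, which is the favorable direction) but also could in principle create extra Monge-Ampère mass on $\partial B_r$ from the corner. The resolution is that $\tilde w \le v$ with equality on $\partial B_r$ means the convex envelope of the glued function coincides with it, so no singular mass appears on $\partial B_r$; this is a standard lemma (e.g. in Savin \cite{savin2005obstacle} or Guitérrez \cite{gutierrez2016monge}) and I would cite it. Finally, the passage from the Aleksandrov statement $\M v = v^q\mu_0$ with $\ud\mu_0 = g\,\ud x$, $g\in L^\infty$, to $\det D^2 v = g v^q\chi_{\{v>0\}}$ in $\Omega$ is immediate: on $\{v>0\}$ it is the Euler–Lagrange equation, and on $\mathrm{int}\{v=0\}$ both sides vanish ($D^2 v = 0$ and $\chi_{\{v>0\}}=0$), while $\partial\{v=0\}$ is Lebesgue-null for the absolutely continuous measure $\M v$ restricted appropriately — more precisely $\M v$ gives no mass to $\{v=0\}$ beyond what $v^q\mu_0 = 0$ there allows, so the identity holds as measures on all of $\Omega$.
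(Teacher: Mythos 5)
Your overall route is the same as the paper's: a lower barrier solving $\M w_0=\|\varphi\|_{L^\infty}^q\mu_0$ and an upper barrier with $\M\varphi_0=0$ give uniform bounds and equicontinuity up to $\partial\Omega$; the class is stable under convex envelopes of minima because the envelope has zero Monge--Amp\`ere mass off the contact sets; a decreasing minimizing sequence converges, and weak convergence of Monge--Amp\`ere measures preserves $\M v\le v^q\mu_0$; and the Euler--Lagrange identity comes from a Perron-type replacement in small balls. Your opening paragraph is muddled (the ``$v^{**}=v$ by minimality'' shortcut is circular, since one must first verify $v^{**}\in\D_{\mu_0,\varphi}$, which is exactly the contact-set lemma you are trying to avoid), but it lands on the same facts the paper uses.

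The one step you flagged as the crux is also where your verification fails. Inside $B_r$ you bound $\M(\text{glued})\le v^q\mu_0$ and then invoke $(\text{glued})^q\le v^q$ to conclude $\M(\text{glued})\le(\text{glued})^q\mu_0$. This is the wrong direction: since $q\ge 0$ and the glued function is $\le v$, one has $(\text{glued})^q\mu_0\le v^q\mu_0$, so the required inequality is \emph{stronger} than the one you proved, not weaker. (For $q=0$ there is nothing to check and the argument is exactly Savin's; the difficulty is specific to $q>0$.) The paper solves the same auxiliary problem $\M\omega=v^q\mu_0$ and asserts admissibility of the spliced competitor without elaborating; to make the step airtight for $q>0$ one should instead solve the Dirichlet problem with the self-consistent right-hand side, i.e.\ produce $\omega\le v$ with $\M\omega=\omega^q\mu_0$ in $B_r$ and $\omega=v$ on $\partial B_r$ (by iteration or by running the same minimization on $B_r$), after which the contact-set computation gives $\M w_3\le\M\omega=\omega^q\mu_0=w_3^q\mu_0$ on $\{w_3=\omega\}$ with the correct sign, and positivity of $\omega$ for small $r$ follows from Aleksandrov's estimate as you say. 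Your final reduction from the Aleksandrov identity to $\det D^2v=gv^q\chi_{\{v>0\}}$ when $\ud\mu_0=g\,\ud x$ is fine and matches the paper.
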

\begin{proof}
By the solvability of the Dirichlet problem for Monge-Amp\`ere equations, see \cite{gutierrez2016monge,hartenstine2006dirichlet},  there exist $w_0 ,\varphi_0 \in C(\overline{\Omega})$ such that
\[
\M w_0 = \left\|\varphi\right\|_{L^{\infty}}^q \cdot \mu_0  \quad \text{on } \Omega , \quad w_0=\varphi  \quad \text{on }\partial \Omega,
\]
 and
 \[
 \M \varphi_0 = 0  \quad \text{on } \Omega , \quad \varphi_0=\varphi  \quad \text{on }\partial \Omega.
 \]
 The comparison principle implies  $w_0 \leq w \leq \varphi_0$ for all $w \in \D_{\mu_0, \varphi}$. Hence, the convex functions in $\D_{\mu_0, \varphi}$ are uniformly continuous on $\partial \Omega$, and also locally uniformly Lipschitz in $\Omega$. Then all the functions in $\D_{\mu_0, \varphi}$ has a uniform modulus of continuity. As in the proof of \cite[Lemma 2.4]{savin2005obstacle}, for any $w_1, w_2 \in \D_{\mu_0, \varphi}$, the convex envelope of $\min\left\{w_1,w_2\right\}$, denoted as $w_3 \geq 0$, satisfies $|\M w_3 (E)| =0$ for $E=\Omega \setminus \left(\left\{w_3=w_1 \right\} \cup \left\{w_3=w_2 \right\}\right)$, implying that 
 \[
 \M w_3 \leq w_3^q \mu_0 \quad 
\text{in } \Omega
 \]
 in the Aleksandrov sense. Then, by the Arzel\`a–Ascoli theorem and using the diagonal subsequence technique, we find a decreasing minimizing sequence $\{w_k\}$ which converges uniformly to $v \in C(\overline{\Omega})$. The weak convergence of measures under uniformly convergence \cite[Theorem 2.1]{savin2005obstacle} implies for any open set $O \subset \subset \Omega$ that 
\[
\M v (O)\leq \liminf_{k\to\infty} \M w_k (O) \leq \lim_{k\to\infty}   (w_k^q \mu_0)(O) = (v^q\mu_0) (O).
\]
Hence, the minimizer $v$ exists and belongs to $\D_{\mu_0, \varphi}$.

The Perron's argument as in the proof of  \cite[Proposition 1.1]{savin2005obstacle} implies that $\M v = v^q \mu_0$ on $\left\{ v >0\right\} \cap \Omega$. Indeed,
since $\mu_0$ is a finite Borel measure, it suffices to prove that for $x \in\{v>0\}\cap \Omega$, there exists $\varepsilon_x>0$ such that
\begin{equation}\label{eq:perron method}
\M v\left(B_{\varepsilon}(x)\right)=\int_{B_{\varepsilon}(x)} v^q \ud \mu_0, \quad \forall\ \varepsilon<\varepsilon_x .
\end{equation}
Let $\omega \leq v$ denote the solution of the Dirichlet problem $\M \omega= v^q\mu_0$ in $B_{\varepsilon}(x)$, $\omega=v$ on $\partial B_{\varepsilon}(x)$.  By taking $\varepsilon_x$ sufficiently small,  the Aleksandrov maximum principle implies $\omega>0$. If $ \omega<v$ at some point inside $B_{\varepsilon}(x)$, by extending $w$ to be infinite outside $B_{\varepsilon}$, the convex envelope of $\min \{\omega, v\}$ belongs to $\mathcal{D}_{\mu_0,\varphi}$, contradicting the minimality of $v$.  Therefore, we have  $\omega =v$, which gives \eqref{eq:perron method}.

Lastly, if $\ud\mu_0=g\ud x$ with $g \in L^{\infty}$, one easily see that $\M v \leq v^q\mu_0 =v^q g\ud x =0 $ on $\partial\left\{v = 0\right\}$, and thus, $\M v =0 $ on $\left\{v = 0\right\}$. This concludes the proof of the proposition.
 \end{proof}

 We have the following comparison principle for the obstacle problem
\begin{Lemma}[Comparison Principle]\label{lem:comparison principle obs} 
Suppose the non-negative convex functions $\underline w \in C(\overline{ \Omega})$ and $\overline{w} \in C(\overline{ \Omega})$ satisfy 
\[
\M \underline{w} \geq  \underline w^q\chi_{\{\underline w>0\}}\mu_0\quad  \text{in } \Omega ,
\quad 
\M \overline w\le  \overline w ^q  \mu_0 \quad \text{in } \Omega,
\]
and $\overline w\geq \underline w$ on $\partial \Omega$, then  $\overline w \geq \underline w$ in $ \Omega$.
\end{Lemma}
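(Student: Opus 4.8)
The plan is to argue by contradiction, comparing $\underline w$ and $\overline w$ on the sublevel set where they are \emph{strictly} separated. Suppose the conclusion fails, so that $U:=\{x\in\Omega:\ \underline w(x)>\overline w(x)\}$ is nonempty. Pick a point of $U$ and choose $\va>0$ smaller than the value of $\underline w-\overline w$ there, so that
\[
U_\va:=\{x\in\Omega:\ \underline w(x)>\overline w(x)+\va\}
\]
is a nonempty open set. Since $\underline w\le\overline w<\overline w+\va$ on $\partial\Omega$, the closed set $\{\underline w\ge\overline w+\va\}$ is disjoint from $\partial\Omega$; hence $U_\va\subset\subset\Omega$, and, by continuity, $\underline w=\overline w+\va$ on $\partial U_\va$. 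Both $\underline w$ and $\overline w$ are convex and continuous on the bounded open set $\overline{U_\va}\subset\overline\Omega$.

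Next I would transfer the two differential inequalities onto $U_\va$. On $U_\va$ one has $\underline w>\overline w+\va\ge\va>0$, so $\chi_{\{\underline w>0\}}=1$ there and the subsolution hypothesis gives $\M\underline w\ge \underline w^{q}\mu_0$ in $U_\va$. Because $t\mapsto t^{q}$ is non-decreasing on $[0,\infty)$ (here $q\ge0$) and $\underline w>\overline w\ge0$ on $U_\va$, we have $\underline w^{q}\ge\overline w^{q}$ pointwise on $U_\va$, and combining with the supersolution hypothesis,
\[
\M\underline w\ \ge\ \underline w^{q}\mu_0\ \ge\ \overline w^{q}\mu_0\ \ge\ \M\overline w\qquad\text{in }U_\va .
\]
Adding a constant leaves the Monge-Amp\`ere measure unchanged, so $\M(\overline w+\va)=\M\overline w\le\M\underline w$ in $U_\va$, while $\overline w+\va=\underline w$ on $\partial U_\va$.

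Finally I would invoke the classical comparison principle for the Monge-Amp\`ere operator on the bounded open set $U_\va$ (see \cite{gutierrez2016monge, figalli2017monge}): of two continuous convex functions that agree on the boundary, the one carrying the larger Monge-Amp\`ere measure lies below the other. Applied here, this forces $\underline w\le\overline w+\va$ throughout $U_\va$, which contradicts the very definition of $U_\va$. Therefore $U_\va=\emptyset$ for every $\va>0$, and letting $\va\downarrow0$ gives $U=\emptyset$, i.e. $\overline w\ge\underline w$ in $\Omega$.

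I expect the only delicate point to be the passage from $U$ to the strictly separated set $U_\va$: this is what simultaneously guarantees that $U_\va$ is compactly contained in $\Omega$ (so that its boundary values are dictated purely by the interior hypotheses, with no interference from $\partial\Omega$) and that $\underline w$ is bounded away from $0$ on $U_\va$ (so that the degenerate right-hand side $w^{q}\chi_{\{w>0\}}$ is controlled by nothing more than the monotonicity of $t\mapsto t^{q}$); the added constant $\va$ costs nothing since it is invisible to the Monge-Amp\`ere measure. Everything else is a direct application of the comparison principle, so I do not anticipate any serious obstruction.
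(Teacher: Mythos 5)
Your proof is correct and follows essentially the same route as the paper: restrict to the set where $\underline w>\overline w$, observe that $\underline w>0$ there so the characteristic function drops out and monotonicity of $t\mapsto t^q$ gives $\M\underline w\ge\M\overline w$, then apply the classical Monge--Amp\`ere comparison principle. The only difference is your $\va$-separation, which the paper skips by working directly on $O=\{\underline w>\overline w\}$ (where $\underline w=\overline w$ on $\partial O$ already suffices); your variant is a harmless extra precaution, not a different argument.
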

\begin{proof}
Let $O \subset \Omega $ denote the open set of points $x$ for which $\underline{w}(x) > \overline{w}(x)$. Then $O \subset \{ \underline w>0\} $. Hence,  
\[
\M \underline{w} \geq    \underline w^q\chi_{\{\underline w>0\}} \mu_0=  \underline w^q \mu_0 \geq   \overline w ^q \mu_0  \geq \M\overline w   \quad \text{in }O, \quad \underline w  =\overline w \quad  \text{on }\partial O. 
\]   
By the comparison principle \cite[Theorem 1.4.6]{gutierrez2016monge}, we have $\underline w  \leq \overline w$ on $O$, and hence $O= \emptyset$. 
\end{proof}
\begin{Corollary}
The minimizer $v$ is the unique solution to \eqref{eq:obs equation00} within the class $\D_{\mu_0, \varphi}$.
\end{Corollary}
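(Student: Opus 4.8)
The plan is to derive uniqueness from the comparison principle already established in Lemma~\ref{lem:comparison principle obs}, applied twice. Fix any $w \in \D_{\mu_0, \varphi}$ that also solves \eqref{eq:obs equation00}, i.e. $\M w = w^q\mu_0$ on $\{w>0\}\cap\Omega$; the goal is to show $w=v$, where $v$ is the minimizer produced in Proposition~\ref{prop:existence obs}.

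First I would record the two one-sided differential inequalities that each of $v$ and $w$ satisfies on all of $\Omega$. Since $v\in\D_{\mu_0,\varphi}$ we have $\M v\le v^q\mu_0$ in $\Omega$; and since $\M v = v^q\mu_0$ on $\{v>0\}\cap\Omega$ by Proposition~\ref{prop:existence obs}, while $\M v\ge 0$ everywhere and the measure $v^q\chi_{\{v>0\}}\mu_0$ puts no mass on $\{v=0\}$, we also get $\M v\ge v^q\chi_{\{v>0\}}\mu_0$ in $\Omega$. The same two facts hold verbatim with $w$ in place of $v$, using $w\in\D_{\mu_0,\varphi}$ and the hypothesis that $w$ solves \eqref{eq:obs equation00}. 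Finally, $v=w=\varphi$ on $\partial\Omega$ by the definition of $\D_{\mu_0,\varphi}$.

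Now apply Lemma~\ref{lem:comparison principle obs} with $(\underline w,\overline w)=(w,v)$: its hypotheses $\M w\ge w^q\chi_{\{w>0\}}\mu_0$, $\M v\le v^q\mu_0$, and $v\ge w$ on $\partial\Omega$ are exactly what was just verified, so it yields $v\ge w$ in $\Omega$. Swapping the roles and applying the lemma with $(\underline w,\overline w)=(v,w)$ gives $w\ge v$ in $\Omega$. Hence $v\equiv w$, which is the asserted uniqueness. Alternatively, the inequality $v\le w$ can be obtained more cheaply, since $v$ is by construction the pointwise infimum of $\D_{\mu_0,\varphi}$, and only the reverse inequality $v\ge w$ then needs the comparison principle.

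There is essentially no obstacle here: the entire content is packaged in Lemma~\ref{lem:comparison principle obs}. The only point deserving care is the bookkeeping on the coincidence set — namely that the equation $\M w=w^q\mu_0$ on the open set $\{w>0\}$ upgrades to the global bound $\M w\ge w^q\chi_{\{w>0\}}\mu_0$ in $\Omega$. This holds because the right-hand measure is supported in $\{w>0\}$ and $\M w$ is non-negative, so nothing needs to be checked along the free boundary $\partial\{w=0\}$.
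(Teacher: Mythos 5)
Your proof is correct and is exactly the argument the paper intends: the corollary is stated immediately after Lemma \ref{lem:comparison principle obs} precisely because uniqueness follows from applying that comparison principle in both directions, with the only point of care being the one you flagged (that solving \eqref{eq:obs equation00} on the open set $\{w>0\}$ together with $\M w\ge 0$ gives $\M w\ge w^q\chi_{\{w>0\}}\mu_0$ in all of $\Omega$).
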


\begin{Lemma}\label{lem:volume concidence}
Suppose $\lda\le g(x)\le\Lda$.  Let $h>0$ be a constant. Suppose $ w>0$ satisfies $\operatorname{det} D^2 w\ge g w^q \chi_{\{w>0\}}$ on a convex open set $O$ with $w \leq h $ on $\partial O$. Then
\[
|O| \leq C(n,q,\lambda)h^{\frac{n-q}{2}} .
\]
\end{Lemma}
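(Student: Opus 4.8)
The plan is to reduce the volume bound to the classical Aleksandrov–Bakelman–Pucci-type estimate for Monge–Ampère equations (Lemma \ref{lem:measure constrain 2}) by comparing $w$ with a suitable auxiliary convex function vanishing on $\partial O$. First I would set $\tilde w := w - h$ restricted to $O' := \{x \in O : w(x) < h\}$; since $w \le h$ on $\partial O$ and $w$ is convex, $O'$ is a convex open set, $\tilde w$ is convex on $O'$ with $\tilde w < 0$ inside and $\tilde w = 0$ on $\partial O'$, and $\M \tilde w = \M w$ on $O'$. The key point is that on $O'$ we have $0 < w \le h$, so from $\det D^2 w \ge g w^q \ge \lambda w^q$ on $\{w > 0\}$ one does \emph{not} immediately get a good lower bound on $\M w$ because $w^q$ can be small; instead I would use the reverse, that the smallness of $w$ forces $O'$ to be small via Aleksandrov's principle applied after a dichotomy on the value of $w$.

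Concretely, here is the mechanism I expect to use. Apply Lemma \ref{lem:measure constrain 2} to $\tilde w$ on $O'$: for every $x \in O'$,
\[
h - w(x) = -\tilde w(x) \le C(n)\bigl([\operatorname{diam}(O')]^{n-1}\operatorname{dist}(x,\partial O')\,\M w(O')\bigr)^{1/n}.
\]
Now I would integrate, or rather choose $x$ to be a point where $w$ is small — say the minimum point of $w$ on $O'$, where $w \le h/2$ after possibly noting that either $\inf_{O'} w \le h/2$ (in which case we proceed) or $w \ge h/2$ everywhere on $O'$. In the latter case $\det D^2 w \ge \lambda (h/2)^q$ on all of $O'$, so $\M w(O') \ge \lambda (h/2)^q |O'|$; but also by the estimate above with $x$ the min point of $\tilde w$ and a standard inscribed-ball/John's-theorem normalization, $\M w(O') \ge c(n) h^n / |O'|$ type bounds fail directly — so the cleaner route is: combine the ABP bound at the minimum point, $(h/2)^n \le C(n)[\operatorname{diam}(O')]^{n-1}\operatorname{diam}(O')\,\M w(O')$, i.e. $\M w(O') \ge c(n) h^n/[\operatorname{diam}(O')]^n$, with the volume lower bound $\M w(O') \ge \lambda (h/2)^q|O'|$ — these go the wrong way. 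Let me instead use the \emph{upper} bound on $\M w$: since $\det D^2 w = g w^q \chi_{\{w>0\}} \le \Lambda h^q$ on $O'$ (here $w \le h$), we get $\M w(O') \le \Lambda h^q |O'|$; feeding this into the ABP inequality at the min point of $\tilde w$ gives, after normalizing $O'$ by an affine map (John ellipsoid) so that $B_{r} \subset O' \subset B_{Cr}$ and using that $h/2 \le \operatorname{osc}_{O'} \tilde w$,
\[
h \le C(n)\bigl([\operatorname{diam}(O')]^{n}\,\Lambda h^q |O'|\bigr)^{1/n} \le C(n,\Lambda)\,|O'|^{(n+1)/n^2}\cdots
\]
— the exponent bookkeeping needs care, but the homogeneity forces $|O'| \gtrsim$ a positive power of $h$ from below, which is the wrong direction, confirming that the correct argument must instead exploit the \emph{strict convexity / lower} barrier.

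The clean argument, which I would actually write, is the barrier one: rescale by the affine map normalizing $O'$; on the normalized domain $\Omega^* $ with $B_{c(n)} \subset \Omega^* \subset B_{C(n)}$ the rescaled function $w^*$ satisfies $\det D^2 w^* \ge \lambda (w^*)^q$, $0 < w^* \le h^*$ on $\partial\Omega^*$ where $h^* = h/(\det A)^{2/n}$ and $|O| = |\det A|\,|\Omega^*| = c_n |\det A|$; the ODE-type radial subsolution $\underline w(x) = h^* - c(h^*)(1-|x|^2)^{\gamma}$ with $\gamma = \tfrac{n+1}{n-q}$ and $c(h^*)$ chosen so that $\det D^2 \underline w = \lambda \underline w^q$ shows, by Lemma \ref{lem:comparison principle obs}, that $w^* \ge \underline w$, and since $\underline w$ must stay nonnegative on $\Omega^* \supset B_{c(n)}$ this pins down $c(h^*) \le C (h^*)^{\text{power}}$, forcing $\det A$, hence $|O|$, to be bounded by $C(n,q,\lambda) (h)^{(n-q)/2}$ by matching the homogeneity $\det D^2$ (degree $n$ in second derivatives, i.e. scaling like (length)$^{-n} \times$ (height)$^{n}$) against the right-hand side $w^q$ (height$^q$): balancing gives height $\sim$ length$^{2n/(n-q)}$ hence (length)$^n \sim$ height$^{(n-q)/2}$, and $|O| \sim$ (length)$^n$. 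The main obstacle is making the barrier step rigorous: constructing the explicit radial subsolution to $\det D^2 \underline w = \lambda \underline w^q$ on a ball with the right boundary height and verifying it is a genuine subsolution in the Aleksandrov sense, then invoking the comparison principle of Lemma \ref{lem:comparison principle obs} to squeeze $\Omega^*$ between fixed balls after the John normalization. Once the barrier is in hand, the volume bound with the sharp exponent $\tfrac{n-q}{2}$ and a constant depending only on $n,q,\lambda$ drops out by unwinding the affine normalization.
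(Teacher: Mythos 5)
Your overall strategy (John normalization, an explicit radial barrier, the comparison principle of Lemma \ref{lem:comparison principle obs}, and the homogeneity of $\det D^2 w = w^q$ to produce the exponent $\tfrac{n-q}{2}$) is the same as the paper's, but the execution has the comparison pointed the wrong way, and this is a genuine gap rather than a bookkeeping issue. Lemma \ref{lem:comparison principle obs} concludes $\overline w \ge \underline w$ only when $\underline w$ is a \emph{sub}solution and $\overline w$ is a \emph{super}solution. Your $w^*$ is only known to be a subsolution ($\det D^2 w^* \ge \lambda (w^*)^q$), so you cannot place it in the role of $\overline w$ and deduce $w^* \ge \underline w$ from another subsolution $\underline w$. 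Moreover, even granting $w^* \ge \underline w$, the step ``$\underline w$ must stay nonnegative, which pins down $c(h^*)$'' does not follow: $w^*>0$ and $w^*\ge\underline w$ are perfectly consistent with $\underline w<0$. The correct mechanism is dual to what you wrote: one constructs a \emph{supersolution} $\Phi$ with $\M\Phi \le \lambda\Phi^q$ that vanishes at an interior point and satisfies $\Phi \ge h \ge w$ on $\partial O$ precisely when $O$ is large in units of $h^{\frac{n-q}{2n}}$; the comparison principle then gives $w \le \Phi$, hence $w=0$ at that interior point, contradicting $w>0$. This is what the paper does with $\Phi(x)=c_1|x|^{\frac{2n}{n-q}}$, for which $\det D^2\Phi = c_1^{n-q}c(n,q)\Phi^q \le \lambda\Phi^q$ when $c_1$ is small, after normalizing so that the John ellipsoid of $O$ is a ball of radius $Mh^{\frac{n-q}{2n}}$ centered at the origin.

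Two further problems in your write-up would need repair even after fixing the direction of the comparison. First, the normalization $h^*=h/(\det A)^{2/n}$ is the $q=0$ scaling; for $q>0$ the substitution $w^*(x)=(\det A)^{-2/n}w(Ax)$ turns $\det D^2 w\ge \lambda w^q$ into $\det D^2 w^*\ge \lambda(\det A)^{2q/n}(w^*)^q$, so the equation is not preserved and the constant you extract would degenerate; the equation-preserving rescaling multiplies heights by $s^{-\frac{2n}{n-q}}$ when lengths are multiplied by $s^{-1}$ (you state the correct homogeneity at the very end, but it contradicts the normalization you actually use). Second, your candidate barrier $h^*-c(1-|x|^2)^{\gamma}$ with $\gamma=\tfrac{n+1}{n-q}>1$ is not convex on the whole ball (its radial second derivative changes sign), so it is not an admissible comparison function for Lemma \ref{lem:comparison principle obs} as stated. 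The paper's homogeneous barrier $c_1|x|^{\frac{2n}{n-q}}$ avoids both issues.
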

\begin{proof}
We can assume $O$ is bounded, since otherwise we just apply the following argument to every bounded convex subset of $O$. 

By some unimodular affine transformation, we may assume that the ellipsoid of minimum volume containing $O$ is $Mh^{\frac{n-q}{2n}}B_{1}(0)$, where $M>0$ (may depend on $h$). By John's lemma, $n^{-\frac 32} Mh^{\frac{n-q}{2n}}B_{1}(0) \subset O$. The function $\Phi(x)= c_1|x|^{\frac{2n}{n-q}}$ satisfies 
\[
\det D^2 \Phi=c_1^{n} c(n,q)|x|^{\frac{2nq}{n-q}}=c_1^{n-q} c(n,q) \Phi^q \le \lda \Phi^q \le g \Phi^q
\] 
if $c_1(n,q,\lambda)$ is small enough. Furthermore,  there exists $C_0$ depending only on $c_1$ and $n$ such that if $M\ge C_0$, then $\Phi \geq h$ on $\partial O$. Consequently,  the comparison principle in Lemma \ref{lem:comparison principle obs}  implies $w(0) \leq \Phi(0)=0$, which is impossible. Hence, $M\le C_0$, and thus, $|O| \leq C(n,q,\lambda)h^{\frac{n-q}{2}}$.
\end{proof}

 \begin{Corollary}\label{coro:volume concidence}
Suppose $\lda\le g(x)\le\Lda$, and  $ w\geq 0$ satisfies $\operatorname{det} D^2 w\ge g w^q \chi_{\{w>0\}}$ in $\Omega$. Let $K:=\left\{x\in\Omega: w(x)=0\right\}$. If $|K|=0$, then $ \dim K \leq \frac{n+q}{2}$.
\end{Corollary}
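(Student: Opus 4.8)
The plan is to combine the convexity of $w$ with a single application of Lemma~\ref{lem:volume concidence} to a thin cylinder placed just off the coincidence set $K$.

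I would first observe that $K=\{w\le 0\}$ is convex, hence has nonempty relative interior. After a rigid motion of $\R^n$ (which preserves all the hypotheses, in particular $\lambda\le g\le\Lambda$ and $\det D^2 w\ge g\,w^q\chi_{\{w>0\}}$) I may assume $0$ lies in the relative interior of $K$ and that the affine hull of $K$ equals $\R^d\times\{0\}$, writing $x=(x',x'')\in\R^d\times\R^{k}$ with $d=\dim K$ and $k=n-d$. If $K=\emptyset$ or $d=0$ the statement is trivial, and $|K|=0$ forces $d\le n-1$, so assume $1\le d\le n-1$. For $\rho>0$ small, $\overline{B^d_\rho}\times\{0\}\subset K\cap\Omega$, and since a convex function is locally Lipschitz, there are $L>0$ and $s_0>0$ with
\[
\max_{Q_s}w\le Ls\qquad(0<s<s_0),\qquad Q_s:=B^d_{\rho/2}\times B^k_s,
\]
because $w$ vanishes on $\overline{B^d_\rho}\times\{0\}$.

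Then, for $0<s<s_0$, I would apply Lemma~\ref{lem:volume concidence} to the bounded convex open set $O_s:=B^d_{\rho/2}\times B^k_{s/4}\!\big(\tfrac{s}{2}e_{d+1}\big)\subset\Omega$. Every point of $\overline{O_s}$ has $|x''|\ge s/4>0$, so $\overline{O_s}\cap K=\emptyset$ and $w>0$ on $\overline{O_s}$; and $\overline{O_s}\subset Q_s$, so $h:=\max_{\partial O_s}w\le Ls$. The lemma yields $|O_s|\le C(n,q,\lambda)\,(Ls)^{(n-q)/2}$. But $|O_s|=c_0\,s^{k}$ for a constant $c_0=c_0(n,d,\rho)>0$, so
\[
c_0\,s^{\,k-\frac{n-q}{2}}\le C(n,q,\lambda)\,L^{\frac{n-q}{2}}\qquad\text{for all }0<s<s_0.
\]
Letting $s\to 0^+$ forces $k-\frac{n-q}{2}\ge 0$, i.e.\ $k\ge\frac{n-q}{2}$, which is precisely $\dim K=n-k\le\frac{n+q}{2}$.

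The only delicate point is the choice of $O_s$: it must stay uniformly away from $K$ so that $w>0$ on it and Lemma~\ref{lem:volume concidence} applies, while still having volume of order $s^{k}$ and lying inside a box on which $w=O(s)$; the offset cylinder above achieves all three, so no finer information on the behaviour of $w$ near $K$ is required.
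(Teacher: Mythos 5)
Your proof is correct and follows essentially the same route as the paper's: both arguments apply Lemma~\ref{lem:volume concidence} to a thin convex set adjacent to $K$ whose volume is of order $s^{\,n-\dim K}$ while $w=O(s)$ on its boundary, and then let $s\to 0$. The only cosmetic difference is that the paper uses the sublevel set $\{w<h\}$ intersected with a half-space (bounding its volume from below via a convex hull with a small ball), whereas you use an explicitly offset cylinder, which makes the volume computation immediate.
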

\begin{proof}
	After an affine transformation, let us assume for simplicity that the mass center of $K$ is $0$, and $K \subset \left\{ x_1 = 0\right\}$ (since $|K|=0$). Let $O_h:=\left\{ w < h\right\} \cap \left\{ x_1 > 0\right\} \subset \left\{ 0<w<h\right\}$. Since $w$ is convex and locally Lipschitz, we can find some $c_w>0$ small such that $(\{x_1>0\}\cap \operatorname{conv} \left\{ c_{w}B_{h}(0)  , K\right\}) \subset O_h$,  where $\operatorname{conv} E$ denotes the convex hull of the set $E$.  Applying Lemma \ref{lem:volume concidence}, we obtain
\[
c_{w,K} h^{n- \dim K}  \leq  |O_h| \leq   C(n,q,\lambda) h^{\frac{n-q}{2}}.
\]
By sending $h \to 0$, we conclude that $ \dim K \leq  \frac{n+q}{2}$.
\end{proof}

Let $v$ be a solution of \eqref{eq:obs equation}. Similar to \cite[Lemma 3.2]{savin2005obstacle}, a lower bound for the expansion of $v$ away from the free boundary will be established below. This ensures the stability of the coincidence sets under the uniform convergence of solutions in the case where $|K|>0$.
\begin{Lemma}\label{lem:bound from below}
Suppose $0 \in \partial K$ and $K \subset\left\{x_n\ge 0\right\}$. Then, if $y=(y',y_n)\in \Omega$, $y_n \leq 0$ we have
\[
v(y) \geq c(n,q,\lambda)\left|K \cap\left\{x_n \le \left|y_n\right|\right\}\right|^{\frac{2}{n-q}}.
\] 
\end{Lemma}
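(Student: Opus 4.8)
The plan is to build an explicit convex barrier from below on a suitable convex region and invoke the comparison principle in Lemma~\ref{lem:comparison principle obs}. Set $d:=|y_n|\ge 0$ and abbreviate $K_d:=K\cap\{0\le x_n\le d\}$. Since $K\subset\{x_n\ge 0\}$ and $0\in\partial K$, the set $K_d$ is a convex subset of the slab $\{0\le x_n\le d\}$ containing $0$ on its boundary, and $y$ lies below the hyperplane $\{x_n=0\}$. The idea is to compare $v$ on the convex hull $O:=\operatorname{conv}(K_d\cup\{y\})$ (intersected with $\Omega$, and with the region where the barrier is positive) against a function that vanishes on $K$, is bounded by $v$ on the relevant portion of $\partial O$, and solves $\det D^2 w\le \lambda w^q\le g v^q\chi_{\{v>0\}}$ in the Aleksandrov sense, so that Lemma~\ref{lem:comparison principle obs} forces $v\ge w$ at $y$.

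The natural barrier, following the scaling dictated by $\det D^2 w=\lambda w^q$, is $w(x)=c_1\,\ell(x)^{\frac{2n}{n-q}}$ where $\ell$ is (the positive part of) an affine function that is $\le 0$ on $K_d$; then $\det D^2 w=0$ on $\{\ell\le 0\}\supset K_d$ (as $w$ is then constant $=0$ there, or more precisely $w$ restricted to a convex function supported on a hyperplane contributes no Monge–Amp\`ere mass), and on $\{\ell>0\}$ one computes, as in the proof of Lemma~\ref{lem:volume concidence}, that a one-dimensional profile in the direction $\nabla\ell$ has vanishing Hessian in the $n-1$ transverse directions, so $\det D^2 w=0$ there too. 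That barrier is too degenerate. Instead I would use the genuinely $n$-dimensional profile $\Phi(x)=c_1|x-z|^{\frac{2n}{n-q}}$ centered at an interior point $z$ of $K_d$, exactly as in Lemma~\ref{lem:volume concidence}: it satisfies $\det D^2\Phi\le g\Phi^q$ for $c_1=c_1(n,q,\lambda)$ small. The issue is that $\Phi$ does not vanish on all of $K$, only at $z$. To fix this, note $v=0$ on $K\supset K_d$, so it is cleaner to run the argument on the convex set $O_h:=\{v<h\}\cap\{x_n<0\}$ for $h$ slowly sent down, in the spirit of Corollary~\ref{coro:volume concidence}: by convexity and local Lipschitzness of $v$ there is $c_v>0$ with $\{x_n<0\}\cap\operatorname{conv}(c_v B_h(0),\,K_d)\subset O_h$, and this region has volume $\gtrsim h\,|K_d|$ for $h$ small (a cone of height $\sim h$ over the base $K_d$). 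Lemma~\ref{lem:volume concidence} then gives $|O_h|\le C(n,q,\lambda)h^{(n-q)/2}$, hence $|K_d|\le C h^{(n-q)/2-1}=Ch^{(n-q-2)/2}$; but this blows up as $h\to0$ when $q<n-2$, so a direct volume comparison at $y$ is the wrong bookkeeping.

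The correct route is the barrier comparison. Fix the interior point $z$ of $K_d$ to be, say, the point of $K_d$ closest to $\{x_n=-d\}$ realizing the "depth" of $K_d$; more robustly, take $z=0$ and exploit that the segment from $0$ to $y$ has length $\ge d=|y_n|$ and lies in $\Omega$. Consider the convex region $O=\operatorname{conv}(K_d\cup\{y\})\cap\Omega$, and on $O$ put $\underline w = $ the affinely-rescaled copy of the model solution from Lemma~\ref{lem:volume concidence} on a ball: choose coordinates so that, by John's lemma, $K_d$ contains an ellipsoid $\mathcal E$ with $|\mathcal E|\gtrsim|K_d|$, rescale by the unimodular map sending $\mathcal E$ to $M B_1$ with $M\sim|K_d|^{1/n}$; in these coordinates $y$ has $|y_n|$-coordinate comparable to $d$ up to the distortion. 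Set $\underline w(x)=c_1(\operatorname{dist}(x,\mathcal E_-))^{\frac{2n}{n-q}}$ with $\mathcal E_-$ a half-ellipsoid inside $K_d$ — this vanishes on a set containing a chunk of $K$, is convex, solves $\det D^2\underline w\le\lambda\underline w^q$ by the same computation (distance to a convex set has convex positive part and the Hessian estimate of Lemma~\ref{lem:volume concidence} still applies), and on $\partial O\cap\{v>0\}$ we have $\underline w\le v$ once we check $\underline w\le $ (the minimal linear growth of $v$ off $K$), which holds because $v\ge 0$ everywhere and $\underline w$ can be taken with small constant $c_1$. Then Lemma~\ref{lem:comparison principle obs} yields $v(y)\ge\underline w(y)\ge c(n,q,\lambda)\,\big(\operatorname{dist}(y,K_d)\big)^{\frac{2n}{n-q}}\cdot(\text{normalization})$, and tracking the John-position normalization (volume factor $M^{-2}\sim|K_d|^{-2/n}$ from the unimodular rescaling, cf. the affine-invariance of $\det D^2$) converts this to $v(y)\ge c(n,q,\lambda)\,|K_d|^{\frac{2}{n-q}}$ after using $\operatorname{dist}(y,K_d)\ge|y_n|$ and that, in rescaled coordinates, the relevant height is bounded below independent of $|K_d|$.

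The main obstacle I anticipate is the bookkeeping of the affine normalization: one must choose the barrier and the comparison domain so that, after putting $K_d$ in John position, the point $y$ sits at a height that is bounded \emph{below} by an absolute constant (not shrinking with $|K_d|$), and simultaneously the barrier's zero set stays inside $K$ and its boundary values stay below $v$; getting a clean power $|K_d|^{2/(n-q)}$ rather than an extraneous factor of $|y_n|$ requires exploiting that $\operatorname{dist}(y,K)\ge|y_n|$ together with the fact that the unimodular rescaling multiplies $v$ by $(\det A)^{-2/n}=|K_d|^{-2/n}$ up to constants. Once the geometry is set up, the PDE input is entirely the model-solution computation already carried out in Lemma~\ref{lem:volume concidence}, and the conclusion is immediate from the comparison principle.
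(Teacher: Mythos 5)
There is a genuine gap, and it lies in the mechanism of your final ``barrier comparison'' route. To conclude $v\ge \underline w$ from Lemma \ref{lem:comparison principle obs} you need $\underline w$ to be a \emph{subsolution}, i.e.\ $\det D^2\underline w\ge \Lambda\,\underline w^{\,q}$ on $\{\underline w>0\}$, together with $v\ge\underline w$ on the boundary of the comparison domain; a function with $\det D^2 w\le\lambda w^q$, as you prescribe, is a supersolution and can only yield an \emph{upper} bound for $v$ (this is exactly how $\Phi$ is used in Lemma \ref{lem:volume concidence}, where the conclusion is $w(0)\le\Phi(0)$). Moreover, even after reversing the inequality, the barrier $c_1\operatorname{dist}(x,\mathcal E)^{\frac{2n}{n-q}}$ cannot be a subsolution near $\partial\mathcal E$: in the radial model one computes, with $\beta=\tfrac{2n}{n-q}$,
\[
\det D^2\bigl(c_1(|x|-r_0)^{\beta}\bigr)=C(n,q)\,c_1^{\,n-q}\Bigl(\tfrac{|x|-r_0}{|x|}\Bigr)^{n-1}\cdot\bigl(c_1(|x|-r_0)^{\beta}\bigr)^{q},
\]
and the factor $\bigl(\tfrac{|x|-r_0}{|x|}\bigr)^{n-1}$ tends to $0$ at $\partial B_{r_0}$, so $\det D^2 w\ge\Lambda w^q$ fails in every neighborhood of the barrier's zero set, no matter how $c_1$ is chosen. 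Finally, on the portion of $\partial O$ lying in $K$ (e.g.\ the top face $K\cap\{x_n=|y_n|\}$) one has $v=0$, while your barrier is strictly positive off $\mathcal E$, so the required boundary inequality $\underline w\le v$ fails there as well. None of these three problems is repaired by the John-position bookkeeping.

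The irony is that you walked past the correct argument and discarded it for the wrong reason. The paper's proof is precisely the sublevel-set volume comparison of your second attempt, but with the right convex subset: by convexity, for every $z\in K\cap\{x_n<|y_n|\}$ one has $v\bigl(\tfrac12 y+\tfrac12 z\bigr)\le\tfrac12 v(y)+\tfrac12 v(z)=\tfrac12 v(y)$, so the half-scaled translate $\Omega_y:=\tfrac12 y+\tfrac12\bigl(K\cap\{x_n\le|y_n|\}\bigr)$ lies in $\{v\le v(y)\}\cap\{x_n\le 0\}$ and has volume $2^{-n}\bigl|K\cap\{x_n\le|y_n|\}\bigr|$ --- with no factor of $h$ --- while $v>0$ on the part of $\Omega_y$ in $\{x_n<0\}$ because $K\subset\{x_n\ge0\}$. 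Applying Lemma \ref{lem:volume concidence} to this set with $h=v(y)$ gives $2^{-n}\bigl|K\cap\{x_n\le|y_n|\}\bigr|\le C\,v(y)^{\frac{n-q}{2}}$, which is the claim. Your version degenerated as $h\to0$ only because you coned $K_d$ against a ball of radius $O(h)$ at the origin instead of translating half of $K_d$ to the fixed depth $\tfrac12 y$.
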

\begin{proof}
If $z \in K \cap\left\{x_n <\left|y_n\right|\right\}$, then $v\left(\frac{1}{2} y+\frac{1}{2}z\right) \leq  v(y)=: h$, hence $v<h$ on the convex set
\[
\Omega_y:=\frac{1}{2} y+\frac{1}{2}\left(K \cap\left\{x_n \leq\left|y_n\right|\right\}\right) .
\]
As $\Omega_y \subset \left\{x_n<0\right\} $, it follows from the assumption that $v>0$ on $\Omega_y$. Thus, the conclusion can be derived from Lemma \ref{lem:volume concidence} by choosing $O=\Omega_y  $.
\end{proof}

\begin{Remark}
If $B_{c_1}(x_0) \subset (K \cap  B_{C_1}(0))$, then the function $v$ satisfies
\begin{equation}\label{eq:bound from below}
v(x) \geq c(n, q, \lambda, c_1, C_1)[\operatorname{dist}(x, \partial K)]^{\frac{2n}{n-q}}  \quad   \text{in  }  B_{C_1}(0)\setminus K.
\end{equation} 
Indeed, let $y\in\partial K$ be the point where the distance from $x$ to $\partial K$ is realized.  Since $K$ contains the convex set generated by $y$ and $B_{c_1}(x_0)$, we find
\[
\left|K \cap\left\{z:\;\left(z-y\right) \cdot \frac{\left(y-x\right)}{\left|y-x\right|} \leq  \operatorname{dist}(x, \partial K) \right\}\right| \geq c\left|x-y\right|^n,
\]
which then, by Lemma \ref{lem:bound from below}, implies
\[
v(x) \geq c|x-y|^{\frac{2n}{n-q}}.
\]
This proves \eqref{eq:bound from below}.
\end{Remark}

Suppose  $Q$ is a closed convex set (a relative closed subset of $\Omega$). A nonempty convex subset $E$ of $Q$  is called a face of $Q$ if $\alpha x+(1-\alpha) y \in E$ with some  $x$, $y \in Q$ and some $\alpha\in(0,1)$, then $x, y \in E$. If $E$ contains an interior point, then $E=Q$.  A face that can be represented as the intersection of the convex set with its supporting hyperplane  is called an exposed face. As a convention here, we alway call $Q$ itself an exposed face of $Q$ as well. Each face is contained within an exposed face. A zero-dimensional face is an extreme point, and a zero-dimensional exposed face is an exposed point. 
We denote by $Q^{ext}$ and $Q^{exp}$ the sets of extreme points and exposed points of $Q$, respectively. By definition,  we have
\[
Q^{exp} \subset Q^{ext} \subset \operatorname{rel bd}(Q) \subset \partial Q,
\]
 where $\operatorname{rel bd} (Q)$ is the boundary of $Q$ relative to its affine hull.
The converse $Q^{exp} \supset Q^{ext}$ may not necessarily hold  by considering
\[
Q = \left\{  (x_1,x_2) \in \R^2:\; x_2 \geq  \max\{x_1,0\}^2\right\},
\]
where $Q^{ext} =\partial Q \cap \left\{ x_1 \geq 0\right\}$ and $Q^{exp}=\partial Q \cap \left\{ x_1 > 0\right\}$. We shall call the set of exposed points the strictly convex part of $\partial Q$. Given an extreme point, if there is no line segment on $\partial Q$ with this point as one of the endpoints, then it is also an exposed point. Furthermore, Straszewicz's theorem states that the set of exposed points is dense within the set of extreme points.

For any two closed convex sets $E \subset Q$, we have 
\[
(Q^{exp} \cap E) \subset  E^{exp} ,\quad   (Q^{ext} \cap E)\subset  E^{ext}.
\] 
In addition, if $E $ is a face of $Q$, then $E^{ext}=(Q^{ext} \cap E)$.

A convex function $w  $ on $\Omega$ is strictly convex at $x_0 \in \Omega$ if $x_0$ serves as an exposed point of the epigraph of $w$.   This condition is equivalent to the existence of a $p \in \partial w(x_0)$ such that
\[
w(x)>w(x_0)+p \cdot(x-x_0),   \quad  \forall\ x \neq x_0.
\]
The section of $v$ at $x_0$ with height $t$  for subgradient $p \in \partial v(x_0)$ is denoted as
\[
S_{t}(x_0)= S_{t}^v(x_0,p) = \left\{ x   :\;  v(x) < v(x_0)+p\cdot (x-x_0)+t  \right\}.
\]
The strict convexity at $x_0$ is also equivalent to say that $S_{t}^v(x_0,p) \subset \subset\Omega$ for some $t>0$ small. If $w$ is strictly convex in $\Omega$, i.e., $w$ is strictly convex at every point in $\Omega$, then any straight line connecting any two points on the graph of $w$ lies strictly above the graph, except  at the endpoints.

We begin by revisiting the notion of centered sections initially introduced by Caffarelli in \cite[Lemma 1]{caffarelli1996boundary}. 
\begin{Lemma}[Centered Section {\cite[Lemmas 2.6]{savin2005obstacle}}]\label{lem:savin lem2.6} 
Let $w: \mathbb{R}^n \rightarrow \mathbb{R} \cup\{\infty\}$ be a globally defined convex function. Also, assume $w$ is bounded in a neighborhood of $x_0$ and the graph of $w$ does not contain an entire line crossing $x_0$.
	
Then, for each $h>0$, there exists a ``centered section" $\widetilde{S}_h^w(x_0)$ at $x_0$, that is, there exists $p_h\in\R^n$ such that the convex set
\begin{equation}\label{eq:centered section}
\widetilde{S}_h:=\widetilde{S}_h^w(x_0)=\left\{x: \; w(x)< w(x_0)+p_h\cdot (x-x_0)+h \right\}
\end{equation}
is bounded and has $x_0$ as the center of mass.
\end{Lemma}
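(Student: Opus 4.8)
The plan is to produce the centered section as the minimizer of a convex functional of the slope. After subtracting a supporting affine function $w(x_0)+p_0\cdot(x-x_0)$ (with $p_0\in\partial w(x_0)$, which exists since $w$ is finite near $x_0$) and translating, assume $x_0=0$, $w\ge 0$, $w(0)=0$, and fix $h>0$; for $p\in\R^n$ write $S_h(p)=\{x\in\R^n:\ w(x)<p\cdot x+h\}$. Consider
\[
F(p):=\int_{\R^n}\bigl(p\cdot x+h-w(x)\bigr)_+\,dx\ \in[0,\infty],
\]
which is convex in $p$ (an integral of positive parts of affine functions), and note the formal identity $\nabla F(p)=\int_{S_h(p)}x\,dx$. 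Thus if $F$ attains its minimum at some $p_h$ at which $S_h(p_h)$ is bounded and $F$ is differentiable, then $\int_{S_h(p_h)}x\,dx=0$, i.e. $\widetilde S_h:=S_h(p_h)$ is a bounded convex set with the origin as its center of mass, which is exactly the assertion.

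First I would establish three structural facts about $F$. (a) \emph{$F$ is proper, and $\{F<\infty\}$ equals $D:=\{p:\ S_h(p)\ \text{is bounded}\}$, a nonempty open convex set.} Here the hypothesis enters: letting $w_\infty(v)=\lim_{t\to\infty}w(tv)/t$ be the recession function and $w^*$ the Legendre transform (proper because $w$ is bounded near $0$), one has $D=\operatorname{int}(\operatorname{dom}w^*)=\operatorname{int}\{p:\ p\cdot v\le w_\infty(v)\ \forall v\}$, and this interior is empty only if the closed convex set $\{p:\ p\cdot v\le w_\infty(v)\ \forall v\}$ lies in a hyperplane, which forces $w_\infty(v_0)+w_\infty(-v_0)=0$ for some $v_0\ne 0$, hence (a convex function of one variable with opposite asymptotic slopes is affine) $t\mapsto w(tv_0)$ is affine on $\R$ — an entire line in the graph of $w$ through $x_0$, a contradiction. (b) \emph{$F$ is coercive.} Since $w\le M_0$ on some ball $B_\rho(0)$, on the spherical cap $G_p=\{x\in B_\rho(0):\ (p/|p|)\cdot x\ge \rho/2\}$, whose measure $\gamma(n,\rho)>0$ is independent of $p$, the integrand is $\ge |p|\rho/2+h-M_0$, so $F(p)\ge c(n,\rho)\,|p|\to\infty$. (c) \emph{$F$ is lower semicontinuous and $\{F<\infty\}$ is open.} Lower semicontinuity is Fatou's lemma; and if $S_h(p)$ were unbounded it would contain a half-cylinder $B_r(x_*)+[0,\infty)v$ along a recession direction $v$, on which convexity bounds $p\cdot x+h-w(x)$ below by a positive constant, forcing $F(p)=\infty$; hence $\{F<\infty\}=D$ is open.

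Granting (a)--(c), $F$ is a proper, lower semicontinuous, coercive convex function, so it attains its minimum at some $p_h$, and $p_h\in\{F<\infty\}=D$. On the open set $D$ one justifies $\nabla F(p)=\int_{S_h(p)}x\,dx$ by differentiating under the integral sign: for $p'$ near $p_h$ the difference quotients are dominated by $|x|\chi_E$ with $E$ a fixed bounded set (a slightly enlarged section, bounded since $S_h(p_h)$ is), and for each $p$ the integrand fails to be differentiable in the slope only on $\{x:\ w(x)=p\cdot x+h\}=\partial S_h(p)$, the boundary of a bounded convex set, hence Lebesgue-null. Therefore $0\in\partial F(p_h)=\{\nabla F(p_h)\}$, i.e. $\int_{S_h(p_h)}x\,dx=0$, which proves the lemma with $p_h$ as the required slope.

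The hard part is the interplay between (a) and (c): one must convert ``no entire line in the graph through $x_0$'' into the nonemptiness of $D$ via the recession function, and — more importantly — establish the dichotomy that an \emph{unbounded} section makes $F$ infinite, which is exactly what forces the minimizer into the open region $D$ where the Euler--Lagrange relation is an honest gradient identity rather than a subdifferential inclusion (the latter would only locate the center of mass somewhere in a small convex set around $x_0$). An alternative, closer to Caffarelli's original approach, bypasses $F$ and instead shows directly that the center-of-mass map $p\mapsto |S_h(p)|^{-1}\int_{S_h(p)}x\,dx$ is a continuous proper surjection of $D$ onto $\R^n$ by a degree argument; the convexity route above seems cleaner.
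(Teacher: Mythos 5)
The paper does not prove this lemma at all: it is quoted verbatim from Savin \cite[Lemma 2.6]{savin2005obstacle} (going back to Caffarelli's Lemma~1 in \cite{caffarelli1996boundary}), so there is no in-paper argument to compare against. Your variational proof is correct and is a clean, self-contained route: minimizing $F(p)=\int(p\cdot x+h-w)_+\,dx$ and reading off $\nabla F(p)=\int_{S_h(p)}x\,dx$ is exactly the right mechanism, and the three structural facts (a)--(c) are the right things to check. Two of your assertions deserve the one-line justifications you omitted. First, for the domination in the differentiation step you need $S_h(p')$ to lie in a \emph{fixed} bounded set for all $p'$ near $p_h$; this follows because $\psi:=w-p_h\cdot x$ satisfies $\psi\ge h$ off $B_R\supset S_h(p_h)$ and $\psi(0)=0$, whence by convexity $\psi(x)\ge h|x|/R$ for $|x|\ge R$, so $S_h(p')\subset B_{2R}$ once $|p'-p_h|\le h/(2R)$. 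Second, the non-differentiability set $\{x:\ w(x)=p_h\cdot x+h\}$ is Lebesgue-null because $w(0)=0<h$ forces $\{w\le \ell_{p_h}\}\subset\overline{\{w<\ell_{p_h}\}}$, so this set sits inside the boundary of a bounded convex body with nonempty interior. Beyond that, the only loose ends are standard (replace $w$ by its lsc hull so that $w_\infty=\sigma_{\operatorname{dom}w^*}$ and the identification $D=\{p:\ p\cdot v<w_\infty(v)\ \forall v\ne0\}=\operatorname{int}(\operatorname{dom}w^*)$ is legitimate; note also that in (b) the cap $G_p$ lies in $S_h(p)$ only once $|p|\rho/2+h-M_0>0$, which is all the coercivity argument needs). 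With these small repairs the argument is complete, and it arguably gives a tidier existence proof than the continuity/degree argument you mention as the alternative.
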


A bounded set $E\subset \R^{n}$ is said to be $\kappa$-balanced around $x_0 \in \R^n$ for $\kappa>0$ if
\begin{equation}\label{eq:balanced def}
t(x_0-E)\subset E-x_0,  \quad \forall\ t \in [0,  \kappa]. 
\end{equation}
If $\kappa $ is a universal constant, meaning a constant depending only on $n,q,\lambda$ and $\Lambda$, we just say that $E$ is balanced around $x_0$ for simplicity. By John's lemma, a bounded convex set (and hence, every centered section in Lemma \ref{lem:savin lem2.6}) is always $c(n)$-balanced around its mass center for some constant $c(n)$.

Let $v$ be a convex solution of \eqref{eq:obs equation}. We extend $v$ to be infinite outside $\Omega$. Suppose $0\in\Omega$. We consider the centered section
\[
\widetilde{S}_h=\widetilde{S}_h^v(0),
\]
and express $\widetilde{S}_h=\left\{x: \; v(x)<\ell_h(x)\right\}$ for some $\ell_h(x)=v(0)+p_h\cdot x+h$. Since $v$ is non-negative,
$\ell_h \geq v \geq  0$ in $\widetilde{S}_h$.  Suppose  $x\in \widetilde{S}_h$.  By the balanced property of $\widetilde{S}_h$, we have for $c=c(n)$ that $-cx\in \widetilde{S}_h$, and thus,
\[
\frac{c}{1+c}v(x) \leq \frac{c}{1+c}\ell_h(x)=  \ell_h(0)-\frac{1}{1+c}\ell_h(-cx) \leq \ell_h(0) = v(0)+h,
\] 
which implies 
\begin{equation}\label{eq:centerS right}
\widetilde{S}_{h} \subset S_{C(n)(v(0)+h)}, 
\end{equation} 
where $C(n)=\frac{1+c(n)}{c(n)}$, and
\[
S_{h} :=  S_{h}^v(0,0) =\left\{ x:\; v(x) <  h\right\} \quad \text{ for } h>0.
\]
Since $v(0)<\ell_h(0)$, then $\widetilde{S}_{h} $ is a section of $v$ at certain point with height at least $h$.  Also, due to the convexity of $v-\ell_h$, the balanced property implies 
\begin{equation}\label{eq:centerS nondege}
v(x)- \ell_{h}(x) \geq \frac{1+c}{c}(v(0)-\ell_h(0)) -\frac{1}{c}(v(-cx)- \ell_{h}(-cx)) \geq  -C(n) h  \quad  \text{in }\widetilde{S}_h.
\end{equation}
Furthermore, if $0$ is an extreme point of $K$, then due to the balanced property of $\widetilde{S}_{h}$ around $0$, we have
\[
\varlimsup_{h \to 0} \widetilde{S}_{h} \subset \left(\left(\varlimsup_{h \to 0} \widetilde{S}_{h}\right) \cap \left(-\varlimsup_{h \to 0} C\widetilde{S}_{h}\right) \right) \subset  \left\{0\right\},
\]
which implies 
\begin{equation}\label{eq:center sec shrink}
\lim_{h \to 0} \widetilde{S}_{h}= \left\{0\right\}.
\end{equation} 
In general, we are only concerned with the centered sections that satisfy  $\widetilde{S}_{h} \subset\subset \Omega$.

\begin{Lemma}\label{lem:volume Shc up}
Suppose $0\in\partial K$,  then
\[
|\widetilde{S}_h| \leq C(n,q,\lambda)h^{\frac{n-q}{2}}, \quad |\M v(\widetilde{S}_h)| \leq C(n,q,\lambda,\Lambda)h^{\frac{n+q}{2}}.
\]
\end{Lemma}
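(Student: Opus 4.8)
The plan is to prove the two estimates in sequence, using the volume bound from Lemma 2.9 for the first and a comparison/containment argument for the second.

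First, for the bound $|\widetilde S_h|\le C(n,q,\lambda)h^{(n-q)/2}$: the centered section is $\widetilde S_h=\{v<\ell_h\}$ with $\ell_h(x)=v(0)+p_h\cdot x+h$. On $\partial\widetilde S_h$ we have $v=\ell_h$. Since $0\in\partial K$, we have $v(0)=0$, and by the balanced property argument recalled in the excerpt (leading to \eqref{eq:centerS right}), $\widetilde S_h\subset S_{C(n)h}=\{v<C(n)h\}$; in particular $v\le C(n)h$ on $\widetilde S_h$. Now $v>0$ on $\widetilde S_h\setminus K$, and $\det D^2 v=gv^q\ge\lambda v^q$ there. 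The subtlety is that $\widetilde S_h$ may meet $K$, where the equation degenerates, so I cannot directly apply Lemma 2.9 to $w=v$ on $O=\widetilde S_h$. Instead I split: let $O:=\widetilde S_h\setminus K$, which is a (not necessarily convex) open set with $v\le C(n)h$ on $\partial O$; for each connected component, or more simply, for each bounded convex open subset $O'\subset\widetilde S_h\setminus K$ on which $v>0$, Lemma 2.9 gives $|O'|\le C(n,q,\lambda)(C(n)h)^{(n-q)/2}$. Taking the supremum over such $O'$ and using $|K\cap\widetilde S_h|=0$ when $|K|=0$ — or, when $|K|>0$, using that $K\cap\widetilde S_h$ has volume controlled by Lemma 2.8/Corollary 2.6 type estimates (indeed $K\cap\widetilde S_h\subset K\cap S_{C(n)h}$, and since $0\in\partial K$, a convexity/dimension argument bounds this) — yields $|\widetilde S_h|\le C(n,q,\lambda)h^{(n-q)/2}$. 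Actually the cleanest route: $v\le C(n)h$ throughout $\widetilde S_h$ means $\widetilde S_h\subset S_{C(n)h}$, so it suffices to bound $|S_h|$; and $S_h\setminus K=\{0<v<h\}$ is where $\det D^2 v\ge\lambda v^q$ holds, so applying Lemma 2.9 to bounded convex pieces of $S_h$ where $v>0$ gives the bound on $|S_h\setminus K|$, while $|K\cap S_h|$ is handled by the structure of $K$ near its boundary point $0$.

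Second, for $|\M v(\widetilde S_h)|=|\partial v(\widetilde S_h)|\le C(n,q,\lambda,\Lambda)h^{(n+q)/2}$: on $\widetilde S_h$ the density of the Monge-Ampère measure is $gv^q\chi_{\{v>0\}}\le\Lambda(C(n)h)^q$ by the bound $v\le C(n)h$ just established. Hence
\[
|\M v(\widetilde S_h)|=\int_{\widetilde S_h}gv^q\chi_{\{v>0\}}\,\ud x\le\Lambda(C(n)h)^q|\widetilde S_h|\le C(n,q,\lambda,\Lambda)\,h^q\cdot h^{\frac{n-q}{2}}=C\,h^{\frac{n+q}{2}}.
\]
Here I am using that $v$ is an Aleksandrov solution of $\det D^2 v=gv^q\chi_{\{v>0\}}$ in $\Omega$ (Proposition 2.3), so $\M v$ has this explicit density and the Monge-Ampère measure of any Borel set compactly contained in $\Omega$ — in particular $\widetilde S_h$, which we restrict attention to the case $\widetilde S_h\subset\subset\Omega$ — is the integral of the density over that set.

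The main obstacle is the first estimate, specifically handling the degeneracy on $K$: Lemma 2.9 requires $w>0$ on a \emph{convex} open set, whereas $\widetilde S_h$ (or $S_{C(n)h}$) need not avoid $K$, and $\widetilde S_h\setminus K$ need not be convex. The resolution is either (a) if $|K|=0$, then $|\widetilde S_h|=|\widetilde S_h\setminus K|$ and one exhausts $\widetilde S_h\setminus K$ by bounded convex open subsets on which $v>0$, applying Lemma 2.9 to each and passing to the supremum; or (b) if $|K|>0$, one observes $\widetilde S_h\subset S_{C(n)h}$, decomposes via the slab geometry of $K$ near $0\in\partial K$ as in Lemma 2.8, bounding $|K\cap S_{C(n)h}|$ by a term of the same order $h^{(n-q)/2}$ (using that $0$ is a boundary point so the relevant slice of $K$ has controlled volume), while the complement is handled as in (a). Once $v\le C(n)h$ on $\widetilde S_h$ is in hand, the second estimate is immediate.
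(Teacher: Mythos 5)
Your second estimate is fine, and your identification of the obstacle in the first estimate (the degeneracy of the equation on $K$, which prevents a direct application of Lemma \ref{lem:volume concidence} to $\widetilde S_h$) is exactly right; but neither of your proposed resolutions closes the gap. First, ``taking the supremum over bounded convex open subsets $O'\subset \widetilde S_h\setminus K$'' does not bound $|\widetilde S_h\setminus K|$: the supremum of $|O'|$ over convex subsets of a non-convex set is in general much smaller than the measure of the set, and covering $\widetilde S_h\setminus K$ by convex pieces would require controlling their number and overlap, which you do not do. Second, your ``cleanest route'' of reducing to a bound on $|S_{C(n)h}|$ cannot work: since $v=0<C(n)h$ on $K$, one always has $K\subset S_{C(n)h}$, so $|S_{C(n)h}|\ge|K|$, which is not $O(h^{(n-q)/2})$ when $|K|>0$. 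The full sublevel set $S_{C(n)h}$ genuinely admits no such volume bound; only the centered section $\widetilde S_h$ does, and the reason is its balancedness around the boundary point $0$ --- a property you invoke only to obtain the inclusion \eqref{eq:centerS right}, not where it is actually needed.

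The missing idea is the following two-step argument, which is the paper's proof. Rotate so that $K\subset\{x_n\ge 0\}$ (possible since $0\in\partial K$). Then $O:=S_{C(n)h}\cap\{x_n<0\}$ is a single convex open set on which $v>0$, with $v\le C(n)h$ on $\partial O$, so Lemma \ref{lem:volume concidence} applies directly and gives $|S_{C(n)h}\cap\{x_n\le 0\}|\le C(n,q,\lambda)h^{\frac{n-q}{2}}$. Now use that $\widetilde S_h$ is $c(n)$-balanced around its mass center $0$: the map $x\mapsto -c(n)x$ sends $\widetilde S_h\cap\{x_n>0\}$ into $\widetilde S_h\cap\{x_n<0\}$, whence $|\widetilde S_h|\le C(n)|\widetilde S_h\cap\{x_n\le 0\}|\le C(n)|S_{C(n)h}\cap\{x_n\le 0\}|$ by \eqref{eq:centerS right}. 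This yields the first estimate without ever having to estimate $|K\cap\widetilde S_h|$ separately (your appeal to Corollary \ref{coro:volume concidence} only helps when $|K|=0$ anyway), and the second estimate then follows exactly as you wrote it.
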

\begin{proof}
After a rotation, we may assume that  $K \subset \left\{ x_n \geq 0\right\}$. Then, we can apply Lemma \ref{lem:volume concidence} to the convex set $S_{C(n)h} \cap  \left\{ x_n \leq 0\right\}$  to conclude that $|S_{C(n)h}| \leq C(n,q,\lambda)h^{\frac{n-q}{2}}$. Thus, using \eqref{eq:centerS right} and the balanced property of $\tilde{S}_h$,  
\[
|\widetilde{S}_h| \leq C(n) |\widetilde{S}_h\cap  \left\{ x_n \leq 0\right\} | \leq  C(n) |S_{C(n)h}\cap  \left\{ x_n \leq 0\right\} | \leq  C(n,q,\lambda)h^{\frac{n-q}{2}}.
\]
Consequently, we have $|\M v (\widetilde{S}_h)|\leq \Lambda \int_{\widetilde{S}_h}v^q \ud x \leq C(n,q,\lambda,\Lambda)h^{\frac{n+q}{2}}$.
\end{proof}

The following lemma originates from \cite{caffarelli1997properties}, and the proof can be found in \cite[Lemma 2.5]{mooney2015partial}.  
\begin{Lemma}\label{lem:measure constrain} 
For any section $S_{h}^v(x_0,p) \subset \subset \Omega$, we have 
\[
| \M v (S_{h}^v(x_0,p))| \cdot |S_{h}^v(x_0,p)| \geq c(n) h^{n}. 
\] 
\end{Lemma}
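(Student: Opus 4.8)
The plan is to reduce to a normalized situation by John's lemma and then compare $v$ with an explicit quadratic (a paraboloid) on the section. Write $S := S_h^v(x_0,p)$ and let $\ell(x) = v(x_0) + p\cdot(x-x_0) + h$ be the supporting affine function defining $S$, so that $w := \ell - v$ is a nonnegative concave function on $S$ with $w = 0$ on $\partial S$ and $w(x_0) = h - (v(x_0) - v(x_0) - 0) \ge 0$; more precisely $w \ge 0$ on $S$, $w$ vanishes on $\partial S$, and $w$ attains a positive value inside since $\ell > v$ somewhere in the open set $S$. By John's lemma there is an affine unimodular-up-to-scaling normalization; since the quantity $|\M v(S)|\cdot|S|$ is invariant under unimodular affine transformations of the domain (the Monge–Ampère measure transforms by $|\det A|^2$ while volumes transform by $|\det A|$, and one checks the product $|\M v(S)|\,|S|$ is unchanged by an affine map of determinant one; for a general linear map $A$ one can rescale), it suffices to treat the case where $S$ is normalized so that $B_1(0) \subset S \subset B_n(0)$ after an affine transformation, with $x_0$ mapped to some point of $S$.

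First I would bound $\M v(S)$ from below by the measure of the subdifferential of the concave "cap" $w$. The key geometric fact: the subdifferential image $\partial v(S)$ contains, up to a translation by $p$, the image $\partial(-w)(x_0')$ of the maximum point $x_0'$ of $w$, and more usefully, for every slope $\xi$ with $|\xi|$ at most a universal multiple of $\frac{\max_S w}{\operatorname{diam} S}$, the affine function $\ell - \xi\cdot(x - x_0')$ touches $v$ from above at some interior point of $S$ (slide it down until it first touches the graph of $v$; the touching cannot occur on $\partial S$ because there $v = \ell$ but $\ell - \xi\cdot(x-x_0')$ is strictly below $\ell$ for the appropriate sign, using that $x_0'$ is an interior maximum of the cap). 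Hence $\partial v(S) \supset p + B_{c\,M/D}(0)$ where $M = \max_S w$ and $D = \operatorname{diam} S$, giving $|\M v(S)| \ge c(n)(M/D)^n$. In the normalized coordinates $D \le 2n$, so $|\M v(S)| \ge c(n) M^n$.

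Next I would bound $M = \max_S w$ from below by a universal constant times $h$. This is where Aleksandrov's maximum principle (Lemma \ref{lem:measure constrain 2}) enters, applied not to $v$ but to the convex function $-w + (\text{const})$… actually the cleanest route: $M \ge$ the value of the cap at its center, and by construction of the section the cap has "height parameter" $h$ in the sense that $w(x_0) = h - 0$ relative to the supporting plane at $x_0$ — but $x_0$ need not be the max of $w$, so instead observe $M \ge w(x_0) $ only if $p \in \partial v(x_0)$, which holds by hypothesis, giving $w(x_0) = \ell(x_0) - v(x_0) = h$. Therefore $M \ge h$. Combining, $|\M v(S)| \ge c(n) h^n$ in normalized coordinates, and since $|S| \ge |B_1| = \omega_n$ there, we get $|\M v(S)|\cdot |S| \ge c(n) h^n$; undoing the normalization preserves this product, completing the proof.

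The main obstacle I anticipate is making the "slide the plane until it touches" argument fully rigorous when $x_0$ is an interior point of $S$ but not the maximizer of the cap $w$: one must be careful that the touching point of the tilted supporting plane stays strictly inside $S$ for a full ball of slopes $\xi$, which requires quantifying how far $x_0$ (equivalently any point where the cap has height comparable to $h$) sits from $\partial S$ in the normalized picture — this uses that $S \supset B_1(0)$ so the cap, being concave and vanishing on $\partial S \subset \partial B_1(0)^c$-side, has a definite lower bound on $B_{1/2}(0)$ once we know $M \ge h$, via the standard concave-envelope comparison. All of this is routine convex-geometry bookkeeping once set up, and is exactly the content cited from \cite[Lemma 2.5]{mooney2015partial}; I would either reproduce that argument in the two or three lines above or simply invoke it.
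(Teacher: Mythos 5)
Your proposal is correct and is essentially the standard proof of this lemma; the paper itself offers no proof but only cites \cite[Lemma 2.5]{mooney2015partial}, and your argument is the one found there: after an affine normalization (under which the product $|\M v(S)|\cdot|S|$ and the height $h$ are both preserved, since the subdifferential image and the domain volume scale by reciprocal Jacobian factors), one shows $\partial v(S)\supset p+B_{ch/\operatorname{diam}(S)}(0)$ by tilting the supporting plane, and then uses $B_1\subset S\subset B_n$. One small caution on the write-up: the clean justification of the tilting step is not ``slide the plane down until it first touches'' (that produces a plane touching the graph from above, which is the wrong side for a subgradient, and moreover $\xi\cdot(x-x_0)$ does not have a fixed sign on $\partial S$); instead, for $|\xi|<h/\operatorname{diam}(S)$ minimize $v(x)-(p+\xi)\cdot x$ over $\overline S$ and observe that its value on $\partial S$ exceeds its value at $x_0$ by exactly $h-\xi\cdot(x-x_0)\ge h-|\xi|\operatorname{diam}(S)>0$, so the minimum is attained at an interior point $y$, whence $p+\xi\in\partial v(y)$.
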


Consequently, 

\begin{Lemma}\label{lem:volume of Shc}
Suppose $0\in\partial K$, and $\widetilde{S}_h\subset \subset \Omega$, then we have
\[
c(n,q,\lambda,\Lambda)h^{\frac{n-q}{2}} \leq |\widetilde{S}_h| \leq C(n,q,\lambda)h^{\frac{n-q}{2}},
\]
thus
\[  c(n,q,\lambda,\Lambda)h^{\frac{n+q}{2}} \leq |\M v(\widetilde{S}_h)| \leq C(n,q,\lambda,\Lambda)h^{\frac{n+q}{2}}
\]
\end{Lemma}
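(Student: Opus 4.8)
The plan is to combine the upper bound on $|\widetilde S_h|$ already proved in Lemma \ref{lem:volume Shc up} with the reverse inequality coming from Lemma \ref{lem:measure constrain}, using the equation to convert between the Monge--Amp\`ere measure of a centered section and its volume. The only genuinely new ingredient needed is a \emph{lower} bound for $v$ on a definite fraction of $\widetilde S_h$, which will give a lower bound for $|\M v(\widetilde S_h)|$ in terms of $h$ and $|\widetilde S_h|$; feeding this into Lemma \ref{lem:measure constrain} closes the loop.

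First I would record, from Lemma \ref{lem:measure constrain} applied to the section $\widetilde S_h$ (which is a section of $v$ at some point with height at least $h$, as noted right after \eqref{eq:centerS right}),
\[
|\M v(\widetilde S_h)|\cdot|\widetilde S_h|\ \ge\ c(n)\,h^n .
\]
So it suffices to produce an upper bound of the shape $|\M v(\widetilde S_h)|\le C\,h^{q}|\widetilde S_h|$, since then $C h^q|\widetilde S_h|^2\ge c h^n$, i.e. $|\widetilde S_h|\ge c(n,q,\Lambda)\,h^{(n-q)/2}$, which together with the already-established $|\widetilde S_h|\le C h^{(n-q)/2}$ gives the two-sided volume bound; the two-sided bound on $|\M v(\widetilde S_h)|$ then follows from Lemma \ref{lem:volume Shc up} and from $|\M v(\widetilde S_h)|\ge c h^n/|\widetilde S_h|\ge c h^n/(C h^{(n-q)/2})=c h^{(n+q)/2}$.

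For the bound $|\M v(\widetilde S_h)|\le C h^q|\widetilde S_h|$: since $0\in\partial K$, rotate so that $K\subset\{x_n\ge 0\}$, and recall $\widetilde S_h\subset S_{C(n)(v(0)+h)}=S_{C(n)h}$ by \eqref{eq:centerS right} (using $v(0)=0$). On $S_{C(n)h}$ we have $0\le v< C(n)h$, so $\M v(\widetilde S_h)=\int_{\widetilde S_h} g\,v^q\,\chi_{\{v>0\}}\,dx\le \Lambda (C(n)h)^q|\widetilde S_h|$; this is exactly the $h^q|\widetilde S_h|$ bound. (Note this is essentially the computation already carried out in the proof of Lemma \ref{lem:volume Shc up}, just not pushed through the John-type comparison.) Combining with the paragraph above yields both displayed chains of inequalities, with all constants depending only on $n,q,\lambda,\Lambda$.

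The main obstacle is really just bookkeeping: one must make sure the section to which Lemma \ref{lem:measure constrain} is applied is compactly contained in $\Omega$ — this is the standing hypothesis $\widetilde S_h\subset\subset\Omega$ in the statement — and that one uses the \emph{correct} height (at least $h$, not exactly $h$) so that the constant in Lemma \ref{lem:measure constrain} is not lost. There is no compactness or PDE subtlety beyond what is already available; the lemma is a clean consequence of Lemmas \ref{lem:volume Shc up} and \ref{lem:measure constrain} once the elementary estimate $v<C(n)h$ on $\widetilde S_h$ is invoked to bound $\M v(\widetilde S_h)$ from above by $C\,h^q|\widetilde S_h|$.
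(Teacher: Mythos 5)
Your proposal is correct and follows essentially the same route as the paper: the paper's proof also observes that $\widetilde S_h$ is a section of height at least $h$ and then combines Lemma \ref{lem:volume Shc up} with Lemma \ref{lem:measure constrain}. Your intermediate bound $|\M v(\widetilde S_h)|\le C h^q|\widetilde S_h|$ is exactly the computation already embedded in the proof of Lemma \ref{lem:volume Shc up}, so the argument matches the paper's in substance, just written out in more detail.
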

\begin{proof}
Note that $\widetilde{S}_{h} $ is a section of $v$ at certain point with height at least $h$. Then the conclusions follow from Lemma \ref{lem:volume Shc up}, and Lemma \ref{lem:measure constrain} applied to $\widetilde{S}_h$.
\end{proof}

\begin{Lemma}\label{lem:volume of Sh K=0}
Suppose $K=\left\{0\right\}$, and $S_{h} \subset \subset \Omega$, then we have
\[
\widetilde{S}_{c(n)h}  \subset  S_{h} \subset C(n,q,\lambda) \widetilde{S}_{c(n)h} ,\quad C(n,q,\lambda,\Lambda) h^{\frac{n-q}{2}} \leq |{S}_h| \leq C(n,q,\lambda) h^{\frac{n-q}{2}}.  
\]
\end{Lemma}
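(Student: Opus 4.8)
## Proof proposal for Lemma \ref{lem:volume of Sh K=0}

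The plan is to compare the sublevel set $S_h = \{v < h\}$ with the centered section $\widetilde S_{c(n)h}$ at $0$, using the balanced property of centered sections together with the two-sided volume/measure estimates already available. Since $K = \{0\}$ and $S_h \subset\subset \Omega$, the function $v$ is strictly convex at $0$ with $v(0) = 0$, so $\widetilde S_t^v(0)$ is well-defined for all small $t$, and by Lemma \ref{lem:savin lem2.6} it has $0$ as its center of mass and is therefore $c(n)$-balanced around $0$. The inclusion $\widetilde S_{c(n)h} \subset S_h$ will follow from \eqref{eq:centerS right}: that estimate says $\widetilde S_t \subset S_{C(n)(v(0)+t)} = S_{C(n)t}$ since $v(0)=0$, so choosing $t = c(n)h$ with $c(n) = 1/C(n)$ gives $\widetilde S_{c(n)h} \subset S_h$.

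For the reverse inclusion $S_h \subset C(n,q,\lambda)\widetilde S_{c(n)h}$, I would argue as follows. Write $\widetilde S_{c(n)h} = \{v < \ell(x)\}$ where $\ell(x) = p \cdot x + c(n)h$ (using $v(0)=0$). By \eqref{eq:centerS nondege} applied with $h$ replaced by $c(n)h$, we have $v(x) - \ell(x) \geq -C(n)h$ throughout $\widetilde S_{c(n)h}$; combined with $v \geq 0$ this pins down $|p|$ and the geometry of $\widetilde S_{c(n)h}$ quantitatively. More directly: since $\widetilde S_{c(n)h}$ is balanced around $0$ and $v(0)=0$, the affine function $\ell$ satisfies $\ell \leq C(n,q,\lambda,\Lambda)h$ on $\widetilde S_{c(n)h}$ (same computation as in the derivation of \eqref{eq:centerS right}), hence $v < C h$ on a dilate of $\widetilde S_{c(n)h}$ — but what one really needs is to bound $S_h$ from outside. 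The clean route is to note that $S_h$ is itself a section of $v$ at $0$ with height $h$, so by Lemma \ref{lem:measure constrain} we get $|\mathscr M v(S_h)| \cdot |S_h| \geq c(n) h^n$; pairing this with the upper bound $|\mathscr M v(S_h)| \leq \Lambda \int_{S_h} v^q \leq \Lambda h^q |S_h|$ yields $|S_h| \geq c(n,q,\lambda,\Lambda) h^{(n-q)/2}$. The matching upper bound $|S_h| \leq C(n,q,\lambda) h^{(n-q)/2}$ comes from Lemma \ref{lem:volume concidence} applied on the convex open set $S_h \cap \{x_1 > 0\}$ (after translating so that $K = \{0\} \subset \{x_1 = 0\}$, using $|K| = 0$) together with convexity, exactly as in the proof of Lemma \ref{lem:volume Shc up} — this also reproves the volume chain $c h^{(n-q)/2} \leq |S_h| \leq C h^{(n-q)/2}$ stated in the lemma.

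With both $\widetilde S_{c(n)h}$ and $S_h$ now known to have comparable volumes (order $h^{(n-q)/2}$), and with $\widetilde S_{c(n)h} \subset S_h$ both convex and both ``centered'' at $0$ in the balanced sense, the inclusion $S_h \subset C \widetilde S_{c(n)h}$ follows from a standard John's-lemma / balanced-set comparison: a convex set $S_h$ that is balanced around a point $0$ and contains a convex subset $\widetilde S_{c(n)h}$ of comparable volume which is also balanced around $0$ must be contained in a bounded dilate of that subset. Concretely, take the John ellipsoid $\mathcal E$ of $\widetilde S_{c(n)h}$ (so $\mathcal E \subset \widetilde S_{c(n)h} \subset n^{3/2}\mathcal E$ after centering); if $S_h$ were not contained in $C\mathcal E$ for large $C$, convexity of $S_h$ together with a point far out plus a ball inside $\widetilde S_{c(n)h}$ around $0$ (which exists since $\widetilde S_{c(n)h}$ is balanced) would force $|S_h|$ to exceed $C^{1}|\widetilde S_{c(n)h}|$, contradicting the volume comparison. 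I expect this last balanced-set comparison — making the geometric dependence on constants uniform and correctly handling the fact that $S_h$ need not have $0$ as a genuine center of mass, only as a balanced center inherited from $\widetilde S_{c(n)h} \subset S_h$ — to be the main technical point, though it is entirely parallel to the arguments already used for the centered sections earlier in this section.
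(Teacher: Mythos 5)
Your proposal is correct and follows essentially the same route as the paper: the first inclusion is read off from \eqref{eq:centerS right} using $v(0)=0$, and the reverse inclusion comes from the observation that a point of $S_h$ lying outside $M\widetilde S_{c(n)h}$ for large $M$ would produce a convex set of volume at least $cMh^{\frac{n-q}{2}}$ on which $0<v\le h$, contradicting Lemma \ref{lem:volume concidence}. The only cosmetic differences are in ordering and bookkeeping: the paper applies Lemma \ref{lem:volume concidence} directly to $\operatorname{conv}\{x_0,\widetilde S_{c(n)h}\cap\{x_n>0\}\}$ and then deduces both volume bounds from Lemma \ref{lem:volume of Shc}, whereas you first establish the two-sided bound on $|S_h|$ (for the upper bound, sum the estimate over both half-spaces $\{\pm x_1>0\}$ rather than invoking a balanced property, since $S_h$ is not a priori balanced) and then run the convex-hull volume comparison.
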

\begin{proof}
From \eqref{eq:centerS right}, we know that $\widetilde{S}_{c(n)h}  \subset  S_{h}$ since $v(0)=0$.

Suppose, for simplicity, there exists a point $x_0= ae_{n} \in  S_{h} \setminus M \widetilde{S}_{c(n)h}  $ with $M$ being large.  
	Then, by applying Lemma \ref{lem:volume concidence} for the convex set $O := \operatorname{conv}\left\{ x_0, \widetilde{S}_{c(n)h}  \cap \left\{ x_n >0 \right\} \right\}$, we obtain that $M \leq C(n,q,\lambda) $. This implies that $S_{h} \subset C \widetilde{S}_{c(n)h}$.   Then the estimates on $|S_h|$ follow from Lemma \ref{lem:volume of Shc}.
\end{proof}

\begin{Lemma}\label{lem:balanced other}
Suppose $0\in\partial K$, then for any centered section $\widetilde{S}_h \subset \subset \Omega$,  and every $x_0$ satisfying  $v(x_0) -\ell_h(x_0) \leq -\ell_h(0)$, $\widetilde{S}_h$ is $ c(n,q,\lambda,\Lambda)$-balanced around $x_0$.
\end{Lemma}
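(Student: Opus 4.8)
The plan is to combine the balancing of the centered section around $0$ (which is automatic from John's lemma) with the two inequalities \eqref{eq:centerS nondege} and the volume bounds of Lemma~\ref{lem:volume of Shc}, in order to transfer the balancing to an arbitrary point $x_0$ with $v(x_0)-\ell_h(x_0)\le-\ell_h(0)$. Write $w:=v-\ell_h$, a convex function that is negative precisely on $\widetilde S_h$, equals $0$ on $\partial\widetilde S_h$, has $w(0)=v(0)-\ell_h(0)=-\ell_h(0)<0$, and by hypothesis $w(x_0)\le -\ell_h(0)=w(0)$. The first step is to normalize: after a unimodular affine transformation fixing the mass center $0$ of $\widetilde S_h$, John's lemma and Lemma~\ref{lem:volume of Shc} give $B_{c}\subset h^{-(n-q)/(2n)}\widetilde S_h\subset B_C$ for universal $c,C$ (here I rescale so the section has volume comparable to $1$; equivalently I may just keep $h$ and carry the $h^{(n-q)/(2n)}$ factors). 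So it suffices to prove: if $E:=\widetilde S_h$ is a bounded convex set with $B_c\subset E\subset B_C$, $0$ its mass center, and $w$ convex on $E$ with $w<0$ in $E$, $w=0$ on $\partial E$, $w(0)\le -\ell_h(0)$ and $w(x_0)\le w(0)$, then $t(x_0-E)\subset E-x_0$ for all $t\in[0,\kappa]$ with $\kappa$ universal.

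The geometric heart of the argument is this: the balancing constant of a convex body $E$ around an interior point $x_0$ degenerates only as $x_0$ approaches $\partial E$, and quantitatively $\kappa\gtrsim \operatorname{dist}(x_0,\partial E)/\operatorname{diam}(E)$; since $\operatorname{diam}(E)\le 2C$ is universally bounded, it is enough to show $\operatorname{dist}(x_0,\partial E)\ge c_0$ for a universal $c_0>0$. To get this lower bound I will use the convexity of $w$ together with its boundary values. Suppose toward a contradiction that $x_0$ is within distance $\delta$ of some boundary point $y\in\partial E$, where $\delta$ is small to be chosen. Since $w$ is convex, $w=0$ on $\partial E$, and $E\subset B_C$, a standard Aleksandrov-type bound (Lemma~\ref{lem:measure constrain 2} applied on $E$, or just the gradient bound $|Dw|\le \dfrac{\sup_E|w|}{\operatorname{dist}(\cdot,\partial E)}$) forces $|w(x_0)|\le C\,\delta\cdot\dfrac{\sup_E|w|}{c}$ — i.e. $w$ is small near the boundary, controlled by $\delta\sup_E|w|$. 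On the other hand $|w(x_0)|\ge|w(0)|=\ell_h(0)=v(0)+h\ge h$, and by Lemma~\ref{lem:volume Shc up}/Lemma~\ref{lem:volume of Shc} (or directly from \eqref{eq:centerS nondege} and the balancing around $0$) we have $\sup_E|w|\le C(n)\,h$. Combining, $h\le |w(x_0)|\le C\delta\,\sup_E|w|\le C'\delta\,h$, which is a contradiction once $\delta$ is chosen smaller than $1/C'$. Hence $\operatorname{dist}(x_0,\partial E)\ge c_0(n,q,\lambda,\Lambda)>0$, and the balancing follows.

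The step I expect to be the main obstacle is making the implication ``$x_0$ near $\partial E$ $\Rightarrow$ $|w(x_0)|$ small'' clean and genuinely universal, because one must control $w$ near the boundary by its global oscillation without losing constants — the naive bound $|Dw|\le \sup|w|/\operatorname{dist}(\cdot,\partial E)$ blows up exactly where we need it. The fix is to not differentiate but to use the linear upper barrier: for $y\in\partial E$, pick a supporting hyperplane of $E$ at $y$ with inner unit normal $\nu$; then on the segment from the point $-cy\in E$ (available by balancing of $E$ around $0$) through $x_0$ toward $y$, convexity of $w$ together with $w(y)=0$ and the universal bounds on $\sup_E|w|$ and $\operatorname{diam}(E)$ yields $|w(x_0)|\le C\,\operatorname{dist}(x_0,\partial E)$ directly, with no gradient. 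Everything else — the normalization via John's lemma, the two-sided volume estimate, and the elementary fact that a convex body containing $B_{c_0}(x_0)$ and contained in $B_C$ is $(c_0/2C)$-balanced around $x_0$ — is routine and already supplied by the lemmas above.
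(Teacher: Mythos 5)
Your overall route is the same as the paper's: normalize $\widetilde S_h$ by John's lemma, observe that in normalized coordinates $w:=v_h-\tilde\ell_h$ satisfies $w(x_0)\le -1$ while $\sup|w|\le C$, and reduce everything to showing $\operatorname{dist}(x_0,\partial\widetilde S_h)\ge c$ after normalization (your observation that a convex body in $B_C$ containing $B_{c_0}(x_0)$ is $(c_0/C)$-balanced around $x_0$ is correct). The gap is in the step you yourself flag as the main obstacle, and your proposed fix does not work. The claim that convexity of $w$, the boundary condition $w=0$ on $\partial E$, and universal bounds on $\sup_E|w|$ and $\operatorname{diam}(E)$ alone yield $|w(x_0)|\le C\operatorname{dist}(x_0,\partial E)$ is false: take $E=B_1$ and let $w$ be the cone function with vertex $(p,-1)$ over the base $\partial B_1\times\{0\}$, where $p=(1-\delta)e_1$. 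This $w$ is convex, vanishes on $\partial E$, has $\sup|w|=1$ and $\operatorname{diam}(E)=2$, yet $|w(p)|=1$ at distance $\delta$ from the boundary for arbitrarily small $\delta$. The reason your barrier argument cannot be completed is structural: convexity gives \emph{upper} bounds for $w$ at interior points of segments, whereas you need a \emph{lower} bound for $w$ at $x_0$; extrapolating such a lower bound requires a point of $E$ on the far side of $x_0$ from the boundary at controlled distance, which is exactly what degenerates as $x_0\to\partial E$.

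What rules out the cone counterexample — and what the paper uses — is the bound on the Monge--Amp\`ere mass: after normalization, $\det D^2 v_h\le \Lambda_h v_h^q\le C$ and $|\T_h^{-1}\widetilde S_h|\le C$, so $\M w(\T_h^{-1}\widetilde S_h)\le C$, and then Lemma \ref{lem:measure constrain 2} gives $|w(x)|\le C\bigl(\operatorname{dist}(x,\partial)\,\M w\bigr)^{1/n}\operatorname{diam}^{(n-1)/n}\le C\operatorname{dist}(x,\partial)^{1/n}$, which combined with $|w(x_0)|\ge 1$ forces $\operatorname{dist}(x_0,\partial)\ge c$. (Note the exponent $1/n$ and the dependence on $\M w$ rather than on $\sup|w|$; your displayed form $|w(x_0)|\le C\delta\sup_E|w|/c$ is not what Lemma \ref{lem:measure constrain 2} gives.) You actually mention Lemma \ref{lem:measure constrain 2} parenthetically before discarding it; retaining it, and adding the one-line verification that $\M v_h$ is bounded in normalized coordinates, turns your argument into the paper's proof. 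As written, however, the proof is not complete.
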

\begin{proof}
Applying Lemma \ref{lem:volume of Shc}, we can use John's lemma to find a diagonal transformation $\T_h$ with $c  h^{\frac{n-q}{2}} \leq \det \T_h \leq Ch^{\frac{n-q}{2}} $ that maps the unit ball to the minimum volume ellipsoid of $\widetilde{S}_h $, such that $ \T_h B_c(0)  \subset   \widetilde{S}_h  \subset  \T_h  B_C(0)$. Then, we introduce the following normalization  of $v$ at $0$ for $\widetilde{S}_h $,
\begin{equation*}
v_{h} (x) = \frac{ v\left( \T_hx\right)}{h} ,\quad \tilde{\ell}_{h} (x)=\frac{\ell_{h}\left( \T_hx\right)}{h} , \quad \tilde{g}_{h} (x)=(\det \T_h)^2h^{q-n}g\left( \T_hx\right), \quad x \in \T_h^{-1}\tilde{S}_h.   
\end{equation*}
Then, $0\leq v_{h} \leq C$ in $\T_h^{-1}\tilde{S}_h$ (by \eqref{eq:centerS right}) and  is a solution of 
\[
\det D^2 v_{h} =g_h  v_{h}^q \chi_{ \left\{v_{h}>0  \right\}}  \quad \text{in } \T_h^{-1} \tilde{S}_h, \quad v_{h} =\tilde{\ell}_{h}   \quad  \text{on }\partial \T_h^{-1} \tilde{S}_h, \quad   0 < c \leq g_h  \leq   C. 
\]
In particular, $\det D^2 v_h \leq C$ in $\T_h^{-1} \tilde{S}_h$. Applying Lemma \ref{lem:measure constrain 2} to the normalization $v_{h}- \tilde{\ell}_{h} $, we obtain the uniform continuity modulo of $v_{h}- \tilde{\ell}_{h}$  up to the boundary of $\T_h^{-1}\widetilde{S}_h $. Hence, the linear function $\tilde \ell_{h}$ is bounded on $\T_h^{-1} \widetilde{S}_h$, which provides a uniform bound for $|\nabla \ell_{h}|$ in $\T_h^{-1} \tilde{S}_h$. Thus, we obtain a uniform  modulo of continuity of $v_{h}$  up to the boundary of $\T_h^{-1}\widetilde{S}_h $.

Furthermore, observing that 
\[
v_{h}(\T_h^{-1}  x_0) -\tilde{\ell}_{h} (\T_h^{-1}  x_0) =  \frac{v(x_0) -\ell_h(x_0)}{h}\le \frac{-\ell_h(0)}{h}=-1 ,
\]
we can infer from Lemma \ref{lem:measure constrain 2}  that $\operatorname{dist}(\T_h^{-1}x_0, \partial \T_h^{-1}\widetilde{S}_h) \geq c$. Consequently, $\T_h^{-1} \widetilde{S}_h $ is balanced around $\T_h^{-1}  x_0$, and thus, $\widetilde{S}_h $ is balanced around $x_0$, since the balanced property is affine invariant.
\end{proof}

\begin{Lemma}\label{lem:ext are exp}
Suppose $0\in \Omega$ is an extreme point of $K$, then it cannot serve as an endpoint of a line segment on $\partial K$, and thus, it is an exposed point. In addition, if $|K|=0$, then $K =\left\{0\right\}$.
\end{Lemma}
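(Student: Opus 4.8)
The plan is to argue by contradiction using the centered sections $\widetilde S_h = \widetilde S_h^v(0)$ together with the volume estimates (Lemma \ref{lem:volume of Shc}) and the balancing lemma (Lemma \ref{lem:balanced other}). Suppose $0$ is an extreme point of $K$ but lies at the endpoint of a nondegenerate segment $[0,z]\subset\partial K$, where $z = \ell e_n$ for some $\ell>0$ after a rotation (and WLOG $K\subset\{x_n\ge 0\}$, $0\in\partial K$). Since $0$ is extreme, the centered sections shrink to $\{0\}$ by \eqref{eq:center sec shrink}, so for all small $h>0$ we have $\widetilde S_h\subset\subset\Omega$ and $z\notin\widetilde S_h$. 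First I would use the normalization in Lemma \ref{lem:balanced other}: pick the diagonal map $\T_h$ from John's lemma with $c h^{(n-q)/2}\le\det\T_h\le C h^{(n-q)/2}$ and $\T_h B_c(0)\subset\widetilde S_h\subset\T_h B_C(0)$. In the normalized picture $v_h(x)=v(\T_h x)/h$ solves a uniformly nondegenerate obstacle equation on $\T_h^{-1}\widetilde S_h$ with $0\le v_h\le C$, and $0=v_h$ on the normalized coincidence set $\T_h^{-1}K$; moreover $v_h$ has a uniform modulus of continuity up to the boundary.

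The key point is to track where the segment $[0,z]$ goes under $\T_h^{-1}$. Because $\T_h$ is diagonal and $\widetilde S_h$ is bounded with $x_0=0$ as center of mass, the diameter of $\widetilde S_h$ in each coordinate direction is comparable to the $\T_h$-scaling in that direction. I would show that the diameter of $\widetilde S_h$ in the $e_n$-direction, call it $\rho_h$, satisfies $\rho_h\to 0$ (since $\widetilde S_h\to\{0\}$), so $\T_h^{-1}z = (\ell/\rho_h)e_n + o(1)$ has $x_n$-component $\to\infty$. Hence the normalized coincidence set $\T_h^{-1}K$ contains the entire segment from $0$ to a point at height $\to\infty$ in the $e_n$-direction, and in particular $\T_h^{-1}K$ contains the point $e_n$ for all small $h$. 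But $v_h$ vanishes on $\T_h^{-1}K$ and $B_c(0)\subset\T_h^{-1}\widetilde S_h$, so the function $v_h$ is $0$ at the center of mass $0$ and at the point $e_n$; since $0$ lies in the interior of the centered section and $v_h - \tilde\ell_h\le -1$ at $0$, convexity forces $v_h$ (hence $v_h$ on the segment) to be governed by the obstacle equation's lower barrier, contradicting Lemma \ref{lem:volume of Shc} / Lemma \ref{lem:volume concidence}: the set $\{v_h<1/2\}$ would then contain a convex region of volume bounded below independently of $h$ built from $B_c(0)$ and the segment, while Lemma \ref{lem:volume concidence} caps its volume in terms of the height. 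This contradiction shows no such segment exists, so every extreme point of $K$ is exposed.

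For the second assertion, suppose $|K|=0$ and $0\in K$. By Corollary \ref{coro:volume concidence}, $\dim K\le (n+q)/2<n$, so $K$ lies in a proper affine subspace; after translation and rotation assume $0$ is the mass center of $K$ and $K\subset\{x_1=0\}$. If $K\neq\{0\}$, then $K$ contains a nondegenerate segment through $0$ (indeed $0$ is an interior point of $K$ relative to $\mathrm{aff}(K)$, which has dimension $\ge 1$), so $0$ is not an extreme point of $K$. But then, by the argument just given applied to any actual extreme point of $K$ — which exists since $K$ is compact convex — combined with the first part, and by the representation of $K$ as the convex hull of its extreme points, we obtain that $K$ contains a segment with an extreme point as an endpoint, again contradicting the first assertion. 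Alternatively and more directly: if $\dim K\ge 1$ then some extreme point $p$ of $K$ is the endpoint of a segment in $K$ (take a supporting hyperplane at $p$ and a chord of $K$ emanating from $p$ into $K$), contradicting what we just proved; hence $\dim K = 0$, i.e. $K=\{0\}$.

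I expect the main obstacle to be making rigorous the claim that $\T_h^{-1}$ sends the fixed segment $[0,z]$ to a segment whose length blows up — equivalently, controlling the rate at which $\widetilde S_h$ shrinks in the $e_n$-direction relative to $\det\T_h$. The clean way around this is not to estimate $\rho_h$ quantitatively but to pass to a subsequential limit: normalize so that $\T_h^{-1}\widetilde S_h\to S_\infty$ (a bounded convex set balanced around $0$) and $v_h\to v_\infty$ locally uniformly, with $v_\infty$ a global solution of the limiting obstacle equation $\det D^2 v_\infty = c\, v_\infty^q\chi_{\{v_\infty>0\}}$ on $S_\infty$ vanishing on a limit coincidence set $K_\infty$; since $z$ escapes every $\widetilde S_h$ but $0$ stays at the center, the dilation structure forces $K_\infty$ to contain a full ray $\{t e_n : t\ge 0\}$ (or at least an unbounded piece), and then Lemma \ref{lem:volume concidence} applied on slabs $\{0<v_\infty<h\}$ intersecting a half-space gives the contradiction by letting the slab get long. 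This is the standard Savin-type blow-up argument, and the bookkeeping of the limiting equation and coincidence set is where care is needed.
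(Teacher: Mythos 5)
There is a genuine gap in the first (and main) part of your argument: the contradiction you claim never materializes. After normalizing by $\T_h^{-1}$, the section $\T_h^{-1}\widetilde S_h$ is trapped between $B_c(0)$ and $B_C(0)$, so the normalized coincidence set contains a segment from the mass center $0$ out to $\partial(\T_h^{-1}\widetilde S_h)$ of length at most $C$ --- and this is not, by itself, contradictory. Lemma \ref{lem:volume concidence} cannot be applied to your ``convex region built from $B_c(0)$ and the segment'' because that lemma requires $w>0$ on $O$, whereas your region meets the set $\{v_h=0\}$; and even setting that aside, after normalization the height is of order one, so the lemma's upper bound and your lower bound are both just universal constants and do not clash. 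Your fallback blow-up has the same defect: since the domains $\T_h^{-1}\widetilde S_h$ stay inside $B_C(0)$, the limit configuration lives on a \emph{bounded} convex set $S_\infty$, the limit coincidence set contains only a bounded segment $[0,y_\infty]$ with $y_\infty\in\partial S_\infty$ (not ``a full ray'' or ``an unbounded piece''), and there is no long slab to which Lemma \ref{lem:volume concidence} could be applied. Ruling out this bounded limit configuration is essentially the content of the lemma itself, so nothing has been gained.

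The idea you are missing is where to center the sections. The paper centers them not at the extreme point $0$ but at an interior point $ae$ of the alleged segment $[0,e]$. Since $e\notin\widetilde S_h(ae)$ one gets $\ell_h(e)\le v(e)=0$, while $\ell_h(ae)=h$, so by linearity $\ell_h(0)>h$ and hence $v(0)-\ell_h(0)\le-\ell_h(ae)$. Lemma \ref{lem:balanced other} then says $\widetilde S_h(ae)$ is balanced around $0$, so $ae\in\widetilde S_h(ae)$ forces $-c\,ae\in\widetilde S_h(ae)$ for every small $h$; letting $h\to0$ and using $\varlimsup_h\widetilde S_h(ae)\subset K$ puts the segment $[-c\,ae,ae]$ inside $K$, which contradicts extremality of $0$ because $K$ now extends past $0$. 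Centering at $0$ itself, as you do, only yields that $\widetilde S_h$ is balanced around its own mass center, which carries no information, and can never produce a point of $K$ on the far side of $0$ --- the only kind of contradiction extremality affords. The second assertion is fine once the first is in place, though it is simpler than you make it: if $|K|=0$ then $K=\partial K$, so any second point $x_0\in K$ makes $0$ the endpoint of the segment $[0,x_0]\subset\partial K$ (no appeal to compactness of $K$ or to other extreme points is needed, and $K$ need not be compact since it is only relatively closed in $\Omega$).
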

\begin{proof}
Suppose the contrary that $0$ is an endpoint of a line segment on $\partial K$. Denote this line segment as $[0,e]$.  By \eqref{eq:centerS right}, we have for $0<a<1$ that
\[
\varlimsup_{h \to 0}  \widetilde{S}_h(ae) \subset K= \lim_{h \to 0}  S_{C(n)h}.
\]
By noting that $\widetilde{S}_h(ae)$ is $c(n)$-balanced around its mass center $ae$, $\varlimsup_{h \to 0}  \widetilde{S}_h(ae) $ is $c(n)$-balanced around $ae$, and we have
\[
\varlimsup_{h \to 0}  \widetilde{S}_h(ae) \subset K \cap \left(ae-C(n)K\right).
\]	 
Since $0$ is an extreme point, we have
\[
\varlimsup_{a \to 0} K \cap \left(ae-C(n)K\right) =\{0\}. 
\]
Therefore, we have for  $a>0$ small that $\varlimsup_{h \to 0}  \widetilde{S}_h (ae) \subset \subset \Omega \setminus \left\{ e\right\}$, and consequently, for $h>0$ and $a>0$ small that 
\[
\widetilde{S}_h(ae) \subset \subset \Omega  \setminus \left\{ e\right\}.
\]
For these small $h$, we denote 
\[
\widetilde{S}_h(ae) =\left\{x: \; v(x)<\ell_h(x):=v(ae)+p\cdot (x-ae)+h\right\}.
\] 
Since $e \notin \widetilde{S}_h(ae)$, we have $\ell_{h}(e) \leq v(e) =0$. Given that $\ell_{h}(ae)=h$,  by the linearity of $\ell_h$, we find that  $\ell_{h}(0) > h$. Consequently, $0 \in 	\widetilde{S}_h(ae)$ and  $v(0)-\ell_h(0) \leq -\ell_h(ae)$. Thus, Lemma \ref{lem:balanced other} implies that $	\widetilde{S}_h(ae)$ is balanced around $0$.  Since $ae \in \widetilde{S}_h(ae)$, then $-cae$, and consequently the line segment $[-cae,ae]$, are in $\widetilde{S}_h(ae)$ for all $h$, where the constant $c=c(n,q,\lambda,\Lambda)>0$. Since $\varlimsup_{h \to 0}  \widetilde{S}_h(ae) \subset  K $, we obtain $[-cae,ae]\subset K$, which is a contradiction to the assumption that $0$ is an extreme point.

Suppose now that $|K|=0$. Then we have $K =\partial K$. If there exists another point $x_0 \in K \setminus \left\{0\right\}$, then $0$ is an endpoint of the segment on $\partial K$ that connects $0$ and $x_0$, which is impossible.
\end{proof}

Next, we provide a characterization for the set $\Gamma_{nsc}:=\Gamma \setminus \Gamma_{sc}=(\Omega \cap \partial K) \setminus K^{exp}$. Since we are interested in the interior regularity of the solution $v$ and its free boundary $\Gamma$, one notes from the definition of $K$ that $K$ is a subset of $\Omega$, not touching the boundary $\partial\Omega$. 
We say a convex set is non-trivial if it contains more than one elements.

\begin{Lemma}\label{lem:nsc property}
$\Gamma_{nsc}=\bigcup\{E \subset \partial K:  E \mbox{ is a non-trivial exposed face of }K\}$. Furthermore, for every non-trivial exposed face $E \subset \partial K$ of $K$, we have $E^{ext} =\emptyset$. 
\end{Lemma}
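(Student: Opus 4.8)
The plan is to establish the two parts of the statement in sequence, using the fact (noted in the excerpt) that $K\subset\Omega$ and that extreme and exposed points have been analyzed in Lemma \ref{lem:ext are exp}. Recall $\Gamma_{nsc}=(\Omega\cap\partial K)\setminus K^{exp}$.

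First I would prove the inclusion $\Gamma_{nsc}\subset\bigcup\{E\}$. Take $x_0\in\Gamma_{nsc}$, so $x_0\in\partial K$ but $x_0$ is not an exposed point of $K$. Since $x_0\in\partial K$, there is a supporting hyperplane $H$ of $K$ at $x_0$; let $E:=K\cap H$ be the corresponding exposed face, which is a closed convex set containing $x_0$. I claim $E$ is non-trivial: if $E=\{x_0\}$ then $x_0$ is by definition an exposed point, contradicting $x_0\in\Gamma_{nsc}$. (One must be slightly careful: a given non-exposed boundary point may lie on several supporting hyperplanes; it suffices to pick any one whose exposed face is a singleton to reach a contradiction, and the existence of such follows because an exposed point is precisely a point that IS the intersection of $K$ with one of its supporting hyperplanes — so if $x_0$ is not exposed, every supporting hyperplane through it gives an $E$ with more than one point.) Hence $x_0\in E$ with $E$ a non-trivial exposed face of $K$, giving the inclusion. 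Conversely, suppose $x_0\in E$ for some non-trivial exposed face $E$ of $K$. Then $E$ contains a point $y\ne x_0$, and since $E$ is convex the segment $[x_0,y]\subset E\subset\partial K$. So $x_0$ lies on a nondegenerate segment contained in $\partial K$; such a point cannot be exposed (an exposed point $p$ satisfies $w(x)>w(p)+p\cdot(x-p)$ strictly for $x\ne p$ on the supporting functional, which rules out a segment through $p$ on $\partial K$). Therefore $x_0\notin K^{exp}$, and since $E\subset\partial K\subset\Omega$ (as $K$ does not touch $\partial\Omega$), we get $x_0\in\Gamma_{nsc}$. This proves the set equality.

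For the second assertion, let $E\subset\partial K$ be a non-trivial exposed face of $K$, and suppose for contradiction that $E^{ext}\ne\emptyset$, say $z\in E^{ext}$. Since $E$ is a face of $K$, the excerpt records $E^{ext}=K^{ext}\cap E$, so $z\in K^{ext}$, i.e. $z$ is an extreme point of $K$. By Lemma \ref{lem:ext are exp}, $z$ cannot serve as an endpoint of a line segment lying on $\partial K$. But $E$ is non-trivial and convex, hence contains a segment $[z,y]$ with $y\in E$, $y\ne z$; since $E\subset\partial K$, this segment lies on $\partial K$ and has $z$ as an endpoint — a contradiction. Hence $E^{ext}=\emptyset$.

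The main obstacle, as I see it, is purely a bookkeeping point in the first part: one has to be careful that "$x_0$ is not an exposed point" is correctly translated into "there exists a supporting hyperplane through $x_0$ whose intersection with $K$ has more than one point", rather than the weaker statement that this holds for all supporting hyperplanes; unpacking the precise definition of exposed point (as $K$ intersected with a single supporting hyperplane equaling a singleton) makes this immediate, but it is the only place where the argument is not a one-line convexity fact. Everything else reduces to the face/exposed-face calculus already summarized in the excerpt together with Lemma \ref{lem:ext are exp}.
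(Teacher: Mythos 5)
Your forward inclusion $\Gamma_{nsc}\subset\bigcup\{E\}$ and your proof that $E^{ext}=\emptyset$ are both correct and essentially the paper's arguments. The gap is in the converse inclusion $\bigcup\{E\}\subset\Gamma_{nsc}$: you claim that a point lying on a nondegenerate segment contained in $\partial K$ cannot be an exposed point of $K$, and you justify this by unwinding the definition of exposed point. That claim is false for general convex sets. An exposed point of $K$ is a point $p$ with $H'\cap K=\{p\}$ for \emph{some} supporting hyperplane $H'$; this only forbids segments through $p$ lying in that particular hyperplane $H'$, not segments $[p,y]\subset\partial K$ transversal to it. Concretely, for $K=[0,1]^2$ the corner $(0,0)$ is exposed (take the supporting line $x_1+x_2=0$), yet it is the endpoint of the edge $\{x_2=0\}\cap K$, which is a non-trivial exposed face. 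So for a general convex body the asserted set equality fails, and the lemma cannot be a pure convex-geometry statement.

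What saves the converse for the coincidence set $K$ of the obstacle problem is precisely Lemma \ref{lem:ext are exp}: an extreme point of $K$ in $\Omega$ cannot be the endpoint of a line segment on $\partial K$. Given $x_0\in E$ with $E$ a non-trivial exposed face, pick $y\in E\setminus\{x_0\}$, so $[x_0,y]\subset\partial K$. If $x_0$ is in the relative interior of a segment of $\partial K$ it is not extreme, hence not exposed; if $x_0$ is only an endpoint, Lemma \ref{lem:ext are exp} forces it not to be extreme either. In both cases $x_0\notin K^{exp}$, i.e.\ $x_0\in\Gamma_{nsc}$. This is exactly the argument the paper gives, and it is also exactly the argument you yourself use (correctly) to prove $E^{ext}=\emptyset$; you only need to import it into the first half of your proof in place of the false general claim.
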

\begin{proof}
For every point $x_0\in \Gamma_{nsc}$. Let $H$ be a supporting hyperplane of $K$ at $x_0$. Then $E:= (H\cap K)\subset \partial K$  is a non-trivial exposed face containing $x_0$. 

Conversely, let $E\subset \partial K$ be a non-trivial exposed face of $K$. For any $x_0\in E$, there always exists a line segment $L\subset E\subset\partial K$ which contains $x_0$. Lemma \ref{lem:ext are exp} implies  $x_0\in \Gamma_{nsc}$. Therefore, $E\subset \Gamma_{nsc}$. Furthermore, we have  
\[
E^{ext}=(K^{ext} \cap E)=(K^{exp} \cap E) \subset  (K^{exp} \cap \Gamma_{nsc}) =\emptyset.
\]
\end{proof}

Consequently, we can show that $\Gamma_{sc}$ is relative open as follows. 

\begin{Lemma}\label{lem:exposed set}
If there exists  $\delta >0$ such that
\[
K \subset \left\{ x_n \geq 0\right\}, \quad K \cap \left\{ x_n \leq \delta \right\} \subset \subset \Omega,
\]	
then every point  on $\partial K \cap \left\{ x_n \leq \delta \right\}$ is an exposed point of  $K$. Consequently, the set $\Gamma_{sc}=K^{exp}\cap\Omega$ is an open subset of $\partial K$.
\end{Lemma}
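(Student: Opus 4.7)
The plan is to use Lemma \ref{lem:nsc property} to reduce each of the two claims to a convex-geometric compactness argument built on the hypothesis $K \cap \{x_n \le \delta\} \subset\subset \Omega$. For the first claim, I argue by contradiction: assume some $x_0 \in \partial K \cap \{x_n \le \delta\}$ is not exposed. Lemma \ref{lem:nsc property} then places $x_0$ in a non-trivial exposed face $E = K \cap H$ of $K$ with $E^{ext} = \emptyset$. The strategy is to slice $E$ at its lowest $x_n$-level and produce an extreme point of the resulting slice, which will automatically be an extreme point of $E$.

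Concretely, let $m = \inf_{x\in E} x_n \in [0,\delta]$, where the upper bound comes from $x_0 \in E$. A minimizing sequence $y_j \in E$ eventually sits inside $K \cap \{x_n \le \delta\}$, which is closed in $\Omega$ and compactly contained in $\Omega$, hence compact in $\R^n$; together with closedness of $H$ this produces a cluster point $y^\ast \in E$ with $y^\ast_n = m$, so $F := E \cap \{x_n = m\}$ is non-empty. The same compactness argument shows $F$ is closed in $\R^n$ and bounded, hence compact; being also convex and non-empty, $F$ admits an extreme point $p \in F^{ext}$. Since $F$ is an exposed face of $E$ (cut by the supporting hyperplane $\{x_n = m\}$ of $E$), the standard fact $F^{ext} \subset E^{ext}$ yields a contradiction with $E^{ext} = \emptyset$.

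For the openness of $\Gamma_{sc}$, fix $x_0 \in \Gamma_{sc}$ and, after translating and rotating, assume $x_0 = 0$, $K \subset \{x_n \ge 0\}$, and $K \cap \{x_n = 0\} = \{0\}$ (using that $x_0$ is exposed). The first claim applies once we exhibit $\delta > 0$ with $K \cap \{x_n \le \delta\} \subset\subset \Omega$, for then $\partial K \cap \{x_n < \delta\} \subset \partial K \cap \{x_n \le \delta\} \subset K^{exp} \cap \Omega = \Gamma_{sc}$ is a relatively open neighborhood of $x_0$ in $\partial K$. To find such a $\delta$, suppose for contradiction that for every $j$ the closure in $\R^n$ of $K \cap \{x_n \le 1/j\}$ meets $\partial \Omega$. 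Extracting a limit point $y^\ast \in \bar K \cap \partial \Omega$ with $y^\ast_n = 0$, necessarily $y^\ast \ne 0$; convexity of $\bar K$ puts the segment $[0, y^\ast]$ in $\bar K \cap \{x_n = 0\}$, while convexity of $\Omega$ puts the open segment $(0, y^\ast)$ in $\Omega$, so any interior point of the segment lies in $\bar K \cap \Omega = K$ and on $\{x_n = 0\}$, contradicting $K \cap \{x_n = 0\} = \{0\}$.

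The main technical subtlety is that $K = \{v = 0\} \cap \Omega$, with $v$ allowed to vanish on parts of $\partial\Omega$, need not be closed in $\R^n$; its closure $\bar K$ can strictly contain $K$. Both arguments therefore require careful bookkeeping to keep limits inside $\Omega$, i.e.\ inside $K$ rather than merely $\bar K$, and this is exactly where the compact-containment hypothesis (respectively the exposed-point property of $x_0$) is invoked.
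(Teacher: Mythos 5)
Your proof is correct and follows essentially the same route as the paper's: both reduce the first claim to Lemma \ref{lem:nsc property} and exploit the compactness of $K\cap\{x_n\le\delta\}$ to exhibit an extreme point of the non-trivial exposed face $E$, contradicting $E^{ext}=\emptyset$, and both derive openness of $\Gamma_{sc}$ by normalizing an exposed point and invoking the first claim. Your version merely supplies details the paper leaves implicit (slicing $E$ at its lowest level rather than appealing to the convex hull of extreme points, and actually proving the existence of $\delta$ with $K\cap\{x_n\le\delta\}\subset\subset\Omega$, which the paper asserts without argument), so there is no substantive difference in method.
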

\begin{proof}
If  $x \in  \partial K \cap \left\{ x_n \leq \delta \right\} $ is not an exposed point of $K$, then by Lemma \ref{lem:nsc property}, there exists a non-trivial convex set $E \subset \partial K$ such that the extreme points of $E$ lie on $\partial\Omega$ and $E$ passes through $x$. Since $K \cap \left\{ x_n \leq \delta \right\}$ does not intersect $\partial\Omega$, we have that $E^{ext}\subset\{x_n>\delta\}$. Hence, $x\in E\subset\{x_n>\delta\}$, which is a contradiction.

Suppose $\Gamma_{sc}$ is not empty. Let $x_0\in \Gamma_{sc}$. After a translation and rotation, we assume $x_0=0$, $K \subset \left\{ x_n \geq 0\right\}$, and $K \cap \left\{ x_n = 0\right\}=\{0\}$. Then there exists $\delta>0$ such that  $K \cap \left\{ x_n \leq \delta \right\} \subset \subset \Omega$. Hence, every point  on $\partial K \cap \left\{ x_n \leq \delta \right\}$ is an exposed point of  $K$. This implies that $\Gamma_{sc}$ is an open subset of $\partial K$. 
\end{proof}

Here, we present some results regarding the non-existence of $\Gamma_{nsc}$.

\begin{Proposition}\label{prop:infinite ray}
Suppose $v$ is a solution to \eqref{eq:obs equation}. Then in any of the following three scenarios:
\begin{itemize}
\item[(1).] $\Omega=\R^n$;
\item[(2).] $v > 0$ on $\partial \Omega$;
\item[(3).] $n=2$ and $q=0$;
\end{itemize}
we have that $ \Gamma \setminus \Gamma_{sc}=\emptyset $.
\end{Proposition}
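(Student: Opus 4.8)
The plan is to show that in each of the three scenarios every non-trivial exposed face of $K$ is impossible, so that $\Gamma_{nsc}=\emptyset$ by Lemma \ref{lem:nsc property}. Suppose toward a contradiction that there is a non-trivial exposed face $E\subset\partial K$. By Lemma \ref{lem:nsc property}, $E^{ext}=\emptyset$, which forces $E$ to be unbounded (a non-empty compact convex set has extreme points by Krein--Milman); in fact $E$ must contain a full line or a ray whose endpoint is not in $\Omega$. After an affine normalization I would assume $E$ contains the ray $\{te_n : t\ge 0\}$ (or the whole line $\mathbb Re_n$) and that $K\subset\{x_n\ge$ something$\}$ with a supporting hyperplane $H=\{x\cdot\nu = a\}$ realizing $E=H\cap K$. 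The engine in all three cases is the volume/measure control already developed: along the ray inside $K$ the coincidence set is ``large'' in $n-1$ transverse directions, so the sub-level sets $S_h$ of $v$ near points of $E$ are forced to be very thin, contradicting the lower volume bound of Lemma \ref{lem:measure constrain} (equivalently \ref{lem:volume concidence} applied to a slab $\{0<v<h\}\cap\{$half-space$\}$), just as in the proof of Lemma \ref{lem:ext are exp}.

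Case (1), $\Omega=\mathbb R^n$: here a non-trivial exposed face $E$ of $K$, lying in a supporting hyperplane $H$, is itself a closed convex subset of $\mathbb R^n$ with no exposed (hence no extreme) points, so by Straszewicz/Krein--Milman $E$ contains a line $\ell$. Translating so $0\in\ell\subset K$ and $\ell=\mathbb Re_n$, apply Lemma \ref{lem:bound from below} (or \ref{lem:volume concidence}) on the slabs $\{|y_n|\le R\}$: since $K\cap\{|x_n|\le R\}$ contains a segment of length $2R$ in the $e_n$-direction but we also may translate along $\ell$, one gets $v(y)\ge c|K\cap\{x_n\le|y_n|\}|^{2/(n-q)}$ with the right side growing in $R$ after taking $R\to\infty$ at a fixed transverse point, which is incompatible with $v$ being finite; more precisely I would run the balanced-section argument of Lemma \ref{lem:ext are exp} at a point $ae$ with $e$ a unit vector along $\ell$ to conclude $[-cae,ae]$ and then (letting $a\to\infty$ along the invariance) an unbounded segment lies in $K$ in a transverse direction too, contradicting $E$ being a proper face. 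Case (2), $v>0$ on $\partial\Omega$: then $K\subset\subset\Omega$, so $K$ is compact and convex, hence $K=\mathrm{conv}(K^{ext})$; but a non-trivial exposed face $E$ of a compact $K$ has $E^{ext}\subset K^{ext}\ne\emptyset$, directly contradicting $E^{ext}=\emptyset$. (Equivalently: $K$ compact $\Rightarrow\Gamma_{sc}=\Gamma$ by Lemma \ref{lem:exposed set}.) Case (3), $n=2$, $q=0$: a non-trivial exposed face of a planar convex set is a line segment or a ray or a line contained in $\partial K$; if $K$ is bounded this is Case (2), so $K$ is unbounded and $E$ contains a ray $\rho$. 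One endpoint of $\rho$, if it exists, is an extreme point of $E$, contradicting $E^{ext}=\emptyset$; hence $\partial K$ contains a full line, forcing $K$ to be a half-plane or a slab. In either case $K$ has nonempty interior while its boundary is affine, and one derives a contradiction from the expansion estimates: e.g. for $K$ a half-plane $\{x_2\ge 0\}$, Lemma \ref{lem:bound from below} gives $v(x)\ge c|K\cap\{x_2\le|y_2|\}|^{2/n}=\infty$ since the slice is infinite, impossible; for a slab the same slicing argument applies.

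I expect the main obstacle to be Case (1) (and the slab subcase of Case (3)): one must rule out that $\partial K$ contains an affine subspace of positive codimension along which $K$ is ``thin enough'' to be consistent. The clean way is to exploit the affine/translation invariance of the problem along any line in $\partial K$ combined with Lemma \ref{lem:bound from below}: a line $\ell\subset\partial K$ and the requirement that $v$ be finite and convex on all of $\mathbb R^n$ (in Case (1)) force, via the balanced centered sections of Lemma \ref{lem:balanced other} applied at points marching to infinity along $\ell$, that $K$ actually contains a transverse segment of unbounded length, i.e. $\ell$ is not on the boundary of a proper face. Making the ``marching to infinity'' quantitative — showing the balanced constant does not degenerate so the contradiction survives the limit — is the delicate point; it is handled exactly by the universal (scale-invariant) nature of the constants in Lemmas \ref{lem:volume of Shc} and \ref{lem:balanced other}.
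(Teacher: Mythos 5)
Your Case (2) is correct and essentially the paper's argument ($K$ compactly contained in $\Omega$, so every nonempty face has extreme points, contradicting $E^{ext}=\emptyset$). The other two cases have genuine problems. For Case (1), the paper's route is far simpler than what you attempt: once $\partial K$ contains a full line $L$ with $v=0$ on $L$, convexity and finiteness of $v$ on all of $\R^n$ force $v$ to be constant along the direction of $L$ through every point (slide a point of $L$ to infinity in the convexity inequality), hence $\partial v(\R^n)$ lies in a hyperplane, $\det D^2v\equiv 0$, and the equation gives $v\equiv 0$, so $\Gamma=\emptyset$. Your volume argument, as written, does not close: you place $\ell=\R e_n\subset K$ and then invoke Lemma \ref{lem:bound from below}, whose hypothesis is $K\subset\{x_n\ge 0\}$ — incompatible with your choice of coordinates; a ``segment of length $2R$'' has zero Lebesgue measure, so it does not make $\left|K\cap\{x_n\le|y_n|\}\right|$ grow; and when $|K|=0$ (e.g.\ $K$ a line inside a hyperplane, which is not excluded a priori) the quantity $\left|K\cap\{x_n\le|y_n|\}\right|$ vanishes and the lemma gives nothing. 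The ``balanced sections marching to infinity'' step, which you yourself flag as the delicate point, is not an argument: the segment $[-cae,ae]$ you would produce lies along $\ell$, which is already known to be in $K$, and no transverse segment is actually generated.

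Case (3) is where the proposal goes wrong in substance. In that scenario $\Omega$ is a bounded convex domain on which $v$ is allowed to vanish on part of $\partial\Omega$; a non-trivial exposed face $E$ with $E^{ext}=\emptyset$ is then a \emph{segment whose endpoints lie on $\partial\Omega$}, not a ray or a line, so your dichotomy ``$K$ bounded $\Rightarrow$ Case (2), otherwise $E$ contains a ray and $K$ is a half-plane or slab'' never addresses the actual configuration ($K$ is always bounded here, but need not be compactly contained in $\Omega$). More tellingly, your argument nowhere uses $q=0$, yet Remark \ref{rem:nonstrictconvex} constructs, for every $q\in(0,2)$ in dimension two, a solution with $\Gamma_{nsc}$ equal to a line segment; any proof of Case (3) that does not invoke $q=0$ must therefore be incorrect. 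The paper's proof uses $q=0$ precisely as follows: with $\Omega\cap\{x_1=0\}\subset\Gamma_{nsc}$ and $K\subset\{x_1\le 0\}$, one has $\lambda\le\det D^2v\le\Lambda$ on $\Omega\cap\{x_1>0\}$ (non-degeneracy available only because $q=0$) together with $v=0$ on $\{x_1=0\}$, and this is impossible in two dimensions by the boundary localization results of \cite{mooney2024sobolev} or \cite{jian2021boundary}. That two-dimensional, non-degenerate boundary estimate is the missing ingredient in your proposal.
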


\begin{proof} 
Suppose $x \in \Gamma \setminus \Gamma_{sc}$. By Lemma \ref{lem:nsc property},  there exists a line segment $L$, say $L \subset \left\{x'=0\right\}$ on $\Omega \cap \partial K $, extending to the boundary $\partial\Omega$ or infinity.
If $L$ does not intersect $\partial \Omega$, then $L$ is a line. Then for any $(x',x_n) \in \Omega$ and $t \in \R$, by using the convexity of $\Omega$ and $v$,  we have $(x',x_n+t ) \in \Omega$ and $v(x',x_n+t)=v(x',x_n)$. Thus,   $\partial v (\Omega) \subset \left\{ x_n =0\right\}$. Consequently, $\det D^2 v \equiv 0$. By the equation  \eqref{eq:obs equation}, we have $v \equiv 0 $,  and $\Gamma=\emptyset$. This  in particular implies that $ \Gamma \setminus \Gamma_{sc}=\emptyset $ if $\Omega=\R^n$.

Therefore, it remains to consider the case that $L$ intersects with $\partial \Omega$.

In case (2), since $v>0$ on $\partial\Omega$, then $L$ cannot intersect with $\partial \Omega$, which is a contradiction.

In case (3),  for simplicity, let us assume that  $(\Omega \cap \left\{x_1 = 0\right\})\subset \Gamma_{nsc}$, and $K \subset \left\{x_1 \leq 0\right\}$. Then, we have  $\lambda \leq \det D^2 v \leq \Lambda$ on $\Omega\cap \left\{x_1 >0\right\}$ with $v=0$ on $  \left\{x_1 = 0\right\}$, this is impossible, by \cite[Lemma 6.1]{mooney2024sobolev}  or \cite[Lemma 3.5]{jian2021boundary}. 
\end{proof}

\begin{Remark}\label{rem:nonstrictconvex}
It is not always true that  $ \Gamma\setminus \Gamma_{sc} \neq \emptyset $. For the case $q = 0$, there are lots of examples, such as Pogorelov's example or the singular solutions constructed in \cite{caffarelli1993note} and \cite{caffarelli2022singular}. 	 

Below, we introduce a two-dimensional example for the case $q \in (0,2)$. Specifically, let $\alpha =\frac{q}{2} \in (0,1)$ and let $\zeta(t), t\in \R,$ solve
\[
\alpha (1+\alpha)\zeta \zeta''-(1+\alpha)^2\zeta'^2=(1+|t|^{\alpha}\zeta)^{2\alpha},\quad \zeta(0)=1,\quad \zeta'(0)=0,
\]
which always admits a positive convex solution for small $|t|<c(\alpha)$. Then the function
\[
w(x)=\max\left\{x_{2},0\right\}+ \max\left\{x_{2},0\right\}^{1+\alpha}\zeta(x_{1}),\quad x=(x_1,x_2) \in \R^2 
\] 
is a solution of \eqref{eq:obs eq 0}  in $B_{c(\alpha)}(0)$ with $g\equiv 1$. In this case, $\Gamma_{sc}=\emptyset$, $\Gamma_{nsc}$ is a line segment, and the solution is merely Lipschitz continuous.
\end{Remark}

Combining Lemma \ref{lem:volume concidence} and Lemma \ref{lem:ext are exp}, we have the following stability of the coincidence sets under uniform convergence.
 \begin{Lemma}\label{lem:stability of K}
 Let $v_i \geq 0$, $i=1,2,\cdots,$ be convex solutions of $	\det D^2 v_i =g_i  v_i^q \chi_{\{v_i>0\}}$ in $\Omega$ with $0 <\lambda \leq g_i  \leq   \Lambda$. Suppose that $\left\{v_i\right\}$ locally uniformly converges to some $v_{\infty}$. Suppose there exists a nonempty,  convex, relative closed subset $K_\infty$ of $\Omega$ such that $K_{i}=\left\{v_i=0\right\}$ converges to $K_{\infty}$ under the Hausdorff metric. If $\left\{v_{\infty}=0 \right\}$ contains an extreme point in $\Omega$, then $\left\{v_{\infty}=0 \right\}=K_{\infty}$.
 \end{Lemma}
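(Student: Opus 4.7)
The easy inclusion $K_\infty \subset K_\ast := \{v_\infty = 0\}$ is immediate: any $x \in K_\infty$ is the limit of a sequence $x_i \in K_i$ by Hausdorff convergence, and the locally uniform convergence $v_i \to v_\infty$ combined with continuity of $v_\infty$ yields $v_\infty(x) = \lim_i v_i(x_i) = 0$. For the reverse inclusion, I would first verify that the limit $v_\infty$ is itself a legitimate obstacle-problem solution: it is convex, non-negative, and its Monge--Amp\`ere measure satisfies $\lambda v_\infty^{q} \chi_{\{v_\infty > 0\}} \le \det D^2 v_\infty \le \Lambda v_\infty^{q}$ in $\Omega$. The upper bound follows from the weak-$\ast$ convergence of Monge--Amp\`ere measures under locally uniform convergence of convex functions, combined with the uniform convergence $v_i^{q} \to v_\infty^{q}$ on compact sets; the lower bound is obtained on compact subsets of the open set $\{v_\infty > 0\}$, where $v_i > 0$ eventually. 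Consequently Lemmas \ref{lem:volume concidence} and \ref{lem:ext are exp} are available for $v_\infty$.

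I then split into cases based on $|K_\ast|$. In \emph{Case A}, $|K_\ast| = 0$: the hypothesis supplies an extreme point $p \in \Omega$ of $K_\ast$, and Lemma \ref{lem:ext are exp} applied to $v_\infty$ collapses $K_\ast$ to $\{p\}$; since $\emptyset \ne K_\infty \subset K_\ast$, we conclude $K_\infty = K_\ast$. In \emph{Case B}, $|K_\ast| > 0$, so the convex set $K_\ast$ has non-empty interior. I claim $\mathrm{int}(K_\ast) \subset K_\infty$. If not, fix $x_0 \in \mathrm{int}(K_\ast)$ with $\delta := \mathrm{dist}(x_0, K_\infty) > 0$ and choose $r \in (0, \delta)$ small enough that $\overline{B_r(x_0)} \subset \mathrm{int}(K_\ast) \subset \Omega$. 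Since $v_\infty \equiv 0$ on $\overline{B_r(x_0)}$, uniform convergence gives $h_i := \sup_{\overline{B_r(x_0)}} v_i \to 0$; meanwhile the Hausdorff convergence $K_i \to K_\infty$ yields $B_r(x_0) \cap K_i = \emptyset$ for large $i$, hence $v_i > 0$ on $B_r(x_0)$. Lemma \ref{lem:volume concidence} applied with $w = v_i$, $O = B_r(x_0)$, and $h = h_i$ then produces the contradiction $|B_r(x_0)| \le C(n,q,\lambda) h_i^{(n-q)/2} \to 0$. Hence $\mathrm{int}(K_\ast) \subset K_\infty$, and because $K_\ast$ is a convex subset of $\Omega$ with non-empty interior that is relatively closed in $\Omega$, we have $K_\ast = \overline{\mathrm{int}(K_\ast)} \cap \Omega \subset K_\infty$, using the relative closedness of $K_\infty$.

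The main obstacle is the preparatory step of transferring the obstacle-problem structure to the limit $v_\infty$ through the weak-$\ast$ Monge--Amp\`ere convergence argument; once it is in hand, Case A reduces to the single-point rigidity of Lemma \ref{lem:ext are exp}, and Case B is a direct application of the volume bound of Lemma \ref{lem:volume concidence} on a ball interior to $K_\ast$.
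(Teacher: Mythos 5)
Your proof is correct and follows essentially the same route as the paper's: the easy inclusion $K_\infty\subset\{v_\infty=0\}$, Lemma \ref{lem:ext are exp} applied to the limit to dispose of the measure-zero case, and Lemma \ref{lem:volume concidence} on a small ball in $\{v_\infty=0\}\setminus K_\infty$ to contradict the uniform convergence $v_i\to 0$ there. The only cosmetic difference is that you use the full Hausdorff convergence to make the whole ball $B_r(x_0)$ disjoint from $K_i$, whereas the paper only uses $x_0\notin K_i$ and then separates $K_i$ by a supporting halfspace, applying the volume lemma to the half-ball; both variants are equally valid.
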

 \begin{proof}
By uniform convergence, we have $K_{\infty} \subset \left\{v_{\infty}=0 \right\}$.  Note that $v_{\infty}$ is still a solution of the obstacle problem, and $\left\{v_{\infty}=0 \right\}$ is a relative closed subset of $\Omega$. 	If $ K_{\infty} \subsetneq \left\{ v_{\infty}=0 \right\}$, then $\left\{v_{\infty}=0 \right\}$ contains at least two points. By Lemma \ref{lem:ext are exp}, we have $|\left\{v_{\infty}=0 \right\}| >0$. Therefore,  we can find an open ball $B_{r_0}(x_0) \subset \left\{v_{\infty}=0 \right\} \setminus K_{\infty}$. Moreover, we may assume that  $x_0 \notin K_i$ for all $i$ large. Then, due to convexity, there exist half spaces $H_i^-$ containing  $x_0$ such that $H_i^- \cap K_{i} =\emptyset$. Thus, $v_{i} > 0 $ on $ B_{r_0}(x_0)  \cap H_i^-$. Applying Lemma \ref{lem:volume concidence}, we obtain that $\sup_{x \in B_{r_0}(x_0)  \cap H_i^-  } v_{i}\geq c(n,q,\lambda,r_0) >0$. This contradicts the fact that $v_{i}$ uniformly converges to $v_{\infty} =0$ on $B_{r_0}(x_0) \subset \left\{ v_{\infty} =0 \right\}$.
 \end{proof}

Suppose now $|K|>0$.   After a rotation, we can always assume that  $K \subset \left\{ x_n \geq 0\right\}$ and that $\partial K$ is locally an upper graph in the $e_n$-direction. Then, we consider the following normalization of $v$  in the spirit of  \cite{savin2005obstacle}.  

\begin{Definition}\label{def:normalized p obp}
For ${\kappa}>0$, let $\E_{\kappa}$ denote the family of convex functions $v$  satisfying:
\begin{itemize}
\item[i)] $v$ is a non-negative, convex and continuous function defined in $\overline{V_v} \cap \{x_n \leq 1 \}$ for some convex bounded open set $V_v$;
\item[ii)] $ v = \kappa >0$ on $(\partial V_v)\cap  \{x_n < 1 \}$;
\item[iii)] $\M v=0$ on $K= \{v=0 \}$, and $\lambda v^q \ud x \leq \M v \leq \Lambda v^q \ud x$ in the set $\{v>0\}$;
\item[iv)] $K \subset \{x_n \geq 0 \}$ and $0\in K \cap \{x_n=0 \} $;
\item[v)]  $K \cap \{x_n=1 \}$ is normalized in $\mathbb{R}^{n-1}$, i.e.,  $\mathcal{H}^{n-1}(K\cap\{x_n=1\})=1$, 
\[
\big(\{x_n=1 \} \cap  \{ |x^{\prime}-y' | \leq c(n) \} \big) \subset \big(K \cap \{x_n=1 \} \big) \subset \{ |x^{\prime} | \leq C(n) \},
\]
where $y'$ denote the mass center of $K \cap \{x_n=1 \}$;
\item[vi)] $K\cap \{x_n \leq 1 \}$ is an upper graph, i.e., if $x \in K$ then $x+t e_n \in K$ for all $t \in (0,1-x_n)$.
\end{itemize}
\end{Definition}

\begin{Remark}
Note that we will use only finitely many iterations of normalizations, so a change in the universal constants does not affect our proof below.
\end{Remark}

The emergence of the model in Definition \ref{def:normalized p obp} arises from rescaling solutions on the free boundary as follows:
\begin{Definition}\label{def:normal type1}
Let $v$ be a convex solution of \eqref{eq:obs equation}. Suppose $ 0 \in K \subset \left\{ x_n \geq 0\right\}$, and for some $t_0>0$, $(K \cap \left\{x_n \leq t_0\right\}) \subset\subset \Omega $ is locally an upper graph of $\psi$ in the $e_n$-direction. For $t\in(0,t_0)$, we can denote 
\[
K_t'=S_{t}^{\psi}(0,0) =\left\{ x'\in \R^{n-1}:\; \psi (x') \leq t\right\}= \{x'\in \R^{n-1}:(x',t)\in K\}.
\]
Applying John's lemma, for each fixed $t\in (0,t_0)$, there exists an affine transformation $\D_{t}'$ in $\mathbb{R}^{n-1}$ such that  
\[
\D_{t}' B_{c(n)}'(x_t') \subset K_t' \subset \D_{t}' B_{C(n)}'(x_t'), \quad \det D_{t}'=\mathcal{H}^{n-1}(K_t')
\]
hold for some $x_t'$.  Let $h:=h(t)=(t \operatorname{det} \D_t')^{\frac{2}{n-q}}$.  Clearly, $h$ is continuous and increasing on $t$, and as $t \to 0$, we have $h \to 0$ and $\kappa/h \to \infty$. Moreover, $c_Kt^{\frac{2n}{n-q}}\le h(t)\le C_K t^{\frac{2}{n-q}}$. Then, the following normalization $v_h$ of $v$ at $0 \in K \cap \left\{ x_n \geq 0\right\}$  is in $\E_{\kappa/h}$ for all $0<\kappa \leq  \inf_{\partial \Omega \cap \left\{x_n \leq t\right\}} v $, where
\begin{equation}\label{eq:normalized sol}
v_{h}\left(x', x_n\right):=h^{-1} v\left(\D_t' x', t x_n\right).
\end{equation}
Furthermore, $v\in \E_{\kappa}$ implies $v_{h} \in \E_{\kappa/h}$, which further implies that $v_h\in \E_1$ for all $h\le \kappa$.
\end{Definition}

Similar to \cite[Lemma 3.4]{savin2005obstacle}, we shall establish the compactness within the space $\E_{1}$. 
\begin{Lemma}[Convergence of ``graph solutions"]\label{lem:compactness of E1}
Suppose $v_{i} \in \E_1$,  $i=1,2,\cdots$, then there exists $\delta=\delta(n, q,\lambda,\Lambda)>0$, a function $v_{\infty}$ which satisfies iii) in Definition \ref{def:normalized p obp}, and a subsequence of $\{v_i\}$ that uniformly converges in $\left\{x_n \leq 3/4\right\}\cap\{v_\infty\le\delta\}$ to $v_\infty$.

Moreover, $\{v_i=0\}\cap \{x_n\le 3/4\}$ converges uniformly to $\{v_\infty=0\}\cap \{x_n\le 3/4\}$ in the Hausdorff distance topology.
\end{Lemma}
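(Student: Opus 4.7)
The plan is to combine the geometric normalizations in Definition \ref{def:normalized p obp} with the volume estimates of Lemma \ref{lem:volume concidence}, Aleksandrov's principle (Lemma \ref{lem:measure constrain 2}), and the coincidence set stability of Lemma \ref{lem:stability of K}, following the blueprint of \cite[Lemma 3.4]{savin2005obstacle}. The main point is to produce a convergent subsequence via Blaschke selection and Arzel\`a--Ascoli, pass to the limit in the Monge--Amp\`ere equation using weak convergence of measures, and then show that the limiting coincidence set coincides with the Hausdorff limit of the $K_i$.

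First, I would exploit conditions (iv)--(vi) in Definition \ref{def:normalized p obp}, which normalize $K_i\cap\{x_n=1\}$ and force the upper-graph structure anchored at $0$, to obtain uniform two-sided geometric bounds: Lemma \ref{lem:volume concidence} applied on $\{v_i<1\}\cap\{x_n\le 1\}$ bounds its volume by a universal constant, while (v) and John's lemma give a universal lower bound on the sizes of $V_{v_i}\cap\{x_n\le 3/4\}$ and $K_i\cap\{x_n\le 3/4\}$. By the Blaschke selection theorem, along a subsequence the convex sets $V_{v_i}\cap\{x_n\le 1\}$ and $K_i\cap\{x_n\le 3/4\}$ converge in Hausdorff distance to convex bodies $V_\infty$ and $K_\infty$, with $0\in K_\infty\subset\{x_n\ge 0\}$.

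Second, since $v_i=1$ on the lateral boundary $\partial V_{v_i}\cap\{x_n<1\}$ and $0\le v_i\le 1$ by convexity, the functions $v_i$ are uniformly Lipschitz on $\{x_n\le 3/4\}$ with a universal constant (the Lipschitz bound follows from convexity and a uniform positive distance from $\{x_n=3/4\}$ to the level set $\{v_i=1\}$ inside $\{x_n<1\}$, the latter being controlled by Lemma \ref{lem:volume concidence}). By Arzel\`a--Ascoli, a further subsequence converges locally uniformly on compact subsets of $V_\infty\cap\{x_n<1\}$ to a convex non-negative function $v_\infty$. Using the weak convergence of Monge--Amp\`ere measures under uniform convergence (\cite[Theorem 2.1]{savin2005obstacle}) together with uniform convergence of $v_i^q$, passing to the limit in $\lambda v_i^q\,\ud x\le \M v_i\le \Lambda v_i^q\,\ud x$ on $\{v_i>0\}$ and $\M v_i=0$ on $\{v_i=0\}$ yields property (iii) of Definition \ref{def:normalized p obp} for $v_\infty$.

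The main obstacle is the identification $\{v_\infty=0\}=K_\infty$ in the region $\{x_n\le 3/4\}$: uniform convergence only gives the inclusion $K_\infty\subset\{v_\infty=0\}$ directly. To obtain the reverse inclusion, I would invoke Lemma \ref{lem:stability of K}: the set $\{v_\infty=0\}$ is a bounded convex subset of $V_\infty$ and hence has extreme points, and the geometric normalizations together with the uniform nondegeneracy estimate of Lemma \ref{lem:bound from below} imply that at least one such extreme point lies in the open domain, so Lemma \ref{lem:stability of K} applies. Finally, I would choose $\delta=\delta(n,q,\lambda,\Lambda)>0$ small so that $\{v_i\le\delta\}\cap\{x_n\le 3/4\}$ stays uniformly separated from $\{v_i=1\}$ (again via Lemma \ref{lem:bound from below} applied to $v_i$ near $\partial K_i$); the uniform convergence of $v_i$ to $v_\infty$ on this set and the Hausdorff convergence of $\{v_i=0\}\cap\{x_n\le 3/4\}$ to $\{v_\infty=0\}\cap\{x_n\le 3/4\}$ then follow from the previous steps. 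The verification that the limiting extreme point actually sits in the interior of $V_\infty$, and not on its boundary where Lemma \ref{lem:stability of K} would not directly apply, is the delicate issue; it is handled by observing that the upper-graph structure combined with the normalization at $x_n=1$ confines the Hausdorff limit to stay strictly inside $V_\infty$ in the slab $\{x_n\le 3/4\}$.
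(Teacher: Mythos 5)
Your overall blueprint (Blaschke selection, Arzel\`a--Ascoli, weak convergence of Monge--Amp\`ere measures, and Lemma \ref{lem:stability of K} for the identification of the limiting coincidence set) is the same as the paper's, and the last two steps are handled correctly. However, there is a genuine gap at the key quantitative step, namely your second paragraph. You claim the $v_i$ are ``uniformly Lipschitz on $\{x_n\le 3/4\}$'' because of ``a uniform positive distance from $\{x_n=3/4\}$ to the level set $\{v_i=1\}$,'' controlled by Lemma \ref{lem:volume concidence}. This is not correct: the lateral boundary $\{v_i=1\}$ of $V_{v_i}$ necessarily meets the slab $\{0\le x_n\le 3/4\}$, so no such separation from the hyperplane holds, and a global Lipschitz bound on $\{x_n\le 3/4\}$ is both false in general and stronger than what the lemma asserts. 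What is actually needed is that $\{v_i\le\delta\}\cap\{x_n\le 3/4\}$ is \emph{uniformly compactly contained} in $\{v_i<1\}\cap\{x_n<1\}$, together with a uniform modulus of continuity there. Neither of the tools you cite can produce this: Lemma \ref{lem:volume concidence} only gives volume \emph{upper} bounds (it uses $\M v\ge\lambda v^q$), and Lemma \ref{lem:bound from below} gives a \emph{lower} bound $v(x)\ge c\operatorname{dist}(x,\partial K)^{2n/(n-q)}$, which confines $\{v\le\delta\}$ to a neighborhood of $K$ but says nothing about how close $\{v=1\}$ can come to $K$; ruling that out requires an \emph{upper} bound for $v$ near $K$, i.e.\ precisely the modulus of continuity you are trying to establish, so the argument is circular.

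The missing ingredient is the Aleksandrov maximum principle (Lemma \ref{lem:measure constrain 2}) applied to a tilted truncation, using the upper bound $\M v\le\Lambda v^q\,\ud x\le\Lambda\,\ud x$. The paper sets $E=\{v+16\delta(x_n-7/8)\le 0\}$ and $\tilde v=\min\{v+16\delta(x_n-7/8),0\}$; for $\delta$ small one checks $E\subset\{v<1\}\cap\{x_n\le 7/8\}$ (using $\{v<1\}\subset B_C(0)$), while $\{v\le\delta\}\cap\{x_n\le 3/4\}\subset\{\tilde v\le-\delta\}$. Since $\tilde v=0$ on $\partial E$ and $\M\tilde v\le\Lambda$ in $E$, Aleksandrov's estimate forces every point with $\tilde v\le-\delta$ to lie at a universal positive distance from $\partial E$, which yields both the inclusion $B_c(0)\subset\{v\le\delta\}\cap\{x_n\le 3/4\}$ and the uniform modulus of continuity needed for Arzel\`a--Ascoli on the correct set $\{x_n\le 3/4\}\cap\{v_\infty\le\delta\}$. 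Without this barrier argument your compactness step does not close.
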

\begin{proof} 
We claim that there exists a small $\delta=\delta(n,\lambda,\Lambda)\in(0,1)$ such that 
\begin{equation}\label{eq:E1 normal}
B_c(0) \subset  (\left\{v \leq \delta\right\}  \cap\left\{x_n \leq 3 / 4\right\}) \subset  (\left\{v \leq 1\right\}  \cap\left\{x_n \leq 1\right\}) \subset B_C(0), \quad \forall\ v \in \E_1.
\end{equation} 

We first demonstrate the right-hand side of  \eqref{eq:E1 normal}. By assumptions iv), v), vi) in Definition \ref{def:normalized p obp}, we have $B_{c/2}(\tilde{x}) \subset (K\cap \left\{ x_n \leq 1\right\}) \subset B_{2C}(0)$, where $\tilde{x} = \left( \frac{y'}{2}, \frac{1}{2} \right) $. By \eqref{eq:bound from below}, we obtain the right-hand side of  \eqref{eq:E1 normal}. 

Next, we  establish the left-hand side of \eqref{eq:E1 normal}.  Let $E =\left\{ v+ 16\delta(x_n -7/8) \leq 0 \right\}$ and $\tilde{v}= \inf\left\{ v+16\delta(x_n -7/8),0\right\}$. It is clear that 
\[
(\left\{v \leq \delta\right\}  \cap\left\{x_n \leq 3 / 4\right\}) \subset\subset E \subset  \left\{x_n \le 7 / 8\right\}.
\]
Since $\left\{ v <1\right\} \subset B_C(0)$, then for sufficiently small  $\delta(n,q,\lambda,\Lambda)$, we have that 
\[
v+ 16\delta(x_n -7/8)>0 \quad\text{on }\{v=1\}\cap \{x_n\le 7/8\}.
\]
By using the convexity, we obtain $E\subset \{v<1\}$. Hence,
\[
(\left\{v \leq \delta\right\}  \cap\left\{x_n \leq 3 / 4\right\}) \subset\subset E \subset (\left\{ v <1\right\}\cap \left\{x_n \le 7 / 8\right\}).
\]
Since $\det D^2 \tilde{v} \leq \Lambda$ in $E$,  the Aleksandrov maximum principle (Lemma \ref{lem:measure constrain 2}) ensures the uniform continuity of $\tilde{v}$ up to the boundary $\partial E$. Noting that $\tilde{v}(0) = -14 \delta$, we conclude that $B_{c}(0) \subset E$.  By convexity, both $\tilde{v}$ and $v$ are uniformly continuous and locally Lipschitz in $E$. By noting $v(0)=0$, we obtain that	$B_c(0) \subset  (\left\{v \leq \delta\right\}  \cap\left\{x_n \leq 3 / 4\right\}) $ for some small $c>0$. This concludes the proof of \eqref{eq:E1 normal}.
	
%
By the Blaschke selection theorem, $\{v_k \leq \delta\} \cap \{x_n \leq 3/4\}$ converge in Hausdorff distance to a closed convex set  $Z_\infty$. The uniform  modulus of continuity of $v_k + 16\delta(x_n-7/8)$ yields a subsequence $v_k \to v_\infty$ uniformly on $Z_\infty$. Then, $Z_{\infty}=\left\{x_n \leq 3/4\right\}\cap\{v_\infty\le\delta\}$, and  $v_\infty$  satisfies iii) in Definition \ref{def:normalized p obp}.

Furthermore, let $K_{i}=(\left\{ v_i=0\right\} \cap \left\{x_n \leq 3/4\right\})$. Then subject to a subsequence, $K_{i}$ converges to a nonempty closed set $K_{\infty}$ in the  Hausdorff distance topology. Noting that $0 \in K_{\infty} \subset \left\{ v_{\infty} =0\right\} $, the boundary value of $v_\infty$ implies the set $\left\{ v_{\infty} =0\right\}$ must possess an extreme point in $\{x_n<3/4\}$. By applying Lemma \ref{lem:stability of K} to $\Omega=(\{v_\infty<\delta\}\cap \{x_n<3/4\})$, we obtain that $\left\{ v_{\infty} =0\right\}=K_{\infty}$. 
\end{proof}

By utilizing Lemma \ref{lem:exposed set} and employing the normalization arguments in Definition \ref{def:normal type1}, we have
\begin{Proposition}\label{prop:Savin-1}
Suppose $v \in \E_1$. Then $B_c'(0) \subset K_1'\subset B_C'(0)$, and we have
	\[ c|x'|^{\frac{1+\alpha}{\alpha}}\leq \psi\left( x' \right) \leq C|x'|^{1+\alpha} \quad  \text{in }B_c'(0),\]
	where $\alpha \in (0,1)$, and $c$ and $C$ are positive constants depending only on $n$, $q$, $\lambda$ and $\Lambda$.  Consequently,
$\partial K \cap B_c(0)$ is $C^{1,\alpha}$ and strictly convex. 
\end{Proposition}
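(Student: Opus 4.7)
The plan is to use the Savin-type compactness-and-iteration method set up by Definition \ref{def:normalized p obp}, Definition \ref{def:normal type1}, and Lemmas \ref{lem:stability of K}--\ref{lem:compactness of E1}, exploiting that every rescaling $v_h$ of $v$ at scale $h$ again belongs to $\E_1$ as long as $h\le 1$. I split the proof into three steps: (a) the non-degeneracy $B_c'(0)\subset K_1'$; (b) the upper bound $\psi(x')\le C|x'|^{1+\alpha}$; and (c) the lower bound $\psi(x')\ge c|x'|^{(1+\alpha)/\alpha}$. The inclusion $K_1'\subset B_C'(0)$ is already built into item (v) of Definition \ref{def:normalized p obp}.

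For (a), I argue by compactness and contradiction. Suppose along a sequence $\{v_k\}\subset\E_1$ the set $K_1'(v_k)$ fails to contain $B_{1/k}'(0)$. Lemma \ref{lem:compactness of E1} yields a subsequential uniform limit $v_\infty$, and Lemma \ref{lem:stability of K} identifies $\{v_\infty=0\}$ with the Hausdorff limit of the $K(v_k)$. Because $\psi_k(0)=0$ with $\psi_k\ge 0$, the subdifferential $\partial\psi_k(0)$ contains $0$, while the mass-center normalization in (v) places the John center of $K_1'(v_k)$ at a uniformly bounded point. Passing to the limit, $0$ lies at a universal distance from the relative boundary of $K_1'(v_\infty)$, contradicting the failure of the inclusion along the sequence.

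For (b) and (c), the core ingredient is an improvement-of-flatness lemma: there exist universal constants $\theta_0\in(0,1)$ and $\rho\in(\theta_0,1)$ such that for every $v\in\E_1$, the operator norm of the normalizing affine map $\D_{\theta_0}'$ from Definition \ref{def:normal type1} obeys $\|\D_{\theta_0}'\|\le\rho$. Since $v_{h(\theta_0)}\in\E_1$, the lemma iterates to $\|\D_{\theta_0^k}'\|\le\rho^k$ for every $k\ge 1$, which, translated back through Definition \ref{def:normal type1}, gives $\psi(x')\le C|x'|^{1+\alpha}$ with $\alpha=-\log(\rho/\theta_0)/\log\theta_0\in(0,1)$, establishing (b). For (c), I combine (b) with the two-sided volume bound $|\widetilde S_h|\asymp h^{(n-q)/2}$ from Lemma \ref{lem:volume of Shc}: if $\psi(x_0')<c|x_0'|^{(1+\alpha)/\alpha}$ at some small $x_0'$, then the section of $v$ at height $h=\psi(x_0')$ would, by (b), have tangential extent so large that its volume overshoots the upper bound $Ch^{(n-q)/2}$. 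The $C^{1,\alpha}$ regularity and strict convexity of $\partial K\cap B_c(0)$ then follow by applying the whole argument at each nearby exposed point, Lemma \ref{lem:exposed set} guaranteeing that $\Gamma_{sc}$ is open.

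The main obstacle is the improvement-of-flatness lemma itself. Qualitative compactness from Lemma \ref{lem:compactness of E1} does not rule out the rigid case $\|\D_{\theta_0}'\|=\theta_0$ in the limit. To close the contradiction I must combine Lemma \ref{lem:ext are exp} (every extreme point of $K$ is exposed under the obstacle equation), Lemma \ref{lem:stability of K} (stability of coincidence sets under uniform convergence), and the lower bound $\M v\ge\lambda v^q\,\ud x$ from (iii) of Definition \ref{def:normalized p obp}, in order to force the limit coincidence set $\{v_\infty=0\}$ to contain no non-trivial flat face through $0$. This strict convexity of the limit free boundary at the origin is exactly what prevents the degenerate scaling and delivers a universal $\rho<1$, thereby driving the iteration and producing the quantitative bounds on $\psi$.
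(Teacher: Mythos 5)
Your overall framework --- compactness in $\E_1$ plus iteration of a one-step dichotomy, with Lemmas \ref{lem:ext are exp}, \ref{lem:exposed set}, \ref{lem:stability of K} and \ref{lem:compactness of E1} ruling out degeneration in the limit --- is the same as the paper's, which proves the two-sided inclusion $\frac{1+\epsilon}{2}K_t'\subset K_{t/2}'\subset(1-\epsilon)K_t'$ and iterates. But there are genuine gaps. First, the directions are swapped: your improvement-of-flatness statement $\|\D_{\theta_0}'\|\le\rho<1$ controls only the \emph{diameter} of the slices (it says $K_{\theta_0^k}'\subset B_{C\rho^k}'(0)$), and iterating it yields the \emph{lower} bound $\psi(x')\ge c|x'|^{\gamma}$ with $\gamma=\log\theta_0/\log\rho>1$, not the upper bound claimed in step (b). The upper bound $\psi\le C|x'|^{1+\alpha}$ is equivalent to an \emph{inradius} bound $K_t'\supset B_{ct^{1/(1+\alpha)}}'(0)$, i.e.\ to the complementary inclusion $K_{t/2}'\supset\frac{1+\epsilon}{2}K_t'$, which your lemma does not formulate and which needs its own compactness argument (in the paper: if it failed, $\partial\{v_\infty=0\}$ would contain a segment ending at $0$, contradicting the exposedness of $0$ guaranteed by Lemmas \ref{lem:exposed set} and \ref{lem:ext are exp}).

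Second, the volume argument proposed for step (c) cannot close. The bound $|\widetilde{S}_h|\asymp h^{(n-q)/2}$ (equivalently $\hat{S}_h\asymp K_t'\times(-t,t)$ with $h=(t\,|K_t'|)^{2/(n-q)}$) is insensitive to the eccentricity of $K_t'$: since $h$ is \emph{defined} through $|K_t'|$, a slice that is very long in one tangential direction and correspondingly thin in the others produces exactly the same section volume, so no contradiction with $Ch^{(n-q)/2}$ ever arises; degenerate shapes are excluded only by the exposedness mechanism, not by volume counting. A smaller issue of the same kind occurs in step (a): the facts that $0\in\partial\psi_k(0)$ and that the John center is bounded do not prevent $0'$ from landing on $\partial K_1'(v_\infty)$ (a priori $K_\infty$ could be a cone with the vertical segment from $0$ to $(0',1)$ on its boundary); there too you must invoke Lemmas \ref{lem:exposed set} and \ref{lem:ext are exp} for the limit. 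Once all three steps are routed through the exposedness of $0$ for the limiting coincidence set --- proving \emph{both} inclusions of the one-step dichotomy by compactness --- the argument becomes the paper's.
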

\begin{proof}
We  claim that there exists a constant $\epsilon(n,q,\lambda,\Lambda)>0$ such that
\begin{equation}\label{eq:universalconvex}
\frac{1+\epsilon}{2}  K_t'\subset   K_{\frac{1}{2}t}' \subset (1-\epsilon)  K_t'
\end{equation} 
for all $t\in(0,1)$, where $	K_t'=S_{t}^{\psi}(0,0)$ is defined in Definition \ref{def:normal type1}.
Notice that \eqref{eq:universalconvex} remains invariant under
the normalization \eqref{eq:normalized sol}. So it suffices to prove it for the case of $t=1$.  

We only prove the left-hand side of \eqref{eq:universalconvex}, since the right-hand side of \eqref{eq:universalconvex} can be proved similarly. Assume by contradiction the left-hand side is false. Thus there exists a sequence $v_k$ for which the distance  between $ \frac{1}{2}  \partial K_{1}'$ and $ \partial K_{\frac{1}{2}}'$ converges to $0$. Then by convexity, the limiting function $v_{\infty}$ contains a line segment in $\partial \left\{ v_{\infty}=0 \right\}$ with one endpoint being $0$. Hence, $0$ is not an exposed point of $\{v_{\infty}=0\}$, which contradicts with Lemma \ref{lem:exposed set} applied to $v_\infty$.  

This compactness argument actually can also show that $B_c'(0) \subset K_1'$. Since $v\in\E_1$, by definition, we have $K_1'\subset B_C'(0) $. From \eqref{eq:universalconvex}, a standard iteration process leads to that for all $m =1,2,\cdots$,
\[ 2^{m \left(\log_2 \left( 1+\epsilon\right)-1\right)} K_t'\subset   K_{2^{-m} t}'  \subset 2^{m\log_2 (1-\epsilon)}  K_t' .\] 
This implies for $\alpha= \inf\left\{\frac{\log_2 \left( 1+\epsilon\right)}{1-\log_2 \left( 1+\epsilon\right)}, \frac{-\log_2 (1-\epsilon)}{1+\log_2 (1-\epsilon)} \right\} \in (0,1)$ that
\[ 
c\left|x'\right|^{\frac{1+\alpha}{\alpha}}\leq \psi\left( x' \right) \leq C\left|x'\right|^{1+\alpha} \quad \text{in }  B_c'(0).
\]
Finally, by examining the normalization of $v$ at other points on $\partial K \cap B_c(0)$, we deduce that $\partial K \cap B_c(0)$ is $C^{1,\alpha}$ and strictly convex.
\end{proof}

Below we introduce two facts that will be used later.
\begin{Lemma}\label{lem:E1 normal}
Suppose $v \in \E_1$.  Then for any $t \leq 1$, and $h:=h(t)=(t |K_t'|)^{\frac{2}{n-q}}$, the convex set $\hat{S}_h:=S_h \cap \left\{x_n \leq t\right\}$ is balanced around $0$, and is comparable to $K_t'\times (-t, t)$ in the sense that
\begin{equation}\label{eq:coincidence set volume 1}
c\left(K_t' \times (-t, t)\right) \subset   \hat{S}_h \subset  C\left(K_t' \times (-t, t)\right).
\end{equation}
Consequently, $h\le Ct$.
\end{Lemma}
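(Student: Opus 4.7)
The plan is to apply the rescaling from Definition~\ref{def:normal type1} to reduce to the normalized case $t=1$, then combine Proposition~\ref{prop:Savin-1}, Lemma~\ref{lem:compactness of E1}, and the convexity of $v$.

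First, I would set $v_h(y',y_n) := h^{-1} v(\D_t' y', t y_n)$. Because $t\le 1$ and $K_t' \subset K_1'$ with $\mathcal H^{n-1}(K_1')=1$ by Definition~\ref{def:normalized p obp}(v), a direct computation shows $h \le 1$, so $v_h \in \E_{1/h}$ with $1/h \ge 1$. The rescaling sends $\hat{S}_h$ (for $v$) to $\hat{S}_1(v_h) := \{v_h \le 1\} \cap \{y_n \le 1\}$ and $K_t' \times (-t, t)$ to $K_1'(v_h) \times (-1, 1)$. Since balancing around $0$ and set-comparability with universal constants are affine-invariant, it suffices to prove the lemma for $v_h$ at $t=h=1$.

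Next, I invoke the adaptations of Proposition~\ref{prop:Savin-1} and Lemma~\ref{lem:compactness of E1} to the class $\E_\kappa$ with $\kappa \ge 1$; their proofs carry over since a larger boundary value only tightens the compactness estimates. These yield
\[
B_c'(0) \subset K_1'(v_h) \subset B_C'(0), \qquad B_c(0) \subset \{v_h \le \delta\} \cap \{y_n \le 3/4\} \subset \hat{S}_1(v_h) \subset B_C(0),
\]
for universal constants $c, \delta, C>0$. The lower containment $c_0(K_1'(v_h) \times (-1, 1)) \subset \hat{S}_1(v_h)$ then follows by a convexity argument: for small universal $c_0 \le c/4$ and $(y', y_n) \in c_0(K_1'(v_h) \times (-1, 1))$, write $(y', y_n) = c_0(y'/c_0, 1) + (1-c_0)(0, \tau)$ with $\tau := (y_n - c_0)/(1-c_0) \in (-c, 0)$; since $(y'/c_0, 1) \in K(v_h)$ and $(0, \tau) \in B_c(0) \subset \{v_h \le \delta\}$, convexity gives $v_h(y', y_n) \le (1-c_0)\delta < 1$. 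The upper containment follows from $\hat{S}_1(v_h) \subset B_C(0)$ and $B_c'(0) \subset K_1'(v_h)$: any $(y', y_n) \in \hat{S}_1(v_h)$ satisfies $(y'/C_1, y_n/C_1) \in B_c'(0) \times (-1, 1) \subset K_1'(v_h) \times (-1, 1)$ for $C_1 := \max(C/c, 2C)$. Balancing around $0$ is then immediate from $B_c(0) \subset \hat{S}_1(v_h) \subset B_C(0)$: any $z \in \hat{S}_1(v_h)$ has $|{-sz}| \le c$ for $s \in [0, c/C]$, so $-sz \in B_c(0) \subset \hat{S}_1(v_h)$.

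The main obstacle I foresee is verifying that Proposition~\ref{prop:Savin-1} and Lemma~\ref{lem:compactness of E1} indeed extend to $\E_\kappa$ with $\kappa \ge 1$ without loss of universal constants. This requires inspecting their proofs; the key observation is that the boundary value $\kappa$ from Definition~\ref{def:normalized p obp}(ii) is used only to ensure $\{v \le 1\}$ is compactly contained in $V_v$, a property only strengthened by $\kappa \ge 1$. The consequence $h \le Ct$ then follows from the upper containment together with the bound $|K_t'| \le |K_1'| = 1$ and the growth estimate on $\psi$ supplied by Proposition~\ref{prop:Savin-1}.
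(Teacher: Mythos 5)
The core of your argument --- reducing to $t=h=1$ via the normalization of Definition~\ref{def:normal type1}, then reading off the two containments and the balancedness from \eqref{eq:E1 normal} together with $B_c'(0)\subset K_1'\subset B_C'(0)$ --- is exactly the paper's route, and your convexity computation for the lower containment and the dilation argument for the upper containment and balancedness are correct. Your worry about extending Proposition~\ref{prop:Savin-1} and Lemma~\ref{lem:compactness of E1} to $\E_\kappa$ with $\kappa\ge 1$ is already resolved in the paper: Definition~\ref{def:normal type1} records that $v_h\in\E_{\kappa/h}$ implies $v_h\in\E_1$ for $h\le\kappa$ (restrict to $\{v_h<1\}$), so no separate extension is needed.

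There is, however, a genuine gap in your justification of the final assertion $h\le Ct$. Unwinding $h=(t|K_t'|)^{\frac{2}{n-q}}\le Ct$, you need $|K_t'|\le C t^{\frac{n-q-2}{2}}$, which for $n-q>2$ is a quantitative decay of $|K_t'|$ as $t\to 0$. The ingredients you cite give only $|K_t'|\le 1$ and, from $\psi(x')\ge c|x'|^{\frac{1+\alpha}{\alpha}}$, the bound $|K_t'|\le Ct^{\frac{(n-1)\alpha}{1+\alpha}}$; since $\alpha$ is a small universal constant, the exponent $\frac{(n-1)\alpha}{1+\alpha}$ can be strictly less than $\frac{n-q-2}{2}$ (e.g.\ $q=0$, $n$ large), so this yields only $h\le Ct^{\gamma}$ with $\gamma<1$, which is weaker than what is claimed. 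The correct mechanism is different: the upper containment forces $-Cte_n\notin\hat S_h$, and since $-Ct\le t$ this means $v(-Cte_n)\ge h$; combining with the universal local Lipschitz bound for $v\in\E_1$ near the origin (which follows from convexity and $B_c(0)\subset\{v\le\delta\}$ in \eqref{eq:E1 normal}) gives $h\le v(-Cte_n)\le Ct\,\|v\|_{Lip}\le Ct$. This is precisely how the paper closes the argument; you should replace your last sentence accordingly.
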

\begin{proof}
Due to the invariance of the balanced property and the estimate \eqref{eq:coincidence set volume 1} under our normalization, it suffices to consider the case where $t=h=1$. Then the conclusions follow directly from the estimate  \eqref{eq:E1 normal} and the inclusions $B_c'(0) \subset K_1'\subset B_C'(0)$, and that $h\le v(-Cte_n)\le Ct\|v\|_{Lip}$.
\end{proof}

\begin{Lemma}\label{lem:poly dependent}
We say two positive real-valued variables $a$ and $b$ are polynomially dependent if there exist positive constants $\gamma$, $c$ and $C$ such that $ ca^{\frac{1}{\gamma}}<b< Ca^{\gamma} $. Suppose $v \in \E_1$. Then the quantities $t\in (0,1)$, $h(t)$, $\| \D_t' \| $, and $\| \D_t'^{-1} \|^{-1}$, which are defined in Definition \ref{def:normal type1}, are polynomially dependent on each other, with $\gamma$, $c$ and $C$  depending only on $n$, $q$, $\lambda$ and $\Lambda$.
\end{Lemma}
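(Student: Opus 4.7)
The plan is to reduce everything to Proposition \ref{prop:Savin-1}, which already provides the two-sided H\"older comparison $c|x'|^{(1+\alpha)/\alpha}\le \psi(x')\le C|x'|^{1+\alpha}$ in $B_c'(0)$. First I would translate these inequalities into ball inclusions for the sections $K_t'=\{\psi\le t\}$. The upper bound on $\psi$ gives $B_{c\,t^{1/(1+\alpha)}}'(0)\subset K_t'$ for small $t$, while the lower bound gives $K_t'\subset B_{C\,t^{\alpha/(1+\alpha)}}'(0)$. For $t$ close to $1$ one instead uses directly the normalization from item v) of Definition \ref{def:normalized p obp}, which already forces $K_t'$ between two balls of universal radii, so the polynomial dependence is trivial there. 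Combining the two regimes, $K_t'$ sits between two Euclidean balls whose radii are (two-sided) positive powers of $t$.

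Next I would use John's lemma inclusion $\D_t'B_{c(n)}'(x_t')\subset K_t'\subset \D_t'B_{C(n)}'(x_t')$ to convert these ball inclusions into estimates on the singular values of $\D_t'$. Since the smallest and largest semi-axes of the inscribed/circumscribed John ellipsoids are comparable to $\|\D_t'^{-1}\|^{-1}$ and $\|\D_t'\|$ respectively, the ball sandwich above yields
\[
c\,t^{1/(1+\alpha)}\;\le\;\|\D_t'^{-1}\|^{-1}\;\le\;\|\D_t'\|\;\le\;C\,t^{\alpha/(1+\alpha)}.
\]
In particular $\|\D_t'\|$ and $\|\D_t'^{-1}\|^{-1}$ are each polynomially comparable to $t$, and hence to each other.

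For the last quantity, $h(t)=(t\det\D_t')^{2/(n-q)}$, I would simply bound $\det\D_t'=\prod_{i=1}^{n-1}\sigma_i(\D_t')$ between $\|\D_t'^{-1}\|^{-(n-1)}$ and $\|\D_t'\|^{n-1}$. Plugging in the two-sided powers of $t$ from the previous step, one obtains $c\,t^{a}\le h(t)\le C\,t^{b}$ for strictly positive exponents $a,b$ depending only on $n,q,\alpha$; inverting, $t$ is in turn trapped between two positive powers of $h$. Chaining these comparisons produces polynomial dependence between every pair of the four quantities $t$, $h(t)$, $\|\D_t'\|$, $\|\D_t'^{-1}\|^{-1}$.

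The argument is essentially routine once Proposition \ref{prop:Savin-1} is in hand; the only point requiring care is bookkeeping the two regimes (small $t$, where the power estimate on $\psi$ is the driver, and $t$ of order $1$, where the $\E_1$ normalization takes over) so that positive exponents and universal constants appear uniformly for all $t\in(0,1)$. Since $\alpha\in(0,1)$ depends only on $n,q,\lambda,\Lambda$, all exponents $\gamma$ and constants $c,C$ produced in the argument will likewise depend only on these parameters, as required.
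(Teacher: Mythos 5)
Your proposal is correct and follows essentially the same route as the paper: Proposition \ref{prop:Savin-1} gives the two-sided ball inclusions $ct^{1/(1+\alpha)}B_1'(0)\subset K_t'\subset Ct^{\alpha/(1+\alpha)}B_C'(0)$, which via John's lemma control $\|\D_t'\|$ and $\|\D_t'^{-1}\|^{-1}$, and then $\det\D_t'$ and hence $h(t)$ are trapped between positive powers of $t$. Your extra care with the regime $t$ near $1$ is harmless but not needed beyond what the paper does.
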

\begin{proof}
By Proposition   \ref{prop:Savin-1}, we have $ ct^{\frac{1}{1+\alpha}}B_1'(0)\subset K_t' \subset  Ct^{\frac{\alpha}{1+\alpha}}B_{C}'(0)$, which implies that  $  ct^{\frac{1}{1+\alpha}}\leq \| \D_t' \| \leq Ct^{\frac{\alpha}{1+\alpha}}$, $ ct^{\frac{1}{1+\alpha}} \leq \| \D_t'^{-1} \|^{-1} \leq Ct^{\frac{\alpha}{1+\alpha}}$. Then  $ ct^{\frac{2(n+\alpha)}{(n-q)(1+\alpha)}}\le h(t)\le C t^{\frac{2(1+n\alpha)}{n-q}}  $. Hence, the conclusion follows.
\end{proof}

\section{$C^{1,\alpha}$ regularity of the solution}\label{sec:solutionregularity1}

Assuming  $0<\lambda \leq \det D^2 w \leq \Lambda$, Caffarelli \cite{caffarelli1990ilocalization,caffarelli1993note} showed that any convex function $w$ is strictly convex when the dimension $n=2$; and when $n \geq 3$,  the set of non-strictly convex points of $w$ is a closed set, consists of convex subsets $E $ satisfying $w$ is linear on $E$, all the extreme points of $E$ lie on the boundary, and $1\leq \dim E < \frac{n}{2}$.  The  interior $C^{1,\gamma}$ and Schauder  regularity theory for strictly convex solutions to Monge-Amp\`ere equations are then established in \cite{caffarelli1990ilocalization,caffarelli1990interiorw2p,caffarelli1991regularity}.

Similarly, given a convex function $w$ on the open convex set $\Omega$, we define $\Sigma_w$ as the union of all convex sets $E\subset\Omega$ such that
\begin{itemize}
\item $w$ is linear on $E$;
\item $E^{ext} =\emptyset$ (so all extreme points of $\overline E$ lie on $\partial \Omega$).
\end{itemize}

We first study the $C^1$ regularity of the solution $v$ to \eqref{eq:obs equation} away from $\Sigma_{v}$.

\begin{Proposition}\label{thm:nsc set}	
Suppose $v$ is a solution to \eqref{eq:obs equation}. Then, the set of non-strictly convex points of $v$ in $\Omega\setminus K$ is the set $\Sigma_{v}\setminus K$, and
\[
\Gamma_{nsc}= \Sigma_{v} \cap \Gamma.
\] 
Furthermore,  $\Sigma_v$ is closed (relative to $\Omega$), and $v\in C^1(\Omega \setminus \Sigma_{v})$.
\end{Proposition}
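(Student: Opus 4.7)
The plan is to prove the four claims in sequence, drawing on the structural lemmas of Section~\ref{chp:MA obs}. I first establish $\Gamma_{nsc}=\Sigma_v\cap\Gamma$. The inclusion $\subset$ is immediate from Lemma~\ref{lem:nsc property}: each non-trivial exposed face $F$ of $K$ satisfies $F^{ext}=\emptyset$ and carries $v\equiv 0$, so $F\subset\Sigma_v$. For the reverse inclusion, if $x_0\in\Sigma_v\cap\Gamma$ sits on a convex set $E$ with $v=\ell$ on $E$, then $\ell(x_0)=0$, $\ell\le 0$ on $K$, and $\ell\ge 0$ on $E$; hence $\{\ell=0\}\cap E$ is an exposed face of $E$. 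If it were the singleton $\{x_0\}$, then $x_0$ would be an isolated exposed point of $E$ inside $\Omega$, contradicting $E^{ext}=\emptyset$; so it is non-trivial, $\{\ell=0\}\cap K$ is then a non-trivial exposed face of $K$ containing $x_0$, and Lemma~\ref{lem:nsc property} places $x_0$ in $\Gamma_{nsc}$.

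Next, I prove that the non-strictly convex points of $v$ in $\Omega\setminus K$ coincide with $\Sigma_v\setminus K$; the non-trivial direction is that, given $x_0\in\Omega\setminus K$ on a non-trivial contact set $E=\{v=\ell\}$, one has $E\subset\Sigma_v$. Assume by contradiction that $\bar E$ has an extreme point $x_1\in\Omega$. If $x_1\in\Omega\setminus K$, a small ball $B_r(x_1)\subset\Omega\setminus K$ carries $v\ge v(x_1)/2>0$, so $\det D^2 v\ge\lambda(v(x_1)/2)^q>0$ there, and Caffarelli's interior localization theorem~\cite{caffarelli1990ilocalization} applied to $v-\ell$ forces extreme points of $\bar E\cap\bar B_r(x_1)$ onto $\partial B_r(x_1)$, contradicting $x_1$ being interior. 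If $x_1\in K$, a short argument (any decomposition $x_1=(y+z)/2$ with $y,z\in K$ forces $\ell(y)=\ell(z)=0$, hence $y,z\in E$) shows $x_1\in K^{ext}$, and Lemma~\ref{lem:ext are exp} gives $x_1\in\Gamma_{sc}$. Then two subcases arise: if $\{\ell=0\}\cap K$ is a non-trivial exposed face $F$ of $K$, the maximality of the contact set $E$ forces $F\subset E$, while Lemma~\ref{lem:nsc property} asserts $F^{ext}=\emptyset$, so $x_1\in F\subset\bar E$ cannot be extreme in $F$, a contradiction; if instead $\{\ell=0\}\cap K=\{x_1\}$, I combine the $C^{1,\alpha}$ strict convexity of $\partial K$ at $x_1$ (Proposition~\ref{prop:Savin-1}) with the normalization of Definition~\ref{def:normal type1} and compactness of $\E_1$ (Lemma~\ref{lem:compactness of E1}) to rule out $E$ reaching from $x_1$ into $\Omega\setminus K$. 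This last subcase is the main obstacle.

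Finally, closedness of $\Sigma_v$ follows by compactness: given $x_k\in\Sigma_v\to x_\infty\in\Omega$ with associated sets $E_k$ and affine functions $\ell_k$, the local Lipschitz continuity of $v$ yields, along a subsequence, $\ell_k\to\ell_\infty$ and $\bar E_k\to\bar E_\infty$ in the uniform and Hausdorff topologies respectively, with extreme points of $\bar E_\infty$ still lying on $\partial\Omega$. For $v\in C^1(\Omega\setminus\Sigma_v)$: on $\mathrm{int}(K)$, $v\equiv 0$; at strictly convex points in $\Omega\setminus K$, Caffarelli's $C^{1,\alpha}$ regularity applies locally since the equation is uniformly elliptic; at $x_0\in\Gamma_{sc}$, the second step shows that no nonzero supporting affine function of $v$ at $x_0$ can exist (any such would produce a non-trivial contact set reaching into $\Omega\setminus K$), so $\partial v(x_0)=\{0\}$ and $v$ is differentiable at $x_0$ with vanishing gradient, whence continuity of $\nabla v$ at differentiability points extends this to $v\in C^1$ in a neighborhood.
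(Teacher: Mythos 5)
Your overall architecture is sound and in places (the closedness of $\Sigma_v$ via Blaschke selection and Carath\'eodory, which is cleaner than the paper's ray-segment construction; the reduction of $\Gamma_{nsc}=\Sigma_v\cap\Gamma$ to Lemma \ref{lem:nsc property}) the argument works. But there is a genuine gap exactly at the step you yourself label ``the main obstacle'', and it is not a technicality: it is the heart of the proposition. In the subcase where the supporting affine function $\ell\not\equiv 0$ has $\{\ell=0\}\cap K=\{x_1\}$ with $x_1\in\Gamma_{sc}$, you must exclude a non-trivial contact set $E=\{v=\ell\}$ emanating from $x_1$ into $\Omega\setminus K$, and you only name tools (Proposition \ref{prop:Savin-1}, Definition \ref{def:normal type1}, Lemma \ref{lem:compactness of E1}) without an argument. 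Proposition \ref{prop:Savin-1} controls the shape of $\partial K$, not the growth of $v$ off $K$, so it cannot by itself kill a segment on which $v$ is affine with nonzero slope; and the growth estimates on $v$ that would do the job ($v\leq C\operatorname{dist}(x,K)^{1+\alpha}$, Lemma \ref{lem:c1a v pointwise}) are proved in the paper \emph{after} and \emph{using} the very statement you are trying to establish, so invoking them here would be circular. The paper closes this subcase by first proving that $v$ is differentiable at every point of $\Gamma_{sc}$ (Lemma \ref{lem:c1 at exposed}), via a quantitative centered-section/balancing argument (Lemmas \ref{lem:balanced h>0} and \ref{lem:measure constrain 2}); then $\nabla v(x_1)=0$ forces $\nabla\ell=0$ and hence $\ell\equiv 0$, a contradiction. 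Some version of that differentiability argument is unavoidable, and your proposal contains no substitute for it.

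A second, related flaw is in your final paragraph: you deduce differentiability of $v$ at $x_0\in\Gamma_{sc}$ from the claim that ``any nonzero supporting affine function of $v$ at $x_0$ would produce a non-trivial contact set''. That implication is false: a supporting hyperplane with nonzero slope can touch the graph of a convex function at a single point (and $\partial v(x_0)$ can be a non-trivial convex set even when every contact set is a singleton). So even if your step 2 were complete, it would not yield $\partial v(x_0)=\{0\}$. The logical order must be reversed, as in the paper: differentiability at exposed points of $K$ is proved first, by the centered-section argument, and then feeds both the contact-set classification (Lemma \ref{lem:Eext=emptyset}) and the $C^1$ regularity on $\Omega\setminus\Sigma_v$. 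Finally, a minor point in your first step: when $\ell\equiv 0$ the set $\{\ell=0\}\cap K$ is $K$ itself, which need not be contained in $\partial K$, so Lemma \ref{lem:nsc property} does not apply verbatim; instead one concludes directly that $x_0\in K^{exp}\cap E\subset E^{ext}=\emptyset$ is impossible, hence $x_0\in\Gamma_{nsc}$.
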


The proof of Proposition \ref{thm:nsc set}	 will be presented after the proof of Lemma \ref{lem:c1 around ek}. 
 
\begin{Lemma}\label{lem:volume Shc h>0}
Suppose $v(x_0) >0$. Then for any centered section $\widetilde{S}_h:= \widetilde{S}_h(x_0)\subset \subset \Omega$ with $0<h<v(x_0)$, we have
\[
c(n,\Lambda)v(x_0)^{-\frac{q}{2}}h^{\frac{n}{2}} \leq |\widetilde{S}_h| \leq C(n,q,\lambda)v(x_0)^{-\frac{q}{2}}h^{\frac{n}{2}}
\]
and
\[
c(n,q,\lambda)v(x_0)^{\frac{q}{2}}h^{\frac{n}{2}} \leq |\M v(\widetilde{S}_h)| \leq C(n,q,\lambda,\Lambda)v(x_0)^{\frac{q}{2}}h^{\frac{n}{2}}.
\]
\end{Lemma}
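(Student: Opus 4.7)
The plan is to adapt the proof of Lemma \ref{lem:volume of Shc} (which covers $x_0 \in \partial K$) to the present regime $v(x_0) > 0$. The key observation is that when $0 < h < v(x_0)$, the balanced geometry of the centered section $\widetilde{S}_h$ at $x_0$ forces $v$ to be uniformly bounded above by $Cv(x_0)$ on $\widetilde{S}_h$, so the obstacle equation $\det D^2 v = gv^q$ effectively has right-hand side of order $v(x_0)^q$. The section volume should then scale like that of the model equation $\det D^2 u = v(x_0)^q$, i.e., $|\widetilde{S}_h|\sim v(x_0)^{-q/2}h^{n/2}$, as one can check on the explicit quadratic solution $u(x) = v(x_0) + \tfrac{1}{2}v(x_0)^{q/n}|x-x_0|^2$.

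Writing $\widetilde{S}_h = \{v < \ell_h\}$ with $\ell_h(x) = v(x_0) + p_h \cdot (x-x_0) + h$, the balanced argument leading to \eqref{eq:centerS right}--\eqref{eq:centerS nondege} gives $|v - \ell_h| \leq C(n)h$ on $\widetilde{S}_h$. Combined with $v \geq 0$ and applied to the reflected point $x_0 - c(n)(x - x_0) \in \widetilde{S}_h$, this yields the two-sided bound $-v(x_0) - h \leq p_h \cdot (x - x_0) \leq C(n)(v(x_0) + h)$ on $\widetilde{S}_h$, and hence
\[
0 \leq v(x) \leq \ell_h(x) \leq 2C(n)\,v(x_0) \quad \text{on } \widetilde{S}_h,
\]
using $h < v(x_0)$. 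Consequently $\M v(\widetilde{S}_h) \leq \Lambda(2C(n))^q v(x_0)^q |\widetilde{S}_h|$, which when inserted into Lemma \ref{lem:measure constrain} gives the volume lower bound $|\widetilde{S}_h| \geq c(n, \Lambda)\,v(x_0)^{-q/2}h^{n/2}$.

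For the matching upper bound on $|\widetilde{S}_h|$, I would exploit the scaling invariance of the equation. The rescaling $\tilde v(y) := v(x_0)^{-1} v\bigl(v(x_0)^{(n-q)/(2n)} y + x_0\bigr)$ is again a solution of the same obstacle problem, with $\tilde v(0) = 1$, and centered sections transform as $|\widetilde{S}_h(x_0)| = v(x_0)^{(n-q)/2} |\widetilde{S}_{\tilde h}^{\tilde v}(0)|$ with $\tilde h = h/v(x_0) \in (0, 1)$. Thus it suffices to prove the universal upper bound $|\widetilde{S}_{\tilde h}^{\tilde v}(0)| \leq C(n, q, \lambda) \tilde h^{n/2}$. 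By the balanced property and Gr\"unbaum's inequality applied to the linear functional $\tilde p \cdot y$ on the mass-centered convex set $\widetilde{S}_{\tilde h}^{\tilde v}(0)$, one identifies a substantial subregion $A$ with $|A| \geq c(n) |\widetilde{S}_{\tilde h}^{\tilde v}(0)|$ on which $\tilde v \geq c(n)$; there $\det D^2 \tilde v \geq \lambda c(n)^q$, and comparison with an explicit quadratic barrier $\Phi(y) = \tilde\ell(y) + a|y - y_0|^2$ with $(2a)^n \sim \lambda$ yields the desired upper bound. The two-sided bound on $\M v(\widetilde{S}_h)$ then follows from the pointwise control on $v$ together with Lemma \ref{lem:measure constrain} and the two-sided bound on $|\widetilde{S}_h|$. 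The main obstacle is exactly this upper bound on $|\widetilde{S}_h|$: since $\widetilde{S}_h$ can extend toward $K$ when $h$ is comparable to $v(x_0)$, the pointwise lower bound $v \gtrsim v(x_0)$ fails on part of $\widetilde{S}_h$, and one must carefully leverage the constraint $h < v(x_0)$ and the balanced geometry to isolate a subregion of comparable measure where such a lower bound for $v$ does hold.
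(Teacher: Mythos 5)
Your lower bound on $|\widetilde{S}_h|$ and both bounds on $\M v(\widetilde{S}_h)$ are correct and follow exactly the paper's route: \eqref{eq:centerS right} together with $h<v(x_0)$ gives $v\le C(n)v(x_0)$ on $\widetilde{S}_h$, hence $\M v(\widetilde{S}_h)\le C(n,\Lambda)v(x_0)^q|\widetilde{S}_h|$, and Lemma \ref{lem:measure constrain} does the rest. The rescaling $\tilde v(y)=v(x_0)^{-1}v(v(x_0)^{(n-q)/(2n)}y+x_0)$ is also consistent, though the paper does not bother to normalize.

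The gap is in the one step you yourself flag as the main obstacle: the identification of a proportional-measure subregion where $\tilde v\ge c(n)$. Applying Gr\"unbaum (or the balanced property) to the half-space determined by the slope $\tilde p$ of the affine function defining the centered section does not work: on $\{\tilde p\cdot y\ge 0\}$ you only know $\tilde v<1+\tilde p\cdot y+\tilde h$ (an upper bound), and combining with \eqref{eq:centerS nondege} gives $\tilde v\ge 1-(C(n)-1)\tilde h$, which is useless since $\tilde h$ may be close to $1$ and $C(n)>2$. The correct linear functional is a subgradient $\xi\in\partial v(x_0)$, i.e., the supporting hyperplane at $x_0$ of the convex sublevel set $\{v<v(x_0)\}$: convexity gives $v\ge v(x_0)$ on $O:=\widetilde{S}_h\cap\{\xi\cdot(x-x_0)\ge 0\}$, and the balanced property of $\widetilde{S}_h$ around its centroid $x_0$ gives $|O|\ge c(n)|\widetilde{S}_h|$. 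This is precisely what the paper does ("after a rotation, assume $v\ge v(x_0)$ on $\{x_n\ge (x_0)_n\}$"); it then applies Lemma \ref{lem:volume concidence} with $q=0$ to $v(x_0)^{-q/n}(v-\ell_h+2C(n)h)$ on $O$, which is the same comparison-with-a-quadratic-barrier argument you propose. With the half-space corrected, your proof closes and coincides with the paper's.
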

\begin{proof} 
From \eqref{eq:centerS right}, we have $| \M v (\widetilde{S}_h)|\leq \Lambda \int_{\widetilde{S}_h}v^q \ud x \leq C(n,\Lambda) v(x_0)^q | \widetilde{S}_h|$, which, combined with Lemma \ref{lem:measure constrain}, leads to
\[
| \widetilde{S}_h| \geq c(n,\Lambda)  \sqrt{v(x_0)^{-q}| \M v (\widetilde{S}_h)| \cdot |\widetilde{S}_h| }\geq c(n,\Lambda)v(x_0)^{-\frac{q}{2}}h^{\frac{n}{2}}.
\]
	
After a rotation, we may assume that $v(x) \geq v(x_0)$ on $\left\{ x_n \geq (x_0)_n\right\}\cap\Omega$.  Let $O:= \widetilde{S}_h \cap \left\{ x_n \geq (x_0)_n\right\}$.  By the balanced property of $\widetilde{S}_h$, we have $|O| \geq C(n)|\widetilde{S}_h|$. Suppose $\widetilde{S}_h=\left\{x: \; v(x)<\ell_h(x)\right\}$, where $\ell_h(x)=v(x_0)+p_h\cdot (x-x_0)+h$. By \eqref{eq:centerS nondege}, the convex function $\tilde{v} =v(x_0)^{-\frac{q}{n}} \left(v-\ell_{h}+2C(n)h\right)$ satisfies 
\[
0 <  \tilde{v} \leq 2C(n)v(x_0)^{-\frac{q}{n}} h   \quad \text{and } \quad  \det D^2  \tilde{v} \geq \lambda \quad   \text{on }O.
\]
Applying Lemma  \ref{lem:volume concidence} to the function $ \tilde{v} $ for the case $q=0$, we obtain that 
\[
|\widetilde{S}_h| \leq c(n)|O|   \leq   C(n,\lambda)\left\| \tilde{v}\right\|_{L^{\infty}}^{\frac{n}{2}} \leq C(n,q,\lambda)v(x_0)^{-\frac{q}{2}}h^{\frac{n}{2}}.
\]
Then the conclusions follow from  Lemma \ref{lem:measure constrain}.	
\end{proof}

\begin{Lemma}\label{lem:balanced h>0}
Assume $x_0 \notin K$, $0<h<v(x_0)$, and the centered section 
\[
\widetilde{S}_h(x_0)=\left\{x: \; v(x)<\ell_h(x)\right\} \subset \subset \Omega.
\]
 Then for any $y$ satisfying  $v(y) -\ell_h(y) \leq -h$, $\widetilde{S}_h(x_0)$ is $ c(n,q,\lambda,\Lambda)$-balanced around $y$.
\end{Lemma}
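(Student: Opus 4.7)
The proof will closely parallel Lemma~\ref{lem:balanced other}, which handles the case $x_0 \in \partial K$, replacing its use of Lemma~\ref{lem:volume of Shc} with Lemma~\ref{lem:volume Shc h>0} to accommodate the positivity of $v(x_0)$. The idea is to normalize $\widetilde{S}_h(x_0)$ to a set comparable to the unit ball, show that the rescaled function still has bounded Monge-Amp\`ere measure, and then apply Aleksandrov's maximum principle to force $y$ to lie uniformly away from the boundary of the section; the balanced property is then a consequence of the affine invariance of the notion and the standard observation that a normalized convex body is balanced around any of its interior points.

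First I will record the analog of \eqref{eq:centerS nondege}: by the $c(n)$-balanced property of the centered section around $x_0$, writing $x_0 = \frac{1}{1+c}x + \frac{c}{1+c}z$ for any $x\in\widetilde{S}_h(x_0)$ with $z:=x_0+c(x_0-x)\in \widetilde{S}_h(x_0)$, convexity of $v$ together with $v(z)\le \ell_h(z)$ and linearity of $\ell_h$ yields $v-\ell_h\ge -C(n)h$ on $\widetilde{S}_h(x_0)$; running the same argument with the lower bound $v\ge 0$ in place of $v(z)\ge 0$ gives $\ell_h\le Cv(x_0)$ on $\widetilde{S}_h(x_0)$. By Lemma~\ref{lem:volume Shc h>0} and John's lemma, there is an affine map $T_h\colon\R^n\to\R^n$ with $T_h(0)=0$, $(\det T_h)^2\asymp v(x_0)^{-q}h^n$, such that $T_h(B_c)+x_0\subset \widetilde{S}_h(x_0)\subset T_h(B_C)+x_0$. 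Introduce the normalization
\[
\tilde{v}_h(x)=h^{-1}\bigl(v(T_h x+x_0)-\ell_h(T_h x+x_0)\bigr),\qquad x\in \widetilde{U}:=T_h^{-1}(\widetilde{S}_h(x_0)-x_0),
\]
so that $B_c\subset \widetilde{U}\subset B_C$, $\tilde{v}_h=0$ on $\partial \widetilde{U}$, and $-C(n)\le \tilde{v}_h\le 0$ in $\widetilde{U}$.

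A direct computation gives $\det D^2\tilde v_h =h^{-n}(\det T_h)^2\det D^2 v(T_h x+x_0)$, and since $v\le \ell_h\le Cv(x_0)$ on $\widetilde{S}_h(x_0)$, the bound $\det D^2 v\le \Lambda v^q$ combined with $(\det T_h)^2 h^{-n}\le Cv(x_0)^{-q}$ yields $\det D^2\tilde v_h\le C(n,q,\lambda,\Lambda)$ in $\widetilde{U}$. Set $\tilde y:=T_h^{-1}(y-x_0)$. The hypothesis $v(y)-\ell_h(y)\le -h$ translates into $\tilde v_h(\tilde y)\le -1$, and applying Aleksandrov's maximum principle (Lemma~\ref{lem:measure constrain 2}) to $\tilde v_h$ on $\widetilde{U}$ gives
\[
1\le -\tilde v_h(\tilde y)\le C\bigl(\operatorname{diam}(\widetilde U)^{n-1}\operatorname{dist}(\tilde y,\partial \widetilde U)\,\M \tilde v_h(\widetilde U)\bigr)^{1/n},
\]
so $\operatorname{dist}(\tilde y,\partial \widetilde U)\ge c(n,q,\lambda,\Lambda)$. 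Since $\widetilde U$ is convex with $\widetilde U\subset B_C$ and $\tilde y$ has universal interior distance, an elementary argument (draw the chord from any $z\in\widetilde U$ through $\tilde y$) shows that $\widetilde U$ is $c(n,q,\lambda,\Lambda)$-balanced around $\tilde y$; transporting this by $T_h$, whose affinity preserves \eqref{eq:balanced def}, yields the desired balanced property of $\widetilde{S}_h(x_0)$ around $y$. The only point that requires some care is verifying the bound $\ell_h\le Cv(x_0)$ on the section, which underlies the upper bound on $\det D^2\tilde v_h$; the rest is a clean adaptation of the argument in Lemma~\ref{lem:balanced other}.
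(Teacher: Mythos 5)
Your proposal is correct and follows essentially the same route as the paper's proof: normalize $\widetilde{S}_h(x_0)$ via John's lemma using the volume bounds of Lemma \ref{lem:volume Shc h>0}, bound $\det D^2$ of the rescaled function by a universal constant using $v\le \ell_h\le Cv(x_0)$ on the section, and apply Aleksandrov's maximum principle to get a universal interior distance for $y$, from which the balanced property follows by affine invariance. The only difference is that you spell out a couple of steps the paper leaves implicit (the bound $\ell_h\le Cv(x_0)$, which the paper draws from \eqref{eq:centerS right}, and the final elementary chord argument).
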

\begin{proof}
After a translation, we assume $x_0=0$. Using Lemma \ref{lem:volume Shc h>0}, we can apply John's lemma to find a diagonal transformation $\D_h$ with $ v^{\frac{q}{2}}(0)\det \D_h=h^{\frac{n}{2}}$
such that $\D_h B_c(0)  \subset    \widetilde{S}_h \subset   \D_hB_C(0)$.
Then, we introduce the following normalization  of $v$ at $0$ for $\widetilde{S}_h $,
\[
v_{h} (x) = \frac{ v\left( \D_h x\right)- \ell_h(\D_hx)}{h} ,\quad x \in \D_h^{-1} \widetilde{S}_h.
\]
Using \eqref{eq:centerS right} and \eqref{eq:centerS nondege}, we obtain that 
\[
\det D^2 v_{h}  \leq  \Lambda\frac{v^q( \D_h x)}{v^q(0)} \leq \Lambda \frac{\left(2C(n)v(0)\right)^q}{v^q(0)} \leq C  \quad  \text{in }\D_h^{-1} \widetilde{S}_h.
\]
Note that $v_{h}=0$ on $\partial \D_h^{-1} \widetilde{S}_h$. Thus, for any $y$ satisfying $v_{h}(\D_h^{-1} y) =\frac{v(y) -\ell_h(y)}{h} \leq  -1$, we can apply Lemma  \ref{lem:measure constrain 2} to obtain that  $\operatorname{dist}(\D_h^{-1}y, \partial \D_h^{-1}\widetilde{S}_h) \geq c$. Consequently, $\D_h^{-1} \widetilde{S}_h $ is balanced around $\D_h^{-1}  y$, and thus, $\widetilde{S}_h $ is balanced around $ y$.
\end{proof}

\begin{Lemma}\label{lem:c1 at exposed}
$v$ is differentiable at every point on $\Gamma_{sc}$. 
\end{Lemma}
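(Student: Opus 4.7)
The goal is to show that $\partial v(x_0)$ is a singleton. By Lemma \ref{lem:ext are exp}, either $|K|>0$ or $K=\{x_0\}$; I focus on the main case $|K|>0$, where after an affine transformation and the normalization of Definition \ref{def:normal type1} I may assume $v\in\E_1$ with $x_0=0$, $K\subset\{x_n\ge 0\}$, and $\{x_n=0\}\cap K=\{0\}$ (using that $0$ is exposed). The case $K=\{x_0\}$ admits a parallel but simpler isotropic argument since the centered sections are already forced to shrink to a point.

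My first step constrains $\partial v(0)$ to a ray. Any $p=(p',p_n)\in\partial v(0)$ satisfies $p\cdot x\le 0$ on $K$ since $v=0$ there. Testing with graph points $(x',\psi(x'))\in K$ and using the upper bound $\psi(x')\le C|x'|^{1+\alpha}$ from Proposition \ref{prop:Savin-1} gives $p'\cdot x'\le C|p_n|\,|x'|^{1+\alpha}$; sending $x'\to 0$ in every direction forces $p'=0$, and testing against $(0,1)\in K$ yields $p_n\le 0$. Hence $\partial v(0)\subset\{(0,p_n):-a\le p_n\le 0\}$ for some $a\ge 0$, with $0\in\partial v(0)$ because $v\ge 0$.

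Next, I would assume for contradiction that $a>0$ and extract a slab condition on the centered sections. Since $0$ is extreme, $\tilde{S}_h(0)=\{v<p_h\cdot x+h\}$ shrinks to $\{0\}$ by \eqref{eq:center sec shrink}, is $c(n)$-balanced around $0$, and satisfies $|\tilde{S}_h|\sim h^{(n-q)/2}$ by Lemma \ref{lem:volume of Shc}. Combining $v(x)\ge 0$ and $v(x)\ge -ax_n$ with the balanced property $-c(n)\tilde{S}_h\subset\tilde{S}_h$ yields $|p_h\cdot x|\le Ch$ and $|(p_h+ae_n)\cdot x|\le Ch$ on $\tilde{S}_h$, and subtracting gives $\tilde{S}_h\subset\{|x_n|\le 2Ch/a\}$.

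The hard part is turning this slab confinement into a contradiction for every $q\in[0,n)$. For $q\ge n-2$, combining the slab with $|\tilde{S}_h|\ge c h^{(n-q)/2}$ forces the $(n{-}1)$-dimensional transverse section of $\tilde{S}_h$ to have measure bounded below by a positive constant, contradicting $\tilde{S}_h\to\{0\}$. For $q<n-2$ the transverse area shrinks and a direct volume argument fails; I plan a blow-up using the John affine map $T_h$ that normalizes $\tilde{S}_h$ (sizes polynomially comparable by Lemma \ref{lem:poly dependent}), and pass to a subsequential limit $v_\infty\in\E_1$ via the compactness Lemma \ref{lem:compactness of E1} and the stability of coincidence sets Lemma \ref{lem:stability of K}, noting that $0$ remains an exposed point of $\{v_\infty=0\}$ by Lemma \ref{lem:ext are exp} while the nontrivial subgradient persists under the rescaling. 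Iterating this normalization should yield a self-similar blow-up homogeneous of degree strictly greater than $1$ (using the $C^{1,\alpha}$ decay from Proposition \ref{prop:Savin-1}), for which $\partial v_\infty(0)=\{0\}$ automatically, contradicting the persistence of the nonzero subgradient.
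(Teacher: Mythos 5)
Your first three steps are sound and, for $q\ge n-2$, they constitute a complete and genuinely different argument from the paper's: the reduction $\partial v(0)\subset\{0\}\times[-a,0]$ via Proposition \ref{prop:Savin-1}, the slab confinement $\widetilde{S}_h\subset\{|x_n|\le Ch/a\}$ from the two subgradients $0$ and $-ae_n$ together with balancedness, and the volume lower bound $|\widetilde{S}_h|\ge ch^{(n-q)/2}$ of Lemma \ref{lem:volume of Shc} do clash when $(n-q)/2\le 1$. The paper instead avoids any volume count: it cuts with a plane of slope $b$ slightly below the extremal slope $a$, identifies $E_s=\{v<s+bx_n\}$ as a centered section about its mass center, invokes Lemma \ref{lem:balanced h>0} to get balancedness around $0$, and derives a contradiction because $E_s$ reaches out to $x_n\sim s/(a-b)$ but is trapped in $\{x_n\ge -s/b\}$. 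That argument is uniform in $q\in[0,n)$.

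The case $q<n-2$ (which contains the basic case $q=0$, $n\ge 3$) is where your proposal has a genuine gap. First, the claim that the iterated John normalizations produce a blow-up limit that is \emph{homogeneous of degree strictly greater than $1$} has no mechanism behind it: blow-up limits of convex solutions are not automatically homogeneous, and the decay $v\le C\operatorname{dist}(x,K)^{1+\alpha}$ that would force degree $>1$ is not Proposition \ref{prop:Savin-1} (which controls only the free boundary $\psi$) but Lemma \ref{lem:c1a v pointwise}, which is proved \emph{after} and \emph{by means of} the present lemma (through Lemma \ref{lem:Eext=emptyset}); using it here would be circular. Second, even granting convergence, the configuration you would obtain in the limit — a normalized solution with $0$ an exposed point of its coincidence set and a surviving nonzero subgradient at $0$ — is not itself a contradiction: it is exactly the hypothesis you are trying to refute, reproduced at unit scale, so the iteration does not close without an additional quantitative improvement (or a rigidity statement for degree-one cones) that the sketch does not supply. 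Third, the "persistence of the nontrivial subgradient" under the anisotropic rescaling $p\mapsto h^{-1}\T_h^{T}p$ requires a matching \emph{lower} bound on the $e_n$-extent of $\widetilde{S}_h$ (of order $h/a$), which you assert but do not verify. To repair the proof for all $q$ you would essentially need the paper's device of cutting at slope $b<a$ and applying Lemma \ref{lem:balanced h>0}, or some equivalent replacement for the failed volume count.
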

\begin{proof}
Assume the lemma does not hold. Without loss of generality, we assume $0\in\Gamma_{sc}$ and $v$ is not differentiable at $0$. Then $a =\sup\left\{|p|:\; p\in \partial v(0) \right\}>0$. After a rotation and an affine transformation, we further assume that $ae_n \in \partial v(0)$, so that $K \subset \left\{ x_n \leq 0\right\}$. By using the convexity of $v$ and the maximality of $a$, we have
\[
v(0)=0, \quad  v \geq a \max\left\{x_n,0\right\} ,\quad  v\left(t e_n\right)=a t+o(t) \text{ for } t\geq 0.
\]
For $b \in (0,a)$ close to $a$, and $s>0$ small, there holds
\[
E_s:=S_{s}^v(0,b) =\left\{ x :\; v(x)<s+bx_n\right\} \subset \left\{-sb^{-1} \leq  x_n \leq s(a-b)^{-1} \right\}.
\]
Since $ 0\in \Gamma_{sc}=K^{exp}$, by taking $s$ sufficiently small, we have
\[
E_s \subset  \left(\left\{-sb^{-1} \leq  x_n \leq s(a-b)^{-1} \right\}  \cap \left\{ v(x) \leq 2sa(a-b)^{-1} \right\}\right) \subset\subset \Omega. 
\]
Thus, we can write $E_s= \widetilde{S}_h(x_h)=\left\{ v(x) < \ell_{h}(x)\right\}$, where $x_h$ denotes the mass center of $E_s$, $\ell_{h}=s+bx_n$, and  $h=\ell_h(x_h)-v(x_h)$. Notably, the function $v(x)-\ell_h(x)$  achieves its minimum value $-s$ at $0$, and we have $h \leq s$. 
	
By observing that $v\left(t e_n\right)=a t+o(t)$, we find for $s$ small that  $\frac{1}{2}s(a-b)^{-1} e_n \subset \widetilde{S}_h(x_h) $.  Since $b$ is close to $a $, the balanced property of  $\widetilde{S}_h(x_h)$ around $x_h$ yields that $x_h \cdot e_n \geq c(n) s(a-b)^{-1}$, thus $v(x_h) \geq  ax_h \cdot e_n \geq s \geq h$. Then, we can apply Lemma \ref{lem:balanced h>0} to conclude that $\widetilde{S}_h(x_h)$ is balanced around $0$, which contradicts with the fact $ \frac{1}{2}s(a-b)^{-1} e_n \in  \widetilde{S}_h(x_h) \subset \left\{ x_n  \geq -sb^{-1} \right\}$ for $b$ close to $a$.
\end{proof}

\begin{Lemma}\label{lem:Eext=emptyset}
Suppose $\ell \leq v$ is a linear function with $\ell \not\equiv0$. Then for every non-trivial convex set $E \subset\left\{ v=\ell \right\}$, we have $E^{ext}\cap \Gamma_{sc}=\emptyset$, and $E \subset \Sigma_{v}$. Consequently, it follows that $\Gamma_{sc} \cap \Sigma_v =\emptyset$.  
\end{Lemma}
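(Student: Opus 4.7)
The first assertion $E^{\mathrm{ext}} \cap \Gamma_{sc} = \emptyset$ reduces to a single first-order observation at an exposed point. Suppose for contradiction that $x_0 \in E^{\mathrm{ext}} \cap \Gamma_{sc}$. Since $\Gamma_{sc} \subset K$ we have $v(x_0)=0$, and since $x_0 \in E \subset \{v=\ell\}$ we also have $\ell(x_0)=0$; hence $\ell$ is an affine support of $v$ at $x_0$ and $\nabla \ell \in \partial v(x_0)$. By Lemma~\ref{lem:c1 at exposed} $v$ is differentiable at $x_0$, so $\partial v(x_0)=\{Dv(x_0)\}$ is a singleton, and because $v\ge 0 = v(x_0)$ attains a global minimum at $x_0$, convexity together with differentiability forces $Dv(x_0)=0$. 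Hence $\nabla \ell = 0$ and $\ell \equiv 0$, contradicting $\ell \not\equiv 0$.

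For the inclusion $E \subset \Sigma_v$, I will show the slightly stronger statement that the maximal contact set $F := \{v=\ell\}$ itself qualifies as one of the convex sets in the union defining $\Sigma_v$; since $v$ is trivially linear on $F$ it suffices to check $F^{\mathrm{ext}} \cap \Omega = \emptyset$. Since $\ell\not\equiv 0$, the set $F\cap K$ is contained in the hyperplane $\{\ell=0\}$ and in particular in $\Gamma$, so any $x \in F^{\mathrm{ext}} \cap \Omega$ falls in exactly one of $\Gamma_{sc}$, $\Gamma_{nsc}$, or $\{v>0\}$. The $\Gamma_{sc}$ case is excluded by the first assertion applied to $F$. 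In the $\Gamma_{nsc}$ case, $\ell(x)=0$ makes $\{\ell=0\}$ a supporting hyperplane of $K$ at $x$; the exposed face $K\cap\{\ell=0\}$ containing $x$ must be non-trivial (otherwise $x$ would be exposed in $K$, hence in $\Gamma_{sc}$), and by Lemma~\ref{lem:nsc property} it has no extreme points in $\Omega$, so a segment through $x$ lies in this face $\subset F$, contradicting extremality. In the $\{v>0\}$ case, on a neighborhood $U \Subset \{v>0\}$ of $x$ the equation $\det D^2 v = g v^q$ has data bounded in $[c,C]$ with $c>0$, so Caffarelli's strict convexity theorem (recalled at the start of Section~\ref{sec:solutionregularity1}) applies: either $v$ is strictly convex at $x$, in which case differentiability makes $\ell$ the unique affine support and the strict inequality $v(z)>\ell(z)$ for $z\ne x$ forces $F=\{x\}$ locally, contradicting $x\in F^{\mathrm{ext}}$ of a non-trivial $F$; or $x$ lies in the relative interior of a non-strict affine piece $E'$ of $v$ in $U$ on which $v$ equals the same $\ell$ (by the uniqueness of the subgradient at $x$), so $E' \subset F$ and $x$ is not extreme in $F$.

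The consequence $\Gamma_{sc} \cap \Sigma_v = \emptyset$ follows by a similar dichotomy: if $x$ lies in both, then $x \in E'$ for some member of the defining union of $\Sigma_v$, with $v=\ell'$ linear on $E'$ and $(E')^{\mathrm{ext}}\cap\Omega=\emptyset$. If $\ell'\equiv 0$ then $E'\subset K$, and since $x$ is exposed in $K$ it is also extreme in $E'$, contradicting $(E')^{\mathrm{ext}}\cap\Omega=\emptyset$. If $\ell'\not\equiv 0$, the first assertion applied to $E'$ gives $x\notin (E')^{\mathrm{ext}}$, so $x$ sits on an open segment $(a,b)\subset E'$; linearity of $v$ on $[a,b]$ combined with $v\ge 0$ and $v(x)=0$ forces $v(a)=v(b)=\ell'(a)=\ell'(b)=0$, placing $[a,b]$ into the non-trivial exposed face $K \cap\{\ell'=0\}$, which by Lemma~\ref{lem:nsc property} is contained in $\Gamma_{nsc}$, contradicting $x\in\Gamma_{sc}$. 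The only step beyond routine convex-analytic bookkeeping is the $\{v>0\}$ subcase of the second part, where the main point is to localize to a neighborhood $U \Subset \{v>0\}$ so that $\det D^2 v$ is bounded between two positive constants and Caffarelli's strict convexity theorem becomes applicable.
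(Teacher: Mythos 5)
Your proof follows essentially the same route as the paper's: part (1) is identical (differentiability at exposed points via Lemma~\ref{lem:c1 at exposed} forces $\nabla\ell=0$); part (2) likewise passes to the full contact set $F=\{v=\ell\}$ and shows $F^{ext}\cap\Omega=\emptyset$, with your three-way split into $\Gamma_{sc}$, $\Gamma_{nsc}$ and $\{v>0\}$ playing the role of the paper's two steps ($F^{ext}\subset K$ by Caffarelli's structure theorem, then $(F\cap K)^{ext}=F\cap K^{ext}\subset\Gamma_{sc}$); and part (3) uses the same dichotomy on whether the supporting function vanishes identically.

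Two justifications are looser than they should be. First, in the $\{v>0\}$ subcase you invoke ``uniqueness of the subgradient at $x$'' to conclude that the affine piece $E'$ through $x$ carries the same $\ell$; but at a non-strictly-convex point $v$ need not be differentiable, so $\partial v(x)$ need not be a singleton. The one-line fix: on a segment $[a,b]\subset E'$ with $x$ in its relative interior, $v$ is affine, $\ell\le v$, and $\ell(x)=v(x)$, so the non-negative affine function $v-\ell$ vanishes at an interior point of the segment and hence on all of it, giving $[a,b]\subset F$ and contradicting $x\in F^{ext}$. Second, in part (3) you never arrange $\ell'\le v$ --- the definition of $\Sigma_v$ only requires $v$ to be linear on $E'$ --- so neither ``the first assertion applied to $E'$'' (which is redundant anyway, since $(E')^{ext}=\emptyset$ already gives the segment) nor the claim that $K\cap\{\ell'=0\}$ is an exposed face of $K$ is justified as written. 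The paper handles this by replacing $\ell'$ with the supporting function of $v$ at the mass center of $E'$. In your argument the repair is even simpler: you already have $[a,b]\subset K$ with $x$ in its relative interior, and $x\in\Gamma_{sc}=K^{exp}\subset K^{ext}$ cannot lie in the relative interior of a segment contained in $K$; this is a direct contradiction without any appeal to exposed faces or to $\Gamma_{nsc}$.
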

\begin{proof}
Suppose there exists $x_0\in E^{ext}\cap \Gamma_{sc}$. Then, by Lemma \ref{lem:c1 at exposed}, we have $\nabla \ell=\nabla v(x_0)=0$. Since $\ell(x_0)=v(x_0)=0$,  it follows that $\ell\equiv 0$, which is a contradiction. Therefore, $E^{ext}\cap \Gamma_{sc}=\emptyset$.
	
Let $F:=\left\{ v=\ell \right\}\cap\Omega$. We want to show $F^{ext} =\emptyset$, from which it follows that $E\subset F\subset \Sigma_v$. First, the classical regularity theory in \cite{caffarelli1990ilocalization} implies that $F^{ext} \subset K $. Second, if $F\cap K\neq\emptyset$, then since $\ell \not\equiv0$, we know $F\cap K$ is an exposed face of both $F$ and $K$. Hence, 
\[
(F^{ext}\cap K)=(F\cap K)^{ext} =(F\cap  K^{ext}) \subset\Gamma_{sc}.
\]
Therefore, $F^{ext} \subset (F^{ext} \cap \Gamma_{sc}) =\emptyset$. 
	
Suppose $\Gamma_{sc} \cap \Sigma_v \neq \emptyset$. Then, there exists $x_0\in \Gamma_{sc} \cap \Sigma_v$. By the definition of $\Sigma_v$, there exists a linear function $\ell_0$ and a convex set $E_0 \subset \left\{ v=\ell_0 \right\}$ such that $x_0\in E_0$ and $(E_0)^{ext} =\emptyset$. By considering the supporting hyperplane of $v$ at the mass center of $E_0$, we may further assume that this hyperplane is given by the linear function $\ell_0$, so that $\ell_0 \leq v$.  If $\ell_0 \equiv0$, then $E_0\subset K$, and  $x_0 \in (\Gamma_{sc} \cap E_0) = (K^{ext} \cap E_0) \subset (E_0)^{ext}=\emptyset$, this is impossible. Hence, $\ell_0\not\equiv0 $. Let $F_0=\left\{ v=\ell_0 \right\}$, then we have $x_0 \in (F_0\cap K^{ext}) =(F_0^{ext}\cap K) =\emptyset$, which is also impossible.
\end{proof}

\begin{Lemma}\label{lem:c1 around ek}
For every $x\in \Gamma_{sc}$, there exists an open neighborhood $O$ of $x$ such that $v$ is strictly convex in $O \setminus K$, and $C^1$  on $O$.  More specifically, 
\begin{itemize}
\item when $K=\left\{0\right\}$ and $S_h:=\left\{ v(x) < h\right\} \subset \subset \Omega$,  we can choose $O=S_h$;
\item when $x=0$ and $v\in \E_1$, we can choose $O=\left\{v(x)< \delta\right\} \cap \left\{ x_n < \delta\right\}$ for $\delta(n,q,\lambda,\Lambda)>0$ small.
\end{itemize}
\end{Lemma}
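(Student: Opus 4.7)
The plan is to prove strict convexity on $O\setminus K$ by a Krein--Milman style extremal-point argument built on Lemma \ref{lem:Eext=emptyset}, and then to deduce $C^1$ regularity on $O$ by combining Caffarelli's interior $C^{1,\alpha}$ theory for strictly convex solutions with Lemma \ref{lem:c1 at exposed} and upper semicontinuity of the subdifferential.

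Suppose $v$ is not strictly convex at some $y\in O\setminus K$; then there is a supporting affine function $\ell$ at $y$ whose contact set $E:=\{v=\ell\}\cap \Omega$ contains a nontrivial convex subset through $y$. If $\ell\equiv 0$ then $E\subset K$, contradicting $y\notin K$, so $\ell\not\equiv 0$, and Lemma \ref{lem:Eext=emptyset} gives $E\subset\Sigma_v$ with $E^{\mathrm{ext}}=\emptyset$; moreover $0\notin E$, since otherwise $0\in\Sigma_v\cap \Gamma_{sc}=\emptyset$. In Case~1 ($K=\{0\}$, $O=S_h\subset\subset\Omega$), set $F:=E\cap\overline{S_h}$, a compact convex set. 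The standard extremal-point inclusion $F^{\mathrm{ext}}\subset E^{\mathrm{ext}}\cup\partial S_h$, together with $E^{\mathrm{ext}}=\emptyset$, forces $F^{\mathrm{ext}}\subset E\cap\partial S_h$; on this set the identities $v=h$ and $v=\ell$ give $\ell=h$. Krein--Milman then yields $F\subset\mathrm{conv}(F^{\mathrm{ext}})\subset\{\ell=h\}$, contradicting $\ell(y)=v(y)<h$. Case~2 ($v\in\E_1$, $O=S_\delta\cap\{x_n<\delta\}$, $\delta$ small) follows the same template with $F:=E\cap\overline{O}$, except that extreme points of $F$ may additionally live on the artificial face $\{x_n=\delta\}$, on which one only has $\ell=v\leq\delta$ rather than $\ell=\delta$, so the naive Krein--Milman step does not close. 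To overcome this I plan to invoke Proposition \ref{prop:Savin-1} (uniform $C^{1,\alpha}$ strict convexity of $\partial K$ at $0$) together with the compactness of $\E_1$ in Lemma \ref{lem:compactness of E1}: a hypothetical sequence $\delta_k\to 0$ with bad points $y_k$ and flat sets $E_k$ forces $y_k\to 0$ (by the uniform strict convexity of $\partial K$ near the exposed origin, which survives in the limit), and after a suitable rescaling a nontrivial flat direction of $v_\infty$ through $0\in\Gamma_{sc}(v_\infty)$ would emerge, contradicting Lemma \ref{lem:Eext=emptyset} applied to $v_\infty$. The main obstacle will be executing this compactness argument, in particular verifying that a nondegenerate flat direction in $E_k$ survives the rescaling and passage to the limit.

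For the $C^1$ conclusion on $O$: on $O\setminus K$ the bound $\lambda v^q\leq \det D^2 v\leq \Lambda v^q$ is pinched between two positive constants on each sublevel $\{v\geq c\}\cap O$, so strict convexity combined with Caffarelli's classical interior theory gives $v\in C^{1,\alpha}_{\mathrm{loc}}(O\setminus K)$. At every $x_0\in K\cap O$, Lemma \ref{lem:exposed set} (for $\delta$ sufficiently small in Case~2) forces $x_0\in\Gamma_{sc}$, and Lemma \ref{lem:c1 at exposed} provides differentiability there with $\nabla v(x_0)=0$. Upper semicontinuity of the subdifferential of a convex function then promotes this pointwise differentiability into continuity of $\nabla v$ across $\partial K\cap O$, giving $v\in C^1(O)$.
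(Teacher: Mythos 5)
Your Case 1 (the $K=\{0\}$ case) and your $C^1$ step are correct and essentially coincide with the paper's argument: the paper likewise passes through the extreme points of the contact set, uses Lemma \ref{lem:Eext=emptyset} to exclude extreme points away from $\partial S_h$, and concludes $\ell\ge h$ on $E$; and the $C^1$ conclusion is obtained exactly as you describe (Caffarelli away from $K$, Lemma \ref{lem:c1 at exposed} on $\Gamma_{sc}$, and the standard fact that everywhere-differentiable convex functions are $C^1$).

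The genuine gap is Case 2, and it is precisely the step you flag as unresolved. Your compactness scheme does not close for two concrete reasons. First, the negation of the statement produces, for each $\delta_k\to 0$, a function $v_k\in\E_1$ and a nontrivial contact set $E_k$ through a bad point $y_k$; nothing prevents $\operatorname{diam}(E_k)\to 0$, in which case the limit flat set is a single point and no contradiction with Lemma \ref{lem:Eext=emptyset} emerges. Second, if you instead rescale so that $E_k$ has unit size, the rescaling is dictated by the geometry of $E_k$ (which may be long and thin in a direction unrelated to the sections of $v_k$), so the rescaled functions need not lie in a class to which Lemma \ref{lem:compactness of E1} applies, and even in the limit the flat segment would sit inside $K_\infty$ with the limit support function $\ell_\infty\equiv 0$, which Lemma \ref{lem:Eext=emptyset} does not forbid. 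The paper avoids compactness entirely: it takes the contact set inside the \emph{large} slab $\{0<v<\delta\}\cap\{x_n\le 1\}$ rather than $\{x_n\le\delta\}$, shows via Caffarelli, the Case-1 argument, and Lemmas \ref{lem:exposed set} and \ref{lem:c1 at exposed} that the minimizing extreme point $\tilde x$ of $\overline{E}$ must lie on the far face $\{x_n=1\}$, and then observes that the segment from $x_0$ to $\tilde x$ is trapped in the collar $\{0<v<\delta\}$, which by \eqref{eq:bound from below} lies within distance $C\delta^{(n-q)/(2n)}$ of $K$, while Proposition \ref{prop:Savin-1} forces any such segment reaching height $x_n=1$ to pierce the uniformly convex body $E_1\subset K$ once $\delta$ is small. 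You would need to replace your blow-up scheme with an argument of this direct geometric type (or supply the missing nondegeneracy of $E_k$ under rescaling) to complete the proof.
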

\begin{proof}
Suppose $0\in \Gamma_{sc}$. We first show that there exists an open neighborhood $O$ of $0$ such that $v$ is strictly convex in $O \setminus K$. We split into the following two cases. 
	 
In the first case, we consider that $|K|=0$. Then by Lemma \ref{lem:ext are exp},  $K=\left\{0\right\}$. We will show that $v$ is strictly convex in $S_h$ as long as $S_h:=\left\{ v(x) < h\right\} \subset \subset \Omega$.  Indeed, suppose $v$ is not strictly convex at $x_0 \in S_h$. Then, there exists a supporting function $\ell$ of $v$ at $x_0$ such that  $\left\{ x_0\right\} \subsetneq \overline{\left\{ v=\ell \right\}\cap S_h}:=E$. By  \cite{caffarelli1990ilocalization}, we have $E^{ext} \subset (\{0\} \cup \partial S_h)$. Since, by Lemma \ref{lem:Eext=emptyset}, $ 0 \notin E^{ext}$, and thus, $E^{ext} \subset \partial S_h$. Hence, $v=\ell\ge h$ on $E$. This is a contradiction with $x_0\in E$.

In the second case, we assume that   $|K|>0$. After an appropriate affine transformation, and by considering the normalization in \eqref{eq:normalized sol}, we may assume for simplicity that  $v\in \E_1$. We will show for $\delta(n,q,\lambda,\Lambda)>0$ small, $v$ is strictly convex in $F:=\left\{0 <v(x)< \delta\right\} \cap \left\{ x_n \leq  \delta\right\}$. Indeed, if $v$ is not strictly convex at $x_0 \in F$, then there exists a supporting function $\ell$ of $v$ at $x_0$ such that  $\left\{ x_0\right\} \subsetneq \overline{\left\{ v=\ell \right\}\cap \{0<v<\delta\}\cap\{x_n\le 1\}}:=E$. Since $\ell\not\equiv 0$, then $E$ is convex.  
In $ \left\{x_n \leq 1 \right\}$, by  \cite{caffarelli1990ilocalization}, we have $E^{ext} \subset \partial \left(\left\{0 <v(x)< \delta\right\} \cap \left\{x_n \leq 1 \right\}\right)$. By the argument in the first case, we know $E^{ext}\not\subset \{v=\delta\}$. Then there must be a point $ \tilde{x} \in  E^{ext} $ such that $v(\tilde x)=\inf_{E}v<\delta$.  Applying Lemma \ref{lem:exposed set} and Lemma \ref{lem:c1 at exposed}, we find that $E^{ext} \cap K \cap \left\{ x_n < 1\right\} =\emptyset$. Therefore, we have $\tilde{x} \in \left\{ x_n =1\right\}$, and the line segment $L$ connecting $x_0$ and $\tilde{x}$ is in $F$. By Proposition \ref{prop:Savin-1}, we know that
\[
E_1:=\left\{ x_n \geq C|x'|^{1+\alpha} \right\} \subset K  \subset  \left\{ x_n \geq c|x'|^{\frac{1+\alpha}{\alpha}} \right\}  \quad  \text{in }B_c (0).
\]
Thus, for small $\epsilon > 0$, we have that
\begin{align*}
\left\{x: \operatorname{dist}(x,K) \leq  \epsilon, x_n \leq \frac{1}{2}\right\}  & \subset \left\{ x: c \max\left\{|x'|-C(n)\epsilon,0 \right\}^{\frac{1+\alpha}{\alpha}} \leq  x_n +C(n)\epsilon\right\} \\
&=: E_{2,\epsilon}.
\end{align*}
From \eqref{eq:bound from below}, we notice that $\left(\left\{ v <\delta \right\}  \cap B_c(0)\right) \subset \left\{x:\; \operatorname{dist}(x,K) \leq  C\delta^{\frac{n-q}{2n}}\right\} $. Therefore,
\[
L \subset \left\{ 0< v< \delta\right\} \subset  E_{2,C\delta^{\frac{n-q}{2n}}} \setminus E_1.
\] 
However, if $\delta(n,q,\lambda,\Lambda)$ is sufficiently small,  $L$ must intersect $E_1$, which is a contradiction. This proves that there exists an open neighborhood $O$ of $0$ such that $v$ is strictly convex in $O \setminus K$. 

Notice that $\lambda \leq \det D^2  \leq \Lambda $ outside $K$. By  \cite{caffarelli1990ilocalization}, we then conclude that $v\in C^1(O\setminus K)$.  We know that $v$ is $C^1$ in the interior of $K$.  Finally, combining Lemma \ref{lem:exposed set} and Lemma \ref{lem:c1 at exposed}, we can conclude that $v\in C^1$ near $0$.
\end{proof}

\begin{proof}[Proof of Proposition \ref{thm:nsc set}]
By definition, every point in $\Sigma_{v}$ is a non-strictly convex point of $v$. Let $x_0 \in \Omega \setminus K$ be a non-strictly convex point. Then, there exists $p\in \partial v(x_0)$ such that $\{x_0\} \subsetneq E:=\left\{ v(x)=v(x_0)+p\cdot x \right\}$. Clearly, $v(x_0)+p\cdot x \not \equiv 0$. Then, by Lemma \ref{lem:Eext=emptyset}, we have $x_0\in E \subset \Sigma_{v}$. In conclusion, the set of non-strictly convex points of $v$ in $\Omega\setminus K$ is the set $\Sigma_{v} \setminus K$.

It follows from Lemma \ref{lem:nsc property} that $\Gamma_{nsc} \subset (\Sigma_{v} \cap \Gamma)$. To prove the other direction, we let $x_0\in \Sigma_{v} \cap \Gamma$. Let $\ell$ be a linear function so that $\{x_0\}\subsetneq E:=\{v=\ell\}\cap\Omega$ and $\ell(x_0)=v(x_0)=0$. If $\ell\not\equiv 0$, then it follows from Lemma \ref{lem:Eext=emptyset} that $x_0\in\Gamma_{nsc}$. If $\ell\equiv 0$, then $K=E\subset \Sigma_v$ and $E^{ext}=\emptyset$. Hence, $x_0\in\Gamma_{nsc}$. 
	
Next, we show that $\Sigma_v$ is closed (relative to $\Omega$). Indeed, if $\Sigma_v$ is not closed, then there exists $x_0 \notin \Sigma_v$ and a sequence of points $x_k\in \Sigma_v$ that converges to $x_0$.  
By \cite{caffarelli1990ilocalization}, $\Sigma_v$ is closed in $\Omega\setminus K$, so $x_0 \in K$.  If $x_0 \in \mathring{K}$, we may assume that $x_k \in \Sigma_{v} \cap  \mathring{K}   $. Hence, we can select $n+1$ ray segments $L_{k,1}, \cdots, L_{k,n+1} \subset K$   starting from $x_k$ such that $E_k:=\operatorname{conv}\left\{ L_{k,1} \cap \Omega, \cdots,  L_{k, n+1}\cap \Omega  \right\}$ satisfies $(E_k)^{ext} =\emptyset$ (note that some ray segments could be repeated).  Subject to a subsequence,  $\left\{ L_{k,1}, \cdots,  L_{k,n+1}\right\}$ will converge to some $\left\{ L_{0,1} , \cdots,  L_{0,n+1}  \right\}$. Let $E_0=\operatorname{conv}\left\{ L_{0,1} \cap \Omega, \cdots,  L_{0,n+1}\cap \Omega  \right\}$. Then, $(E_0)^{ext}  =\emptyset$. Since $v$ is continuous, then $v\equiv 0$ on $E_0$.  Therefore, $x_0 \in E_0 \subset \Sigma_{v} $. This contradicts with the fact that $x_0\not\in \Sigma_{v}$.  Thus, $x_0 \in( \partial K\setminus \Sigma_{v}) =\Gamma_{sc}$. Then, we can apply Lemma \ref{lem:exposed set} and  Lemma \ref{lem:c1 around ek} to conclude that $x_k \in \mathring{K}$ for all $k$ large. Then the same argument above still implies that $x_0 \in \Sigma_v$, which is impossible. This concludes that $\Sigma_v$ is closed (relative to $\Omega$).

The classical regularity theory in \cite{caffarelli1990ilocalization} indicates $v \in C^1\left(\Omega \setminus \left( \Sigma_{v} \cup K\right) \right)$. Clearly, $v$ is $C^1$ on $\mathring{K}$. By Lemma \ref{lem:c1 at exposed}, we also have $v$ is $C^1$ on $\Gamma_{sc}$. In conclusion, $v \in C^1(\Omega \setminus \Sigma_{v})$.
\end{proof}

Subsequently, we shall proceed to establish the $C^{1,\alpha}$ regularity of $v$ around $\Gamma_{sc}$ when $|K|>0$.  The proof follows from the standard normalization methods and the $C^1$ regularity of $v$, in a similar way of proving  \eqref{eq:universalconvex} in  Proposition \ref{prop:Savin-1}. 
\begin{Lemma}\label{lem:c1a v pointwise}
Suppose $v\in \E_1$. Then 
there exists $\alpha(n,q,\lambda,\Lambda) \in (0,1)$  such that \eqref{eq:v distance alpha} holds and
\begin{equation}\label{eq:v distance alpha 1}
c\operatorname{dist}(x,K)^{\frac{1}{\alpha}} \leq |\nabla v(x)|\leq C\operatorname{dist}(x,K)^{\alpha}  \quad \text{in }B_{c}(0).
\end{equation}  
\end{Lemma}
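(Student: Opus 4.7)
The plan is to adapt to the function $v$ the compactness-iteration strategy used for the free boundary regularity in Proposition~\ref{prop:Savin-1}, now exploiting the $C^1$-regularity at $\Gamma_{sc}$ provided by Lemma~\ref{lem:c1 around ek} in place of the strict convexity of $\partial K$.

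The core step is an iterative claim: there exist universal $c_0>0$ and $\theta\in(0,\tfrac{1}{2})$ such that
\[
w(0,-c_0/2)\le \theta\,w(0,-c_0)\qquad\text{for every } w\in\E_1.
\]
I would argue by contradiction. If a sequence $w_k\in\E_1$ satisfies $w_k(0,-c_0/2)/w_k(0,-c_0)\to\tfrac{1}{2}$, Lemma~\ref{lem:compactness of E1} produces a subsequence converging uniformly on $B_c(0)\cap\{x_n\le 3/4\}$ to some $w_\infty$ inheriting the $\E_1$-type structure. Part v) of Definition~\ref{def:normalized p obp} combined with convexity forces $|K_\infty\cap\{x_n\le c_0\}|\ge c\,c_0^n>0$, hence Lemma~\ref{lem:bound from below} gives $w_\infty(0,-c_0)\ge c>0$. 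Passing to the limit in the ratio yields the midpoint identity $w_\infty(0,-c_0/2)=\tfrac{1}{2}(w_\infty(0)+w_\infty(0,-c_0))$, which forces the convex map $t\mapsto w_\infty(0,-t)$ to be linear on $[0,c_0]$ with strictly positive slope $w_\infty(0,-c_0)/c_0$. But Lemma~\ref{lem:c1 around ek} gives $w_\infty\in C^1$ near $0$ and, since $w_\infty\ge 0$ is minimized at $0$, $\nabla w_\infty(0)=0$, contradicting the positive slope. I then iterate this claim via the rescaling $v_h(x',x_n)=h(t)^{-1}v(\D_t'x',tx_n)\in\E_1$ of Definition~\ref{def:normal type1}: applied to $v_h$ the claim reads $v(0,-c_0 t/2)\le\theta\,v(0,-c_0 t)$, so iterating in $t$ produces
\[
v(0,-s)\le C\,s^{1+\alpha}\qquad\text{for }s\in(0,c_0],\qquad \alpha:=-\log_2\theta-1>0.
\]

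Since $0\in K\subset\{x_n\ge 0\}$, $\operatorname{dist}((0,-s),K)=s$, so this is exactly \eqref{eq:v distance alpha} upper bound along the $-e_n$-axis. For a general $x\in B_c(0)\setminus K$ with nearest projection $y\in\partial K$, Lemma~\ref{lem:exposed set} gives $y\in\Gamma_{sc}$, and after translating $y$ to $0$, rotating the outward normal into $-e_n$, and renormalizing via Definition~\ref{def:normal type1} (with all distortions uniform in $y$ thanks to Lemma~\ref{lem:poly dependent}), the same iteration yields $v(x)\le C\operatorname{dist}(x,K)^{1+\alpha}$. The lower bound in \eqref{eq:v distance alpha} comes from Lemma~\ref{lem:bound from below} at $y$ combined with the volume estimate $|K\cap\{x_n\le d\}|\ge c\,d^{(n+\alpha_0)/(1+\alpha_0)}$ derived from Proposition~\ref{prop:Savin-1} (where $\alpha_0$ is its exponent), giving $v(x)\ge c\,d^\gamma$ with $\gamma=2(n+\alpha_0)/((1+\alpha_0)(n-q))$; shrinking $\alpha$ if necessary so that $(1+\alpha)/\alpha\ge\gamma$ produces the stated lower bound. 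The gradient estimates \eqref{eq:v distance alpha 1} then follow from \eqref{eq:v distance alpha} by convexity: choosing $z=x+d\,\nabla v(x)/|\nabla v(x)|$ in the supporting-hyperplane inequality yields $d|\nabla v(x)|\le v(z)-v(x)\le C(2d)^{1+\alpha}$, while the supporting-hyperplane inequality at $x$ evaluated at $y$, together with $v(y)=0$, gives $\nabla v(x)\cdot(x-y)\ge v(x)\ge c\,d^{(1+\alpha)/\alpha}$, so that $|\nabla v(x)|\ge c\,d^{1/\alpha}$.

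The main obstacle I expect is the global-to-local transfer in paragraph three: one must verify that the affine normalization centered at each $y\in\Gamma_{sc}\cap B_c(0)$ lands back in $\E_1$ with constants uniform in $y$, so that the iterative claim applies at $y$ with the same $\theta$ and yields the bound along the direction $(x-y)/d$. This relies crucially on combining the polynomial dependencies in Lemma~\ref{lem:poly dependent} with the openness of $\Gamma_{sc}$ from Lemma~\ref{lem:exposed set} and the uniform $C^{1,\alpha}$ control of $\partial K$ from Proposition~\ref{prop:Savin-1}.
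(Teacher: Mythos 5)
Your proposal is correct and follows essentially the same strategy as the paper: a universal dyadic decay claim for $v$ along the inward normal, proved by contradiction via the compactness of $\E_1$ (Lemma \ref{lem:compactness of E1}) and the differentiability of the limit at the exposed point $0$, then iterated through the normalization of Definition \ref{def:normal type1} and transferred to nearby free boundary points. The one genuine difference is in the lower bound of \eqref{eq:v distance alpha}: the paper encodes both bounds in a two-sided section inclusion $\frac{1+\epsilon}{2}(S_h\cap\R^-)\subset S_{h/2}\cap\R^-\subset(1-\epsilon)(S_h\cap\R^-)$ and proves the second inclusion by an additional compactness argument (the limit would otherwise be discontinuous), whereas you obtain the lower bound directly from Lemma \ref{lem:bound from below} together with the volume of the slices $K\cap\{x_n\le d\}$ coming from Proposition \ref{prop:Savin-1} (the cruder bound \eqref{eq:bound from below} would already suffice), at the harmless cost of shrinking $\alpha$. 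This is a legitimate simplification since the exponents in \eqref{eq:v distance alpha} are not sharp; your derivation of \eqref{eq:v distance alpha 1} from \eqref{eq:v distance alpha} by convexity, and your treatment of the uniformity in the base point $y\in\partial K\cap B_c(0)$, are at the same level of detail as the paper's.
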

\begin{proof} 
Let $\R^-:=\left\{ te_n:\; t\leq 0\right\}$. We claim that there exist constants $\epsilon(n,q,\lambda,\Lambda)>0$ and  $\delta=\delta(n,q,\lambda,\Lambda)>0$ such that
\begin{equation}\label{eq:universalconvex 1}
\frac{1+\epsilon}{2} \left(S_h\cap \R^-\right)
\subset 
\left(  S_{\frac{1}{2}h}\cap \R^-\right)\subset (1-\epsilon) \left(S_h\cap \R^-\right)
\end{equation} 
for all $h\in(0,\delta)$ and $v \in \E_1$. Notice that \eqref{eq:universalconvex 1} remains invariant under
the normalization \eqref{eq:normalized sol}. So it suffices to prove it for the case of $h=\delta$. 
	
We first prove the left-hand side of \eqref{eq:universalconvex 1}. Assume by contradiction that there exists a sequence $\epsilon_k\to 0 $ and $v_k$ for which the left-hand side is false for $\epsilon_k$.  By Lemma \ref{lem:compactness of E1} and  convexity, the limiting function $v_{\infty}$ is linear on $S_\delta^{v_\infty}\cap\R^-$, and $0$ is still an exposed point of $\{v_{\infty}=0\}$.  Let $\ell$ be the support function of $v_{\infty}$ at $\left\{ v_\infty =\frac{\delta}{2}\right\} \cap \R^-$, then we can apply Lemma \ref{lem:Eext=emptyset} to conclude that $\ell \equiv 0$, which is impossible.
	
The right-hand side of \eqref{eq:universalconvex 1} can be proved similarly. Assume by contradiction there exists a sequence $\epsilon_k\to 0 $ and $v_k$ for which the right-hand side is false for $\epsilon_k$ at $h =\delta$.  By Lemma \ref{lem:compactness of E1} and convexity, the limiting function $v_{\infty}$ is discontinuous on $\left(S_{3\delta/4}\cap \R^-\right) $. But by Lemma \ref{lem:compactness of E1},   $v_{\infty}$ is always uniformly continuous in $\left\{v_\infty \leq \delta\right\}  \cap\left\{x_n \leq 3 / 4\right\}$, which is impossible.

From \eqref{eq:universalconvex 1}, a standard iteration process leads to that for all $m =1,2,\cdots$,
\[
2^{m \left(\log_2 \left( 1+\epsilon\right)-1\right)} \left({S}_{\delta} \cap \R^-\right) \subset   \left({S}_{2^{-m}\delta}\cap \R^-\right)\subset 2^{m\log_2 (1-\epsilon)}  \left(S_{\delta}\cap \R^-\right).
\]
This implies 
\[
c\operatorname{dist}(x,K)^{\frac{1+\alpha}{\alpha}} \leq v \leq C\operatorname{dist}(x,K)^{1+\alpha}  \quad \text{in } B_c(0) \cap \R^-
\]
for $\alpha= \inf\left\{\frac{\log_2 \left( 1+\epsilon\right)}{1-\log_2 \left( 1+\epsilon\right)}, \frac{-\log_2 (1-\epsilon)}{1+\log_2 (1-\epsilon)} \right\} \in (0,1)$. 
Then, by invoking Proposition  \ref{prop:Savin-1} and performing an appropriate rotation, we can consider the normalization of $v$ at other points on $\partial K \cap B_c(0)$ and deduce \eqref{eq:v distance alpha}.
By convexity, this estimate implies  $c|x|^{\frac{1}{\alpha}} \leq |\nabla v|\leq C|x|^{\alpha}$ in $B_c(0)  \cap \R^-$.  By considering the normalization of $v$ at other points on $\partial K \cap B_c(0) $ again, we obtain \eqref{eq:v distance alpha 1}.
\end{proof}

\begin{proof}[Proof of Theorem  \ref{thm:c1a vK K>0}]
From  Assumption (H) and Lemma \ref{def:normal type1}, we conclude that $v$ belongs to the class $\E_1$ after appropriately modifying the universal constants $c$ and $C$. 
In Proposition \ref{prop:Savin-1}, we establish the regularity of the free boundary.
In Lemma \ref{lem:c1a v pointwise}, we obtain the estimate \eqref{eq:v distance alpha}. 
In Lemma \ref{lem:c1 around ek}, we show that $v$ is strictly convex on $(S_{\delta}\cap \left\{x_n \leq \delta\right\}) \setminus K$. This, together with the estimate  \eqref{eq:v distance alpha},  implies that $v$ is strictly convex in $B_c(0)\setminus K$ for some small $c>0$.

Therefore, it remains to show that there exist positive constants $\alpha\in(0,1)$,   $c$ and $C$, all of which depend only on $n$, $q$, $\lambda$ and $\Lambda$, such that 
\begin{equation}\label{eq:c1a vK K>0}
|\nabla v(x) -\nabla v(y)|\leq C| x-y|^{\alpha},\quad \forall\ x,y \in B_c(0).
\end{equation}
Without loss of generality, we can assume for simplicity $x=se_n$ for some $s >0$ and $ v(y) \leq v(se_n) \le \delta$ for $\delta=\delta (n,q,\lambda,\Lambda)>0$. Let $a=v(se_n)/\delta\in [0,1]$. 

First, if $a=0$, then $x=se_n\in K$ and $y\in K$. Since $v$ is $C^1$ around $ K \cap  \left\{ x_n <1\right\} $, $\nabla v(x)=\nabla(y)=0$, and thus, \eqref{eq:c1a vK K>0} holds.

Second, let us assume $a=1$. We will show \eqref{eq:c1a vK K>0}  holds for some $\alpha=\gamma_0(n,q,\lambda,\Lambda)>0$.
For any fixed  $v \in \E_1$, by recalling the strict convexity of $v$ in $(S_{\delta} \setminus K)  \cap \left\{x_n \leq 0\right\} $ as indicated in  Lemma \ref{lem:c1 around ek}, we can define 
$$h_0:=\sup\{h>0: S_h(x) \subset \subset \left(S_1 \setminus S_{\delta/2 }\right)\}>0.$$
Noting that $h_0$ is lower semicontinuous with respect to uniform convergence of functions. Then  we can use Lemma \ref{lem:compactness of E1} and employ compactness arguments to demonstrate that $h_{0}>c_0(n,q,\lambda,\Lambda,\delta)>0$. By virtue of the Lipschitz regularity of  $v$, we have $B_{ch_0}(x) \subset S_{h_0/2}(x)$.   Noting that  $\Lambda> \det D^2 v \geq \lambda v^q> (\delta/2)^q>0$ on $ S_1 \setminus S_{\delta/2}$, the classical regularity theory in \cite{caffarelli1990ilocalization} then yields the $C^{1,\alpha}$ norm of $v$ in $S_{h_0/2}(x)$, thereby concluding the proof.

Finally, we are going to prove \eqref{eq:c1a vK K>0}. Recalling \eqref{eq:v distance alpha} and \eqref{eq:v distance alpha 1}, we always have $|\nabla v| \leq C |v|^{\gamma_0}$ holds for some $\gamma_0(n,q,\lambda,\Lambda)>0$.
If $|x-y| >  a^2$, then we have 
\[
|\nabla v(x) -\nabla v(y)| \leq 2Ca^{\gamma_0} \leq 2C|x-y|^{\gamma_0/2}.
\]
If $|x-y| <a^2$, then we consider the normalization $v_a$ of $v$ as defined in Definition  \ref{def:normal type1}.  Denote $\T_a^{-1} x= \left(\D_t'^{-1}x', t^{-1}x_n\right)$ and $\T_a^{-1} y= \left(\D_t'^{-1} y', t^{-1} y_n\right)$, then we have
\[|\nabla v_a(\T_a^{-1}x) -\nabla v_a(\T_a^{-1} y)|  \leq  C|\T_a^{-1}x -\T_a^{-1} y|^{\gamma_0}.\]
By Lemma \ref{lem:E1 normal}, $\|\T_a^{-1}\|\leq ct^{-1}\le ca^{-1}$, and thus 
\[
|\nabla v(x) -\nabla v(y)|   \leq  C|\nabla v_a(\T_a^{-1}x) -\nabla v_a(\T_a^{-1} y)|.
\]
Then we obtain
\[\begin{split}
|\nabla v(x) -\nabla v(y)|  \leq C |\T_a^{-1}x -\T_a^{-1} y|^{\gamma_0} \leq C a^{-\gamma_0}|x-y|^{\gamma_0 } \leq  C |x-y|^{\frac{\gamma_0}{2}}.
\end{split}
\] 
\end{proof}

\begin{proof} [Proof of Theorem \ref{thm:c1a v K=0}]
We  only provide a sketched proof, as it can be derived using the same methodology as that in the case of $|K| > 0$.

First, we introduce the following normalized model for the case of  $K=\left\{0\right\}$,  consisting of non-negative convex functions $w$ that satisfy
\[
\lambda w^q \chi_{\{ w > 0\}} \ud x \leq \M w\leq \Lambda w^q \chi_{\{ w > 0\}} \ud x ,  \quad \left\{ w=0\right\}=0 , 
\] 
and
\[ 
w=1  \quad \text{on }\partial \Omega_w, \quad B_c(0) \subset \Omega_w\subset B_C(0) .  
\]	

Then, we discuss the corresponding compactness for these normalized solutions. Suppose $w_{i} \geq 0$, $i=1,2,\cdots$,  are normalized solutions for the case of  $K=\left\{0\right\}$.	Applying Lemma \ref{lem:measure constrain 2},  we obtain the uniform continuity modulo of $w_{i}$  up to the boundary of $ \Omega_i =\left\{w_i\leq 1\right\}$, and a subsequence of $w_{i}$ uniformly converging to some $w_\infty$. To demonstrate that $w_{\infty} $ is also a normalized solution, it remains to verify that  $\left\{w_{\infty} = 0\right\}=\left\{0\right\}$.  By observing that the set  $\left\{w_{\infty} = 0\right\}$ is compact, it always admits extreme points. Thus, Lemma \ref{lem:stability of K} implies $\left\{w_{\infty} = 0\right\}=\left\{0\right\}$.

Let $v$ be a convex solution of \eqref{eq:obs equation}. Suppose $K=\left\{0\right\}$, and $S_h \subset \subset \Omega$. Now, we can use Lemma \ref{lem:volume of Sh K=0} and John's lemma to find a diagonal transformation $\D_h$ such that $\D_h B_c(0) \subset   S_h \subset \D_h B_C(0)$ with $\det \D_h=h^{\frac{n-q}{2}} $.   Then, the following normalization $v_h$ of $v$ at $0 $ for $h\in(0,1)$,
\[
v_{h} (x) = \frac{ v\left( \D_hx\right)}{h}
\]
is a normalized solution for the case of  $K=\left\{0\right\}$.
Therefore, for the case of $K=\left\{0\right\}$, we can  always presume, for simplicity, that $h=1$ and $S_1\subset\subset \Omega$ is normalized such that $B_c(0) \subset S_1 \subset B_C(0)$. 

Hence, we can using the compactness arguments again, together with the $C^1$ regularity (Lemma \ref{lem:c1 around ek}), to show that 
\[
\frac{1+\epsilon}{2} S_h
\subset S_{\frac{1}{2}h}\subset (1-\epsilon) S_h ,\quad \forall\ h \in(0,1)
\]
holds for some $\epsilon(n,q,\lambda,\Lambda)>0$. By iterations, this implies \eqref{eq:v distance alpha 0}  and \eqref{eq:v distance alpha 1}. It also indicates a polynomial dependence among $h$, $\| \D_h \| $, and $\| \D_h^{-1} \|^{-1}$. Therefore, by employing the same argument as in the proof of Theorem \ref{thm:c1a vK K>0}, we obtain the $ C^{1,\alpha}$ regularity of $v$ in $S_{1/2}$.
\end{proof}

In Proposition \ref{prop:infinite ray}, we present several sufficient conditions for the non-existence of the set $\Gamma_{nsc}$.  Next, we establish a sufficient condition for $v$ to be $C^{1,\alpha}$ in $\Omega$ as follows.

\begin{Corollary}
Let $v$ be a solution of \eqref{eq:obs equation} with $\Gamma_{nsc}=\emptyset$. If  $\partial \Omega \in C^{1,\gamma}$ and $\varphi \in C^{1,\gamma}$ on $ \partial \Omega$ for some $\gamma >\frac{n-2}{n}$, then we have  $v \in C_{loc}^{1,\alpha}(\Omega)$, where $\alpha \in (0,1)$  depending only on $n$, $q$, $\lambda$ and $\Lambda$.
\end{Corollary}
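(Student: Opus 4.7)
The plan is to reduce the corollary to showing that $v$ is strictly convex on $\Omega\setminus K$. Once this is granted, on compact subsets of $\Omega\setminus K$ the equation \eqref{eq:obs equation} becomes a Monge-Amp\`ere equation whose right-hand side is bounded between positive constants, and Caffarelli's classical interior $C^{1,\alpha}$ theory for strictly convex solutions would deliver the required local regularity. Interior points of $K$ are trivial since $v\equiv 0$; at points of $\Gamma$ the hypothesis $\Gamma_{nsc}=\emptyset$ gives $\Gamma=\Gamma_{sc}$, so Theorems \ref{thm:c1a vK K>0} and \ref{thm:c1a v K=0} supply $C^{1,\alpha}$ neighborhoods. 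Thus only strict convexity on $\Omega\setminus K$ remains.

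To prove strict convexity on $\Omega\setminus K$, I would argue by contradiction: suppose $x_0\in\Sigma_v\cap(\Omega\setminus K)$, let $\ell$ be an affine support of $v$ at $x_0$ such that $F:=\{v=\ell\}\cap\Omega$ contains $x_0$ nontrivially, and note $\ell(x_0)=v(x_0)>0$ so $\ell\not\equiv 0$. Next I would invoke Caffarelli's structure theorem \cite{caffarelli1990ilocalization} on the open set $\Omega\setminus K$ (where the right-hand side of \eqref{eq:obs equation} is locally bounded between positive constants) to conclude $1\le\dim F<n/2$ and that every extreme point of $\overline F$ lies on $\partial(\Omega\setminus K)=\partial\Omega\cup\Gamma$. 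Lemma \ref{lem:Eext=emptyset} together with $\Gamma=\Gamma_{sc}$ rules out extreme points on $\Gamma$, leaving every extreme point of $\overline F$ on $\partial\Omega$.

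Finally I would extract a contradiction from the boundary regularity. Along $\overline F\cap\partial\Omega$ the boundary data $\varphi$ coincides with the affine function $\ell$. Under $\partial\Omega,\varphi\in C^{1,\gamma}$ with $\gamma>\frac{n-2}{n}$, Caffarelli's sharp boundary strict convexity result \cite{caffarelli1990ilocalization} forbids this configuration: the exponent $\frac{n-2}{n}$ is the critical threshold at which the dimension bound $\dim F<n/2$ becomes incompatible with the $C^{1,\gamma}$ graph structure of $\partial\Omega$ carrying the affine trace $\varphi=\ell$ on $\overline F\cap\partial\Omega$. This contradiction yields $\Sigma_v\cap(\Omega\setminus K)=\emptyset$, completing the proof.

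The hard part will be the last step: a quantitative comparison between the tangent structure of $\overline F$ at its extreme points on $\partial\Omega$ and the $C^{1,\gamma}$ control on $(\partial\Omega,\varphi)$, where the threshold $\gamma>(n-2)/n$ is sharp. Modulo this classical boundary strict convexity input, all other ingredients, namely the description of $\Sigma_v$ (Proposition \ref{thm:nsc set}), the obstruction to flat faces reaching $\Gamma_{sc}$ (Lemma \ref{lem:Eext=emptyset}), the $C^{1,\alpha}$ regularity at $\Gamma_{sc}$ (Theorems \ref{thm:c1a vK K>0} and \ref{thm:c1a v K=0}), and Caffarelli's interior $C^{1,\alpha}$ theory for strictly convex Monge-Amp\`ere solutions, are already established in the paper or in the cited literature.
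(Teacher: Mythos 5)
Your proposal is correct and follows essentially the same route as the paper: reduce to showing $\Sigma_v\setminus K=\emptyset$ by a contradiction argument on a flat face $F$ of $v$ with affine support $\ell\not\equiv 0$, exclude extreme points of $\overline F$ on $\Gamma=\Gamma_{sc}$ via Lemma \ref{lem:Eext=emptyset}, exclude extreme points on $\partial\Omega$ via Caffarelli's boundary strict convexity result under the $C^{1,\gamma}$, $\gamma>\frac{n-2}{n}$ hypothesis (the paper cites \cite[Corollary 4]{caffarelli1993note} or \cite[Corollary 3.6]{jin2016solutions} for exactly this input), and then assemble the local $C^{1,\alpha}$ regularity from Caffarelli's interior theory on $\Omega\setminus K$, triviality on $\mathring K$, and Theorems \ref{thm:c1a vK K>0} and \ref{thm:c1a v K=0} at $\Gamma_{sc}$. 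The classical boundary input you flag as the ``hard part'' is treated as a citation in the paper as well, so no gap remains.
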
 
\begin{proof}
First, we assert that $\Sigma_{v}\setminus K=\emptyset$. Assuming the contrary, there exists a support function $\ell\not\equiv 0$ of $v$ such that the relative closed set $E:=\left\{x\in \Omega:\; v(x)=\ell(x)\right\}\subset \Sigma_v$ satisfies $(\overline E)^{ext} \in \partial \Omega$. Since $\partial \Omega \in C^{1,\gamma}$ and $\varphi \in C^{1,\gamma}$ on $ \partial \Omega$ for some $\gamma >\frac{n-2}{n}$, by \cite[Corollary 4]{caffarelli1993note} (or \cite[Corollary 3.6]{jin2016solutions}), we have $(\overline E)^{ext}\subset \overline K$. Hence $\ell\equiv 0$, which is a contradiction. 
	 
Then, by \cite{caffarelli1990ilocalization}, we conclude that $v\in C_{loc}^{1,\alpha}(\Omega\setminus K)$.  Clearly, $v=0$ is $C^{1,\alpha}$ in $\mathring{K}$.  Finally, since $\Gamma_{nsc}=\emptyset$, by using  Theorem \ref{thm:c1a vK K>0}   and Theorem \ref{thm:c1a v K=0}, we conclude that $v\in C_{loc}^{1,\alpha}(\Omega)$.
\end{proof}

\section{$C^{1,1}$ regularity and uniform convexity of the free boundary} \label{sec:c11}

In this section, we study the case $g \equiv 1$, in which $v$ is a solution of 
\begin{equation}\label{eq:obs eq g=1}
\det D^2 v =v^q\chi_{\{v>0\}}, \quad v\ge 0, \quad \text{in } \Omega.
\end{equation}
After a suitable affine transformation, we will always assume that $v\in \E_{1}$ is a solution to \eqref{eq:obs eq g=1} in this section.

Following \cite[Section 4]{savin2005obstacle} or \cite[Section 2]{huang2024regularity}, we can show the $C^{1,1}$ regularity and the uniform convexity of the level set of $v$ around $0$.
\begin{Theorem}\label{thm:c11 bound of K}
Let $n\ge 2$ and $0\le q < n$. Suppose  $v\in \E_{1}$ is a solution to \eqref{eq:obs eq g=1}.
Then, there exist positive constants $c$ small and $C$ large, depending only on $n$ and $q$, such that for each $t \in [0,c] $,  the principal curvatures of the convex hyperspace $\partial\left\{v  \leq t\right\}$ in $B_c(0)$ are between $c$ and $C$.
\end{Theorem}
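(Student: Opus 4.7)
My strategy follows the remark after Theorem~\ref{thm:c2a origorous}: dualize via the Legendre transform and invoke the Pogorelov-type estimate (Theorem~\ref{lem:Pogorelov p>0}) on the dual equation. Since $v\in\E_1$ is $C^1$ in a neighborhood $U$ of $0$ by Lemmas~\ref{lem:c1 at exposed} and \ref{lem:c1 around ek}, and strictly convex on $U\setminus K$, the Legendre transform
\[
w(p)=\sup_{x\in S_{\delta}}\bigl(p\cdot x-v(x)\bigr)
\]
of a small centered section $S_{\delta}$ is a well-defined convex function, smooth and strictly convex on the image $P_{\delta}:=Dv(S_{\delta}\setminus K)$. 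A direct computation using $D^2 w=(D^2 v)^{-1}$ together with the duality identity $v(Dw(p))=p\cdot Dw(p)-w(p)$ shows
\[
\det D^2 w=v^{-q}=(p\cdot Dw-w)^{-q}\quad\text{in }\{p\cdot Dw-w>0\},
\]
which is precisely the equation governed by Theorem~\ref{lem:Pogorelov p>0}. The coincidence set $K$ collapses to the single point $p=0$ under the Legendre correspondence, so estimating $D^2 w$ in a punctured neighborhood of $0$ controls the behavior of $v$ along its whole free boundary.

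\textbf{Applying the Pogorelov estimate.} Using John's lemma together with the volume estimates of Lemma~\ref{lem:volume of Shc} and the polynomial scaling of Lemma~\ref{lem:poly dependent}, I would normalize a centered section $\widetilde S_H^w$ of $w$ at a suitable reference point so that it is comparable to a unit ball with universal constants. Theorem~\ref{lem:Pogorelov p>0} then yields $\|D^2 w\|_{L^\infty}\le C(n,q)$ on a smaller interior sub-region. Inverting via $D^2 w=(D^2 v)^{-1}$ gives the lower bound $D^2 v\ge c\,I$ on the dual preimage; combined with the equation $\det D^2 v=v^q$, this yields the two-sided bound
\[
c\,I\le D^2 v\le C v^q\, I
\]
in the corresponding sub-region of $S_{\delta}\setminus K$, with the upper bound coming from the determinant identity and the lower bound on the other $n-1$ eigenvalues.

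\textbf{Curvature bounds and the boundary case.} For each $t\in(0,c]$ and $x\in\partial\{v\le t\}\cap B_c(0)$, the principal curvatures of $\partial\{v\le t\}$ at $x$ are the eigenvalues of $(D^2 v)|_{T_x}/|\nabla v|$. Combining the Hessian bounds just obtained with the $C^{1,\alpha}$ estimates of Theorem~\ref{thm:c1a vK K>0}, sharpened to the optimal scaling $|\nabla v|\sim\operatorname{dist}(x,K)^{(q+1)/(n-q)}$ via the new Hessian control, yields uniform two-sided bounds $c\le \kappa_i\le C$. The boundary case $t=0$ --- i.e., curvature of $\partial K$ itself --- is then obtained by passing to the limit as $t\to 0^+$, using the strict convexity and $C^{1,\alpha}$ regularity of $\partial K$ from Proposition~\ref{prop:Savin-1} together with the compactness of Lemma~\ref{lem:compactness of E1}.

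\textbf{Main obstacle.} The principal difficulty is that the dual equation degenerates at $p=0$ (where $p\cdot Dw-w\to 0$), so the Pogorelov estimate is genuinely an interior estimate on $P_{\delta}$ and does not reach $p=0$ directly. To cover a neighborhood of $\partial K$ in $x$-space, I would perform the estimate scale by scale along a sequence of nested normalized sections, using the balancedness and polynomial scaling of Section~\ref{chp:MA obs} to ensure that the Pogorelov constants remain uniform across scales; the curvature bound at $\partial K$ then follows by passing to the limit with the compactness machinery of Section~\ref{chp:MA obs}.
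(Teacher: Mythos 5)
There is a genuine gap: your argument only produces the \emph{lower} bound on the principal curvatures, and the step you use to extract the upper bound is wrong. The two-sided Hessian bound you claim, $c\,\I_n\le D^2v\le Cv^q\,\I_n$, is self-contradictory near the free boundary (take determinants: it forces $c^n\le \det D^2v=v^q\to 0$). The correct conclusion of the Legendre-dual Pogorelov argument is the weighted bound $(-w)|D^2w|\le C$, which at a point $\tilde y$ with $|\tilde y_n|\sim s_0=|\nabla v(x_0)|$ gives $D^2u\le (C/s_0)\I_n$ and hence only $D^2v\ge c\,|\nabla v(x_0)|\,\I_n$ — the lower curvature bound. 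Trying to convert this into an upper bound via the determinant identity fails: one gets $\lambda_{\max}(D^2v)\le v^q/(c|\nabla v|)^{n-1}$, and dividing by $|\nabla v|$ yields $v^q/|\nabla v|^n$, which blows up as $\operatorname{dist}(x,K)\to0$ even with the optimal rates $v\sim d^{\frac{n+1}{n-q}}$, $|\nabla v|\sim d^{\frac{q+1}{n-q}}$ (since $q(n+1)<n(q+1)$ for $q<n$) — and those optimal rates are themselves only available \emph{after} the theorem is proved; a priori one only has the non-sharp $C^{1,\alpha}$ exponents of Theorem~\ref{thm:c1a vK K>0}. Indeed \eqref{eq:hessiangrowthv} shows $D^2v$ is genuinely anisotropic near $\partial K$: the normal second derivative blows up while the tangential ones vanish, so no isotropic two-sided bound on $D^2v$ can hold.

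The paper therefore proves the two bounds by two different mechanisms. The lower bound is essentially your dual Pogorelov argument (Theorem~\ref{lem:Pogorelov p>0} applied to $u+c_1y_n$ on the section $\{u\le -c_1y_n\}$ inside the cone of Lemma~\ref{lem:cone domain of u}; the weight $(-w)$ is comparable to $|y_n|$ there, so no scale-by-scale iteration is needed). The upper bound requires a separate device entirely absent from your proposal: one rewrites the graph of $v$ as a graph $\mathbf{v}$ over its level sets in the $e_n$-direction, obtaining $\det D^2\mathbf{v}=g(x_n)(D_n\mathbf{v})^{n+2}$, and applies Savin's Pogorelov-type estimate to bound the tangential second derivatives $D_{ii}\mathbf{v}$, which are exactly the curvatures of the level sets. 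Since $D_nv=0$ on $K$, this change of graph degenerates at the coincidence set, so the paper must first construct the regularized solutions $v_\varepsilon$ of \eqref{eq:approx equation} with $D_nv_\varepsilon<0$ up to $K_\varepsilon$ (Lemma~\ref{lem:uniformly monotonic}) and pass to the limit. Without this half of the argument the theorem is not established.
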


We will show the upper and lower bound of the principal curvatures in Sections \ref{sec:upper} and \ref{sec:lower}, respectively.

Let us recall the $C^{1,1}$ regularity of the solution $v$ a bit away from the free boundary.
\begin{Lemma}\label{lem:c11 out}
Suppose  $v\in \E_{1}$ is a solution to \eqref{eq:obs eq g=1}. For any $c_0>0$, there exist $c_1$ and $C_1$ depending only on $c_0, n$ and $q$ such that 
\[
c_1 \mathcal{I}_{n} \leq   D^2 v \leq C_1 \mathcal{I}_{n} \quad \text{in } B_c(0) \setminus S_{c_0}.
\]
\end{Lemma}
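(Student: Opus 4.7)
The approach is to apply Caffarelli's interior $C^{2,\alpha}$ estimate for Monge-Amp\`ere equations inside a section around each point of $B_c(0)\setminus S_{c_0}$, which will give both bounds simultaneously (the upper bound directly, and the lower bound from the product of eigenvalues).

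First, I would collect the structural facts on the region $B_c(0)\setminus S_{c_0}$. By the normalization \eqref{eq:E1 normal} built into the class $\E_1$, one has $v\le 1$ on $B_c(0)$, so on this region $c_0\le v\le 1$ and hence the right-hand side satisfies $c_0^q \le v^q \le 1$. By Theorem \ref{thm:c1a vK K>0}, $v\in C^{1,\alpha}(B_c(0))$, so $v^q$ is H\"older continuous with a bound depending only on $n,q,\lambda,\Lambda$ (we are in the case $g\equiv 1$, so $\lambda=\Lambda=1$). By Lemma \ref{lem:c1 around ek}, $v$ is strictly convex on $B_c(0)\setminus K$, and in particular on $B_c(0)\setminus S_{c_0}$, which is compactly contained in the strict convexity set.

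Next, I would produce a uniform section at each point. For each $x_0\in B_c(0)\setminus S_{c_0}$, choose $p=\nabla v(x_0)\in\partial v(x_0)$ (which exists by Proposition \ref{thm:nsc set}) and consider the section $S_{h_0}^v(x_0,p)$. The goal is to find $h_0=h_0(n,q,c_0)>0$ such that for every such $x_0$,
\[
S_{h_0}^v(x_0,p)\subset\subset B_{2c}(0)\cap\{v>c_0/2\}.
\]
This is where the strict convexity of $v$ on $B_c(0)\setminus K$ is used: if no uniform $h_0$ existed, one could take a sequence $x_k$ and apply the compactness of Lemma \ref{lem:compactness of E1} to obtain a limit $v_\infty\in\E_1$ and a limit point $x_\infty\in \overline{B_c(0)\setminus S_{c_0}}\subset B_c(0)\setminus K$ at which $v_\infty$ fails to be strictly convex, contradicting Lemma \ref{lem:c1 around ek} applied to $v_\infty$. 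Alternatively, one can argue quantitatively using the $C^{1,\alpha}$ bound on $\nabla v$ combined with \eqref{eq:v distance alpha} to show directly that sections of height comparable to $c_0$ stay in $\{v\ge c_0/2\}$.

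Inside such a section, $v$ solves $\det D^2 v=v^q$ with $v^q\in C^\alpha$ bounded between the positive constants $c_0^q$ and $1$, and $v$ is strictly convex there. Caffarelli's interior $C^{2,\alpha}$ regularity theorem \cite{caffarelli1991regularity} then yields $v\in C^{2,\alpha}$ at $x_0$ with $|D^2 v(x_0)|\le C_1$, where $C_1$ depends only on $n$, $q$, and $c_0$. The lower bound is an algebraic consequence: since $\det D^2 v(x_0)=v(x_0)^q\ge c_0^q$ and every eigenvalue of $D^2 v(x_0)$ is bounded above by $C_1$, the smallest eigenvalue is at least $c_0^q/C_1^{n-1}=:c_1>0$. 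The main technical step, and the only one that is not a direct citation, is the uniform-section construction in Step 2; once that is in place, the rest is a standard application of the interior theory.
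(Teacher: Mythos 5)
Your proposal is correct and follows essentially the same route as the paper: both establish a uniform section height $h_0(n,q,c_0)>0$ at every point of $B_c(0)\setminus S_{c_0}$ via the compactness of $\E_1$ (Lemma \ref{lem:compactness of E1}) combined with the strict convexity from Lemma \ref{lem:c1 around ek}, and then invoke Caffarelli's interior regularity theory in those sections, with the lower eigenvalue bound following from $\det D^2v=v^q\ge c_0^q$. The paper phrases the compactness step as lower semicontinuity of $\inf_x\sup\{h: S_h(x)\subset\subset S_1\setminus S_{c_0/2}\}$ under uniform convergence rather than your contradiction argument, but these are the same argument.
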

\begin{proof}
For any fixed  $v \in \E_1$, by recalling the $C^{1,\alpha}$ regularity and the strict convexity of $v$ in $B_c(0)\setminus K$ as indicated in Theorem \ref{thm:c1a vK K>0}, we can define 
\[
h_0(v):= \inf_{x \in B_c(0) \setminus S_{c_0} }\sup\left\{h>0:\; S_h(x) \subset \subset \left(S_1 \setminus S_{c_0/2}\right)\right\}>0.
\]
Noting that $h_0$ is lower semicontinuous with respect to the uniform convergence of functions. Then  we can use Lemma \ref{lem:compactness of E1} and employ compactness arguments to demonstrate that $h_{0}>\delta(n,q,\lambda,\Lambda,c_0)>0$. By virtue of the Lipschitz regularity of  $v$, we have $B_{ch_0}(x) \subset S_{h_0/2}(x) \subset (S_1 \setminus S_{c_0/2} )$ for every $ x\in B_c(0) \setminus S_{c_0}$. Then, the classical regularity theory in \cite{caffarelli1990ilocalization,caffarelli1990interiorw2p,caffarelli1991regularity} yields the Hessian estimate of $v$ in $S_{h_0/2}(x)$.
\end{proof}

Then, combining Theorem \ref{thm:c11 bound of K}, we can obtain certain elliptic regularity structure of $v$ near the free boundary as follows:
Let $K_t'= \left\{ x'  :\;  (x',t) \in K\right\}$ for $t >0$. By Theorem \ref{thm:c11 bound of K}, we have  for  all small ${t_1}, {t_2} >0$ that
\begin{equation}\label{eq:slice relation}
c{t_1}^{-\frac{1}{2}}K_{t_1}'\subset {t_2}^{-\frac{1}{2}}K_{t_2}' \subset C{t_1}^{-\frac{1}{2}}K_{t_1}'.
\end{equation}
Thus, for $t>0$ small, by applying Lemma \ref{lem:E1 normal} with $h=t^{\frac{n+1}{n-q}}$ here, and considering each $x\in\partial S_h\cap B_{c_0}(0)$, we obtain
\begin{equation}\label{eq:growthv}
c\operatorname{dist}(x,K)^{\frac{n+1}{n-q}} \leq v \leq C\operatorname{dist}(x,K)^{\frac{n+1}{n-q}}  \quad  \text{in }B_{c_0}(0) .
\end{equation}
Furthermore, let us redefine the normalization in \eqref{eq:normalized sol} to be 
\begin{equation}\label{eq:normalized sol c11}
v_{h}\left(x', x_n\right):=t^{-\frac{n+1}{n-q}} v\left(t^{\frac{1}{2}} x', t x_n\right), \quad h:=t^{\frac{n+1}{n-q}}.
\end{equation}
Then, after a slight modification of the universal constants, we always have $v_{h} \in \mathcal{E}_{1/h} \subset\mathcal{E}_{1}$.  Thus, by invoking Lemma \ref{lem:c11 out}, we obtain that 
\begin{equation}\label{eq:hessiangrowthv}
c \operatorname{diag}\left\{  t^{\frac{q+1}{n-q}} ,\cdots, t^{\frac{q+1}{n-q}},t^{\frac{2q+1-n}{n-q}} \right\} \leq D^2 v(-te_n) \leq   C \operatorname{diag}\left\{  t^{\frac{q+1}{n-q}} ,\cdots, t^{\frac{q+1}{n-q}},t^{\frac{2q+1-n}{n-q}} \right\}
\end{equation}
for $t>0$ small. A similar Hessian estimate can be derived for other points in  $B_c(0)$ by locating the nearest point and rotating the coordinates. This $C^{1,1}$ regularity \eqref{eq:hessiangrowthv} and the uniform convexity of $\partial K$ also imply the growth behavior \eqref{eq:growthv}.

\subsection{The upper bound estimates}\label{sec:upper}
In this subsection, we provide an upper bound estimate of the principal curvatures, using the Pogorelov-type estimate given by Savin \cite[Proposition 4.3]{savin2005obstacle}; see also \cite[Lemma 2.2]{huang2024regularity}. 

Suppose $w$  is a solution of  $\det D^2 w =g(w)\chi_{\{w>0\}} $ with the artificial assumption that $D_n w<0 $. Then the graph of $w$ can be viewed as the graph of $\mathbf{w}$ in the $e_{n}$ direction,
\[
\left(x_1, . ., x_n, w(x)\right)=X=\left( x_1 , . .,  x_{n-1}, \mathbf{w}\left(x_1, . ., x_{n-1}, x_{n+1}\right),x_{n+1}\right).
\]
Direct calculations yield $D_{x'} w_i=-D_nw D_{x'}  \mathbf{w}  $ and $D_n w D_{n+1}\mathbf{w} =1$. Let $\nu$ denote the unit inner normal vector of the upper graph of $w$ at $(x,w(x))$, then we have
\[ 
\nu = \frac{\left(-Dw, 1\right)}{\sqrt{1+|D w|^2}}= \frac{\left(-D_{x'}\mathbf{w}, 1, -D_{n+1}\mathbf{w}\right)}{\sqrt{1+|D \mathbf{w}|^2}}, \quad \frac{\sqrt{1+|D w|^2}}{\sqrt{1+|D \mathbf{w}|^2}} =|D_n w|,
\]
where by $D \mathbf{w}$ we refer to the derivative of  $\mathbf{w}$ as a graph function with respect to the coordinates $\left(x_1, . ., x_{n-1}, x_{n+1}\right)$. Note that the Gauss curvature of the graph of $w$ at $X$ equals to
\[
\frac{\operatorname{det} D^2 \mathbf{w}}{\left(1+|D\mathbf{w}|^2\right)^{\frac{n+2}{2}}}=K
=\frac{\operatorname{det} D^2 w}{\left(1+|D w|^2\right)^{\frac{n+2}{2}}}
=\frac{g(x_{n+1})}{\left(1+|D w|^2\right)^{\frac{n+2}{2}}}.
\]
By relabeling the coordinates $\left(-x_{n+1} \rightarrow x_{n}\right)$, we obtain the equation
\begin{equation}\label{eq:c11 savin 1}
\operatorname{det} D^2 \mathbf{w}=g\left(x_{n}\right)\left(D_{n}\mathbf{w}\right)^{n+2}   \quad \text{in }\left\{ D_n \mathbf{w} >0\right\}.
\end{equation}
In \cite[Proposition 4.3]{savin2005obstacle}, a Pogorelov-type estimate was presented for the equation \eqref{eq:c11 savin 1}, providing the desired estimate on $D_{ii} \mathbf{w}$ for $1\leq i \leq n-1$.

However, since $D_n v =0$ on $K$, the aforementioned discussion is not applicable up to the coincidence set. Therefore, one can consider the following smooth approximations of $v$. 	For each small $\varepsilon>0$, let $a_{\varepsilon} $ be a smooth non-decreasing function such that  
\[
\quad a_{\varepsilon}(t)= 
\begin{cases}
(t+\varepsilon)^q & \text {if} \quad t\geq 0 \\
\varepsilon^{q+1} & \text {if} \quad  t\leq -\varepsilon^3. 
\end{cases}   
\]
Let $v_{\varepsilon}$ be the convex solution of the following problem
\begin{equation}\label{eq:approx equation}
\M v_{\varepsilon} =a_{\varepsilon}\left(v_{\varepsilon} \right) 
\quad \text {in } \left\{v\leq 1\right\} \cap \left\{x_n \leq 1 \right\}, 
\quad v_{\varepsilon} =v  \quad \text{on }\partial \left( \left\{v\leq 1\right\} \cap \left\{x_n \leq 1 \right\}\right) .
\end{equation}
As $a_{\varepsilon}(v)$ is non-decreasing in $v$, there exists a solution to  \eqref{eq:approx equation}. The proof for the existence is omitted here, as it is analogous to that of Proposition \ref{prop:existence obs}.

\begin{Lemma}\label{lem:uniformly monotonic}
There exists $\varepsilon_0>0$, which depends on $v$, such that for any $\varepsilon<\varepsilon_0$,  we have
\begin{equation}\label{eq:approx domain}
B_{c_1}(0)   \subset F_{\varepsilon}:=\left\{x:\; v_{\varepsilon} (x)<-\delta x_n+\delta^2\right\} \subset B_{c_2}(0)
\end{equation} 
for some positive constants $\delta$, $c_1$ and $c_2 $, all of which depend only on $n$ and $q$, 
\begin{equation}\label{eq:approx section}
v_{\varepsilon} (x)<-\delta (x_n+c_1)+\delta^2 \quad \text{in }B_{ c_1}(0),  
\end{equation}
and
\begin{equation}\label{eq:approx domain des}
D_n v_{\varepsilon} < 0  \quad  \text{in }	F_{\varepsilon}.
\end{equation}
\end{Lemma}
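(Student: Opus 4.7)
The strategy is to prove the three assertions in sequence, combining uniform convergence $v_\varepsilon\to v$ (as $\varepsilon\to 0$) with the quantitative $C^{1,\alpha}$ bounds from Theorem \ref{thm:c1a vK K>0}, and then to handle the monotonicity \eqref{eq:approx domain des} via a comparison/shift argument that leverages the strict convexity of $v_\varepsilon$. First I would establish the existence of $v_\varepsilon$ by Perron's method as in Proposition \ref{prop:existence obs} (what is needed is that $a_\varepsilon$ is non-decreasing). Aleksandrov's maximum principle (Lemma \ref{lem:measure constrain 2}) and barrier comparisons give uniform $L^\infty$ and local Lipschitz estimates on $\{v_\varepsilon\}$. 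Since $a_\varepsilon(t)\to t^q\chi_{\{t>0\}}$ and the Monge-Amp\`ere measure is weakly continuous under uniform convergence, any subsequential limit solves the original obstacle problem with boundary data $v$; the comparison principle (Lemma \ref{lem:comparison principle obs}) identifies this limit with $v$, so $v_\varepsilon\to v$ uniformly on $\{v\le 1\}\cap\{x_n\le 1\}$.

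To obtain \eqref{eq:approx domain} and \eqref{eq:approx section}, I would first pin down the analogous limit set $F_0:=\{v<-\delta x_n+\delta^2\}$ using the bounds $c\operatorname{dist}(x,K)^{(1+\alpha)/\alpha}\le v\le C\operatorname{dist}(x,K)^{1+\alpha}$ of Theorem \ref{thm:c1a vK K>0}. Since $K\subset\{x_n\ge 0\}$ and $0\in\partial K$, one has $\operatorname{dist}(x,K)\ge|x_n|$ when $x_n<0$, so the lower bound on $v$ combined with $v<\delta|x_n|+\delta^2$ controls $|x_n|$ by a power of $\delta$, while $v\ge 0$ forces $F_0\subset\{x_n<\delta\}$; together these show $F_0\subset B_{c_2/2}(0)$. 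Conversely, on $B_{2c_1}(0)$ the upper bound $v\le C|x|^{1+\alpha}\le C(2c_1)^{1+\alpha}$ is strictly dominated by $-\delta x_n+\delta^2\ge\delta^2-2\delta c_1$, with a definite gap, provided $\delta$ is small (depending only on $n,q$) and $c_1\ll\delta$. The uniform convergence transfers both inclusions to $v_\varepsilon$ for $\varepsilon<\varepsilon_0(v)$, and the gap yields \eqref{eq:approx section}: inside $B_{c_1}$, $v_\varepsilon\le v+o(1)\le Cc_1^{1+\alpha}+o(1)<\delta^2-2\delta c_1\le-\delta(x_n+c_1)+\delta^2$.

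For \eqref{eq:approx domain des}, my starting observation would be that $v_\varepsilon$ is strictly convex since $\det D^2 v_\varepsilon=a_\varepsilon(v_\varepsilon)>0$ (a positive-semidefinite symmetric matrix with positive determinant is positive definite), hence $D_n v_\varepsilon$ is strictly increasing along each vertical line; therefore it suffices to prove $D_n v_\varepsilon\le 0$ at the topmost point of $F_\varepsilon$ on every vertical slice, after which strict interior inequality propagates automatically. To verify this boundary non-positivity I would compare $v_\varepsilon$ with its upward shift $\tilde v_\varepsilon(x):=v_\varepsilon(x+se_n)$ on the common domain $F_\varepsilon\cap(F_\varepsilon-se_n)$: both solve $\det D^2 u=a_\varepsilon(u)$, and the semilinear Monge-Amp\`ere comparison principle (readily proven at any interior maximum of $\tilde v_\varepsilon-v_\varepsilon$ using that $a_\varepsilon$ is non-decreasing) yields $\tilde v_\varepsilon\le v_\varepsilon$ as long as the inequality holds on the boundary. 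On the top piece of $\partial F_\varepsilon$ this is immediate from the definition $F_\varepsilon=\{v_\varepsilon<-\delta x_n+\delta^2\}$, which contributes a built-in drop of $\delta s$; on the remaining portion, the upper-graph structure of $K\cap\{x_n\le 1\}$ combined with convexity and non-negativity of $v$ gives $v(x+se_n)\le v(x)$ for $x$ on or below $\partial K$, and this transfers to $v_\varepsilon$ modulo the error from Step~1. The hard part will be precisely this last transfer: on the interior of $K$ the shift inequality for $v$ is not strict (since $v\equiv 0$ there), and one must use the buffer \eqref{eq:approx section} together with the strict convexity of $\partial K$ at $0$ from Proposition \ref{prop:Savin-1} to guarantee that the global minimum of $v_\varepsilon$, where $\nabla v_\varepsilon=0$, lies strictly above $F_\varepsilon$ on every vertical line.
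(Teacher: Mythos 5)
Your Steps 1--2 (existence of $v_\varepsilon$ by Perron, identification of the uniform limit with $v$ via weak convergence of Monge--Amp\`ere measures and the comparison principle, and the derivation of \eqref{eq:approx domain}--\eqref{eq:approx section} from the two-sided bounds of Theorem \ref{thm:c1a vK K>0} plus a definite gap) follow the paper's proof essentially verbatim and are fine.

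The gap is in Step 3, and it sits exactly where you flag ``the hard part.'' Your sliding comparison between $v_\varepsilon$ and $v_\varepsilon(\cdot+se_n)$ on $F_\varepsilon\cap(F_\varepsilon-se_n)$ cannot be closed: on the portion of the boundary coming from $\partial(F_\varepsilon-se_n)$, i.e.\ where $x+se_n\in\partial F_\varepsilon$, one has $v_\varepsilon(x+se_n)=-\delta x_n+\delta^2-\delta s$ while membership of $x$ in $\overline{F_\varepsilon}$ only gives $v_\varepsilon(x)\le-\delta x_n+\delta^2$, which is the wrong direction; a Taylor expansion at the unshifted boundary point shows the needed inequality there is equivalent to $D_nv_\varepsilon\le 0$ on that part of $\partial F_\varepsilon$ --- i.e.\ the argument is circular. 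Your fallback, that the minimum of $v_\varepsilon$ along each vertical line lies strictly above $F_\varepsilon$, is indeed the crux, but the tools you invoke (the buffer \eqref{eq:approx section}, strict convexity of $\partial K$, and uniform convergence $v_\varepsilon\to v$) cannot produce it: since $v\equiv 0$ on $K$ and $0\in K_\varepsilon\cap F_\varepsilon$, uniform convergence gives no information about where inside $K_\varepsilon$ the (slightly negative) function $v_\varepsilon$ attains its minimum. The paper's key device, absent from your proposal, is the rescaling $w_\varepsilon=\varepsilon^{-(q+1)/n}v_\varepsilon$ inside $K_\varepsilon$: after bounding the Monge--Amp\`ere mass via the measure estimate $|E_\varepsilon|\le C\varepsilon^{3-(q+1)/n}$ on the transition layer, $w_\varepsilon$ converges to the solution of $\det D^2w=1$ in $K\cap\{x_n\le 1\}$ with zero boundary data, and the quantitative inequality \eqref{eq:unifromly monotonic} for this limit profile, combined with plain convexity, forces $D_nv_\varepsilon<0$ in $F_\varepsilon$. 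A secondary error: your assertion that $\det D^2v_\varepsilon>0$ makes $v_\varepsilon$ strictly convex is false for Aleksandrov solutions in dimension $n\ge 3$ (Pogorelov's example); the paper only uses monotonicity of $D_nv_\varepsilon$ along vertical lines, which follows from convexity alone.
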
 	 
\begin{proof}
By Theorem \ref{thm:c1a vK K>0}, we now have 
\[
c  \max\left\{c|x'|^{\frac{1+\alpha}{\alpha}}-x_n,0\right\}^{\frac{1+\alpha}{\alpha}} \leq v \leq C|x|^{1+\alpha} \quad  \text{in }B_{c}(0) .
\] 
Thus, for all $\delta >0 $ sufficiently small, there exist constants $c_1$ and $c_2$ depending only on $n,q,\delta$, which tend to $0$ as $\delta $ approaches to $0$, such that the following holds for all $ v \in \E_1$:
\[	
B_{2c_1}(0) \subset F:=\left\{x:\; v (x)<-\delta x_n+\delta^2\right\} \subset B_{c_2/2}(0)  .  
\]
and 
\[
v (x)<-\delta (x_n+2c_1)+\delta^2  \quad \text{in }B_{ 2c_1}(0).
\]

The domains in this proof below will be restricted to $\widehat S_1:=\left\{v\leq 1\right\} \cap \left\{x_n \leq 1 \right\}$. 
By the comparison principle, we have $v_{\varepsilon} \leq v$, and $K_{\varepsilon}:= \left\{ v_{\varepsilon} \leq 0 \right\} \supset K=\left\{ v= 0 \right\} $. 
In particular, $\M v_{\varepsilon} \leq (1+\varepsilon)^q \ud x$. By \cite[Lemma 1.6.1]{gutierrez2016monge}, every sequence $\{v_{\varepsilon}\}$ has a subsequence converging to some convex function $v_0$ with $v_0=v$ on $\partial \widehat S_1$, with their Monge-Amp\`ere measures also converging to $\M  v_0$.  
By the equation of $v_{\varepsilon}$, we conclude that $v_0$ satisfies \eqref{eq:obs eq g=1}. This implies $v_0 = v$ in $\widehat S_1$.
Consequently,  $v_{\varepsilon}$ converges to $v$ as $\varepsilon \rightarrow 0$, which implies \eqref{eq:approx domain} and \eqref{eq:approx section}  provided that $\varepsilon_0$ is small.

The convergence of $v_{\varepsilon}$ to $v$ implies that $\lim_{\varepsilon \to 0 } K_{\varepsilon} =K$. By the convexity of $v_{\varepsilon}$, to establish the validity of \eqref{eq:approx domain des}, it suffices to demonstrate that  $D_n v_{\varepsilon} <0$ in $F_{\varepsilon} \cap K_{\varepsilon}$, which is equivalent to showing that $D_n w_{\varepsilon} <0$  in $F_{\varepsilon} \cap K_{\varepsilon}$, where the rescaled function
\[
w_{\varepsilon}(x):=\varepsilon^{-\frac{q+1}{n}} v_{\varepsilon}(x)
\] 
satisfies
\[   
1\leq  \det D^2 w_{\varepsilon} \leq 1+\varepsilon^{-1} \chi_{\{ w_{\varepsilon} \geq   - \varepsilon^{3-\frac{q+1}{n}}  \}} 
\quad \text{in }K_{\varepsilon} ,
\quad w_{\varepsilon} =0 
\quad \text{ on }\partial K_{\varepsilon}. 
\]

Noting that $K\subset K_{\varepsilon}$,  we have $\left\|w_{\varepsilon}\right\|_{L^{\infty}(K_\varepsilon)} \geq c$, which indicates that
\begin{equation}\label{eq:uniformly monotonic}
\left| E_{\varepsilon}\right| \leq C\varepsilon^{3-\frac{q+1}{n}}  \quad \text{for } E_{\varepsilon}= \{ - \varepsilon^{3-\frac{q+1}{n}} \leq w_{\varepsilon} \leq 0  \}=\{   - \varepsilon^{3}\leq   v_{\varepsilon} \leq  0 \}.
\end{equation} 
This yields
\[
\M w_{\varepsilon} (K_{\varepsilon}) \leq |K_{\varepsilon}|+ C\varepsilon^{-1}|E_{\varepsilon}| \leq C.
\]
Thus, the Aleksandrov maximum principle implies the convergence of $w_{\varepsilon}$ to the solution of
\[
\det D^2 w=1   \quad \text{in } K \cap \left\{x_n \leq 1 \right\} , \quad w=0   \quad \text{on }\partial \left(K \cap \left\{x_n \leq 1 \right\}  \right)  .
\]
Let  $c_2$ be small enough. The global continuity of $w$ implies
\[
\inf_{ x\in K_{\varepsilon}\cap B_{c_2}(0) }  w (x)> \frac{1}{2} \sup_{ x\in K_{\varepsilon} \cap B_{c_2}\left(\frac{1}{2}e_n\right)}w(x).
\]
And hence, for $\varepsilon$ small,
\begin{equation}\label{eq:unifromly monotonic}
\inf_{ x\in K_{\varepsilon}\cap B_{c_2}(0) }  w_{\varepsilon} (x)> \frac{3}{4} \sup_{ x\in K_{\varepsilon} \cap B_{c_2}\left(\frac{1}{2}e_n\right)}w_{\varepsilon}(x)>\sup_{ x\in K_{\varepsilon} \cap B_{c_2}\left(\frac{1}{2}e_n\right)}w_{\varepsilon}(x).
\end{equation}
Therefore, the convexity of $w$ implies $D_n w_{\varepsilon} <0$ in $F_{\varepsilon}$. This proves \eqref{eq:approx domain des}.
\end{proof}

\begin{proof}[Proof of the upper bound in Theorem \ref{thm:c11 bound of K}]
Based on Lemma \ref{lem:uniformly monotonic}, we can utilize \cite[Proposition 4.3]{savin2005obstacle} on the corresponding $\mathbf{v}_{\varepsilon} $ to deduce that for any unit vector  $e_i $ orthogonal to $e_n$, we have
\[
\left( \delta^{-1} \left(\delta^2-x_{n}\right)-\mathbf{v}_{\varepsilon}  \right)	D_{ii}\mathbf{v}_{\varepsilon}  \leq C
\quad\text{in }  \left\{x:\;\mathbf{v}_{\varepsilon} (x)\leq  \delta^{-1} \left(\delta^2-x_{n}\right) \right\},
\]
where $c$ and $C$ depend only  $n$, $q$ and $\delta$. From \eqref{eq:approx section}, it follows that $\left( \delta^{-1} \left(\delta^2-x_{n}\right)-\mathbf{v}_{\varepsilon}  \right)>c_1$ whenever $(x',\mathbf{v}_{\varepsilon} ) \in B_{c_1}(0)$. Thus,  we obtain the curvature upper bound of the level sets of $v_{\varepsilon}$ within $B_{c_1}(0)$. By sending $\varepsilon \to 0$,  we find that the principal curvatures of the convex hypersurface  $\partial\left\{v  \leq t\right\}$ are bounded from above within $B_{c_1}(0)$.
\end{proof}

\subsection{The lower bound estimates}\label{sec:lower}
In this subsection, we obtain a lower bound for the principle curvatures of the free boundary by applying a Pogorelov-type estimate to the Legendre transform of $v$, as in \cite[Lemma 4.2]{savin2005obstacle} or \cite[Corollary 2.1]{huang2024regularity} for the case $q=0$.

Let $u:=v^*$ denote the Legendre transform of $v$. Then $u$ is a solution of 
\begin{equation}\label{eq:Legendreu}
\operatorname{det} D^2 u  = \left(x\cdot Du-u\right)^{-q} . 
\end{equation}
Note that if $u$ is a solution of \eqref{eq:Legendreu}, then $u-p\cdot x$ is also a solution for any $p\in\R^n$. 

We first present the following Pogorelov type estimate for \eqref{eq:Legendreu}.
\begin{Theorem}\label{lem:Pogorelov p>0}
Let $w \in C^4(\overline{\Omega})$ be a convex solution to
\[
\operatorname{det} D^2 w  = \left(x\cdot Dw-w\right)^{-q} \quad \text{in } \Omega,  \quad w =0  \quad \text{on } \partial \Omega ,
\]
with $\Omega \subset \left\{ x:\; x\cdot Dw-w>0\right\}$. Then there exists a constant $C>0$, depending only on $n$, $q$ and $\sup _{\Omega}|Dw|$, such that
\[
(-w)\left|D^2 w\right| \leq C, \quad \forall\  x \in \Omega .
\]
\end{Theorem}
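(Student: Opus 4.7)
\medskip

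\noindent\textbf{Proof plan for Theorem \ref{lem:Pogorelov p>0}.}

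The plan is to run a classical Pogorelov-type maximum principle argument adapted to the nonlinearity $\ell^{-q}$, where $\ell := x\cdot Dw - w > 0$ in $\Omega$. Consider the auxiliary function
\[
\Phi(x,\xi) = \log w_{\xi\xi}(x) + \alpha \log(-w(x)) + \tfrac{a}{2}|Dw(x)|^2, \qquad (x,\xi)\in \Omega \times S^{n-1},
\]
where $\alpha,a>0$ are to be chosen, depending only on $n$, $q$ and $M:=\sup_\Omega|Dw|$. Since $w\in C^4(\overline\Omega)$, $w<0$ in $\Omega$ and $w=0$ on $\partial \Omega$, the term $\alpha\log(-w)$ drives $\Phi\to -\infty$ at $\partial\Omega$, so $\Phi$ attains its maximum at an interior point $x_0$ along some direction $\xi_0$. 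After a rotation, I may assume $\xi_0=e_1$ and $D^2w(x_0)$ is diagonal. The goal is to show $w_{11}(x_0)\cdot(-w(x_0))\le C$, from which the claimed estimate follows by chasing back through $\Phi$ and using the bound $|Dw|\le M$.

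At the critical point, $\Phi_i=0$ yields the identity
\[
\frac{w_{11i}}{w_{11}} = -\alpha\,\frac{w_i}{w} - a\,w_i w_{ii} \qquad (i=1,\dots,n),
\]
which I will use to express the third derivatives of $w$ that appear below. Let $L := w^{ij}\partial_i\partial_j$ denote the linearized operator. Differentiating the equation $\log\det D^2w = -q\log\ell$ once and twice in the direction $e_1$, and noting that $\ell_1=\sum_k x_k w_{k1}$ and $\ell_{11}=w_{11}+\sum_k x_k w_{k11}$, gives
\[
Lw_{11} \;=\; w^{ip}w^{jq} w_{1ij}w_{1pq} \;-\; \frac{q\,w_{11}}{\ell} \;-\; \frac{q}{\ell}\sum_k x_k w_{k11} \;+\; \frac{q\,\ell_1^{\,2}}{\ell^2}.
\]
Combined with the standard computations $L(\log(-w))=\frac{n}{w}-\frac{w^{ij}w_iw_j}{w^2}$ and $L(\tfrac12|Dw|^2)=\Delta w - q\,w_k\ell_k/\ell$, the condition $L\Phi(x_0)\le 0$ becomes, after dividing by $w_{11}$ and using $w^{ii}w_{1i1}^2=\frac{1}{w_{ii}}w_{1i1}^2$,
\[
\sum_i \frac{w_{1i1}^{\,2}}{w_{ii}w_{11}^{\,2}} \;+\; \text{(derivative-of-equation terms)} \;+\; \alpha\!\left(\frac{n}{w}-\frac{w^{ij}w_iw_j}{w^2}\right) \;+\; a\bigl(\Delta w - q\,w_k\ell_k/\ell\bigr) \;\le\; 0.
\]

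The main obstacle is the third-derivative term $\frac{q}{\ell}\sum_k x_kw_{k11}$ coming from $\ell_{11}$. To handle it, I use the critical point identity to replace $w_{k11}/w_{11}$ by $-\alpha w_k/w - a w_k w_{kk}$, so that
\[
\frac{1}{\ell}\sum_k x_k \frac{w_{k11}}{w_{11}} \;=\; -\,\frac{\alpha(x\cdot Dw)}{\ell\,w} \;-\; \frac{a}{\ell}\sum_k x_k w_k w_{kk} \;=\; -\,\frac{\alpha(\ell+w)}{\ell w} \;-\; \frac{a\,w_k\ell_k}{\ell},
\]
where the last equality uses $x\cdot Dw = \ell+w$ and $\ell_k=\sum_j x_j w_{jk}$. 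The two $a$-terms of the form $w_k\ell_k/\ell$ then cancel, and the remaining $\alpha$-contribution becomes $\alpha/w + \alpha/\ell$. The $\alpha\,n/w$ from $L\log(-w)$ will dominate these contributions provided $\alpha>q$ is chosen large enough, while $-\alpha w^{ij}w_iw_j/w^2$ is absorbed by Cauchy–Schwarz against the positive quadratic term $\sum_i w_{1i1}^2/(w_{ii}w_{11}^2)$ coupled with the critical point relation (this is the standard Pogorelov trick). Choosing $a$ so that $a\Delta w$ controls the $|Dw|$-dependent error, and using $\ell\le |x||Dw|+|w|\le C(M,\operatorname{diam}\Omega)$ together with $\det D^2 w = \ell^{-q}$ (which gives a lower bound on $\det D^2 w$ hence on $\Delta w\cdot(-w)^{\text{something}}$ via $Lw=n$), I arrive at an inequality of the form
\[
\frac{c}{(-w)^2} \;\le\; \frac{C}{(-w)}\,w_{11}^{1/n} \;+\; C \qquad \text{at } x_0,
\]
which, multiplied by $(-w)^2$, yields $(-w(x_0))^2 w_{11}(x_0)^{1/n}\le C$, i.e.\ $(-w)^{2n}w_{11}\le C$ at $x_0$ (a weaker form of the target that is nevertheless sufficient after raising $\alpha$ appropriately so that the exponent on $(-w)$ in $\Phi$ matches $1$). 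Unwinding $\Phi(x_0,e_1)\ge \Phi(x,\xi)$ for all $(x,\xi)$ and using $|Dw|\le M$ then gives $(-w)w_{\xi\xi}\le C$ for every unit $\xi$, and by convexity $(-w)|D^2w|\le C$ in $\Omega$, as desired.
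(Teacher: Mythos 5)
Your overall strategy is the same as the paper's: a Pogorelov-type maximum principle applied to $\log w_{\xi\xi}+\text{(weight in }-w)+\text{(gradient term)}$, differentiating $\log\det D^2w=-q\log\ell$ twice, and using the critical-point identity to convert the third-derivative term $\frac{q}{\ell}\sum_k x_kw_{k11}$ coming from $\ell_{11}$. You also correctly spot the two structural cancellations that make this work: $x\cdot Dw=\ell+w$ turns the $\alpha$-contribution into $-\alpha/w-\alpha/\ell$, and the $w_k\ell_k/\ell$ terms from the gradient part of the test function cancel against those produced by differentiating the right-hand side. The paper does exactly this with $\alpha=1$ and the directional term $\frac12 w_e^2$ instead of $\frac a2|Dw|^2$, arriving at $\frac{n+q}{w}-\frac{w_1^2}{w_{11}w^2}+w_{11}\le 0$ and hence $(ww_{11})^2+(n+q)ww_{11}\le w_1^2$.

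The gap is in your endgame. First, the positive term that must dominate is $w_{11}$ itself, obtained from $a\,\Delta w\ge a\,w_{11}$ (or, in the paper's directional version, from $\sum_i w_{1i}^2/w_{ii}=w_{11}$); replacing $\Delta w$ by $n(\det D^2w)^{1/n}=n\ell^{-q/n}$ throws this away and only leaves a bounded quantity, which is why you end up with $w_{11}^{1/n}$ instead of $w_{11}$. Second, the inequality you display, $\frac{c}{(-w)^2}\le \frac{C}{(-w)}w_{11}^{1/n}+C$, is a lower bound for the right-hand side; multiplying by $(-w)^2$ gives $c\le C(-w)w_{11}^{1/n}+C(-w)^2$, which bounds nothing from above. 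Third, the fallback conclusion $(-w)^{2n}w_{11}\le C$ is strictly weaker than $(-w)w_{11}\le C$ (it permits blow-up of order $(-w)^{-2n}$ near $\partial\Omega$), and enlarging $\alpha$ in $\Phi$ only weakens the conclusion further, since the maximum of $\log w_{\xi\xi}+\alpha\log(-w)$ controls $(-w)^{\alpha}w_{\xi\xi}$, not $(-w)w_{\xi\xi}$. To close the argument you should take $\alpha=1$, keep the full $w_{11}$ from the gradient term, and derive the quadratic inequality $(ww_{11})^2+(n+q)ww_{11}\le w_1^2$ at the maximum point, which immediately yields $(-w)w_{11}\le C(n,q,\sup|Dw|)$ and then the theorem by unwinding $\Phi$.
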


\begin{proof}
Assume the maximum of
\begin{equation}\label{eq:auxi function p}
\log w_{ee}+\log |w|+\frac{1}{2}  w_{e}^{2} (y)
\end{equation}
occurs at $x_0$. After a rotation of the coordinates, we can assume that $e=e_1$, and $D^2w(x_0)$ is diagonal. 
	
We denote $G=x\cdot Dw-w$, and write
\[
\log \det D^2w=  -q\log G.
\]
Taking derivatives in the $e_1$ direction, we find at $x_0$ that
\begin{align}
\label{eq:Geq1}	& \sum_{i=1}^{n} \frac{w_{1 i i}}{w_{i i}}=-q\frac{G_1}{ G}, \\
\label{eq:Geq2}	& \sum_{i=1}^{n}\frac{w_{11 i i}}{w_{i i}}-\sum_{i,j=1}^{n}\frac{w_{i j 1}^2}{w_{i i} w_{j j}}=-q\frac{G_{11}}{ G}+q\frac{G_1^2}{ G^2} .
\end{align}

On the other hand, from \eqref{eq:auxi function p} we obtain at $x_0$ that
\begin{align}
\label{eq:Gauxi1}	& \frac{w_{i}}w+\frac{w_{11 i}}{w_{11}}+w_{1} w_{1 i}=0, \\
\label{eq:Gauxi2}	& \frac{w_{11 i i}}{w_{11}}-\frac{w_{11 i}^{2}}{w_{11}^{2}}+
\frac{w_{i i}}w-\frac{w_{i}^{2}}{w^{2}} + w_{1} w_{1 i i} + w_{1 i}^{2}\leq 0.
\end{align}

Multiplying \eqref{eq:Gauxi2} by $w_{i i}^{-1}$ and summing in $i,j$, we obtain by using \eqref{eq:Geq1}, \eqref{eq:Geq2} that 
\begin{equation}\label{eq:G ieq1} 
\frac{1}{w_{11}}\left(\sum_{i,j=1}^{n}\frac{w_{i j 1}^2}{w_{i i} w_{j j}}-q\frac{G_{11}}{ G}+q\frac{G_1^2}{ G^2} \right) -\sum_{i=1}^{n}\frac{w_{11 i}^2}{w_{i i} w_{11}^2}+\frac{n}{w}-\sum_{i=1}^{n}\frac{w_i^2}{w_{i i} w^2}-q\frac{w_1G_1}{ G}+  w_{11} \leq 0 . 
\end{equation}

From \eqref{eq:Gauxi1}, one has
\[
\frac{w_i}{w}=-\frac{w_{11 i}}{w_{11}}, \quad \text { for } i \neq 1,
\]
and
\[ \sum_{i,j=1}^{n}\frac{w_{i j 1}^2}{w_{11}w_{i i} w_{j j}}- \sum_{i=1}^{n}\frac{w_{11 i}^2}{w_{i i} w_{11}^2}-\sum_{i=1}^{n}\frac{w_i^2}{w_{i i} w^2}= \sum_{i, j \neq 1} \frac{w_{i j 1}^2}{w_{i i} w_{j j}}- \frac{w_1^2}{w_{11} w^2} \geq - \frac{w_1^2}{w_{11} w^2} . \]
Plugging this into \eqref{eq:G ieq1}, we obtain 
\begin{equation}  \label{eq:G ieq2}
\frac{q}{w_{11}}\left(\frac{G_1^2}{ G^2}-\frac{G_{11}}{ G}\right) +\frac{n}{w}- \frac{w_1^2}{w_{11} w^2}-q  \frac{w_1G_1}{ G}+  w_{11} \leq 0.  
\end{equation}

By direct calculations, we have
\[
G_1= \sum_{i=1}^{n}x_iw_{1i}=x_1w_{11}, \quad G_{11}=\sum_{i=1}^{n}x_iw_{11i} +w_{11}.
\]
Using \eqref{eq:Gauxi1}, we find that
\[
\frac{G_{11}}{ w_{11}}
=\frac{\sum_{i=1}^{n}x_iw_{11i} +w_{11} }{ w_{11}} 
= - \frac{\sum_{i=1}^{n}x_iw_i}{w}-x_1w_1 w_{11}+ 1
= -\frac{G}{w} -x_1w_1 w_{11}=-\frac{G}{w} -w_1G_1 .
\]
Together with  \eqref{eq:G ieq2}, this  gives us  that 
\[
\frac{q}{w_{11}}\frac{G_1^2}{ G^2}+q  \frac{w_1G_1}{ G} +\frac{n+q}{w}- \frac{w_1^2}{w_{11} w^2}-q  \frac{w_1G_1}{ G}+  w_{11} \leq 0.
\]

In conclusion,
\[
\frac{n+q}{w}- \frac{w_1^2}{w_{11} w^2}+  w_{11} \leq 0 .
\]
Multiplying $w_{11} w^2$ to both sides, we have
\[
\left(w w_{11}\right)^2+(n+q) ww_{11}\leq w_1^2
\]
at $x_0$, from which the result follows.
\end{proof}


Define for each $a>0$ the open cone
\begin{equation}\label{eq:conical domain}
\operatorname{Cone}(0,a)=\left\{ te :\; e \in \mathbb{S}^{n-1},\  \langle e,-e_n\rangle > \frac{1}{1+a},\ 0<t<a  \right\}.
\end{equation}
We will later apply the Pogorelov type estimate in Theorem \ref{lem:Pogorelov p>0} to the Legendre transform $v^*$ of $v$. The following lemma will ensure that $v^*$ is well defined in such a cone.

\begin{Lemma}\label{lem:cone domain of u}
There exist positive constants $\alpha\in(0,1)$, $\delta$, $c_0$, $c_1$, $c$ and $C$, all of which depend only on $n,q,\lambda$ and $\Lambda$, such that for every $v\in \E_1$, we have
\begin{equation*}\label{eq:domain nabla u}
\operatorname{Cone}(0,c_0)  \subset  \nabla v\left(E_\delta \right) \subset  \operatorname{Cone}(0,C),  
\end{equation*}
where $E_\delta:=  \left\{0< v \leq \delta\right\}  \cap \left\{ x_n \leq \delta\right\} $. 
Moreover, the function $u:=v^*$ satisfies
\begin{equation}\label{eq:u behavior}
c\left(|y'|^{\frac{1+\alpha}{\alpha}}|y_n|^{-\frac{1}{\alpha}}+  |y_n|^{\frac{1+\alpha}{\alpha}}\right)\leq  u(y',y_n) \leq C\left(|y'|^{1+\alpha}|y_n|^{-\alpha}+|y_n|^{1+\alpha}\right).
\end{equation}
for $y=(y',y_n) \in \nabla v\left(E_\delta \right) $.
\end{Lemma}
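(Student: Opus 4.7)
The lemma combines a geometric claim, that $\nabla v(E_\delta)$ is sandwiched between two cones around $-e_n$, with quantitative two-sided bounds for $u := v^*$ on that image. I would establish the cone containments first, and then deduce the bounds on $u$ via Legendre duality, using Proposition \ref{prop:Savin-1}, Theorem \ref{thm:c1a vK K>0}, and Lemma \ref{lem:c1a v pointwise}.

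For the upper containment $\nabla v(E_\delta) \subset \operatorname{Cone}(0,C)$, fix $x \in E_\delta$ and set $y = \nabla v(x)$. The magnitude bound $|y| \le C\delta^{\alpha^2/(1+\alpha)}$ follows from Lemma \ref{lem:c1a v pointwise} combined with the growth $v \ge c\operatorname{dist}(\cdot,K)^{(1+\alpha)/\alpha}$ in Theorem \ref{thm:c1a vK K>0}. For the angular pinching around $-e_n$, I would test the convexity inequality $v(z) \ge v(x) + y \cdot (z-x)$ against the free-boundary curve $z_s := (x' + se', \psi(x' + se'))$, on which $v = 0$, for unit $e' \perp e_n$; subtracting the $s = 0$ case and invoking $|\nabla \psi(x')| \le C|x'|^\alpha$ from Proposition \ref{prop:Savin-1}, then letting $s \to 0^+$ with $e' = y'/|y'|$, yields $|y'| \le C|x'|^\alpha |y_n|$. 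Since $|x'|$ is small on $E_\delta$, this places $y$ in a fixed cone around $-e_n$; the sign $y_n < 0$ comes from the $s = 0$ inequality $-y_n(\psi(x') - x_n) \ge v(x) > 0$.

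For the lower containment $\operatorname{Cone}(0, c_0) \subset \nabla v(E_\delta)$, given such $y$ I consider the concave function $\Phi_y(x) := x \cdot y - v(x)$ on $\overline{V_v}$. Since $v = 1$ on $\partial V_v \cap \{x_n < 1\}$ and $|y| < c_0$ is small, $\Phi_y$ is negative there; on the other hand, evaluating at $x = -te_n$ with $t \sim |y_n|^{1/\alpha}$ and using $v(-te_n) \le Ct^{1+\alpha}$ from Theorem \ref{thm:c1a vK K>0} gives $\Phi_y \ge c|y_n|^{(1+\alpha)/\alpha} > 0$. Hence the supremum $u(y) = \sup \Phi_y$ is attained at an interior point $x^*$ with $\nabla v(x^*) = y$. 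Lemma \ref{lem:c1a v pointwise} and Theorem \ref{thm:c1a vK K>0} force $\operatorname{dist}(x^*, K) \le (|y|/c)^\alpha$ and $v(x^*) \le C|y|^{\alpha(1+\alpha)} \le \delta$; re-applying the upper-cone inequality at $x^*$ with its nearest point $z^* \in \partial K$ keeps $x^*$ close to the negative $e_n$-axis and so secures $x^*_n \le \delta$, giving $x^* \in E_\delta$.

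For the bounds on $u$, write $u(y) = x \cdot y - v(x)$ with $x = \nabla u(y) \in E_\delta$, let $z = (z', \psi(z')) \in \partial K$ be the point closest to $x$, and set $d = |x - z|$; Lemma \ref{lem:c1a v pointwise} sandwiches $d$ as $(|y|/C)^{1/\alpha} \le d \le (|y|/c)^\alpha$. The lower bound is obtained by exhibiting explicit competitors in the Legendre supremum near the negative $e_n$-axis and invoking the upper growth $v \le C\operatorname{dist}(\cdot, K)^{1+\alpha}$ from Theorem \ref{thm:c1a vK K>0}, whose optimization produces $u(y) \ge c(|y'|^{(1+\alpha)/\alpha}|y_n|^{-1/\alpha} + |y_n|^{(1+\alpha)/\alpha})$. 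For the upper bound, decompose $x \cdot y = z \cdot y + (x-z)\cdot y$: the second term is $\le d|y| \le C|y|^{1+\alpha}$, while for $z \cdot y = z' \cdot y' + \psi(z') y_n$ I combine $\psi(z') \le C|z'|^{1+\alpha}$ from Proposition \ref{prop:Savin-1} with the $|z'|$-refined upper-cone estimate $|y'| \le C|z'|^\alpha |y_n|$, and then invert to $|z'| \le C(|y'|/|y_n|)^{1/\alpha}$ to obtain $u(y) \le C(|y'|^{1+\alpha}|y_n|^{-\alpha} + |y_n|^{1+\alpha})$. The main obstacle is this last step: the mismatched exponents $\alpha$ vs.\ $1/\alpha$ in Lemma \ref{lem:c1a v pointwise} leave $d$ only polynomially sandwiched by $|y|$, so $|z'|$ must be tracked separately via the $|z'|$-version of the upper-cone estimate in order to recover both the $|y'|^{1+\alpha}|y_n|^{-\alpha}$ and $|y_n|^{1+\alpha}$ terms of the advertised bound.
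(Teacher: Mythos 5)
Your treatment of the two cone containments, while more elaborate than necessary, is workable: the paper gets the inclusion $\nabla v(E_\delta)\subset\operatorname{Cone}(0,C)$ in one line from $\nabla v(x)\cdot(z-x)\le -v(x)\le 0$ for $z$ ranging over an interior ball of $K$, and gets the inclusion $\operatorname{Cone}(0,c_0)\subset\nabla v(E_\delta)$ by sliding the plane $t\,x\cdot e$ up to a touching point, which is the same mechanism as your maximization of $\Phi_y$. Likewise your lower bound on $u$ via explicit competitors and $v\le C\operatorname{dist}(\cdot,K)^{1+\alpha}$ is just the inequality $u=v^*\ge \Psi_+^*$ computed by hand.

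The genuine gap is in the upper bound on $u$. The paper avoids your decomposition entirely: since $v\ge\Psi_-:=c\max\{c|x'|^{\frac{1+\alpha}{\alpha}}-x_n,0\}^{\frac{1+\alpha}{\alpha}}$, the order-reversing property of the Legendre transform gives $u=v^*\le\Psi_-^*$, and an explicit computation of $\Psi_-^*$ yields the right-hand side of \eqref{eq:u behavior}. Your route instead needs an \emph{upper} bound on $|z'|$, and the step you propose — inverting $|y'|\le C|z'|^{\alpha}|y_n|$ to $|z'|\le C(|y'|/|y_n|)^{1/\alpha}$ — reverses the inequality: that estimate only yields $|z'|\ge c\left(|y'|/|y_n|\right)^{1/\alpha}$, a lower bound. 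Moreover the claimed upper bound is false in general: if $\psi$ behaves like the lower barrier $|x'|^{\frac{1+\alpha}{\alpha}}$, then $|\nabla\psi(z')|\sim|z'|^{1/\alpha}$ and hence $|z'|\sim(|y'|/|y_n|)^{\alpha}\gg(|y'|/|y_n|)^{1/\alpha}$. The correct control on $|z'|$ must come from the \emph{lower} barrier $\psi\ge c|x'|^{\frac{1+\alpha}{\alpha}}$ (via the separating hyperplane $\{y\cdot w=y\cdot x\}$ lying below the graph of $\psi$), which gives only $|z'|\le C\bigl((|y'|/|y_n|)^{\alpha}+\operatorname{dist}(x,K)^{\frac{\alpha}{1+\alpha}}\bigr)$; the extra term then has to be absorbed by a case analysis against $|y_n|^{1+\alpha}$, precisely the loose-exponent issue you flag but do not resolve. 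I would recommend replacing this whole step by the paper's duality argument $v\ge\Psi_-\Rightarrow u\le\Psi_-^*$.
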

\begin{proof}
Let  $te \in \operatorname{Cone}(0,c_0)$ with $0< t < c_0$ and $e \in \mathbb{S}^{n-1}$. Then it is elementary to check that $v(x) \geq   tx\cdot e $ on $ \partial  (\{v\le\delta\}\cap\{x_n\le\delta\})$ provided that $c_0>0$ is small enough. Since $v(0) =0 =0\cdot e$, by lowering the function $tx\cdot e$ downward, it will eventually touch the graph of $v$ from below at a point $x_0 \in E_\delta$ with $ te \in \partial v(x_0) $. This implies $te \in \nabla v(E_\delta)$, thereby yielding $\operatorname{Cone}(0,c_0)  \subset  \nabla v\left(E_\delta \right)$.  

By the $C^{1,\alpha}$ regularity of $v$ (Lemma \ref{lem:c1a v pointwise}), we have $ \nabla v\left(E_\delta \right)  \in B_C(0)$. Let us assume that $B_{c}\left(\frac{1}{2}e_n\right) \subset K $. Choose $\delta$ sufficiently small so that $E_\delta\subset B_{c/2}(0)$. For every $x \in E_\delta$, since
\[ 
\nabla v (x ) \cdot (z-x) \leq v (z) -v (x)= -v (x) \leq 0 , \quad \forall\  z \in B_{c}\left(\frac{1}{2}e_n\right) ,
\]
we have $\nabla v(x) \in \operatorname{Cone}(0,C)$.  This implies $\nabla v\left(E_\delta \right) \subset  \operatorname{Cone}(0,C)$.

By Theorem \ref{thm:c1a vK K>0}, we now have  
\[
 \Psi_- \leq v \leq \Psi_+ \quad \text{in }B_{c}(0),
\] 
where 
\[
\Psi_- =c\max\left\{c|x'|^{\frac{1+\alpha}{\alpha}}-x_n,0\right\}^{\frac{1+\alpha}{\alpha}}, \quad \Psi_+= C\max\left\{ C|x'|^{1+\alpha}-x_n,0\right\}^{1+\alpha}.
\]
By using the order-reversing property of the Legendre transform,  this implies 
$
 \Psi_+^* \leq v^* \leq  \Psi_-^*
$
in their common domain of definition. The inequality \eqref{eq:u behavior} then follows from the explicit expressions of $\Psi_-^*$ and $\Psi_+^*$ below: with 
\[
y=(y',y_n) :=\left(C^{\frac{1}{1+\alpha}}(1+\alpha)^2\Psi_+^{\frac{ \alpha}{1+\alpha}}|x'|^{\alpha-1}x',-C^{-\frac{ \alpha}{1+\alpha}}(1+\alpha)\Psi_+^{\frac{ \alpha}{1+\alpha}}\right) ,
\]
we obtain that
\[
\Psi_+^*(y):= x\cdot y-\Psi_+(x)= c_2|y'|^{\frac{1+\alpha}{\alpha}}|y_n|^{-\frac{1}{\alpha}}+c_3 y_n^{\frac{1+\alpha}{\alpha}},
\]
where $c_2, c_3> 0$ are positive. This establishes the left-hand side in \eqref{eq:u behavior}. Similarly, the corresponding calculation for $\Psi_-^*$ gives the right-hand side in \eqref{eq:u behavior}.
\end{proof}

\begin{proof}[Proof of the lower bound in Theorem \ref{thm:c11 bound of K}]
Now, we provide the lower bound estimate of the principal curvatures. By examining the Taylor expansion of $v$ along its level sets,	it suffices to show $|\nabla v(x_0)| \I_n \leq CD^2 v(x_0)$ for all $x_0 \in B_c(0) \setminus K$. Recalling the $C^{1,\alpha} $ regularity and strict convexity of $\partial K$, there exists $z \in \partial K \cap B_c(0)$ such that the outer normal of $\partial K$ at $z$ equals $\frac{\nabla v(x_0)}{|\nabla v(x_0)|}$. By invoking Proposition  \ref{prop:Savin-1} and performing an appropriate rotation and translation, we can assume, for simplicity, that $z=0$ and $\nabla v(x_0) = -s_0e_n$ for some small $s_0>0$. 

By Lemma \ref{lem:cone domain of u}, the function $u:=v^* \geq 0$  is well-defined on $\operatorname{Cone}(0,c_0)$. The estimates \eqref{eq:u behavior} implies $u(-se_n) \leq C|s|^{1+\alpha} = o(s)$, and 
\[
u(y) \geq c\left(|y'|^{\frac{1+\alpha}{\alpha}}|y_n|^{-\frac{1}{\alpha}}+  y_n^{\frac{1+\alpha}{\alpha}}\right) \geq -c_1 y_n 
\quad \text{on }  \partial \left(\operatorname{Cone}(0,c)\right).  
\]
Then, by applying Theorem \ref{lem:Pogorelov p>0} to the function  $ u(y)+c_1y_n$ in the section $\left\{ u\leq -c_1 y_n\right\}$, we obtain $ \left(-u(y)-c_1y_n\right) |D^2 u(y)| \leq C_1 $, which yields $|\tilde{y}_n| |D^2 u(\tilde{y})| \leq 2C$ at $\tilde{ y} = -s_0e_n$. Since $D^2 v(x) $ is the inverse of $D^2 u(\tilde{y})$, then $ 2C D^2 v(x_0) \geq |D_n v(x_0)| \I_n $.
\end{proof}

\section{$C^{2,\alpha}$ regularity of the free boundary}\label{sec:c2a regularity}

For any  $v \in \E_1$, based on the $C^{1,\alpha}$ regularity and strict convexity of $v$, and \eqref{eq:v distance alpha 1},  the mapping
\[
T: (y',x_n) \to (y',D_{x_n} v(y',x_n))
\]
transforms the set $\left(x',\psi(x')\right) \in \partial K$ to $(x', 0)$ and provides a local homeomorphism between $B_c(0) \setminus \mathring{K}$ and $\left\{ x_n \leq 0 \right\}$ around $0$. Consider the partial Legendre transform of $v$ with respect to the $x_n$ variable
\[ 
\G v(y',y_n): = \sup_{t}\{ t \cdot y_n -v(y',t)\}= x_n \cdot D_{x_n} v(y',x_n)-v(y',x_n).
\]
It can be readily verified that $\G v $ is a well-defined function on $\overline{B_c^{-}(0)} $ for small $c$ (independent of $v$), where $B_{c}^-(0) :=B_c(0)\cap \left\{ x_n <0 \right\}$. By observing $v(x)+\G v(y)= x_ny_n$, it follows that
\[
\frac{\partial  \G v}{\partial y_i } +\frac{\partial v}{\partial x_i }=0\quad \text { for } i \neq n,\quad y_n=\frac{\partial v}{\partial x_n } , \quad  x_n=\frac{\partial \G v}{\partial y_n }.
\]
Therefore, we formally have
\[
\left( \frac{\partial y_i}{\partial x_j}\right)_{1\leq i,j\leq n}=   \left(\begin{array}{cc} 
\mathcal{I}_{n-1} & 0 \\
v_{jn}  &  v_{nn}\end{array} \right), \quad 
\left( \frac{\partial x_i}{\partial y_j}\right)_{1\leq i,j\leq n}=   \left(\begin{array}{cc} 
\mathcal{I}_{n-1} & 0 \\
-\frac{v_{jn}}{v_{nn}}  &  \frac{1}{v_{nn}}\end{array} \right).
\]
This implies that for $1\leq i \leq j \leq n-1 $,
\[ 
-\frac{\partial}{\partial y_j}\frac{\partial  \G v}{\partial y_i }=  \frac{\partial}{\partial y_j}\frac{\partial v}{\partial x_i }= v_{ij} -\frac{v_{in}v_{jn}}{v_{nn}}, 
\quad -\frac{\partial}{\partial y_i}\frac{\partial  \G v}{\partial y_n }=  -\frac{\partial x_n}{\partial y_i} = \frac{v_{in}}{v_{nn}},
\]
and
\[ 
D_{y_ny_n} \G v+ \frac{(-1)^n}{g v^{q}}\det D_{y'}^2  \G v= 0
\quad  \text{in }B_c^{-}(0) .
\]

Observing that the function $x_n =D_{y_n} \G v(y',0)$ is the graph function of $\partial K$, we now consider the function
\begin{equation}\label{eq:coor s theta x}
\psi( y', y_n ):= \frac{ \G v\left(y',y_n\right)}{y_n} \quad \text{if } y_n <0 ,\quad  \psi( y', 0 ):=D_{y_n} \G v(y',0).
\end{equation}
and we denote this operator as $\G_n: v \to \psi$. Using similar arguments to those of proving the $C^{1,\alpha}$ regularity of $v$ in Theorem \ref{thm:c1a vK K>0}, one can show that $\psi\in C^\alpha(\overline{B^-_{c}})$ for another possibly  smaller $c>0$.

By noting $\G v=y_n\psi$, we can use the inverse formula of the partial Legendre transform to deduce that
\[
v=  y_nD_{y_n} \G v-\G v= y_n D_{y_n}(y_n\psi ) -y_n\psi= y_n^2 D_{y_n} \psi.
\]
Thus, after the change of variables
\[
(y',s)=(y',|y_n|^{\frac{k}{2}}),  \quad k=\frac{n-q}{q+1} , \quad y_n D_{y_n}[\cdot]=   \frac{k}{2} sD_{s} [\cdot],
\] 
we obtain the following equation:
\begin{equation}\label{eq:eq for psi}
\psi_{ss} + \frac{k+2}{k} \frac{\psi_{s}}{s}+\frac{1}{g }\left(\frac{2}{k}\right)^{q+2} \left(\frac{-\psi_{s}}{s}\right)^{-q} \det D_{y'}^2\psi =0 \quad  \text{in }B_c^{+}(0).
\end{equation} 


The main objective of this section is to establish the $C^{2,\alpha}$ regularity of $\psi$ under the coordinates $(y',s)$. Here, we are not concerned with the solutions of equation \eqref{eq:eq for psi}, but rather with the $\psi=\G_n v$ induced by $v \in \E_1$.  
Subsequently, in different coordinate systems, the radius $c$ in $B_c $ or $B_c^+$ may vary in different places. As long as $c$ is sufficiently small, it will not affect our proof.

Let us first consider the case where $g \equiv 1$. In this case, $v$ is smooth and strictly convex in $B_c(0)\setminus K$, and thus, $\psi$ is smooth in $B_c^{+}(0)$ and satisfies \eqref{eq:eq for psi} in the classical way. Theorem \ref{thm:c11 bound of K} and Lemma \ref{lem:c11 out} imply the following elliptic structure of $\psi$. 
\begin{Lemma}\label{lem:psi elliptic 1}	
Suppose $g \equiv 1$. Then the function $\psi = \G_n v$ satisfies 
\begin{equation}\label{eq:psi elliptic 1}
c\leq 	-\frac{\psi_s}{s}  \leq C  \quad  \text{and }\quad 	c\I_{n-1} \leq  D_{y'}^2 \psi \leq C\I_{n-1}  \quad  \text{in }B_c^+(0).
\end{equation}
\end{Lemma}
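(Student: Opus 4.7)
The plan is to exhibit explicit closed-form expressions for $-\psi_s/s$ and $D^2_{y'}\psi$ in terms of $v$ and its Hessian, and then invoke Theorem \ref{thm:c11 bound of K}, Lemma \ref{lem:c11 out}, and the pointwise Hessian asymptotics \eqref{eq:hessiangrowthv} to check that the resulting ratios are pinched between positive constants.

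First I would derive the identities. Since $\mathcal{G}v=y_n\psi$, combining $x_n=\partial_{y_n}\mathcal{G}v=\psi+y_n\psi_{y_n}$ with $\psi=x_n-v/y_n$ gives $\psi_{y_n}=v/y_n^{2}$. Substituting $y_n=-s^{2/k}$ (so $s=|y_n|^{k/2}$) and differentiating in $s$, this leads to the clean identity
\[
-\frac{\psi_s}{s}\;=\;\frac{2}{k}\,\frac{v}{|y_n|^{k+1}}.
\]
For the tangential Hessian, the formula $-\partial_{y_i y_j}\mathcal{G}v=v_{ij}-v_{in}v_{jn}/v_{nn}$ recorded before Lemma \ref{lem:psi elliptic 1} is exactly the Schur complement $\mathrm{Sc}(D^2v;v_{nn})$ of the $(n,n)$–entry in $D^2v$, so $D^2_{y'}\psi = \mathrm{Sc}(D^2v;v_{nn})/(-y_n)$.

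Next I would control each ingredient geometrically. Fix $x$ near the free boundary, let $t=\operatorname{dist}(x,K)$, and $z\in\partial K$ the closest point. Theorem \ref{thm:c11 bound of K} guarantees that $\partial K$ is uniformly convex $C^{1,1}$ near $0$, so the outward normal $\nu(z)$ stays close to $-e_n$ and the rotation-and-translation argument indicated just after \eqref{eq:hessiangrowthv} transfers the axial Hessian estimate to a uniform matrix bound $c\, D_t^{2}\le D^2v(x)\le C\, D_t^{2}$ with $D_t=\operatorname{diag}(t^{(q+1)/(2(n-q))},\ldots,t^{(q+1)/(2(n-q))},t^{(2q+1-n)/(2(n-q))})$. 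From \eqref{eq:growthv} we have $v(x)\asymp t^{(n+1)/(n-q)}$; integrating $v_{nn}$ along the segment to $\partial K$ gives $|y_n|=|v_{x_n}(x)|\asymp t^{(q+1)/(n-q)}$, and therefore $|y_n|^{k+1}\asymp t^{(n+1)/(n-q)}$. Plugging into the identity above yields $c\le -\psi_s/s\le C$.

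Finally I would handle the Schur complement. Conjugation by $D_t^{-1}$ converts the matrix bound on $D^2v$ into $c\mathcal{I}\le D_t(D^2v)^{-1}D_t\le C\mathcal{I}$, so the upper-left $(n-1)\times(n-1)$ block of $(D^2v)^{-1}$ is comparable to $t^{-(q+1)/(n-q)}\mathcal{I}_{n-1}$; inverting that block produces $\mathrm{Sc}(D^2v;v_{nn})\asymp t^{(q+1)/(n-q)}\mathcal{I}_{n-1}$, and dividing by $-y_n\asymp t^{(q+1)/(n-q)}$ gives the second bound on $D^2_{y'}\psi$. The main obstacle is making the transfer in the previous paragraph rigorous: \eqref{eq:hessiangrowthv} is stated only at points of the $-e_n$ axis, so one has to justify a uniform two-sided Hessian bound on $D^2v(x)$ for generic $x$ in $B_c^{-}(0)$ via the uniform $C^{1,1}$ geometry of $\partial K$ and the rescaling \eqref{eq:normalized sol c11}; once this uniform bound is in hand, the rest of the proof reduces to the algebraic identities above.
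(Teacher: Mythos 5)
Your identities for $-\psi_s/s$ and $D^{2}_{y'}\psi$ are exactly the ones the paper uses, and your closing Schur-complement algebra (conjugating the anisotropic Hessian bound, inverting the tangential block, dividing by $|y_n|\asymp t^{(q+1)/(n-q)}$) is correct and matches the paper's reduction of \eqref{eq:psi elliptic 1} to two-sided bounds on $v$, $|v_n|$ and $D^2v$. The problem is the step you yourself flag as ``the main obstacle'': passing from the axis $\{te_n: t\in(-c,0)\}$ to general points of $B_c^{+}(0)$. The mechanism you propose --- rotate coordinates at the nearest boundary point $z$ and transfer \eqref{eq:hessiangrowthv} --- does not work as stated. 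First, the anisotropic bound $c\,D_t^2\le D^2v\le C\,D_t^2$ does not survive conjugation by a rotation through a small angle $\theta$: the off-diagonal errors it creates are of size $\theta\, t^{(2q+1-n)/(n-q)}$, which for small $t$ overwhelms the tangential eigenvalues $t^{(q+1)/(n-q)}$ (since $q<n$), so ``$\nu(z)$ close to $-e_n$'' is not enough. Second, the partial Legendre transform $\G_n$ and hence the quantities $-\psi_s/s$ and $D^2_{y'}\psi$ single out the fixed direction $e_n$; a rotation changes which direction is being transformed, so an estimate in the rotated frame does not directly bound the Schur complement with respect to $v_{nn}$.

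The paper's fix is different and is the missing ingredient: at each $p=(p',\psi(p',0))\in\partial K$ near $0$ it applies the \emph{shear} $(x',x_n)\mapsto(x'-p',\,x_n-b'\cdot(x'-p')-p_n)$ with $b'=D_{x'}\psi(p',0)$, which flattens the tangent plane at $p$ without rotating the $e_n$-direction; under this change $\psi(y',s)$ becomes $\psi(y'+p',s)-b'\cdot y'-p_n$, so both $-\psi_s/s$ and $D^2_{y'}\psi$ are \emph{invariant}. This reduces everything to $y'=0$, i.e.\ to the segment $\{te_n: t\in(-c,0)\}$. There, rather than invoking \eqref{eq:hessiangrowthv}, the paper applies the rescaling \eqref{eq:normalized sol c11} with $h=c^{-1}v(te_n)$ (under which the two target ratios are again invariant) to place the point in the fixed annulus $S_c\setminus S_{c/2}$, where Theorem \ref{thm:c1a vK K>0} and Lemma \ref{lem:c11 out} give the isotropic bounds $c\le v\le C$, $c\le|v_n|\le C$, $c\I_n\le D^2v\le C\I_n$, and the identities finish the proof. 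If you replace your rotation step by this shear-plus-rescaling argument (and check the affine invariance of the two quantities, which is one line), your proof becomes complete and coincides with the paper's.
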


\begin{proof}
Given $p=(p',p_n) \in \partial K$, we then let $p_n = \psi(p',0)$ and $b' = D_{x'}\psi(p',0)$. After considering an affine transformation $(x', x_n) \to (x' - p', x_n -  b' \cdot (x'-p') - p_n)$ that translates the point $p $ to $0$ and the tangent plane of $\partial K$ at $p$ to the plane $\left\{x_n = 0 \right\}$, we may replace $v$ with $v(x' + p', b' \cdot x'+x_n + p_n)$. Consequently, the corresponding $\psi$ transforms into $\psi(y' + p', s) - b' \cdot y'-p_n$, and vice versa. 

Therefore, without loss of generality, we only need to show \eqref{eq:psi elliptic 1} for $y'=0$. Note that since
\[
-\frac{\psi_s}{s} =\frac{2v}{k|y_n|^{k+1}}= \frac{2v}{k|v_n|^{k+1}}, \quad 
D_{y_iy_j}^2 \psi =  \frac{D^2_{y_iy_j} \G v }{y_n} = \frac{1}{|v_n|} \cdot \left[ v_{ij} -\frac{v_{in}v_{jn}}{v_{nn}}\right] 
\]
hold for $1\leq i\leq j \leq n-1$, it suffices to show that for any $v \in \E_1$ with $g \equiv 1$,  we have 
\[	
c\leq \frac{v}{|v_n|^{k+1}}  \leq C \ \   \text{and }\ \  c\I_{n-1} \leq  \frac{1}{|v_n|} \cdot \left[ v_{ij} -\frac{v_{in}v_{jn}}{v_{nn}}\right] \leq C\I_{n-1} \ \  \text{on } \left\{ te_n:\; t \in (-c,0) \right\}.
\]
By considering the normalized function $v_h$ defined in \eqref{eq:normalized sol c11} with $h=c^{-1}v(te_n)$ and noting the invariance of our conclusions under this normalization, we can confine our focus to the case where $ te_n \in \left(S_{c} \setminus S_{c/2}\right)$.
According to Theorem \ref{thm:c1a vK K>0} and Lemma \ref{lem:c11 out}, we  have $c\leq v \leq C$, $c\leq | v_n| \leq C$ and $c\I_n \leq D^2 v \leq C\I_n$ at this point, which imply the estimates we want. 
\end{proof} 
 
We now present a modified version of \cite[Theorem 7.3]{de2021certain} (where $b$ is assumed to be a constant in \eqref{eq:singular eq} below). The proof will be similar.
\begin{Lemma}\label{lem:holder K ndiv}
Let  $w$ be a solution of 
\begin{equation}\label{eq:singular eq}
\mathcal{L} w:=w_{nn} +b\frac{w_{n}}{x_n}+\sum_{1 \leq i, j \leq n-1 }a^{ij}(x)w_{ij}=0  \quad \text{in } B_1^{+}(0) ,
\end{equation}
where the coefficients $\left\{a_{i j}(x) \right\}_{1\leq i, j \leq n-1}$ are uniformly elliptic with ellipticity constants $0<\lambda \leq \Lambda$, and $b$ is a constant satisfying $b \in [b_0, b_1]$ for some constants $b_1\ge b_0>1$. Then, there exists $\alpha \left(n, \lambda,\Lambda, b_0,b_1\right) \in (0,1)$ such that
\[
\left\|w\right\|_{C^{\alpha}\left(B_{1/2}^{+}(0)\right)} \leq C\left(n, \lambda,\Lambda, b_0,b_1\right)\left\| w\right\|_{L^{\infty}\left(B_{1}^{+}(0)\right)}.
\]
\end{Lemma}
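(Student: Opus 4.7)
The lemma is the analog, for the degenerate operator $\mathcal{L}$, of the classical Krylov--Safonov Hölder estimate, and the natural approach is the oscillation-decay/scaling scheme. The only new ingredient over the uniformly elliptic case is the handling of the degenerate coefficient $b/x_n$ at the boundary $\{x_n=0\}$. For a fixed value of $b>1$ this is carried out in \cite[Theorem 7.3]{de2021certain}; my task is to check that all constants in that argument can be chosen uniformly in $b\in[b_0,b_1]$.

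Away from the degenerate boundary, the equation is uniformly elliptic in the non-degenerate sense with $L^\infty$ first-order coefficient bounded by $b_1/\varepsilon$, so Krylov--Safonov gives interior $C^{\alpha_0}$-estimates on $B_{3/4}^+\cap\{x_n\ge\varepsilon\}$ depending on $\varepsilon,n,\lambda,\Lambda,b_1$. Since the rescaling $w_r(x):=w(rx)$ solves an equation of the same form on $B_1^+$ with identical structural constants, the whole theorem reduces to one universal boundary oscillation-decay step: there exist $r_0\in(0,1/2)$ and $\theta\in(0,1)$ depending only on $n,\lambda,\Lambda,b_0,b_1$ so that
\[
\operatorname{osc}_{B_{r_0}^+}w\;\le\;\theta\,\operatorname{osc}_{B_1^+}w
\]
whenever $w$ solves the equation in $B_1^+$ with $\|w\|_{L^\infty(B_1^+)}\le 1$. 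Iterating on the dyadic rescalings and converting via the standard Campanato--Morrey calculation yields the Hölder estimate with $\alpha=-\log\theta/\log r_0^{-1}$.

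The decay follows by a barrier/comparison argument. Normalise $0\le w\le 1$ and, WLOG, $w(\tfrac12 e_n)\le\tfrac12$; interior regularity propagates this to $w\le\tfrac12+c$ on a universal ball $B_\rho(\tfrac12 e_n)$. The key calculation is
\[
\mathcal{L}(x_n^{\gamma})\;=\;\gamma(\gamma-1+b)\,x_n^{\gamma-2}\;\ge\;\gamma(b_0-1)\,x_n^{\gamma-2}\;\ge\;0,
\]
valid for every $\gamma\in(0,1)$ and every $b\ge b_0>1$; this produces an $x_n^{\gamma}$-type barrier in the normal direction whose ``push'' at the boundary is uniform in $b\in[b_0,b_1]$. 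Combining with a standard Pucci-type barrier in the tangential variables on $B_{3/4}^+\setminus B_\rho(\tfrac12 e_n)$ and applying the comparison principle for $\mathcal{L}$ gives $w\le 1-\delta$ on $B_{r_0}^+$ for universal $\delta,r_0>0$, which is the required decay.

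The main obstacle is verifying this uniformity across $b\in[b_0,b_1]$ and across the merely measurable coefficients $(a^{ij})$. The uniformity in $b$ is transparent from the explicit monotone dependence through the factor $(\gamma-1+b)\ge(\gamma-1+b_0)>0$. If the direct barrier verification with measurable $(a^{ij})$ proves technically inconvenient, an alternative compactness argument works: a failure of uniform oscillation-decay would produce a sequence $(b_k,a^{ij}_k,w_k)$ with $b_k\to b_\infty\in[b_0,b_1]$ and $w_k$ converging locally uniformly (by the interior estimate) to a bounded solution of the limit equation violating the $b_\infty$-version already established in \cite[Theorem 7.3]{de2021certain}.
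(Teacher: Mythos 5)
Your overall scheme is the same as the paper's: reduce, by scaling and interior Krylov--Safonov, to a single quantitative step at the flat boundary (your oscillation decay is the paper's Harnack-type statement applied to $1-w$), and prove that step by propagating interior information down to $\{x_n=0\}$ with a barrier. The gap is in the barrier itself, and it is exactly at the one point where the hypothesis $b_0>1$ must enter. No boundary data is prescribed on $\{x_n=0\}$, so a comparison principle on a region whose boundary contains a piece of $\{x_n=0\}$ is not available as stated; any barrier that is continuous up to $\{x_n=0\}$ must dominate (resp.\ be dominated by) $w$ there, and since all you know there is $0\le w\le 1$, a supersolution must be $\ge 1$ on $\{x_n=0\}$ while you need it to be $\le 1-\delta$ on $B_{r_0}^{+}$, which touches $\{x_n=0\}$ --- a contradiction by continuity. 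Your subsolution $x_n^{\gamma}$, $\gamma\in(0,1)$, vanishes at $\{x_n=0\}$ and therefore can only reproduce the trivial bound near the flat boundary; note also that $\mathcal{L}(x_n^{\gamma})\ge0$ already holds for any $b>1-\gamma$, in particular for some $b<1$, where the lemma fails (e.g.\ $x_n^{1-b}$ is a bounded solution with no better than $C^{1-b}$ regularity, degenerating as $b\uparrow1$), so this computation cannot be the decisive ingredient. The paper's device is the \emph{negative}-power term $-\epsilon x_n^{1-b_0}$, which is a subsolution precisely because $b\ge b_0>1$ and which tends to $-\infty$ as $x_n\to0^{+}$: adding it to the tangential barrier $c(-|x'|^2+Mx_n^2)$ makes the comparison function automatically lie below $\tilde w+1$ on $\{x_n=d(\epsilon)\}$, so the comparison principle is applied only on the slab $\{d(\epsilon)\le x_n\le l\}$, and one then sends $\epsilon\to0$. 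This is the mechanism by which the inaccessible boundary $\{x_n=0\}$ is removed from the comparison, and it is what your argument is missing.

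The compactness fallback does not repair this. Before the decay is proven you have no equicontinuity of $w_k$ up to $\{x_n=0\}$, so $\operatorname{osc}_{B_{r_0}^{+}}w_k$ need not pass to the limit; moreover the merely measurable $a_k^{ij}$ do not converge to an operator, so the limit function only satisfies Pucci-class differential inequalities (with the singular drift), and you would need the $b_\infty$-version of the decay for that whole class --- which again requires the negative-power barrier. I recommend replacing the $x_n^{\gamma}$ step with the $-\epsilon x_n^{1-b_0}$ construction; the rest of your outline (interior Harnack on the slice $\{x_n=l\}$, tangential quadratic barrier, iteration and the Campanato conversion) then goes through and matches the paper.
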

\begin{proof}
According to the classical elliptic regularity theory, it suffices to establish the following Harnack inequality: there exist two positive constants $\sigma$ and $\delta$, both of which depend only on $n, \lambda,\Lambda, b_0$ and $b_1$, such that for every solution $\tilde{w}$ of \eqref{eq:singular eq} with $\left\| \tilde{w}\right\|_{L^{\infty}\left(B_{1}^{+}(0)\right)} \leq 1$, if $\tilde{w}\left(\frac{1}{2} e_n\right) > 0$, then $\tilde{w} \geq -1+\sigma$ on $B_{\delta}^+$.

From the classical Harnack inequality, we get that for $l>0$ small,
\[
\tilde{w} +1 \geq c\left(n, \lambda,\Lambda, b_0,b_1,l\right) \quad \text{on }\left\{\left|x^{\prime}\right| \leq 3 / 4\right\} \times\left\{x_n=l\right\} .
\]
Let
\[
w_{\epsilon}= c\left( -|x'|^2+Mx_n^2 -\epsilon x_n^{1-b_0}\right)
\]
with $M =2(n-1)\Lambda/\lambda$ and $\epsilon>0$, and choose $d(\epsilon)>0$ so that
\[
w_\epsilon \leq-\frac{c}{2} \quad  \text{if } x_n \leq d(\epsilon), \quad d(\epsilon) \rightarrow 0 \quad \text{as }\epsilon \rightarrow 0. 
\] 
We can check that when $l$ is small,
\[
\begin{split}
w_\epsilon \leq \frac{c}{2} & \quad \text{on }\left\{\left|x^{\prime}\right| \leq 3 / 4\right\} \times\left\{x_n=l\right\}, \\
w_\epsilon \leq-\frac{c}{2} & \quad \text{on }\left\{|x|^{\prime}=\frac{3}{4}\right\} \times\left\{0 \leq x_n \leq l\right\} .
\end{split}
\]
Since $\mathcal{L} w_\epsilon>\mathcal{L} \left(-\epsilon x_n^{1-b_0}\right) = (1-b_0)\epsilon (b-b_0) \geq 0$, the comparison principle implies that
\[
w_\epsilon+\frac{c}{2} \leq \tilde{w}+1 \quad \text{in }\left\{\left|x^{\prime}\right| \leq 3 / 4\right\} \times\left\{d(\epsilon) \leq x_n \leq l\right\} .
\]
By letting $\epsilon \rightarrow 0$, we obtain the desired estimate.
\end{proof}

Subsequently, we can demonstrate the $C^{2,\alpha}$ regularity of $\psi$ for the case of $g \equiv 1$.	
\begin{Theorem}
Suppose $g \equiv 1$. Then for any $\alpha \in (0,1)$, the function $\psi = \G_n v$ satisfies 
\begin{equation}\label{eq:c2a g=1}
\left\|\psi  \right\|_{C^{2,\alpha} \left(\overline{B_c^+(0)}\right)} +  \left\| \frac{D_{s}\psi}{s} \right\|_{C^{\alpha} \left(\overline{B_c^+(0)}\right)} \leq C(n,q,\alpha)
\end{equation}
under the coordinates $(y',s)$.
\end{Theorem}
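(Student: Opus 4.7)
The plan is to view \eqref{eq:eq for psi} as a nonlinear degenerate elliptic equation whose linearization fits precisely the framework of Lemma \ref{lem:holder K ndiv}. Since $g\equiv 1$, the function $v$ is smooth wherever $v>0$ by Caffarelli's classical theory, so $\psi=\G_n v\in C^\infty(B_c^+(0))$ and \eqref{eq:eq for psi} holds pointwise; the only other ingredient I would use is the pair of bounds $c\le -\psi_s/s\le C$ and $c\I_{n-1}\le D^2_{y'}\psi\le C\I_{n-1}$ from Lemma \ref{lem:psi elliptic 1}, which supplies the required ellipticity throughout.

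Write \eqref{eq:eq for psi} as $F(\psi_{ss},\psi_s/s,D^2_{y'}\psi)=0$ with $F(A,\xi,N)=A+\frac{k+2}{k}\xi+C(k,q)(-\xi)^{-q}\det N$. Direct computation gives $F_A=1$, $F_N^{jl}=C(k,q)(-\xi)^{-q}(\operatorname{cof} N)^{jl}$ uniformly positive definite, and
\[
F_\xi=\tfrac{k+2}{k}+qC(k,q)(-\xi)^{-q-1}\det N\in\bigl[\tfrac{k+2}{k},C_*\bigr];
\]
crucially $F_\xi\ge (k+2)/k>1$. Differentiating the PDE in a tangential direction $y_i$ ($i\le n-1$) and setting $w=\psi_{y_i}$ produces the homogeneous linear equation
\[
w_{ss}+b(y,s)\,\frac{w_s}{s}+\sum_{j,l\le n-1}a^{jl}(y,s)\,w_{y_jy_l}=0
\]
with $b=F_\xi$ and $(a^{jl})=F_N^{jl}$ uniformly elliptic. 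After a standard dyadic rescaling that normalizes $\|w\|_{L^\infty}$, Lemma \ref{lem:holder K ndiv} yields $\psi_{y_i}\in C^\alpha(\overline{B_c^+(0)})$ for some small $\alpha(n,q)$. Differentiating once more in $y_j$ and applying an inhomogeneous version of the same lemma (bounded right-hand side, obtained from Lemma \ref{lem:holder K ndiv} by subtracting a barrier for a particular solution) upgrades the tangential second derivatives, so $D^2_{y'}\psi\in C^\alpha(\overline{B_c^+(0)})$.

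For the $s$-direction I would freeze $y'$, set $f:=-C(k,q)(-\psi_s/s)^{-q}\det D^2_{y'}\psi$, and treat \eqref{eq:eq for psi} as the linear singular ODE $\psi_{ss}+\tfrac{k+2}{k}s^{-1}\psi_s=f$. The boundedness of $\psi_s/s$ at $s=0$ selects the unique admissible solution
\[
\psi_s(y',s)=s^{-(k+2)/k}\int_0^s t^{(k+2)/k}f(y',t)\,dt,
\]
and a direct estimate of this weighted average shows that $\psi_s/s$ and $\psi_{ss}=f-\tfrac{k+2}{k}(\psi_s/s)$ inherit the joint Hölder regularity of $f$ with the same exponent. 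With every coefficient in the linearized equations now $C^\alpha$ and uniformly (degenerately) elliptic, a Schauder-type bootstrap along the lines of Lemma \ref{lem:holder K ndiv} brings the exponent up to any prescribed $\alpha\in(0,1)$, giving \eqref{eq:c2a g=1}. The main obstacle I anticipate is the promotion from Hölder first to Hölder second tangential derivatives: one must extend Lemma \ref{lem:holder K ndiv} to bounded, and then Hölder, right-hand sides up to the degenerate boundary $\{s=0\}$, and verify that the polynomial source generated by differentiating the PDE really lies in $L^\infty(\overline{B_c^+(0)})$, which is precisely what the previous differentiation together with Lemma \ref{lem:psi elliptic 1} provides.
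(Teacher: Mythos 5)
Your overall strategy (linearize \eqref{eq:eq for psi}, feed the linearizations into Lemma \ref{lem:holder K ndiv}, then integrate the singular ODE in $s$) points in the right direction, but three steps have genuine gaps. First, the passage from $\psi_{y_i}\in C^{\alpha}$ to $D^2_{y'}\psi\in C^{\alpha}$ by "differentiating once more in $y_j$" does not produce an equation with a bounded right-hand side: differentiating the linearized equation a second time generates source terms of the form $(\partial_{y_j}a^{kl})\,\partial_{kl}\psi_{y_i}$, i.e.\ products of \emph{third} derivatives of $\psi$, which are exactly what you are trying to control; the claim that Lemma \ref{lem:psi elliptic 1} bounds this source is circular. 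The paper sidesteps this by making an even extension in $s$ and invoking the concavity of the operator together with Evans--Krylov to get interior $C^{2,\alpha_0}$ up to $\{s=0\}$ \emph{before} any second differentiation; you never use concavity. Second, your $s$-direction argument is circular for $q>0$: the source $f=-C(k,q)(-\psi_s/s)^{-q}\det D^2_{y'}\psi$ depends on $\psi_s/s$ itself, so you cannot conclude that $f$ is H\"older without already knowing $\psi_s/s$ is H\"older (and you never establish an initial H\"older bound on $\psi_s/s$ — the paper's very first step is to differentiate the equation in $s$ and apply Lemma \ref{lem:holder K ndiv} to $w=\psi_s/s$). The paper breaks the circularity with the substitution $\zeta=(-\psi_s/s)^{q+1}$, which turns the equation into $(s^{\beta}\zeta)_s=s^{\beta}\tilde f$ with $\tilde f$ depending only on $\det D^2_{y'}\psi$.

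Third, Lemma \ref{lem:holder K ndiv} is a Krylov--Safonov-type estimate yielding only one small universal exponent, while the theorem asserts the bound for every $\alpha\in(0,1)$. Reaching an arbitrary exponent requires a genuine Schauder theory for the degenerate operator up to $\{s=0\}$ (the paper imports this from Theorem 5.4 of Huang--Tang--Wang, applied to the tangential derivatives $\psi_l$, which then makes $\det D^2_{y'}\psi$ Lipschitz and lets the ODE integration deliver any $\alpha$). You correctly identify this as "the main obstacle," but identifying it is not the same as closing it; as written, the proposal establishes at best a $C^{2,\alpha_0}$ bound for one small $\alpha_0$, modulo the two circularities above.
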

\begin{proof}
According to Lemma \ref{lem:psi elliptic 1}, we have
\[ 
0\leq B(y',s):=  q\left(\frac{2}{k}\right)^{q+2}  \left(\frac{-\psi_{s}}{s}\right)^{-q-1} \det D_{y'}^2\psi \leq C.
\]
By differentiating \eqref{eq:eq for psi} in the $s$ direction and multiplying by $s^{-1}$, we find that $w= \frac{\psi_s}{s} \in L^{\infty}(B_c^{+}(0))$ is a solution of 
\[
w_{ss} + \left[\frac{3k+2}{k}+B\right]  \frac{w_{s}}{s}+  \left(\frac{2}{k}\right)^{q+2} \left(\frac{-\psi_{s}}{s}\right)^{-q} \sum_{1 \leq i, j \leq n-1 } \Psi^{'ij}  w_{ij}  =0  ,
\]
where $\{ \Psi^{'ij}\}$ is the cofactor matrix of $D_{y'}^2 \psi$. 
Applying Lemma \ref{lem:psi elliptic 1} and Lemma \ref{lem:holder K ndiv}, we obtain that $  \left\| \frac{D_{s}\psi}{s} \right\|_{C^{\alpha_0} \left(\overline{B_{c/2}^+(0)}\right)} \leq C$ for some $\alpha_0>0$.

Subsequently, we make the even extension of $\psi$, i.e., $\psi(y',-s)=\psi(y',s)$ such that $\left\| \frac{D_{s}\psi}{s} \right\|_{C^{\alpha_0} \left(\overline{B_{c/2}(0)}\right)} \leq C$. By noting the concavity of the operator
\[ 
F(D^2u ,x)= u_{ss} + \frac{k+2}{k} \frac{\psi_{s}}{s}+\left(\frac{2}{k}\right)^{q+2} \left(\frac{-\psi_{s}}{s}\right)^{-q} \det D_{y'}^2u ,
\]
the classical Evans-Krylov theorem  and the Schauder estimates yield the interior $C^{2,\alpha_0}$ regularity of $\psi$ in $B_c(0)$, possibly for a different $\alpha_0>0$.

Next,  for each $1\leq l\leq n-1$, by differentiating \eqref{eq:eq for psi} in the $x_l$ variable and multiplying by $s^{-1}$, we see that $\psi_{l}:=D_{e_{l}} \psi \in L^{\infty}(B_c^{+}(0))$ satisfies
\[
(\psi_{l})_{ss} + \left[\frac{k+2}{k}+B\right]  \frac{(\psi_{l})_{s}}{s}+ \left(\frac{2}{k}\right)^{q+2} \left(\frac{\psi_{s}}{s}\right)^{-q} \sum_{1 \leq i, j \leq n-1 } \Psi^{'ij}  (\psi_{l})_{ij}  =0  .
\]
By the Schauder estimates \cite[Theorem 5.4]{huang2024regularity}, we have
\[
\left\|\psi_{l} \right\|_{C^{2,\alpha_0} \left(\overline{B_c^+(0)}\right)} +  \left\| \frac{D_{s}\psi_{l}}{s} \right\|_{C^{\alpha_0} \left(\overline{B_c^+(0)}\right)} \leq C.
\]

It remains to show the $C^{\alpha}$ regularity of $-\frac{\psi_s}{s}$. The estimates on $\psi_{l}$ implies 
\[ 
f(y',s):= (q+1)\left(\frac{2}{k}\right)^{q+2}   \det D_{y'}^2\psi \in C^{\alpha} \left(\overline{B_c^+(0)}\right)
\]
for all $\alpha \in (0,1)$.   From \eqref{eq:eq for psi}, we find that $\zeta =\left(-\frac{\psi_s}{s}\right)^{q+1}$ is a solution of
$\left( s^{\beta} \zeta\right)_s = s^{\beta}f$, where $\beta=\left(\frac{2(k+1)(q+1)}{k}\right).$
This gives $\zeta (y',s) =\zeta(y',0) +  s^{-\beta} \int_{0}^{s} r^{\beta} f(y',r)dr $ 
and 
\[ 
\zeta_s(y',s) = -\frac{k+2}{k} s^{-\beta-1} \int_{0}^{s} r^{\beta} f(y',r)dr +f \in  C^{\alpha} \left(\overline{B_c^+(0)}\right).
\]
Noting $c\leq 	-\frac{\psi_s}{s}  \leq C $, we find that $-\frac{\psi_s}{s} \in C^{\alpha}$, and the proof is completed.
\end{proof}

Theorem \ref{thm:classify}  is derived from the $C^{2,\alpha}$ estimate \eqref{eq:c2a g=1} after blowing-down at infinity. In the case of $q=0$, this is an essentially equivalent form of  \cite[Theorem 3.1]{huang2024regularity} by considering the partial Legendre transform of $\psi$ with respect to the $x'$ variable.

\begin{proof}[Proof of Theorem \ref{thm:classify}]
Since  $K$ is a non-compact convex closed set, we can assume for simplicity that $0 \in \partial K$, $K \subset \left\{x_n \geq 0 \right\}$, and the ray $\left\{ te_n  :\; t \geq 0 \right\} \subset  K$. Thus, $\partial K$ is a graph over $\R^{n-1}$. Recalling  Lemma \ref{prop:infinite ray}, for each $t >0$, $K_t'= \left\{ x':\; (x',t) \in K\right\}  $ is compact, as there are no rays on $\partial K$. 
By considering the normalization (See Definition \ref{def:normal type1}) of $v$ at $0$ and recalling \eqref{eq:slice relation}, we have 
\[
c{t_1}^{-\frac{1}{2}}K_{t_1}'\subset {t_2}^{-\frac{1}{2}}K_{t_2}' \subset C{t_1}^{-\frac{1}{2}}K_{t_1}', \quad \forall\  t_1,t_2 >0,
\]
which remains invariant under the normalization \eqref{eq:normalized sol c11}. In particular, $\partial K$ is an entire graph. Without loss of generality, let us assume that $K_1'$ has already been normalized in the sense that 
\[
B_c'(0) \subset K_1' \subset B_C'(0).
\]
Then, we have  
\[
ct^{\frac{1}{2}}{B}_{1}'(0) \subset K_t'\subset Ct^{\frac{1}{2}}{B}_{1}'(0) ,\quad \forall\  t>0.
\]
After appropriately modifying the universal constants, we can consider the following normalized function as in \eqref{eq:normalized sol c11},
\[
{v}_h(x',x_n)= t^{-\frac{n+1}{n-q}}v\left(t^{\frac{1}{2}}x',tx_n\right), \quad \psi_h (y',s):=\G_n {v}_h= t^{-1}\psi(t^{\frac{1}{2}}y',t^{\frac{1}{2}}s) ,
\]
where $\psi=\G_n v$ and $h=t^{\frac{n+1}{n-q}}$. Since $\G_n v_h $ is well defined on $\overline{B_c^{-}(0)}$ (where $c$ is independent of $h$), we know that $\psi$ is defined in $\R^n_+$. By \eqref{eq:c2a g=1},  
\[	
\left\|\psi_h  \right\|_{C^{2,\alpha} \left(\overline{B_c^+(0)}\right)} +  \left\| \frac{D_{s}\psi_h }{s} \right\|_{C^{\alpha} \left(\overline{B_c^+(0)}\right)} \leq C. 
\]
Let $h \to \infty$. According to the $C^{2,\alpha}$ estimates \eqref{eq:c2a g=1}, a standard scaling argument implies that $\psi$ is a quadratic polynomial. Observing  \eqref{eq:psi elliptic 1}, $\psi(0)=0$, and $x_n=\psi(x',0)$ is the graph function of $\partial K$, we conclude that $\psi(y',s) $ is affine equivalent to
\begin{equation}\label{eq:psi global sol}
\omega(y',s)
=\frac{1}{2}|y'|^2-\frac{(q+1)^{\frac{q+2}{q+1}}}{(n-q)(n+1)^{\frac{1}{q+1}}} s^2.
\end{equation}
Consequently, $v$ is  affine equivalent to \eqref{eq:glo model 2}.
\end{proof}

\begin{proof}[Proof of Corollary \ref{thm:classify0}]
If $|K|=0$, then $\det D^2 v = 1$ in $\mathbb{R}^n$ in the Aleksandrov sense. It follows from the theorems of J\"orgens \cite{jorgens1954losungen}, Calabi \cite{calabi1958improper}, and Pogorelov \cite{pogorelov1978minkowski} that the solution must be quadratic, and thus, $K=\emptyset$ or is a single point. 

If $0<|K|<\infty$, then the coincidence set $K$ is bounded, and the Legendre transform $u$ of $v$ satisfies $\det D^2 u = 1$ in $\mathbb{R}^n\setminus\{0\}$. It follows from J\"orgens \cite{jorgens1955harmonische} for $n = 2$ and Jin-Xiong \cite{jin2016solutions} for $n \geq 3$ that $u$ must be affine equivalent to $\int_{0}^{|x|} \left( r^n + 1 \right)^{\frac{1}{n}} dr$.  Hence, $v$ is  affine equivalent to $\int_{0}^{|x|} \max\left\{r^n-1,0\right\}^{\frac{1}{n}}dr$, and $K$ is an ellipsoid.

If $|K|=\infty$, then the conclusion follows from Theorem \ref{thm:classify}.   
\end{proof}
 
Lastly, let us assume that $g >0$ is $C^\alpha$ for some fixed $\alpha \in (0,1)$ and $\partial K$ is strictly convex at $0$. We will prove Theorem \ref{thm:c2a origorous} at $0$, using the perturbation arguments in \cite{jian2007continuity}. 

By considering a suitable normalization, which is still denoted as $v$, we can assume for simplicity that $v \in \E_1$ is normalized, $D_n v \leq 0$ in $B_C(0)$,  $g(0)=1$, $\left\| {g}\right\|_{C^{\alpha}\left( B_C(0)\right)} \leq 2$, and 
\[
\|g-1\|_{L^{\infty}\left(B_C(0)\right)}=\sigma <\sigma_0
\]
is sufficiently small. The approximation functions of $v$ and $\psi$ are constructed as follows:
\begin{Lemma}\label{lem:psi approx sigma}
Let  $\tilde{v}$ be the non-negative convex solution of
\[  
\det D^2\tilde{v}= \tilde{v}^q\chi_{\{\tilde{v}>0\}}, \quad \text{in }B_c(0),\quad \tilde{v}=v \quad \text{on }\partial B_c(0),
\] 
and define $\tilde{\psi} = \G_n \tilde{v}$ as in \eqref{eq:coor s theta x}.  Then, $\tilde{\psi} $ satisfies \eqref{eq:psi elliptic 1} and \eqref{eq:c2a g=1}, and we have
\[
\left\|\tilde{\psi} -\psi \right\|_{L^{\infty} \left(B_\tau^+(0)\right)}  \leq C\sigma
\]
for some small $\tau>0$.
\end{Lemma}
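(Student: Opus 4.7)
My plan proceeds in three steps: an $L^\infty$ comparison of $v$ and $\tilde v$, transfer to the partial Legendre transforms, and a stability argument for the degenerate PDE satisfied by $\psi$ and $\tilde\psi$ near the boundary $\{s=0\}$.

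\textbf{Step 1 ($L^\infty$-closeness of $v$ and $\tilde v$).} I introduce the constant-coefficient auxiliary solutions $\overline{w}_\sigma$ and $\underline{w}_\sigma$ of the obstacle problems $\det D^2 u = (1\pm\sigma)\, u^q \chi_{\{u>0\}}$ in $B_c(0)$ with boundary data $v|_{\partial B_c(0)}$, whose existence is provided by Proposition \ref{prop:existence obs}. Applying Lemma \ref{lem:comparison principle obs} with $\mu_0 = (1\pm\sigma)\,dx$ yields the sandwich $\overline{w}_\sigma \le v \le \underline{w}_\sigma$. Each of $\overline{w}_\sigma$, $\underline{w}_\sigma$ differs from $\tilde v$ by a constant scaling: for instance, $(1+\sigma)^{-1/(n-q)}\overline{w}_\sigma$ solves the same unit-coefficient equation as $\tilde v$, but with boundary data smaller by a factor $1 - O(\sigma)$. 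Combining with the Lipschitz dependence of the unit-coefficient obstacle solution on its boundary data (a standard maximum-principle argument for the difference of two solutions of the same equation, exactly as in the proof of monotonicity in the definition of $\D_{\mu_0,\varphi}$), I obtain $\|\overline{w}_\sigma - \tilde v\|_{L^\infty(B_c(0))} \le C\sigma$ and analogously for $\underline{w}_\sigma$, hence $\|v - \tilde v\|_{L^\infty(B_c(0))} \le C\sigma$.

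\textbf{Step 2 (regularity of $\tilde\psi$).} Because $\tilde v$ solves the obstacle problem with $g\equiv 1$ and inherits Assumption~(H) from $v$ (after a harmless modification of universal constants), Theorem \ref{thm:c11 bound of K} and the estimates of Section \ref{sec:c2a regularity} apply verbatim to $\tilde v$. In particular, $\tilde\psi = \G_n \tilde v$ satisfies the ellipticity bound \eqref{eq:psi elliptic 1} and the interior $C^{2,\alpha}$ estimate \eqref{eq:c2a g=1}. These serve as the reference regularity on which all linearizations are based.

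\textbf{Step 3 (transfer to $\psi$ versus $\tilde\psi$).} The supremum characterization of the partial Legendre transform gives the pointwise bound $|\G v(y',y_n) - \G\tilde v(y',y_n)| \le \|v-\tilde v\|_{L^\infty} \le C\sigma$, which immediately yields $|\psi - \tilde\psi|(y',y_n) = |\G v - \G\tilde v|/|y_n| \le C\sigma/|y_n|$, giving the bound $C(\tau)\sigma$ on the portion of $B_\tau^+(0)$ where $s$ (equivalently $|y_n|$) is bounded below. To extend the bound uniformly up to $\{s=0\}$, I subtract the PDEs \eqref{eq:eq for psi} for $\psi$ (coefficient $1/g$) and $\tilde\psi$ (coefficient $1$) and linearize the nonlinear term $(-\psi_s/s)^{-q}\det D_{y'}^2\psi$ around $\tilde\psi$, using the ellipticity and $C^{2,\alpha}$ bounds of Step~2. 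The difference $w := \psi-\tilde\psi$ then satisfies an equation of the degenerate form
\[
w_{ss} + b\,\frac{w_s}{s} + \sum_{1\le i,j\le n-1} a^{ij}(y',s)\, w_{ij} = f \quad \text{in } B_\tau^+(0),
\]
with uniformly elliptic $(a^{ij})$, with $b$ in the admissible range of Lemma \ref{lem:holder K ndiv}, and with $\|f\|_\infty \le C\sigma$ produced by the factor $|1/g-1|\le C\sigma$. A maximum principle / barrier argument for this degenerate linear operator, using the $C\sigma$ boundary bound on the spherical part of $\partial B_\tau^+(0)$ where $s \ge \tau/2$, propagates the estimate $\|w\|_{L^\infty(B_\tau^+(0))} \le C\sigma$ down to the base $\{s=0\}$.

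\textbf{Main obstacle.} The delicate point is the sharpness of the bound near the degenerate boundary $\{s=0\}$: the naive estimate $|w|\le C\sigma/|y_n|$ blows up there, and transferring $\|v-\tilde v\|_{L^\infty}\le C\sigma$ directly to the free-boundary graph difference $\psi_{K_v} - \psi_{K_{\tilde v}}$ via the growth estimate $\tilde v \ge c\,\mathrm{dist}(\cdot,K)^{(n+1)/(n-q)}$ only yields the Hölder-type bound $O(\sigma^{(n-q)/(n+1)})$, which is weaker than needed. The resolution relies essentially on the degenerate-elliptic PDE stability for the linearized equation above, together with the $C^{2,\alpha}$ reference regularity of $\tilde\psi$ from Step~2, in the spirit of the perturbative scheme of \cite{jian2007continuity}.
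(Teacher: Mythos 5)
There is a genuine gap in Step 3, and it is precisely at the point you yourself flag as the main obstacle. To write $w=\psi-\tilde\psi$ as a solution of a uniformly elliptic degenerate equation $w_{ss}+b\,w_s/s+a^{ij}w_{ij}=f$, you must linearize $\left(-\psi_s/s\right)^{-q}\det D^2_{y'}\psi$ \emph{between} $\psi$ and $\tilde\psi$; the resulting coefficients involve the cofactor matrices and the quantity $-\psi_s/s$ along the segment joining $D^2\psi$ and $D^2\tilde\psi$, so you need the two-sided bounds \eqref{eq:psi elliptic 1} (and $C^2$ regularity) for $\psi=\G_n v$ itself, not only for $\tilde\psi$. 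But for general H\"older $g$ these bounds on $\psi$ are exactly what Theorem \ref{thm:c2a origorous} is in the process of proving, and Lemma \ref{lem:psi approx sigma} is the input to that proof; your argument is therefore circular. (Your Step 1, while correct — the scaling $\lambda u$ solves the $\lambda^{n-q}$-coefficient equation and the comparison principle gives Lipschitz dependence on boundary data — only yields $\|v-\tilde v\|_{L^\infty}\le C\sigma$, which, as you note, loses a power when divided by $|y_n|$.)

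The paper's proof sidesteps the PDE for $\psi$ entirely by a different choice of barriers: since $D_nv\le 0$, the functions $\overline w(x)=v\left(x',(1-2\sigma)x_n-C_0\sigma\right)$ and $\underline w(x)=v\left(x',(1+2\sigma)x_n+C_0\sigma\right)$ are a supersolution of the $(1-\sigma)$-equation and a subsolution of the $(1+\sigma)$-equation with $\overline w\ge v=\tilde v\ge\underline w$ on $\partial B_c(0)$, so the comparison principle gives $\underline w\le\tilde v\le\overline w$. Because these barriers are reparametrizations of $v$ in the $x_n$-variable only, their partial Legendre transforms are explicit, e.g. $\G\overline w(y',y_n)=\G v\bigl(y',\tfrac{y_n}{1-2\sigma}\bigr)+\tfrac{C_0\sigma}{1-2\sigma}y_n$; the order-reversing property then sandwiches $\G\tilde v$ between $\G\overline w$ and $\G\underline w$, and the Lipschitz continuity of $\G v$ in $y_n$ gives $\bigl|\G v\bigl(y',\tfrac{y_n}{1\mp2\sigma}\bigr)-\G v(y',y_n)\bigr|\le C\sigma|y_n|$. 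Dividing by $y_n$ yields $|\tilde\psi-\psi|\le C\sigma$ uniformly down to $\{y_n=0\}$ with no loss. If you want to salvage your outline, replace Steps 1 and 3 by this barrier construction; Step 2 is fine and matches the paper.
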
 

\begin{proof}
Since $D_n v \leq 0$, the functions
\[
\overline{w}(x',x_n)= v\left(x',(1-2\sigma)x_n-C_0\sigma\right)
\]
and
\[
\underline{w}(x',x_n)= v\left(x',(1+2\sigma)x_n+C_0\sigma\right)
\]
satisfy
\[
\det D^2 \overline{w}\leq  (1-\sigma) \overline{w}^q\chi_{\{\overline{w}>0\}}  \quad \text{in }B_c(0),\quad \overline{w} \geq v =\tilde{v}\quad  \text{on }\partial B_c(0),   
\]
and
\[
\det D^2 \underline{w}\geq  (1+\sigma)\underline{w}^q\chi_{\{\underline{w}>0\}} \quad \text{in }B_c(0),\quad \underline{w} \leq v=\tilde{v} \quad  \text{on }\partial  B_c(0) ,
\]
respectively, where $C_0>0$ is a constant.
The comparison principle implies
\[
\underline{w}\leq  \tilde{v} \leq \overline{w} \quad \text{in } B_c(0).
\]
Since the partial Legendre transform reverses the order relation, this gives
\[
\G \overline{w}  \leq \G \tilde{v} \leq \G \underline{w} \quad \text{in } B_\tau^+(0).
\]
Note that since $\G v$ is Lipschitz continuous in the last variable, and
\[
\G \overline{w}(y',y_n) =  \G {v}\left(y', \frac{y_n}{1-2\sigma} \right)+\frac{C_0\sigma }{1-2\sigma}y_n ,\quad
\G \underline{w}(y',y_n)= \G {v}\left(y', \frac{y_n}{1+2\sigma} \right)-\frac{C_0\sigma }{1+2\sigma}y_n,
\]
we obtain that
\[
\left\|\tilde{\psi} -\psi \right\|_{L^{\infty}\left(B_\tau^+(0)\right)} = \left\|\frac{\G \tilde{v}-\G v}{y_n}\right\|_{L^{\infty}}  \leq C\sigma.
\] 

Finally, since $\sigma$ is small, $\widetilde{K}:=\left\{  \tilde{v}(x)=0\right\}=\left\{x_n\ge \tilde\psi(x',0)\right\}$ is an upper graph, and $|\widetilde{K}|>0$. By a possible translation, there is an appropriate scaling of $\tilde{v}$ defined in \eqref{eq:normalized sol} belonging to $\E_1$. Hence, by applying Theorem \ref{thm:c1a vK K>0} to $\tilde v$, we obtain the strict convexity of $\partial \widetilde{K}$, and that $\tilde{\psi} $  satisfies \eqref{eq:psi elliptic 1} and \eqref{eq:c2a g=1} around $0$.
\end{proof}

\begin{Lemma}\label{lem:global C2a bounded 1}
Suppose $\alpha \in (0,1)$, $g\equiv 1$, $\psi_1 $ and $\psi_2  $ are two solutions of 
\eqref{eq:eq for psi} that satisfy \eqref{eq:psi elliptic 1} and \eqref{eq:c2a g=1}, then we have
\[ 	\left\|\psi_1 -\psi_2  \right\|_{C^{2,\alpha} \left(\overline{B_{c/2}^+(0)}\right)} \leq C  	\left\|\psi_1 -\psi_2  \right\|_{L^{\infty} \left(\overline{B_c^+(0)}\right)}.
\]
\end{Lemma}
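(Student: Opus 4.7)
\medskip

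\noindent\textbf{Proof proposal.} The plan is to derive a linear degenerate--elliptic equation for $w := \psi_1 - \psi_2$, apply the H\"older estimate of Lemma \ref{lem:holder K ndiv} to convert the $L^\infty$ control into a $C^{\alpha_0}$ bound for some $\alpha_0>0$, and then bootstrap to $C^{2,\alpha}$ by invoking the Schauder estimate from \cite[Theorem 5.4]{huang2024regularity} that was already used in the proof of \eqref{eq:c2a g=1}. Since this is a linear estimate, scaling and the interior bound \eqref{eq:c2a g=1} reduce matters to such a bootstrap once the right coefficient structure is identified.

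To carry this out, interpolate $\psi_\theta := \theta\psi_1 + (1-\theta)\psi_2$ for $\theta\in[0,1]$ and subtract the two copies of \eqref{eq:eq for psi} (with $g\equiv 1$). A standard application of the fundamental theorem of calculus to $\theta\mapsto(-\psi_{\theta,s}/s)^{-q}\det D_{y'}^2\psi_\theta$ shows that $w$ satisfies
\[
w_{ss} + B(y',s)\,\frac{w_s}{s} + \sum_{1\le i,j\le n-1} A^{ij}(y',s)\,w_{ij} = 0 \quad \text{in } B_c^+(0),
\]
where
\[
B = \frac{k+2}{k} + q\left(\frac{2}{k}\right)^{q+2}\!\int_0^1\!\Bigl(\tfrac{-\psi_{\theta,s}}{s}\Bigr)^{-q-1}\!\det D_{y'}^2\psi_\theta\, d\theta,\quad A^{ij} = \left(\tfrac{2}{k}\right)^{q+2}\!\int_0^1\!\Bigl(\tfrac{-\psi_{\theta,s}}{s}\Bigr)^{-q}\!\Psi_\theta^{'ij}\, d\theta,
\]
and $\Psi_\theta^{'ij}$ is the cofactor of $D_{y'}^2\psi_\theta$. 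Each $\psi_\theta$ inherits the ellipticity \eqref{eq:psi elliptic 1} and the $C^{2,\alpha}$ bound \eqref{eq:c2a g=1} (uniformly in $\theta$, by convexity of these bounds), so $B\in C^\alpha$ satisfies $B\ge (k+2)/k>1$, and the matrix $(A^{ij})$ is uniformly elliptic with $C^\alpha$ coefficients. Applying Lemma \ref{lem:holder K ndiv} with $b_0=(k+2)/k$ yields $\|w\|_{C^{\alpha_0}(\overline{B_{c/2}^+(0)})}\le C\|w\|_{L^\infty(\overline{B_c^+(0)})}$ for some $\alpha_0=\alpha_0(n,q,\alpha)>0$.

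For the $C^{2,\alpha}$ upgrade, differentiate the equation in a tangential direction $y_l$, $1\le l\le n-1$: then $w_l=\partial_{y_l}w$ solves a linear equation of the same type with an inhomogeneous term made of $C^\alpha$ functions multiplied by first and second derivatives of $w$, exactly as in the scheme used to prove \eqref{eq:c2a g=1}. Iterating \cite[Theorem 5.4]{huang2024regularity} provides $C^{2,\alpha}$ control of the tangential second derivatives $D_{y'}^2 w$ and of $w_s/s$, and then the original equation determines $w_{ss}$ as a $C^\alpha$ combination of these. The principal obstacle is the singular drift $B\,w_s/s$ near $s=0$, which places the problem outside standard uniformly elliptic Schauder theory; it is precisely this feature that forces the use of the specialized H\"older estimate of Lemma \ref{lem:holder K ndiv} (needing $b_0>1$) and of the degenerate Schauder estimate \cite[Theorem 5.4]{huang2024regularity}. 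Once these tools are in place, the proof is a short linear argument because the nonlinearity of \eqref{eq:eq for psi} has been absorbed into the $C^\alpha$ coefficients $B$ and $A^{ij}$.
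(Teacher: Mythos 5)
Your proposal is correct and follows essentially the same route as the paper, whose proof is a one-line remark that the result "follows directly" from the linearized equation together with Lemma \ref{lem:holder K ndiv} and the Schauder estimate of \cite[Theorem 5.4]{huang2024regularity}; you have simply filled in the interpolation $\psi_\theta$, the resulting coefficients $A^{ij}$, $B$ with $B\ge (k+2)/k>1$, and the tangential-differentiation bootstrap, all of which match the scheme the paper already used to prove \eqref{eq:c2a g=1}.
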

\begin{proof}
By considering the linearized equation for $\psi_1-\psi_2$, the proof follows directly from Lemma \ref{lem:holder K ndiv} and Lemma \cite[Theorem 5.4]{huang2024regularity}.
\end{proof}

\begin{proof}[Proof of Theorem \ref{thm:c2a origorous}]
After considering an appropriate normalization around $0$, we can assume for simplicity that $D_n v \leq 0$ in $B_C(0)$,  $g(0)=1$, $\left\| {g}\right\|_{C^{\alpha}\left( B_C(0)\right)} \leq 2$, and 
\[
\|g-1\|_{L^{\infty}\left(B_C(0)\right)}=\sigma <\sigma_0
\]
is small to be fixed later.

We will first establish the regularity of $ \psi= \G_n v $ when $g(x,v,Dv)=g(x)$ is independent of $(v,Dv)$.

For $m=1,2,\cdots$, let $r_m=  r_1^{m}$, where $r_1>0 $ is small to be fixed later.
As demonstrated in Definition \ref{def:normal type1}, for each $m$, there exist a linear transformation $  {\D_m'}$ and a constant $b_m >0$ with	$b_m^{n-q}=\det {\D_m'} $ such that
\[
{v_m}(x)=b_m^{-2}r_m^{-\frac{n+1}{n-q}}v\left( {\D_m'} r_m^{\frac{1}{2}} x', r_mx_n\right) \in \E_{1}.
\] 
Then, $v_m$ is a solution of
\[
\det D^2  {v}_m(x) =  {g}_m(x) v_m^q\chi_{\left\{ {v}_m > 0\right\}}, \quad {g}_m(x)=g\left( {\D_m'} r_m^{\frac{1}{2}} x', r_mx_n\right)
\]
such that
\begin{equation}\label{eq:gk estimate}
\sigma_m:= \left\| {g}_m-1\right\|_{L^{\infty}\left(B_C(0)\right)}	\leq \min\left\{C\left(
r_m^{\frac{1}{2}}\| {\D_m'} \|+r_m
\right)^{\alpha},\sigma_0 \right\}. 
\end{equation}

Let $\psi_m=\G_n v_m$. Then, we can utilize Lemma \ref{lem:psi approx sigma} to obtain $\tilde{v}_m$ and $\tilde{\psi}_m =\G_n \tilde{v}_m$ that satisfies  
\[
\left\|\tilde{\psi}_m -\psi_m  \right\|_{L^{\infty} \left(B_c^+(0)\right)}  \leq C\sigma_m.
\]
Note that since
\[
\psi_m  (y',s): =\G_n v_m= \frac{1}{r_m} \psi \left(\D_m'  r_m^{\frac{1}{2}} y',b_m^{\frac{n-q}{q+1}}r_m^{\frac{1}{2}} s \right)  ,
\]
we have
\[
\psi_{m+1}  (y',s)= \frac{1}{r_1} \psi_{m}  \left(\D_{m+1}'  \D_{m}'^{-1} r_1^{\frac{1}{2}} y', b_{m+1}^{\frac{n-q}{q+1}}b_m^{-\frac{n-q}{q+1}}r_1^{\frac{1}{2}} s\right).
\]
Thus,
\[
\left\|\tilde{\psi}_{m+1} -\frac{1}{r_1} \tilde{\psi}_{m}  \left(\D_{m+1}'  \D_{m}'^{-1} r_1^{\frac{1}{2}} y', b_{m+1}^{\frac{n-q}{q+1}}b_m^{-\frac{n-q}{q+1}}r_1^{\frac{1}{2}} s \right)\right\|_{L^{\infty} \left(B_c^+(0)\right)}  \leq \frac{C\sigma_m}{r_1} \leq C\sigma_{m-1}.
\]
Taking into account the $C^{2,\alpha}$ regularity of $\tilde{\psi}_m $, we arrive at the following quadratic approximation,
\begin{align}\label{eq:app free}
&\left|\psi_m (y',s) -\left(\tilde{\psi}_m (0)+   D\tilde{\psi}_m (0)\cdot (y',s)+  (y',s)\cdot D^2\tilde{\psi}_m (0)\cdot (y',s)^{T}\right) \right|  \nonumber \\
&\leq C(\sigma_m + |(y',s)|^{2+\alpha}).
\end{align}
Applying Lemma \ref{lem:global C2a bounded 1}, we obtain
\begin{align*}
\left|  r_m \tilde{\psi}_m (0)-  r_{m+1}  \tilde{\psi}_{m+1} (0)\right| 
&\leq C\sigma_{m-1} r_m,\\
\left\| r_m^{\frac{1}{2}}\T_{m+1}\T_m^{-1}D\tilde{\psi}_m (0)- r_{m+1}^{\frac{1}{2}}    D\tilde{\psi}_{m+1} (0) \right\| &\leq C\sigma_{m-1}   r_m^{\frac{1}{2}},\\
\left\|  \T_{m+1} \T_m^{-1}D^2\tilde{\psi}_m (0) \T_m^{-1,T}\T_{m+1}^{T}- D^2\tilde{\psi}_{m+1} (0) \right\| &\leq C\sigma_{m-1} ,
\end{align*}
where $\T_m=\operatorname{diag}\left\{ \D_m',b_m^{\frac{n-q}{q+1}}\right\}=\operatorname{diag}\left\{ \D_m',(\det {\D_m'})^{\frac{1}{q+1}}\right\}$. Hence, by using \eqref{eq:psi elliptic 1} and \eqref{eq:gk estimate},  we find that $\| \D_1'\|+\| \D_1'^{-1}\|\leq C$ and $\| \D_{m+1}'^{-1} \D_m'\|+\| \D_{m+1}' \D_m'^{-1}\| \leq C$ provided that $\sigma_0>0$ is small. Thus, for any given $\epsilon>0$, by selecting a sufficiently small $r_1>0$, we can infer that 
\[\| \D_m'\|+\| \D_m'^{-1}\| \leq C^{m}\leq Cr_1^{-m\epsilon }= Cr_m^{-\epsilon} \]
for all $m\geq 1$.
Subsequently, by choosing $\epsilon$ sufficiently small, we can deduce from \eqref{eq:gk estimate} that $\sigma_m \| \D_m'\|^2 \leq  Cr_m^{ \frac{\alpha}{4}} $  and 
\[
\left\|\D_m'^{-1} D_{y'}^2\tilde{\psi}_m (0) \D_m'^{-1,T}-\D_1'^{-1}D_{y'}^2\tilde{\psi}_{1} (0)\D_1'^{-1,T} \right\|\leq \sum_{j=1}^m\sigma_j \| \D_j'\|^2 \leq C\sum_{j=1}^m r_j^{ \frac{\alpha}{4}}  \leq Cr_1^{ \frac{\alpha}{4}}
\]
for all $m\geq 1$.
Thus, $\D_m'$, and consequently $\T_m$, are strictly positive definite and bounded independent of $m$, and $\partial K  \in C^{1,1}$ is uniformly convex at $0$.
Hence, for any $m \geq  1$, we can further deduce that
\[
 \sigma_m  \leq Cr_m^{\frac{\alpha}{2}}
\]
and
\begin{align*}
\left|  r_m \tilde{\psi}_m (0)-  r_{m+1}  \tilde{\psi}_{m+1} (0)\right|  &\leq C r_m^{1+\frac{\alpha}{2}},\\
\left\| r_m^{\frac{1}{2}} \T_m^{-1}D\tilde{\psi}_m (0)- r_{m+1}^{\frac{1}{2}}   \T_{m+1}^{-1}D\tilde{\psi}_{m+1} (0) \right\| & \leq C r_m^{\frac{1+\alpha}{2}}, \\
\left\|   \T_m^{-1}D^2\tilde{\psi}_m (0) \T_m^{-1,T}-     \T_{m+1}^{-1} D^2\tilde{\psi}_{m+1} (0) \T_{m+1}^{-1,T} \right\| &\leq Cr_m^{ \frac{\alpha}{2}}. 
\end{align*}
This implies $\|D^2\tilde{\psi}_m (0)-A_{\infty}\|+\|D\tilde{\psi}_m (0)-B_{\infty}\|+|\tilde{\psi}_m (0)-C_{\infty}| \leq Cr_m^{  \frac{\alpha}{2}}$ holds for some  $A_{\infty},B_{\infty} $ and $C_{\infty}$.
Recalling \eqref{eq:app free}, we then conclude that
\[
\left| \psi(y',s) -\left(C_{\infty}+ B_{\infty}\cdot (y',s) +  (y',s)\cdot A_{\infty}\cdot (y',s)^{T} \right) \right| \leq C|(y',s)|^{2+\alpha}
\quad  \text{in }B_c(0).
\]

In this way, we have established the pointwise $C^{2,\alpha}$ regularity of $\psi$ on $B_c'(0)$, which implies that $\partial K$ is $C^{2,\alpha}$ and uniformly convex, as well as the growth behavior \eqref{eq:v distance optimal} of $v$.
Furthermore, we can also show that $\psi \in C^{2,\alpha}(B_c(0))$. Here, we omit the details and merely outline the corresponding proof.
Let us assume for simplicity that $C_{\infty}+ B_{\infty}\cdot (y',s) +  (y',s)\cdot A_{\infty}\cdot (y',s)^{T} =\omega(y',s)$, where $\omega$ is the global solution defined in \eqref{eq:psi global sol}. 
For convenience, we  now set $v_m(x)= r_m^{-\frac{n+1}{n-q}}v\left( r_m^{\frac{1}{2}} x', r_mx_n\right) $.
By leveraging the order-reversing property of the partial Legendre transform, we then derive the following estimates for the approximation sequence
\[
\left\| v_m-\varpi\right\|_{L^{\infty}\left(B_c(0)\right)}  \leq C\sigma_m \leq Cr_m^{ \frac{\alpha}{2}},
\]
where $\varpi$ is given in \eqref{eq:glo model 2}. 
Subsequently, by applying the interior regularity theory for Monge-Amp\`ere equations, we can show that $v_m$ is uniformly convex in $B_c(0)\cap \left\{ v_m >\delta\right\} $ for sufficiently small $\delta>0$, provided that $\sigma_m$ is sufficiently small and $\left\| v_m-	\varpi\right\|_{C^{2,\alpha}\left(B_c(0)\cap \left\{ v_m >\delta\right\}\right)} \leq C_\delta r_m^{ \frac{\alpha}{2}}$ holds for some large $C_\delta$. Consequently, $\left\| \psi_m-	\omega\right\|_{C^{2,\alpha}\left(B_c(0)\cap \left\{ s>c_\delta\right\}\right)} \leq C_\delta r_m^{ \frac{\alpha}{2}}$. By employing a standard scaling argument, we establish the regularity of $v$ near the free boundary using \eqref{eq:normalized sol c11}, as well as the  $C^{2,\alpha}$ regularity of $\psi$ in $B_c(0)$.

Finally, when $g(x,v,Dv)$ depends on $(v,Dv)$, applying Theorem  \ref{thm:c1a vK K>0}, we have $\tilde{g}(x):= g(x,v,Dv) \in C^{\gamma}$ for some $\gamma >0$.
In this case, we can repeat the previous proof to conclude the $C^{1,1}$ regularity and uniform convexity of $\partial K$. This implies that $v$ scales as in \eqref{eq:normalized sol c11}.
When $g(x,v,Dv)=g(x,v)$ is independent of $Dv$, we find that $\tilde{g}(x)  \in C^{\alpha}$, and consequently $\psi \in C^{2,\alpha}$.
In general, when $g$ depends on $(x,v,Dv)$, we only have $\tilde{g}(x) \in C^{\beta}$, where $\beta = \min\left\{\frac{q+1}{n-q},1\right\}\alpha$. Nevertheless, it is noteworthy that $\nabla v = 0$ on $K$ and by Lemma \ref{lem:cone domain of u}, $c|\nabla v|\leq  |v_n| =|y_n|=s^{\frac{2(q+1)}{n-q}}$ under a coordinate transformation. This allows us to refine the estimate \eqref{eq:gk estimate} to $\sigma_m \leq Cr_m^{\min\left\{\frac{q+1}{n-q},\frac{1}{2}\right\}\alpha} \sigma_0$, and the previous discussion still ensures $\psi \in C^{2,\min\left\{\frac{2(q+1)}{n-q},1\right\}\alpha}$. This concludes the proof of Theorem \ref{thm:c2a origorous}.
\end{proof}

\section{An application to the $L_p$ Minkowski problem}\label{sec:Minkowski}

Let us  study the following dual version of the obstacle problem \eqref{eq:obs equation}, specifically the following isolated singularity problem. 
Suppose $q\in [0,n)$. Let $u \geq 0$ be a strictly convex solution to
\begin{equation}\label{eq:sing from dual}
\det D^2 u=\frac{f(x, Du)}{(u^*)^{q} }   + a\delta_0 \quad \text{around } 0,\quad  u^*=x\cdot Du-u > 0 \quad \text{outside } 0,
\end{equation} 
with $a> 0$, and $f$ being bounded and positive. Then, $u$ is merely Lipschitz at $0$, and we can express
\begin{equation}\label{eq:tangentconef}
u(\theta,r )= r\phi(\theta)+o(r)  
\end{equation}
under the polar coordinates $(\theta,r)$.  
Let $v$ denote the Legendre transform of   $u$. Then $v$ is a non-negative solution of the obstacle problem
\begin{equation}\label{eq:obs from dual}
\det D^2 v=\frac{1}{f(Dv,x)}\cdot v^p \chi_{\{v>0\}} ,
\end{equation}
and the tangent cone function $\phi$ is the support function of $K:=\left\{ v=0\right\}=\partial u(0)$. Therefore, the eigenvalues of the inverse matrix of $\nabla_{\mathbb{S}^{n-1}}^2\phi+\phi\I_{n-1}$ are the principal curvatures of $\partial K$. By virtue of the strict convexity of $u$, we have $\left\{ v \leq \kappa \right\}$ is compact for some $\kappa>0$. From Proposition \ref{prop:infinite ray} and Theorem \ref{thm:c1a vK K>0}, it follows that $\partial K$ is $C^{1,\alpha}$ and strictly convex.
Under the assumption that $\log f\in C^{\alpha}$ for some $\alpha \in (0,1)$, we can derive from Theorem \ref{thm:c2a origorous} that $\phi \in C^{2,\beta}$ and satisfies  $c\I_{n-1} \leq \nabla_{\mathbb{S}^{n-1}}^2\phi+\phi\I_{n-1} \leq C\I_{n-1} $ for some positive $c$ and $C$ that depend on $u$, where $\beta=\min\left\{\frac{2(q+1)}{n-q},1\right\}\alpha$. 
Subsequently, we shall investigate the asymptotic expansion of the Hessian of $u$ at the point $0$, where the $q = 0$ case has  been discussed in \cite{huang2024regularity}. By setting
\[
(\theta,s)=(\theta,r^{\frac{k}{2}}) \in \mathbb{S}^{n-1}\times [0,\infty),
\quad k=\frac{n-q}{q+1} ,
\]
and introducing the function 
\begin{equation}\label{eq:def of zeta}
\zeta( \theta,s )= \frac{u(\theta,r )}{r} \quad \text{for } r>0 ,\quad  \zeta( \theta,0 )=\phi(\theta),
\end{equation}
the calculation in \cite[Section 4]{huang2024regularity} shows that
\[
D^2 u=
\left(\begin{array}{cccc}
\frac{1}{r} \zeta_{\theta_1 \theta_1}+\zeta_r+\frac{1}{r} \zeta & \cdots & \frac{1}{r} \zeta_{\theta_1 \theta_{n-1}}  & 	\zeta_{r \theta_1}\\
\cdots & \cdots & \cdots & \cdots \\
\frac{1}{r} \zeta_{\theta_1 \theta_{n-1}} & \cdots & \frac{1}{r} \zeta_{\theta_{n-1}
\theta_{n-1}}+\zeta_r+\frac{1}{r} \zeta  &	\zeta_{r \theta_{n-1}}  \\
\zeta_{r \theta_1} & \cdots & \zeta_{r \theta_{n-1}} & r \zeta_{r r}+2 \zeta_r 
\end{array}\right)
\]
under any orthonormal frame  $\theta$ of the unit sphere. We further write
\begin{equation}\label{eq:matrix operators a}
D^2 u= \J \left\{F_{ij}(\zeta) \right\}\J^T,
\end{equation}
where
\[
\J= \operatorname{diag}\left\{r^{-\frac{1}{2}},\cdots,r^{-\frac{1}{2}},r^{\frac{k-1}{2}}\right\}=\operatorname{diag}\left\{s^{-\frac{1}{k}},\cdots,s^{-\frac{1}{k}},s^{\frac{k-1}{k}}\right\}
\] 
and
\[
\{F_{ij}(\zeta)\}=
\left(\begin{array}{cccc}
\zeta_{\theta_1 \theta_1}+\zeta+\frac{k}{2} s \zeta_s & \cdots & \zeta_{\theta_1 \theta_{n-1}} &	\frac{k}{2} \zeta_{s \theta_1} \\	
\cdots & \cdots & \cdots & \cdots \\	
\zeta_{\theta_1 \theta_{n-1}} & \cdots & \zeta_{\theta_{n-1} \theta_{n-1}}+\zeta+\frac{k}{2} s \zeta_s 	& \frac{k}{2} \zeta_{s \theta_{n-1}} \\
\frac{k}{2} \zeta_{s \theta_1} & \cdots & \frac{k}{2} \zeta_{s \theta_{n-1}} 	 &	\left(\frac{k}{2}\right)^2 \zeta_{s s}+\frac{k(k+2)}{4} \frac{\zeta_s}{s}
\end{array}\right). 
\]

\begin{Theorem}\label{thm:c2a regularity u}
Suppose $q\in [0,n)$, $\alpha \in (0,1)$, $\log f\in C^{\alpha}$, and $u \geq 0$ is a strictly convex solution to \eqref{eq:sing from dual} with $a>0$. Let $\beta=\min\left\{\frac{2(q+1)}{n-q},1\right\}\alpha$ and $k= \frac{n-q}{q+1}$. Then, we have
\begin{equation}\label{eq:decay63}
cr^{k+1} \leq u(x)-\phi(\theta) r\leq Cr^{k+1} \quad \text{in }B_c(0) , 
\end{equation}
where $c$ and $C$ are positive constants that depend on $u$.
And under the coordinates $(\theta,s)$, the function $\zeta$ defined by \eqref{eq:def of zeta} is $C^{2,\beta}$ around $\mathbb{S}^{n-1} \times \left\{0\right\}$, and the matrix $\{F_{ij}(\zeta)\}_{1\leq i,j \leq n}$ defined by \eqref{eq:matrix operators a} is elliptic and $C^{\beta}$  around $\mathbb{S}^{n-1} \times \left\{0\right\}$.
Furthermore, if $f(x,Du)=f(Du)$ is independent of $x$, then we can take $\beta=\alpha$ instead. 

More precisely, the $C^{\beta}$ (or $C^{\alpha}$) norm and the elliptic constants of $\{F_{ij}(\zeta)\}_{1\leq i,j \leq n}$ in  $\mathbb{S}^{n-1} \times [0,c_0]$ are bounded by $C_{0}$, where the positive constants $c_0$ and $C_0$ depend only on $n$, $q$, $\alpha$, $\operatorname{diam} (K)$, $a^{-1}$, $\left\|\log f\right\|_{C^{\alpha}}$, and a positive quantity $\kappa$ for which $\left\{ v \leq \kappa \right\}$ is compact, with $v$ denoting the Legendre transform of $u$.
\end{Theorem}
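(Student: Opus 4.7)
The plan is to pass to the Legendre transform $v = u^*$, apply the free boundary regularity theorems already established in this paper to $v$, and then transfer the regularity back to $u$ via the variational identity $u(y) = y \cdot Du(y) - v(Du(y))$. Since $u$ is strictly convex, $v$ is a non-negative convex solution of the obstacle problem \eqref{eq:obs from dual} whose coincidence set $K = \partial u(0)$ has positive measure $a$, and the sublevel set $\{v \leq \kappa\}$ is compact with $v > 0$ outside $K$ for some $\kappa > 0$. Case (2) of Proposition \ref{prop:infinite ray} gives $\Gamma_{sc} = \Gamma$, and the coefficient $g(x, v, Dv) = 1/f(Dv, x)$ inherits $C^\alpha$ regularity from the $C^{1,\alpha}$ bound on $v$ furnished by Theorem \ref{thm:c1a vK K>0}. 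Applying Theorem \ref{thm:c2a origorous} at each point of $\partial K$ (after the local affine normalization of Assumption (H)) then yields: $\partial K$ is $C^{2,\beta}$ and uniformly convex, $v$ obeys the optimal growth \eqref{eq:v distance optimal} near $\partial K$, and the partial Legendre transform $\psi$ of \eqref{eq:coor s theta x} is $C^{2,\beta}$ in the rescaled coordinates $(y', s)$. In the special case $f(x, Du) = f(Du)$, the $u$-side variable $x$ corresponds to $Dv$ on the $v$-side, so $g$ becomes a function of the $v$-side variable alone, falling under the $g = g(x, v)$ case and improving $\beta$ to $\alpha$.

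From the $C^{2,\beta}$ uniform convexity of $\partial K$, the support function $\phi$ lies in $C^{2,\beta}(\mathbb{S}^{n-1})$ with $c \I_{n-1} \leq \nabla^2_{\mathbb{S}^{n-1}} \phi + \phi \I_{n-1} \leq C \I_{n-1}$, since this matrix is the pullback of the reciprocal of the second fundamental form of $\partial K$. To prove \eqref{eq:decay63}, write $x(r) := Du(r\theta)$ and combine the identity $Dv(x(r)) = r\theta$ with the differentiated form of \eqref{eq:v distance optimal}, namely $|Dv(x)| \asymp \operatorname{dist}(x, K)^{1/k}$ near $\partial K$, which follows from the quantitative bounds on $\psi$ via the duality between $D^2v$ and $D^2\psi$. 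This forces $\operatorname{dist}(x(r), K) \asymp r^k$, so both $r\theta \cdot (x(r) - p(\theta))$ and $v(x(r))$ are of order $r \cdot r^k = r^{k+1}$, yielding $cr^{k+1} \leq u(r\theta) - r\phi(\theta) \leq Cr^{k+1}$ via the variational identity.

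For the $C^{2,\beta}$ regularity of $\zeta$ near $s = 0$, localize near a boundary point $p_0 = p(\theta_0)$, choose coordinates so that $p_0 = 0$ and the outer normal at $p_0$ is $-e_n$, and exploit the identity
\[
u(y', y_n) = \sup_{x'}\bigl\{x' \cdot y' + \G v(x', y_n)\bigr\} = \sup_{x'}\bigl\{x' \cdot y' + y_n \psi(x', y_n)\bigr\}.
\]
For $\theta$ near $-e_n$, writing $y = r\theta$ and $s = r^{k/2}$, the Hessian in $x'$ of the integrand equals $y_n D_{x'}^2 \psi$, which is uniformly negative definite by Lemma \ref{lem:psi elliptic 1} and its perturbed analog built into the proof of Theorem \ref{thm:c2a origorous}. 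The implicit function theorem produces a $C^{1,\beta}$ optimizer $x^*(\theta, s)$, and the envelope formula then delivers $\zeta \in C^{2,\beta}$, with second derivatives expressed as smooth combinations of $\psi$ and $x^*$. The ellipticity and $C^\beta$ regularity of $\{F_{ij}(\zeta)\}$ follow from the duality $D^2 u = (D^2 v)^{-1}$ together with the scaling in \eqref{eq:matrix operators a}: the factor $\J$ precisely compensates the degeneracy of $D^2 v$ normal to $\partial K$, so the bounds $c\I_{n-1} \leq D_{y'}^2 \psi \leq C\I_{n-1}$ and $c \leq -\psi_s/s \leq C$ translate into two-sided bounds on $\{F_{ij}(\zeta)\}$. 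Compactness of $\mathbb{S}^{n-1}$ and of $\partial K$ patches these local estimates to a uniform bound on $\mathbb{S}^{n-1} \times [0, c_0]$. The main technical obstacle is the careful bookkeeping of the two coordinate changes — the Cartesian-to-spherical change on the $u$-side and the Cartesian-to-rescaled change $y_n \mapsto s$ on the $v$-side — and verifying that the $s$-variables defined on the two sides agree up to a smooth positive factor, so that the $C^{2,\beta}$ regularity of $\psi$ transfers to $\zeta$ without loss of exponent.
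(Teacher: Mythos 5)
Your overall strategy coincides with the paper's: pass to $v=u^*$, invoke Theorems \ref{thm:c1a vK K>0} and \ref{thm:c2a origorous} (with $\Gamma_{nsc}=\emptyset$ and $g=1/f$ inheriting H\"older continuity from the $C^{1,\alpha}$ bound on $Dv$), and transfer back. Where you genuinely diverge is in how the estimates return to the $u$-side. The paper does \emph{not} differentiate the finished $C^{2,\beta}$ estimate for $\psi$; instead it re-runs the approximation scheme from the proof of Theorem \ref{thm:c2a origorous} ($\|v_m-\varpi\|_{L^\infty}\le\sigma_m$ plus $C^{2,\beta}$ bounds away from the free boundary), applies the order-reversing Legendre transform to each approximant to get $\|u_m-\varpi^*\|$ small in conical domains, and exploits the explicit scaling-invariant model $\varpi^*$ (for which $\J^{-1}D^2\varpi^*\J^{-T}$ is uniformly elliptic and $C^1$) together with a telescoping/triangle-inequality argument splitting $d\le\varepsilon s_0$ from $d\ge\varepsilon s_0$. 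You instead use the envelope identity $u(y)=\sup_{x'}\{x'\cdot y'+y_n\psi(x',y_n)\}$ and the implicit function theorem to read off $\zeta$ and $F_{ij}(\zeta)$ directly from $\psi$ and the block-inversion duality between $D^2u$ and $D^2\G v$. Your route is more direct and avoids repeating the iteration on the $u$-side; the paper's route buys explicit control of how the conical geometry and the normalizing affine maps enter the constants, which is what makes the final "constants depend only on $n,q,\alpha,\operatorname{diam}(K),a^{-1},\|\log f\|_{C^{\alpha}},\kappa$" clause immediate.

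Three points need tightening. First, $y_nD^2_{x'}\psi$ is \emph{not} uniformly negative definite (it degenerates like $|y_n|=s^{2/k}$); you must first divide the stationarity condition by $y_n$ to get $D_{x'}\psi(x^*,y_n)=-\theta'/\theta_n$, and only then is the IFT Jacobian $D^2_{x'}\psi\ge c\I_{n-1}$ uniform. Second, in \eqref{eq:decay63} the observation that $r\theta\cdot(x(r)-p(\theta))$ and $v(x(r))$ are each of order $r^{k+1}$ does not by itself bound their \emph{difference} from below; the lower bound needs a competitor, e.g.\ $z=p(\theta)+cr^k\theta$ with $c$ small so that $r\theta\cdot(z-p(\theta))-v(z)\ge cr^{k+1}-C c^{(k+1)/k}r^{k+1}\ge \tfrac{c}{2}r^{k+1}$ (the paper gets \eqref{eq:decay63} even more cheaply, by integrating the two-sided bounds on $\zeta_{ss}$ and $\zeta_s/s$ contained in the ellipticity of $F_{ij}(\zeta)$). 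Third, "compactness of $\mathbb{S}^{n-1}$ patches the local estimates" only yields constants depending on $u$; to get the stated dependence you must, as the paper does in its final paragraph, record that the affine normalizations putting $v$ into $\E_1$ at each boundary point have magnitude controlled by $\operatorname{diam}(K)$, $a^{-1}$ and $\kappa$. None of these is a fatal gap, but all three must appear in a complete write-up.
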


\begin{proof}
This proof is directly derived by applying Theorems \ref{thm:c1a vK K>0} and \ref{thm:c2a origorous} to the Legendre transform $v$ of $u$, with the details outlined as follows.

Let us first assume that  $v \in \E_1$, and $\left\|\log f\right\|_{C^{\alpha}} \leq 2$ in $B_C(0)$. 
We claim that for any  $(-e_n,s_0)$ and $(\theta_1,s_1)$ with $s_1 <s_0$ and $s_0+|e_n+\theta_1|\leq c$ being small, the matrix $F_{ij}(\zeta)$ is elliptic at 
$(-e_n,s_0)$ and satisfies 
\begin{equation}\label{eq:holder argument}
|F_{ij}(\zeta)(-e_n,s_0)-F_{ij}(\zeta)(\theta_1,s_1)| \leq Cd^{\beta},
\quad \text{ for }d:=|(-e_n,s_0)-(\theta_1,s_1)|.
\end{equation}
Let us assume that the quadratic expansion of $\partial K$ at $0$ is given by $x_n =\frac{1}{2}|x'|^2$.
Let $r_m=  r_1^{m}$ with $r_1>0 $ being small, $v_m(x)= r_m^{-\frac{n+1}{n-q}}v\left( r_m^{\frac{1}{2}} x', r_mx_n\right) $, and  let $\varpi$ be given in \eqref{eq:glo model 2}.
Following the proof of Theorem \ref{thm:c2a origorous}, we now have for $\sigma_m:=Cr_m^{ \frac{\beta }{2}}$ that
\[
\left\| v_m-\varpi\right\|_{L^{\infty}\left(B_c(0)\right)}  \leq \sigma_m, \quad
\left\| v_m-	\varpi\right\|_{C^{2,\beta}\left(B_c(0)\cap \left\{ v_m >\delta\right\}\right)} \leq C_\delta   \sigma_m
\]
for all $m\geq 1$, for some sufficiently large $C$ and $C_\delta$. Let $u_m$ and $\varpi^*$ denote the the Legendre transform of $v_m$ and $\varpi$, respectively.   Both $u_m$ and $\varpi^*$ are well-defined inside a conical domain $\overline{\operatorname{Cone}(0,c) } $ along $-e_n$, where $\operatorname{Cone}(0,c)  $ is defined in \eqref{eq:conical domain} for the $x$ variable, and we shall also use this notation for the corresponding domain in the $(\theta,s)$ coordinates. Note that
\[
u_m (x',x_n) =r_m^{-\frac{n+1}{n-q}}u\left( r_m^{\frac{n+q+2}{2(n-q)}} x', r_m^{\frac{q+1}{n-q}}x_n\right)   
\]
and 
\[
\varpi^*(x',x_n) = \left(\frac{1}{2}\frac{|x'|^2}{|x_n|^2}+\frac{1}{k(k+1)^{\frac{1}{q+1}}}|x_n|^{k}\right)|x_n|, \quad k=\frac{n-q}{q+1}.
\] 
By leveraging the order-reversing property of the Legendre transform, we then have
\begin{equation}\label{eq:umappr}
\left\| u_m-	\varpi^*\right\|_{L^{\infty}\left(\operatorname{Cone}(0,c) \right)} \leq C\sigma_m, \quad \left\| u_m-	\varpi^*\right\|_{C^{2,\beta}\left(\operatorname{Cone}(0,c) \cap \left\{ x_n >c_\delta\right\}\right)} \leq C_\delta \sigma_m.
\end{equation}
In particular, under the coordinates $(\theta,s)$, we have 
\[
\left\|F_{ij}\left(\frac{u_m}{r}\right)   -F_{ij}\left(\frac{\varpi^*}{r}\right)	\right\|_{C^{\beta}\left(\operatorname{Cone}(0,c) \cap \left\{ s >c_\delta\right\}\right)} \leq C_\delta \sigma_m ,
\]
with $c_\delta$ being as small as desired, provided that $C_\delta$ is sufficiently large. 
Note that the convex function  $\varpi^*$ is locally uniformly elliptic and locally $C^3$ in $\left\{ x_n >0\right\}$. Given the scaling invariance $\varpi^*(x',x_n)= b^{-k-1}\varpi^* (b^{\frac{k+2}{2}} x',b x_n)$ for any $b>0$,  the matrix $F_{ij}\left(\frac{\varpi^*}{r}\right):=\J^{-1} D^2\varpi^*  \J^{T,-1}$ is uniformly elliptic and $C^1$ in $\overline{\operatorname{Cone}(0,c)} $ under the coordinates $(\theta,s)$.
Thus, \eqref{eq:umappr}, together with standard scaling arguments,  implies the expansion of $u$ in the direction of $-e_n$ and the ellipticity of $F_{ij}(\zeta)$ at $(-e_n,s_0)$, and also provides the estimate \eqref{eq:holder argument} if $d \leq \varepsilon |s_0| $ and $\varepsilon$ is small. Then the estimate \eqref{eq:decay63} follows from \eqref{eq:matrix operators a} and the ellipticity of $F_{ij}(\zeta)$.

Furthermore, if one denotes $$F_{ij}(\zeta)(-e_n,0)= F_{ij}\left(\frac{\varpi^*}{r}\right)(-e_n,0),$$ then we have in $V(-e_n):=\left\{(\theta,s):\;  c_{\delta}|e_n+\theta| <s<c \right\} $ that
\[
|F_{ij}\left(\zeta\right)(\theta,s)  -F_{ij}\left(\zeta\right)(-e_n,0)|\leq C_{\delta}\left(s+|e_n+\theta|\right)^{ \beta}  .
\]
By repeating the above arguments, for any point $e $ on $\mathbb{S}^{n-1}$ that is near $-e_n$, we have in $V(e):=\left\{(\theta,s):\;  c_{\delta}|\theta-e| <s<c \right\} $ that
\[
|F_{ij}\left(\zeta\right)(\theta,s)  -F_{ij}\left(\zeta\right)(e,0)|\leq C_{\delta}\left(s+|\theta-e|\right)^{ \beta}  .
\]
Then for any point $e $ on $\mathbb{S}^{n-1}$ that is near $-e_n$, we have
\[
\begin{split}
&|F_{ij}\left(\zeta\right)(e,0)  -F_{ij}\left(\zeta\right)(-e_n,0)|\\
\leq & |F_{ij}\left(\zeta\right)(e,0)  -F_{ij}\left(\zeta\right)(\theta,s)| 
 +|F_{ij}\left(\zeta\right)(\theta,s)  -F_{ij}\left(\zeta\right)(-e_n,0)|\\
\leq  &  C_{\delta}|e_n+e|^{ \beta},
\end{split}
\] 
where $(\theta,s)=\left(\frac{e-e_n}{|e-e_n|}, |e+e_n|\right)$.
Thus, we conclude the proof of   \eqref{eq:holder argument} by observing that if $d \geq \varepsilon |s_0| $, then
\[
\begin{split}
|F_{ij}\left(\zeta\right)(\theta_1,s_1)  -F_{ij}\left(\zeta\right)(-e_n,s_0)|
&\leq |F_{ij}\left(\zeta\right)(\theta_1,s_1)  -F_{ij}\left(\zeta\right)(\theta_1,0)| \\
& \quad+|F_{ij}\left(\zeta\right)(\theta_1,0)  -F_{ij}\left(\zeta\right)(-e_n,0)|\\
&\quad+|F_{ij}\left(\zeta\right)(-e_n,s_0)  -F_{ij}\left(\zeta\right)(-e_n,0)| \\
&\leq  C(1+\varepsilon^{-1})^{\beta}d^{\beta} .
\end{split}
\]

Recall that we assumed $v\in\E_1$ and $\left\|\log f\right\|_{C^{\alpha}} \leq 2$ in $B_C(0)$ in the above arguments. In general, for each $\theta_0 \in \mathbb{S}^{n-1}$, we can substitute $v$ with $b v(\T x-X)$ so that these assumptions are satisfied for $b v(\T x-X)$ and its equation, where $b$ is positive, $X$ is a point and $\T$ is the composition of a dilation and a rotation, resulting in the corresponding $u$ being replaced by $b u(b^{-1} \T^{-1} x)+X\cdot x$. Thus,  we obtain the ellipticity of $F_{ij}(\zeta)$ and the estimate \eqref{eq:holder argument} in an appropriate conical domain. Moreover, if $f(x,Du)=f(Du)$ is independent of $x$, then we can take $\beta=\alpha$ instead. 

Therefore, to complete the proof, we need only to show the existence of these transformations $b,X,\T$, and the boundedness of their magnitude.
Recall that $K:=\left\{v=0\right\}=\partial u(0)$. By replacing $u$ with $\left(\det \D\right)^{-\frac{2}{n-q}}  u(\D x)$ for a suitable affine transformation $\D$, we may assume for simplicity that 
\[
B_{1}(\tilde{x}) \subset K  \subset B_{C(n)}(\tilde{x})
\]
holds for some $\tilde{x}$, where the magnitude of the transformation $\D$ depends only on $n$, $\operatorname{diam} (K)$ and $a^{-1}$. After a rotation, we may assume that $\theta_0=-e_n$. Let us denote $x_0$ as the point on $\partial K $ where the outer normal is $-e_n$. By replacing $v$ with $v(x-x_0)$, which is equivalent to replacing $u$ with $u+x_0\cdot x$, we can further assume that $x_0=0$.
Then, we consider the normalization $v_{h}\left(x', x_n\right):=h^{-1} v\left(\D_t' x', t x_n\right)$ with $h:=h(t)=(t \operatorname{det} \D_t')^{\frac{2}{n-q}}$ of $v$ as defined in Definition  \ref{def:normal type1}. 
Since $u$ is strictly convex, there exists $\kappa >0$ such that $V=\left\{ v \leq \kappa \right\}$ is compact. Thus, the normalization $v_h \in \E(\kappa/h)$ is well-defined for small $h$. By noting \eqref{eq:v distance alpha} in Theorems \ref{thm:c1a vK K>0}, for sufficiently small $h$, $v_h$ satisfies our requirements.
\end{proof}

Finally, let us discuss an application of our results to the $L_p$ Minkowski problem, where $p =q+1\in (1,n+1)$. The Minkowski problem addresses the existence, uniqueness, and regularity of closed convex hypersurfaces with prescribed Gaussian curvature (see, e.g., Pogorelov \cite{pogorelov1978minkowski}). The $L_p$ Minkowski problem was introduced by Lutwak \cite{lutwak1993brunnminkowskifirey}, investigating the conditions under which a given measure $\mu$ on $\mathbb{S}^{n}$ can be realized as the $L_p$ surface area measure of a convex body $P \subset \R^{n+1}$ containing $0$. 

Let $p\in(1,n+1)$, which is the range we focus on in this paper. Suppose that the prescribed measure $\mu$ has a bounded positive density $f$ with respect to the spherical Lebesgue measure.
Let $h \in C(\mathbb{S}^{n})$ denote the support function of $P$. Then the Monge-Amp\`ere equation 
corresponding to this $L_p$ Minkowski problem is
\begin{equation}\label{eq:Lp minkowski}
\det\left( \nabla_{\mathbb{S}^{n}}^2 h +h \I_n\right) 
 =h^{p-1}  f
\end{equation}
where a solution $h$ is understood in the Aleksandrov sense. 
If $0 \in \mathring{P}$ (equivalently, $h$ is positive), then the regularity theory developed by Caffarelli \cite{caffarelli1990ilocalization,caffarelli1990interiorw2p,caffarelli1991regularity} are applicable. 
However, if $p\in (1,n+1)$, there exists a positive $C^{\alpha}$ function $f$ such that $0 \in \partial P$; see Hug-Lutwak-Yang-Zhang \cite{hug2005discrete} and Chou-Wang  \cite{chou2006lpminkowski}.

Assume that the convex body $P$ is a solution to the $L_p$ Minkowski problem \eqref{eq:Lp minkowski} with $0 \in \partial P$ and  $f$ being bounded and positive. Then, by \cite{chou2006lpminkowski,caffarelli1990ilocalization}, the convex body $P$ is strictly convex and $C^{1,\alpha}$ outside of $0$. Let us further assume that the boundary of the convex body $P$ includes the graph of a non-negative function $u:U(\subset \R^n) \to \R$ around $0$. Then, $u$ is a generalized solution of 
\begin{equation}\label{eq:sing from Lp}
\det D^2 u=\frac{\tilde{f}}{(u^*)^{p-1} }   + a\delta_0  \quad \text{in }U,
\quad 
u^*=x\cdot Du-u > 0 \quad  \text{outside } 0,
\end{equation}
for some $a \geq 0$, where $\tilde{f}=(1+|Du|^2)^{\frac{n}{2}+p} \Big/ f\left(\frac{(-Du,1)}{\sqrt{1+|Du|^2}}\right)  $ and $\delta_0$ is the Dirac measure at the origin.
It is well-known that the support function $h $ of $P$, when restricted to the hyperplane $x_{n+1}=-1$, yields $v(x)=h(x,-1)=u^*(x)$ and the corresponding obstacle problem
\begin{equation}\label{eq:obs from Lp}
\operatorname{det} D^2 v=\tilde{g} v^{p-1}  \quad \text{in }\R^n,  \quad v \geq 0 \text{ is convex},
\end{equation}
where $\tilde{g}(x)=(1+|x|^2)^{-\frac{n}{2}-p} f\left(\frac{(-x,1)}{\sqrt{1+|x|^2}}\right)$. This is one of our motivations to study \eqref{eq:obs equation}. The support function of the convex set $K:=\left\{v=0\right\}$  is the tangent cone function of $u$ at $0$ and $a=|K| $.
By \cite{chou2006lpminkowski}, both $u$ and $v$ are strictly convex, and thus, smooth outside the zero level set. 
 Consequently,  according to Lemma \ref{lem:ext are exp} and  Lemma \ref{prop:infinite ray}, $\Gamma_{nsc}=\emptyset$. Thus, there are only two possibilities for $K$: either $K=\left\{0\right\}$ or $|K|>0$. 

In the case where $K=\left\{0\right\}$, $u$ is $C^{1,\alpha}$ near $0$.   By using the order-reversing property of the Legendre transform, Theorem \ref{thm:c1a v K=0} implies that
\[
c|x|^{\frac{1+\alpha}{\alpha}} \leq u(x) \leq C|x|^{1+\alpha} 
\quad \text{in }B_{c}(0)
\]
for a universal $\alpha\in (0,1)$, where $c$ and $C$ are positive constants that depend on $u$.  

For the remaining case where  $|K|>0$, $u$ is merely Lipschitz at $0$. Assuming $\alpha \in (0,1)$, $\log f\in C^{\alpha}$,  we then can apply Theorem \ref{thm:c2a regularity u} to deduce that
\[ 
u(\theta,r )= r\phi(\theta)+O(r^{k+1})  
\]
under the polar coordinates $(\theta,r)$, with $\phi \in C^{2,\alpha}$ such that $ \nabla_{\mathbb{S}^{n-1}}^2\phi+\phi\I_{n-1}  $ is bounded and positive definite. Furthermore, under the coordinates $(\theta,s)$, the function $\zeta$ defined by \eqref{eq:def of zeta} is $C^{2,\alpha}$ around $\mathbb{S}^{n-1} \times \left\{0\right\}$, and the matrix $\{F_{ij}(\zeta)\}_{1\leq i,j \leq n}$ defined by \eqref{eq:matrix operators a} is elliptic and $C^{\alpha}$  around $\mathbb{S}^{n-1} \times \left\{0\right\}$.

\small

\bibliography{references}

\smallskip

\noindent T. Jin and X. Tu

\noindent Department of Mathematics, The Hong Kong University of Science and Technology\\
Clear Water Bay, Kowloon, Hong Kong\\[1mm]
Email: \textsf{tianlingjin@ust.hk} (T. Jin) and \textsf{maxstu@ust.hk} (X. Tu).




\medskip

\noindent J. Xiong

\noindent School of Mathematical Sciences, Laboratory of Mathematics and Complex Systems, MOE\\ Beijing Normal University, 
Beijing 100875, China\\[1mm]
Email: \textsf{jx@bnu.edu.cn}

\end{document}